\newlist{enuma}{enumerate}{1}
\setlist[enuma]{label=(\alph*)}
\setlist{font=\normalfont}
\tikzset{knot diagram/every knot diagram/.style={background color=gray!20,clip width=6,end tolerance=5pt,clip radius=0.2cm}}
\tikzset{edge/.style={line width=0.8}}
\tikzset{wall/.style={very thick}}
\tikzset{det/.style={edge,decoration={markings,mark=at position .5 with
	{\node[draw,fill=gray!20,isosceles triangle,sharp corners,transform shape,inner sep=0.1cm] (det){};}},postaction={decorate}}}
\tikzset{-o-/.code 2 args={
\ifthenelse{\equal{#2}{}}{
}{
	\ifthenelse{\equal{#2}{>}\OR\equal{#2}{<}}{
		\pgfkeysalso{decoration={markings,mark=at position #1 with {\arrow[scale=0.8]{#2}}},postaction={decorate}}
	}{
		\pgfkeysalso{decoration={markings,mark=at position #1 with {\draw[black, fill={#2}] circle[radius=2pt];}},postaction={decorate}}
	}
}
}}
\newcommand{\picmargin}{\mathop{}\!}
\newcommand{\stsize}{\footnotesize}
\newcommand{\TanglePic}[5]{
\picmargin
\begin{tikzpicture}[baseline=(ref.base)]
\tikzmath{\xw=#1; \yh=#2; \yd=0; \yu=0;}
\ifthenelse{\equal{#3}{<-}\OR\equal{#4}{<-}}{
	\tikzmath{\yd=-0.1;}
}{}
\ifthenelse{\equal{#3}{->}\OR\equal{#4}{->}}{
	\tikzmath{\yu=0.1;}
}{}
\tikzmath{\yt=\yh+\yu;}
\fill[gray!20] (0,\yd)rectangle(\xw,\yt);
\node(ref) at ({\xw/2},{\yh/2}) {\phantom{$-$}};
\begin{scope}[wall]
\ifthenelse{\equal{#3}{w}}{
	\draw (0,\yd) --(0,\yt);
}{\ifthenelse{\equal{#3}{}}{}{
	\draw[#3] (0,\yd) --(0,\yt);
}}
\ifthenelse{\equal{#4}{w}}{
	\draw (\xw,\yd) --(\xw,\yt);
}{\ifthenelse{\equal{#4}{}}{}{
	\draw[#4] (\xw,\yd) --(\xw,\yt);
}}
\end{scope}
#5
\end{tikzpicture}
\picmargin
}
\newcommand{\HorizontalTangle}[7]{
\TanglePic{0.9}{0.9}{#1}{#2}{
\tikzmath{\ya=\yh/2+0.2; \yb=\yh/2-0.2;}
\path (0,\ya) coordinate (C) (0,\yb) coordinate (D);
\ifthenelse{\equal{#3}{}\AND\equal{#4}{}}{}{
	\draw[left,inner sep=2pt] (C)node{\stsize #3} (D)node{\stsize #4};
}
\path (\xw,\ya) coordinate (A) (\xw,\yb) coordinate (B);
\ifthenelse{\equal{#5}{}\AND\equal{#6}{}}{}{
	\draw[right,inner sep=2pt] (A)node{\stsize #5} (B)node{\stsize #6};
}
#7
}}
\newcommand{\crossSt}[9]{
\HorizontalTangle{#1}{#2}{#6}{#7}{#8}{#9}{
\ifthenelse{\equal{#3}{a}}{
	\draw[edge] (C) ..controls ({\xw/2},\ya) and ({\xw/2},\yb).. (B);
	\draw[edge] (D) ..controls ({\xw/2},\yb) and ({\xw/2},\ya).. (A);
}{
	\begin{knot}
	\strand[edge] (C) ..controls ({\xw/2},\ya) and ({\xw/2},\yb).. (B);
	\strand[edge] (D) ..controls ({\xw/2},\yb) and ({\xw/2},\ya).. (A);
	\ifthenelse{\(\equal{#3}{n}\AND\equal{#4}{#5}\)\OR\(\equal{#3}{p}\AND\NOT\equal{#4}{#5}\)}{
		\flipcrossings{1}
	}{}
	\end{knot}
}
\ifthenelse{\equal{#4}{>}\OR\equal{#4}{<}}{
	\tikzmath{\pos=0.9;}
}{
	\tikzmath{\pos=0.8;}
}
\path[edge,-o-={\pos}{#4}] (C) ..controls ({\xw/2},\ya) and ({\xw/2},\yb).. (B);
\path[edge,-o-={\pos}{#5}] (D) ..controls ({\xw/2},\yb) and ({\xw/2},\ya).. (A);
}}
\newcommand{\cross}[5]{\crossSt{#1}{#2}{#3}{#4}{#5}{}{}{}{}}
\newcommand{\crosswall}[6]{\crossSt{}{#1}{#2}{#3}{#4}{}{}{#5}{#6}}
\newcommand{\walltwowallSt}[8]{
\HorizontalTangle{#1}{#2}{#5}{#6}{#7}{#8}{
\draw[edge,-o-={0.5}{#3}] (C) -- (A);
\draw[edge,-o-={0.5}{#4}] (D) -- (B);
}}
\newcommand{\walltwowall}[4]{\walltwowallSt{#1}{#2}{#3}{#4}{}{}{}{}}
\newcommand{\twowall}[5]{\walltwowallSt{}{#1}{#2}{#3}{}{}{#4}{#5}}
\newcommand{\kink}{
\TanglePic{0.9}{0.9}{}{}{
\begin{knot}
\strand[edge] (\xw,0.3) -- (0.6,0.3)
	..controls (0.2,0.3) and (0.2,0.7).. (0.45,0.7);
\strand[edge] (0,0.3) -- (0.2,0.3)
	..controls (0.7,0.3) and (0.7,0.7).. (0.45,0.7);
\end{knot}
\path[edge,-o-={0.5}{>}] (0.6,0.3) -- (\xw,0.3);
}}
\newcommand{\horizontaledge}[1]{
\TanglePic{0.9}{0.9}{}{}{
\draw[edge,-o-={0.8}{#1}] (0,{\yh/2}) --(\xw,{\yh/2});
}}
\newcommand{\circlediag}[2][]{
\TanglePic{0.9}{0.9}{}{}{
\draw[edge,-o-={0.1}{#2}] (0.45,0.45) circle (0.25);
}}
\newcommand{\ThreeStates}[8]{
\ifthenelse{\equal{#6}{}\AND\equal{#7}{}\AND\equal{#8}{}}{}{
	\draw (#2,#3)node[#1,inner sep=2pt]{{\stsize #6}};
	\draw (#2,#4)node[#1,inner sep=2pt]{{\stsize #7}};
	\draw (#2,#5)node[#1,inner sep=2pt]{{\stsize #8}};
}}
\newcommand{\MultiStrand}[7][b]{
\TanglePic{#2}{#3}{#4}{#5}{
\tikzmath{\xw=#2; \y1=0.2; \y4=\yh-\y1; 
\s=0.35; \y2=\y1+\s; \y3=\y4-\s; \v=(\y2+\y3)/2;}
\ifthenelse{\equal{#1}{b}}{
	\tikzmath{\ym=\y2; \yds=(\y2+\y4)/2;}
}{
	\tikzmath{\ym=\y3; \yds=(\y1+\y3)/2;}
}
\ifthenelse{\equal{#6}{1}\OR\equal{#6}{3}}{
	\draw ({\xw-0.15},\yds)node[rotate=90]{...};
}{}
\ifthenelse{\equal{#6}{2}\OR\equal{#6}{3}}{
	\draw (0.15,\yds)node[rotate=90]{...};
}{}
#7
}}
\newcommand{\sinksourcethree}[1]{
\MultiStrand{1.4}{1.3}{}{}{3}{
\coordinate (V0) at (0.6,\v);
\coordinate (V1) at (0.8,\v);
\draw[edge,-o-={.50}{#1}] (0,\y1) to [out=0, in=-120] (V0);
\draw[edge,-o-={.57}{#1}] (0,\y2) to [out=0, in=-150] (V0);
\draw[edge,-o-={.50}{#1}] (0,\y4) to [out=0, in=120] (V0);
\draw[edge,-o-={.65}{#1}] (V1) to [out=-60, in=180] (\xw,\y1);
\draw[edge,-o-={.58}{#1}] (V1) to [out=-30, in=180] (\xw,\y2);
\draw[edge,-o-={.65}{#1}] (V1) to [out=60, in=180] (\xw,\y4);
}}
\newcommand{\coupon}[2]{
\MultiStrand{1.4}{1.3}{}{}{3}{
\node (V) at (\xw/2,\yh/2) [ellipse, minimum height=0.8cm,inner sep=0pt, draw]
	{\small{#1}};
\draw[edge,-o-={.35}{#2}] (0,\y1) ..controls (\xw*0.2,\y1).. (V.-120);
\draw[edge,-o-={.50}{#2}] (0,\y2) ..controls (\xw*0.2,\y2).. (V.-170);
\draw[edge,-o-={.35}{#2}] (0,\y4) ..controls (\xw*0.2,\y4).. (V.120);
\draw[edge,-o-={.70}{#2}] (V.-60) ..controls (\xw*0.8,\y1).. (\xw,\y1);
\draw[edge,-o-={.60}{#2}] (V.-10) ..controls (\xw*0.8,\y2).. (\xw,\y2);
\draw[edge,-o-={.70}{#2}] (V.60) ..controls (\xw*0.8,\y4).. (\xw,\y4);
}}
\newcommand{\widecoupon}[8]{
\MultiStrand[#1]{1.5}{1.4}{<-}{<-}{3}{
\ThreeStates{left}{0}{\y4}{\ym}{\y1}{#3}{#4}{#5}
\ThreeStates{right}{\xw}{\y4}{\ym}{\y1}{#6}{#7}{#8}
\node (V) at (\xw/2,\v) [circle,minimum height=1cm,
	inner sep=0pt,draw]{#2};
\begin{scope}[edge]
\draw (0,\y1) ..controls (\xw*0.2,\y1).. (V.-120);
\draw (0,\y4) ..controls (\xw*0.2,\y4).. (V.120);
\draw (V.-60) ..controls (\xw*0.8,\y1).. (\xw,\y1);
\draw (V.60) ..controls (\xw*0.8,\y4).. (\xw,\y4);
\ifthenelse{\equal{#1}{b}}{
	\draw (0,\ym) ..controls (\xw*0.1,\ym).. (V.-165);
	\draw (V.-15) ..controls (\xw*0.9,\ym).. (\xw,\ym);
}{
	\draw (0,\ym) ..controls (\xw*0.1,\ym).. (V.165);
	\draw (V.15) ..controls (\xw*0.9,\ym).. (\xw,\ym);
}
\end{scope}
}}
\newcommand{\vertexnearwall}[2][b]{
\MultiStrand[#1]{1}{1.3}{}{w}{2}{
\coordinate (V) at (0.7,\v);
\draw[edge,-o-={.40}{#2}] (0,\y4) to [out=0, in=120] (V);
\draw[edge,-o-={.40}{#2}] (0,\y1) to [out=0, in=-120] (V);
\draw[edge,-o-={.45}{#2}] (0,\ym) ..controls (0.3,\ym).. (V);
}}
\newcommand{\nedgewall}[6][b]{
\MultiStrand[#1]{1}{1.3}{}{#2}{1}{
\ThreeStates{right}{\xw}{\y4}{\ym}{\y1}{#4}{#5}{#6}
\draw[edge, -o-={.5}{#3}] (0,\y4) -- (\xw,\y4);
\draw[edge, -o-={.5}{#3}] (0,\ym) -- (\xw,\ym);
\draw[edge, -o-={.5}{#3}] (0,\y1) -- (\xw,\y1);
}}
\newcommand{\capwall}[6][]{
\HorizontalTangle{}{#2}{}{}{#5}{#6}{
\draw[edge,-o-={0.8}{#4}] (\xw,\yb) ..controls (0.1,\yb) and (0.1,\ya).. (\xw,\ya);
}}
\newcommand{\capnearwall}[1]{
\HorizontalTangle{}{w}{}{}{}{}{
\draw[edge,-o-={0.8}{#1}] (0,\yb) ..controls (0.7,\yb) and (0.7,\ya).. (0,\ya);
}}
\newcommand{\bubblestrand}[6]{
\TanglePic{1}{1}{<-}{<-}{
\ifthenelse{\equal{#1}{b}}{\tikzmath{\b=1;}}{\tikzmath{\b=-1;}}
\tikzmath{\yb=\yh/2-\b*0.2; \ys=\yh/2+\b*0.2;}
\node (B) at ({\xw/2},\yb) [circle,draw,inner sep=1pt] {\stsize #2};
\begin{scope}[every node/.style={inner sep=2pt}]
\begin{knot}
\strand[edge] (0,\ys)node[left]{\stsize #5}
	-- (\xw,\ys)node[right]{\stsize #6};
\strand[edge] (B) -- ({\xw/2},\yd);
\end{knot}
\draw[edge] (0,\yb)node[left]{\stsize #3} -- (B)
	-- (\xw,\yb)node[right]{\stsize #4};
\end{scope}
}}
\newcommand\no[1]{}
\definecolor{pinky}{rgb}{1.0, 0, 1.0}
\theoremstyle{plain}
\newtheorem{theorem}{Theorem}[section]
\newtheorem*{theorem*}{Theorem}
\newtheorem{lemma}[theorem]{Lemma}
\newtheorem{corollary}[theorem]{Corollary}
\newtheorem{proposition}[theorem]{Proposition}
\newtheorem{conjecture}{Conjecture}
\newtheorem{question}{Question}
\newtheorem{definition}[theorem]{Definition}
\theoremstyle{definition}
\newtheorem{remark}[theorem]{Remark}
\newtheorem{example}[theorem]{Example}
\newcommand{\bcon}{\begin{conjecture}}
\newcommand{\econ}{\end{conjecture}}
\newcommand{\bcor}{\begin{corollary}}
\newcommand{\ecor}{\end{corollary}}
\newcommand{\bdf}{\begin{definition}}
\newcommand{\edf}{\end{definition}}
\newcommand{\benu}{\begin{enumerate}}
\newcommand{\eenu}{\end{enumerate}}
\newcommand{\beq}{\begin{equation}}
\newcommand{\eeq}{\end{equation}}
\newcommand{\be}{\begin{equation}}
\newcommand{\ee}{\end{equation}}
\newcommand{\bexa}{\begin{example}}
\newcommand{\eexa}{\end{example}}
\newcommand{\bexe}{\begin{exercise}}
\newcommand{\eexe}{\end{exercise}}
\newcommand{\bfac}{\begin{fact}}
\newcommand{\efac}{\end{fact}}
\newcommand{\bite}{\begin{itemize}}
\newcommand{\eite}{\end{itemize}}
\newcommand{\blem}{\begin{lemma}}
\newcommand{\elem}{\end{lemma}}
\newcommand{\bmat}{\begin{pmatrix}}
\newcommand{\emat}{\end{pmatrix}}
\newcommand{\bprb}{\begin{problem}}
\newcommand{\eprb}{\end{problem}}
\newcommand{\bpro}{\begin{proposition}}
\newcommand{\epro}{\end{proposition}}
\newcommand{\bque}{\begin{question}}
\newcommand{\eque}{\end{question}}
\newcommand{\brem}{\begin{remark}}
\newcommand{\erem}{\end{remark}}
\newcommand{\bthm}{\begin{theorem}}
\newcommand{\ethm}{\end{theorem}}
\newcommand{\bpr}{\begin{proof}}
\newcommand{\epr}{\end{proof}}
\renewcommand*{\fps@figure}{htbp}
\newcommand{\term}[1]{\textbf{#1}}
\DeclareMathOperator{\tr}{tr}
\DeclareMathOperator{\id}{id}
\DeclareMathOperator{\pr}{pr}
\DeclareMathOperator{\Hom}{Hom}
\DeclareMathOperator{\Fr}{Fr}
\DeclareMathOperator{\Mat}{Mat}
\newcommand{\detq}{\operatorname{det}_q}
\def\Id{\mathrm{Id}}
\def\BN{\mathbb N}
\def\BZ{\mathbb Z}
\def\BQ{\mathbb Q}
\def\BC{\mathbb C}
\def\bT{\mathbb T}
\def\Zq{{ \BZ[\qq^{\pm 1}]}}
\def\cS{\mathscr S}
\let\OldS\S 
\renewcommand{\S}[0]{{\mathcal S}} 
\def\cF{\mathcal F}
\def\cV{\mathcal V}
\def\cX{\mathcal X}
\def\cO{\mathcal O}
\def\cA{\mathcal A}
\def\cI{\mathcal I}
\def\Mqn{\mathcal M_q(n)}
\def\Oq{{\mathcal O_q({\rm SL}_n)}} 
\def\SLn{{\rm SL}_n} 
\def\Ibad{\mathcal I^{\mathrm {bad}}}
\def\Oe{\mathcal O_{\eta}({\rm SL}_n)} 
\def\Weyl{{\mathrm{Weyl}}}
\def\ord{{\mathsf{ord}}}
\def\fS{\mathfrak{S}}
\def\fm{{\mathfrak{m}}}
\def\Cut{\mathsf{Cut}}
\def\Mon{{\mathsf {Mon}}}
\def\Pol{{\mathsf {Pol}}}
\def\JJ{{\mathbb{J}}}
\def\sign{{\mathsf{sign}}}
\def\bk{\mathbf k}
\def\buu{{\mathbf u}}
\def\bi{{\bar i}}
\def\bj{{\bar j}}
\def\bV{{\rd V}}
\def\bfS{\rd{\fS}}
\def\bPhi{\rd{\Phi}}
\def\btr{\rdtr}
\def\bA{\rd{\cA}}
\def\tfS{{\widetilde \fS}}
\def\pfS{\partial \fS}
\def\al{\alpha}
\def\ve{\varepsilon}
\def\pal{{\partial \al}}
\def\qq{{\hat q}}
\def\hq{{\hat q}}
\def\Rx{R^\times}
\def\hxi{{\hat \xi}}
\def\la{\langle}
\def\ra{\rangle}
\def\ot{\otimes}
\def\embed{\hookrightarrow}
\def\onto{\twoheadrightarrow}
\def\ttt{{\mathbbm t}}
\def\aaa{{\mathbbm a}}
\def\ccc{{\mathbbm c}}
\def\gaa{\mathsf g}
\def\ag{{ \gaa }}
\newcommand{\ints}{\mathbb{Z}}
\newcommand{\reals}{\mathbb{R}}
\newcommand{\surface}{\fS}
\newcommand{\face}{\cF}
\newcommand{\rdo}{\overline}
\DeclareMathOperator{\rdtr}{\rdo{tr}}
\newcommand{\mat}{\mathsf}
\newcommand{\rdm}[1]{\rdo{\mat{#1}}}
\newcommand{\exm}[1]{\rdm{#1}_{\ext{\lambda}}}
\newcommand{\rd}[1]{\protect\ThisStyle{\makebox[0pt][l]{\ensuremath{\protect\SavedStyle\overline{\phantom{#1}}}}}#1}
\newcommand{\lv}[1]{#1'}
\newcommand{\ext}[1]{#1^\ast}
\newcommand{\FG}{\mathcal{X}}
\newcommand{\bad}{{\mathrm{bad}}}
\newcommand{\skein}{\mathscr{S}}
\newcommand{\reduceS}{\rd{\skein}}
\newcommand{\stdT}{{\poly_3}}
\newcommand{\poly}{\mathbb{P}}
\def\PP{\poly}
\let\avec=\vec
\renewcommand{\vec}{\mathbf}
\newcommand{\cev}[1]{\protect\ThisStyle{\reflectbox{\ensuremath{\protect\SavedStyle\avec{\reflectbox{\ensuremath{\protect\SavedStyle#1}}}}}}}
\newcommand{\ceC}{\cev{C}}
\def\XS{\FG(\fS,\lambda)}
\def\bXS{\rd{\FG}(\fS,\lambda)}
\def\bmQ{\rdm{Q}}
\def\mQ{\mat{Q}}
\def\bmP{\rdm{P}}
\def\bsX{{\rd{\FG}}}
\def\fT{\mathfrak{T}}
\def\norm{{ \mathrm{norm}}}
\def\rdV{{\rd{V}}}
\def\tY{\tilde Y}
\def\SS{\cS_{\hat{q}}(\fS)}
\def\SE{\cS_{\hat{\eta}}(\fS)}
\def\bSS{\reduceS(\fS)}
\begin{document}

\title{Frobenius homomorphisms for stated ${\rm SL}_n$-skein modules}

\author[Hyun Kyu Kim]{Hyun Kyu Kim}
\address{School of Mathematics, Korea Institute for Advanced Study (KIAS), 85 Hoegi-ro, Dongdaemun-gu, Seoul 02455, Republic of Korea}
\email{hkim@kias.re.kr}

\author[Thang  T. Q. L\^e]{Thang  T. Q. L\^e}
\address{School of Mathematics, 686 Cherry Street,
 Georgia Tech, Atlanta, GA 30332, USA}
\email{letu@math.gatech.edu}

\author[Zhihao Wang]{Zhihao Wang}
\address{Zhihao Wang, School of Physical and Mathematical Sciences, Nanyang Technological University, 21 Nanyang Link Singapore 637371}
\email{ZHIHAO003@e.ntu.edu.sg}
\address{University of Groningen, Bernoulli Institute, 9700 AK Groningen, The Netherlands}
\email{wang.zhihao@rug.nl}

\begin{abstract}
	The stated ${\rm SL}_n$-skein algebra $\mathscr{S}_{\hat{q}}(\mathfrak{S})$ of a surface $\mathfrak{S}$ is a quantization of the ${\rm SL}_n$-character variety, and is spanned over $\mathbb{Z}[\hat{q}^{\pm 1}]$ by framed tangles in $\mathfrak{S} \times (-1,1)$. If $\hat{q}$ is evaluated at  
    a root of unity $\hat{\omega}$ with the order of $\hat{\omega}^{4n^2}$ being $N$, 
    then for $\hat{\eta} = \hat{\omega}^{N^2}$, the Frobenius homomorphism $\Phi : \mathscr{S}_{\hat{\eta}}(\mathfrak{S}) \to \mathscr{S}_{\hat{\omega}}(\mathfrak{S})$ is a surface 
    generalization of the well-known Frobenius homomorphism between quantum groups. We show that the image under $\Phi$ of a framed oriented knot $\alpha$ is given by threading along $\alpha$ of the reduced power elementary polynomial, which is an ${\rm SL}_n$-analog of the Chebyshev polynomial $T_N$. This generalizes Bonahon and Wong's result for $n=2$, and confirms a conjecture of Bonahon and Higgins. Our proof uses representation theory of quantum groups and its skein theoretic interpretation, and does not require heavy computations. We also extend our result to marked 3-manifolds.
\end{abstract}

\maketitle
\tableofcontents{}

\section{Introduction}\label{s.Intro}

\def\AAA{{\mathbb A}}
\def\heta{{\hat \eta}}
\def\SeA{\cS_\heta(\AAA)}
\def\SA{\cS(\AAA)}
\def\Cx{{\mathbb C ^\times}}
\def\ppp{\mathbbm {p}}
\def\W{{\mathbb W}}
\def\An{{\mathsf A}}
\def\tAn{{\tilde \An}}
\def\cN{{\mathcal N}}
\def\SB{{\cS(\Bi)}}

\def\Bi{{\mathsf B}}
\def\tBi{{\tilde \Bi}}
\def\MN{{(M,\cN)}}
\def\SMN{{\cS(M,\cN)}}
\def\SMNR{{\cS(M,\cN;R)}}
\def\SeMN{{\cS_\heta(M,\cN)}}
\newcommand{\qbinom}[2]{ \begin{bmatrix}
#1 \\ #2
\end{bmatrix}_q}

\newcommand{\green}[1]{{\color{green}#1}}

\def\Zhq{{\BZ[\hq^{\pm1}]}}

\def\sl{\mathfrak{sl}}

\subsection{Review of ${\rm SL}_n$-skein modules and ${\rm SL}_n$-skein algebras} Let $n\ge 2$. For an oriented surface $\fS$ the {\it $\SLn$-skein algebra}, introduced by Przytycki \cite{Prz} and Turaev \cite{Turaev} for $n=2$ and Sikora \cite{sikora2005skein} for $n \ge 2$, is a quantization of the $\SLn$-character variety of the surface along the Atiyah-Bott-Goldman's Poisson structure.
The $\SLn$-skein algebra of $\fS$ is an algebra over the ring $\Zhq$ of Laurent polynomials in an indeterminate $\hq$, and is spanned by isotopy classes of framed oriented links in the thickened surface $\fS \times(-1,1)$. 

To introduce excision into skein algebra theory, for $n=2$ the second author \cite{Le:triangulation} (see also \cite{BW}) defined the 
{\it stated ${\rm SL}_n$-skein algebra} $\mathscr{S}(\fS)$ (\OldS\ref{ss.marked}--\OldS\ref{subsec:punctured_bordered_surface_and_n-web}). This notion was generalized for $n=3$ by Higgins \cite{Hig23}, and for general $n$ by the second author and Sikora \cite{LS21}. Here the surface $\fS$ is a {\it punctured bordered surface} (or {\it pb surface} for short), which is the result of removing a finite number of points, called 
{\it punctures} or ideal points, from a compact oriented surface such that each component of the boundary
 $\pfS$  is diffeomorphic to an open interval $(0,1)$. As a $\Zhq$-module, $\mathscr{S}(\fS)$ is spanned by framed oriented tangles, consisting of properly embedded knots and arcs, in the thickened surface $\fS \times (-1,1)$, with a height order above each component of $\pfS$. The endpoints of the tangles are {\it stated}, meaning that they are colored by elements of $\{1, \dots, n\}$. The product of two stated tangles in $\mathscr{S}(\fS)$ is the result of stacking the first above the second. The stated ${\rm SL}_n$-skein module can be straightforwardly extended to 
 {\it marked 3-manifolds}, which are pairs $\MN$, where $M$ is a smooth oriented 3-manifold with possibly-empty boundary, and a {\it marking} $\mathcal{N}$  is a union of oriented open intervals embedded in $\partial M$, whose closures are disjoint. The thickening of a pb surface has a natural structure of a marked 3-manifold; see  \OldS\ref{subsec:punctured_bordered_surface_and_n-web}. In general, the stated ${\rm SL}_n$-skein module $\cS\MN$ for $\MN$ is a $\mathbb{Z}[\hat{q}^{\pm 1}]$-module, but not viewed as an algebra. Usually $\cS(\fS)$ and $\cS\MN$ are described using not just framed oriented tangles but also certain directed graphs with framing, called ${\rm SL}_n$-webs or $n$-webs, which results in simplifying the relations (\OldS\ref{ss.marked}--\OldS\ref{subsec:punctured_bordered_surface_and_n-web}). We avoid webs in this introduction to make the discussion easier.

When $e$ is an ideal arc of $\fS$, cutting $\fS$ along $e$ gives another pb surface $\Cut_e \fS$. An important feature of the stated skein theory is that there exists a {\it cutting homomorphism}
$$
\Theta_e : \mathscr{S}(\mathfrak{S}) \to \mathscr{S}(\mathsf{Cut}_e(\mathfrak{S}))
$$
given by a simple formula. If $\fS$ is {\it essentially bordered}, meaning that each connected component of $\fS$ has non-empty boundary, then $\Theta_e$ is an algebra embedding. The cutting homomorphism also extends to the marked 3-manifold case, where it is 
merely a $\Zhq$-module homomorphism.

If $\fS$ has an ideal triangulation $\lambda$, then cutting $\fS$ along all the interior edges of $\lambda$ we get a collection $\cF_\lambda$ of ideal triangles. The cutting homomorphisms give an algebra map
\be  \mathscr{S}(\fS) \to \bigotimes_{\tau \in \cF_\lambda}\cS(\tau).
\label{eq.TriDecomp}
\ee

For an ideal  triangle $\tau$, its stated ${\rm SL}_n$-skein algebra is totally understood in the framework of quantum groups. First, for the bigon $\PP_2$, which is the standard closed disk with two punctures on its boundary removed, there is a natural algebra isomorphism
\be 
\mathscr{S}(\mathbb{P}_2) \cong \mathcal{O}_q({\rm SL}_n)
\label{eq.P222}
\ee
 where $\mathcal{O}_q({\rm SL}_n)$ is the quantized coordinate algebra of the algebraic group ${\rm SL}_n$, with $q = \hat{q}^{2n^2}$.
 It is known that $\mathcal{O}_q({\rm SL}_n)$ is a cobraided Hopf algebra, meaning that it is a Hopf algebra equipped with a so-called co-$R$-matrix (see \OldS\ref{subsec-the-bigon-case}). There are geometrically defined cobraided Hopf algebra structure on $\cS(\PP_2)$, and the isomorphism of \eqref{eq.P222} preserves the cobraided Hopf algebra structure.

Second, the stated ${\rm SL}_n$-skein algebra $\cS(\tau)$ of the ideal triangle $\tau$ is the cobraided tensor product of two copies of $\mathscr{S}(\mathbb{P}_2) = \mathcal{O}_q({\rm SL}_n)$. This means that, as a $\Zhq$-module, $\cS(\tau)= \mathscr{S}(\mathbb{P}_2) \ot \mathscr{S}(\mathbb{P}_2) $, and the product is twisted by the cobrading structure.

There are many applications of the algebra map \eqref{eq.TriDecomp}. One of them is the construction of the {\it quantum trace map} \cite{LY23}
\begin{align}
    \nonumber
    {\bf tr}_\lambda : \mathscr{S}(\mathfrak{S}) \to \overline{\mathcal{X}}(\mathfrak{S},\lambda)
\end{align}
that satisfies several favorable properties. Here $\overline{\mathcal{X}}(\mathfrak{S},\lambda)$ is the ($n$-th root version of the) Fock-Goncharov quantum torus algebra \eqref{eq.reduced_FG_algebra}, appearing in the theory of quantum cluster algebras  \cite{FG2,FG3}. For $n=2$ the quantum trace was constructed by Bonahon and Wong \cite{BW}. For $n=3$ see also \cite{kim2020rm,douglas2021quantum}.

\subsection{Root of unity case, and Frobenius homomorphisms}

This paper concerns the stated ${\rm SL}_n$-skein module $\mathscr{S}_{\hat{\xi}}(M,\mathcal{N})$ and the stated ${\rm SL}_n$-skein algebra $\mathscr{S}_{\hat{\xi}}(\mathfrak{S})$, obtained from $\mathscr{S}(M,\mathcal{N})$ and $\mathscr{S}(\mathfrak{S})$ by evaluating $\hat{q}$ at a non-zero complex number $\hat{\xi}$, especially when $\hat{\xi}$ is a root of unity, with the ground ring $\mathbb{C}$. We use the symbol $\hat{\omega}$ for a root of unity, and let
$$
\omega = \hat{\omega}^{2n^2}, \qquad N = \ord(\omega^2), \qquad \hat{\eta} = \hat{\omega}^{N^2}, \qquad \eta = \hat{\eta}^{2n^2} = \omega^{N^2}.
$$
The definition of $N$ means that $N$ is the smallest positive integer such that $(\omega^2)^N=1$. One can check that $\eta$ is $\pm 1$.

The stated ${\rm SL}_n$-skein modules evaluated at $\heta$ and $\hat{\omega}$ are related to each other by the {\it Frobenius homomorphisms}, which are of our primary interest. The motivating case is the bigon $\mathbb{P}_2$, for which the Frobenius homomorphism
\begin{align}
    \label{eq.intro.bigon_Phi}
    \Phi : \mathscr{S}_{\hat{\eta}}(\mathbb{P}_2) \to \mathscr{S}_{\hat{\omega}}(\mathbb{P}_2)
\end{align}
is the well-studied Frobenius homomorphism \cite{PW}
\begin{align}
    \label{eq.intro.Phi_O}
    \Phi^{\mathcal{O}} : \mathcal{O}_\eta({\rm SL}_n) \to \mathcal{O}_\omega({\rm SL}_n)
\end{align}
between the quantum groups, sending the generator $u_{ij}$ to the power $u_{ij}^N$, where $u_{ij} \in \mathcal{O}_q({\rm SL}_n)$ is what quantizes the function on ${\rm SL}_n$ reading the $(i,j)$-th entry.

\begin{theorem}[Theorem \ref{thmFrob}]
\label{thm.intro.Frobenius_map}
For each marked 3-manifold $(M,\mathcal{N})$ that is {\em essentially marked}, meaning that each connected component of $M$ intersects $\mathcal{N}$, there exists a $\mathbb{C}$-linear map
$$
\Phi : \mathscr{S}_{\hat{\eta}}(M,\mathcal{N}) \to \mathscr{S}_{\hat{\omega}}(M,\mathcal{N})
$$
between the stated ${\rm SL}_n$-skein modules at roots of unity, called the {\bf Frobenius homomorphism}, such that
\begin{enumerate}[label=\rm (\roman*)]
    \item when $(M,\mathcal{N})$ is the thickening of the bigon $\mathfrak{S} = \mathbb{P}_2$, the map $\Phi$ coincides with the map in \eqref{eq.intro.bigon_Phi}, i.e. the Frobenius homomorphism for quantized coordinate algebras for ${\rm SL}_n$,

    \item (Functoriality) if $f:(M,\mathcal{N}) \hookrightarrow (M',\mathcal{N}')$ is an embedding between essentially marked 3-manifolds, then the induced maps $f_* : \mathscr{S}_{\hat{\xi}}(M,\mathcal{N}) \to \mathscr{S}_{\hat{\xi}}(M',\mathcal{N}')$ for $\hat{\xi} \in \{\hat{\eta},\hat{\omega}\}$ commute with $\Phi$, i.e.  the following diagram commutes: 
    $$
    \xymatrix{
    \mathscr{S}_{\hat{\eta}}(M,\mathcal{N}) \ar[r]^-{\Phi} \ar[d]_{f_*} & \mathscr{S}_{\hat{\omega}}(M,\mathcal{N}) \ar[d]^{f_*} \\
    \mathscr{S}_{\hat{\eta}}(M',\mathcal{N}') \ar[r]^-{\Phi} & \mathscr{S}_{\hat{\omega}}(M',\mathcal{N}').
    }
    $$
\end{enumerate}
These properties 
{\rm (i)}--{\rm (ii)} make $\Phi$ unique. Moreover, $\Phi$ satisfies the following:
\begin{enumerate}[label=\rm (\roman*)]
    \setcounter{enumi}{2}
    \item when $(M,\mathcal{N})$ is the thickening of a pb (i.e. punctured bordered) surface $\mathfrak{S}$ that is {\em essentially bordered}, meaning that each connected component of $\mathfrak{S}$ has non-empty boundary, then $\Phi$ is an algebra embedding,

    \item the Frobenius homomorphisms $\Phi$ commute with the cutting homomorphisms,

    \item the Frobenius homomorphisms $\Phi$ are compatible via the quantum trace maps ${\bf tr}_\lambda$ with the Frobenius homomorphisms for quantum torus algebras, i.e. for each triangulable pb surface $\mathfrak{S}$ and its ideal triangulation $\lambda$, the following diagram commutes:
    $$
    \xymatrix@C+10mm{
    \mathscr{S}_{\hat{\eta}}(\mathfrak{S}) \ar[r]^-{\Phi} \ar[d]_{{\bf tr}_\lambda} & \mathscr{S}_{\hat{\omega}}(\mathfrak{S}) \ar[d]^{{\bf tr}_\lambda} \\
    \overline{\mathcal{X}}_{\hat{\eta}}(\mathfrak{S},\lambda) \ar[r]^-{\Phi^\mathbb{T}} & \overline{\mathcal{X}}_{\hat{\omega}}(\mathfrak{S},\lambda),
    }
    $$
    where the algebra homomorphism $\Phi^\mathbb{T}$ sends each generator $x_v$ of $\overline{\mathcal{X}}_{\hat{\eta}}(\mathfrak{S},\lambda)$ to its $N$-th power $x_v^N$ of the corresponding generator $x_v$ of $\overline{\mathcal{X}}_{\hat{\omega}}(\mathfrak{S},\lambda)$ (see \eqref{eqFrDef}).
\end{enumerate}
\end{theorem}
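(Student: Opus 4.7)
The plan is to build $\Phi$ from the smallest pieces upward, guided by the uniqueness statement. First, on the bigon define $\Phi$ to be the classical quantum Frobenius $\Phi^{\mathcal{O}}$ under the isomorphism $\cS(\PP_2)\cong \mathcal{O}_q(\SLn)$; this gives (i) by construction. For an ideal triangle $\tau$, use the cobraided tensor product decomposition $\cS(\tau)=\cS(\PP_2)\otst\cS(\PP_2)$ and set $\Phi_\tau := \Phi^{\mathcal{O}}\otst\Phi^{\mathcal{O}}$. For this to be a well-defined algebra map, the co-$R$-matrix $\rho$ must satisfy a compatibility of the form $\rho_{\hat\omega}(u_{ij}^N, u_{kl}^N) = \rho_{\hat\eta}(u_{ij}, u_{kl})$ on the $\mathcal{O}$-generators, a direct check from the explicit formula for the braiding at roots of unity.

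For an essentially bordered pb surface $\fS$ with an ideal triangulation $\lambda$, the cutting homomorphism $\Theta_\lambda^{\hat\xi} : \cS_{\hat\xi}(\fS) \hookrightarrow \bigotimes_{\tau\in \cF_\lambda}\cS_{\hat\xi}(\tau)$ is an algebra embedding for $\hat\xi\in\{\hat\eta,\hat\omega\}$. Form the composition $\bigl(\bigotimes_\tau\Phi_\tau\bigr)\circ\Theta_\lambda^{\hat\eta}$; the technical heart of the construction is to show that this image lies in $\Theta_\lambda^{\hat\omega}\bigl(\cS_{\hat\omega}(\fS)\bigr)$, after which the lift defines $\Phi_\fS$ uniquely. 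This should proceed by characterizing the image of $\Theta_\lambda$ intrinsically as those tensors satisfying a gluing/balancing condition at each interior edge (given by contraction against a standard gluing tensor $T_e\in \cS(\PP_2)^{\otimes 2}$), and verifying that $\Phi^{\mathcal{O}}\otimes\Phi^{\mathcal{O}}$ sends $T_e^{\hat\eta}$ to $T_e^{\hat\omega}$. Property (iv) for edges in $\lambda$ is then tautological, the general case follows by choosing a triangulation containing the given edge, and triangulation-independence of $\Phi_\fS$ follows from functoriality under subsurface embeddings together with the inductive construction of $\Phi_\tau$ from $\Phi^{\mathcal{O}}$.

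Properties (iii) and (v) are then extracted: (iii) from injectivity of $\Phi^{\mathcal{O}}$ at roots of unity (known) together with injectivity of $\Theta_\lambda^{\hat\omega}$ and the fact that a tensor product of algebra embeddings is an algebra embedding; (v) by a local check reducing to each triangle, since $\mathbf{tr}_\lambda$ factors through $\Theta_\lambda$ into a tensor product of per-triangle traces, yielding the identity $\mathbf{tr}_\tau\circ\Phi_\tau = \Phi^{\mathbb{T}}\circ\mathbf{tr}_\tau$ as a direct computation on $\mathcal{O}$-generators. To extend from surfaces to marked 3-manifolds $\MN$, present $\cS\MN$ as a quotient of the skein module of a thickened essentially bordered pb surface---via a handle decomposition of $M$ compatible with $\cN$---so that $\Phi$ descends because the handle-slide relations are local and their Frobenius-compatibility reduces to the surface case. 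Uniqueness from (i)--(ii) is then immediate since every class in $\cS_{\hat\eta}\MN$ lies in the image of an embedding from a disjoint union of bigon thickenings. The main obstacle is the containment in the surface step: it demands a precise intrinsic description of the image of $\Theta_\lambda$ in terms of gluing tensors, and it is here that the bulk of the work will lie.
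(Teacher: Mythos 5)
Your overall architecture — bigon $\to$ triangle via the cobraided structure $\to$ essentially bordered surface via the triangulation decomposition $\to$ marked $3$-manifolds — matches the spirit of the paper, but you diverge at the two stages that carry the actual weight, and in one of them you leave a genuine gap.

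\textbf{Surface step.} You propose to take the global embedding $\Theta_\lambda : \cS(\fS) \hookrightarrow \bigotimes_{\tau\in\cF_\lambda}\cS(\tau)$, apply $\bigotimes_\tau\Phi_\tau$, and then prove that the result lands back in the image of $\Theta_\lambda^{\home}$. You correctly flag this containment as ``the bulk of the work,'' but the proposal does not actually supply it: it requires an intrinsic characterization of $\mathrm{Im}\,\Theta_\lambda$ (as an equalizer, or your ``gluing tensor'' $T_e$ condition) together with a proof that $\Phi^{\cO}\otimes\Phi^{\cO}$ preserves that condition — neither of which is carried out, and neither of which is particularly standard for general $n$. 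The paper sidesteps this entirely by adding one triangle at a time: it uses the $R$-module bijection $f:\cS(\fS')\to\cS(\fS)$ of \cite[Prop.\ 8.1]{LS21}, which identifies the underlying modules, and only the multiplication changes, by the twist $x*y=\sum\rho_\hq(x'\ot y')\,x''y''$. With that identification in hand, $\Phi:=f_\home\circ\Phi'\circ f_\heta^{-1}$ is automatically a linear bijection onto the whole target, so there is no image-containment to verify; what remains is to check it is multiplicative, which reduces to the compatibility $\rho_\home(\Phi(x)\ot\Phi(y))=\rho_\heta(x\ot y)$ on the bigon. This inductive one-triangle strategy is the key idea that makes the whole proof run, and it is missing from your plan.

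\textbf{$\rho$-compatibility on the bigon.} You propose a ``direct check from the explicit formula for the braiding at roots of unity.'' The paper uses a slicker route: $\rho_\hq$ is expressible through the counit as $\rho_\hq(x\ot y)=\varepsilon(xy)$ (in the appropriate graphical normalization), and since the quantum Frobenius $\Phi^{\cO}$ is a Hopf algebra map it commutes with $\varepsilon$, which immediately gives the compatibility. Your direct computation on co-$R$-matrix entries is likely doable but is harder and unnecessary.

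\textbf{Marked $3$-manifold step.} The proposed route — present $\cS\MN$ as a quotient of a thickened-surface skein module via a handle decomposition compatible with $\cN$ and argue descent — is not what the paper does and is questionable as stated: it is not clear that such a presentation of $\cS\MN$ exists in the form you want, and making the descent work would require showing every defining relation is realized within a single handlebody skein algebra in a Frobenius-compatible way, which is not obviously easier than the original problem. The paper instead works web-by-web: given a stated $n$-web $\alpha$, attach paths $p_i$ from each component of $\alpha$ to $\cN$, take a regular neighborhood of $\alpha\cup\bigcup p_i$ (which is the thickening of an essentially bordered pb surface), and define $\Phi(\alpha)$ by pushing forward from that neighborhood. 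One then verifies independence of the choice of paths (by comparing two such neighborhoods inside a larger one) and compatibility with each defining skein relation (by placing the whole relation inside one common neighborhood). Functoriality for surfaces, proved in the previous step, is the tool that makes both checks go through. Your proposal does not contain this local-neighborhood argument, and the handle-decomposition substitute is too vague to count as a proof.

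In short: the bigon step and the broad shape of the uniqueness and quantum-trace arguments are right, but the surface-level containment is an unproved lemma in your plan (which the paper avoids by a different inductive construction), and the $3$-manifold step is attempted by an approach that does not clearly work and is replaced in the paper by a concrete local-neighborhood argument.
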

The parts 
(i)--(iv) of Theorem \ref{thm.intro.Frobenius_map} have  
already been proven in the literature. For $n=2$ \cite{BL22,KQ}, for $n=3$ \cite{Hig23} when $\gcd(N,6)=1$ and $(M,\mathcal N)$  is the thickening of a pb surface. For general $n$ it is proved in \cite{Wan23} when $\gcd(N,2n)=1$, and in \cite{KW24} when $(M,\mathcal N)$ is the thickening of a pb surface, with its construction relying heavily on the results in \cite{Wan23}. We provide a proof that is different from \cite{Wan23}, and is based on the triangulation map \eqref{eq.TriDecomp} as well as on the arguments of \cite{LS21,LY23}. Our proof is more structural, so that its asserted properties can be understood more clearly. Besides, we are able to drop the assumption on the order of the roots of unity, which was imposed in \cite{Wan23}. 

\vspace{2mm}

The part (v) of Theorem \ref{thm.intro.Frobenius_map} is proved for $n=2$ by Bonahon and Wong \cite{BW16}, and for general $n$ for the first time here. It is one of the main original results of the present paper, and is what shall be a bridge to connect the skein theory to the theory of quantum cluster varieties of Fock and Goncharov \cite{FG2,FG3}; for 
such a connection in the case $n=2$, see \cite{AK17}. In fact, we prove the compatibility of the Frobenius homomorphisms $\Phi$ with four different types of quantum trace maps, not just ${\bf tr}_\lambda$; see Theorem \ref{thmFrob}(f).

\subsection{Image of a loop under Frobenius homomorphism} 
For a concrete understanding of the Frobenius homomorphism $\Phi: \mathscr{S}_{\hat{\eta}}(M,\mathcal{N}) \to \mathscr{S}_{\hat{\omega}}(M,\mathcal{N})$ one needs to know how to describe the image $\Phi(\alpha)$ where $\al$ is a stated tangle. The functoriality reduces this question to the case when $\al$ is connected, i.e. when it is a stated 
framed arc with endpoints in $\mathcal{N}$ or a framed oriented knot.  From Theorem \ref{thm.intro.Frobenius_map} it follows that for a stated framed 
arc $\alpha$, we have
\begin{align}
\label{eq.intro.Phi_on_arc}    
\Phi(\alpha) = \alpha^{(N)},
\end{align}
where $\alpha^{(N)}$ stands for the union of $N$ parallel copies of $\alpha$, taken in the direction of the framing of $\alpha$.

\vspace{2mm}

The major concern of this paper is to describe $\Phi(\alpha)$ for $\al$ a framed oriented knot. To explain the result we should first recall the notion of {\it threading} a polynomial $P$ in $n-1$ variables along a framed oriented knot $\alpha$ in $(M,\mathcal{N})$. We must first assume that
\begin{align}
    \label{eq.intro.condition_on_omega}
    [n]_\omega! \neq 0,
\end{align}
where $[k]_\omega! = \prod_{i=1}^k[i]_\omega$, and $[i]_\omega = \frac{\omega^i-\omega^{-i}}{\omega-\omega^{-1}} = \sum_{j=0}^{i-1} \omega^{-(i-1)+2j}$ is the quantum integer. For each $k=1,2,\ldots,n-1$, we shall define $\alpha_{\varpi_k}$ as 
an element of $\cS_{\hat{\omega}}\MN$ as follows, which can be regarded as $\alpha$ carrying the $k$-th fundamental weight $\varpi_k$ of ${\rm SL}_n$. First, observe that a small open neighborhood $U$ of $\alpha$ is diffeomorphic to the thickening of the open annulus $\mathsf{A}$. Note that $\mathsf{A}$ is diffeomorphic to a sphere with two punctures, which is a pb surface. Let $
{\bf a}$ be the core curve of $\mathsf{A}$ (see Figure \ref{fig1}(a)), so that the element $
{\bf a} \in \mathscr{S}_{\hat{\omega}}(\mathsf{A})$ is sent to $\alpha \in \mathscr{S}_{\hat{\omega}}(M,\mathcal{N})$ via the map $$\iota_* : \mathscr{S}_{\hat{\omega}}(\mathsf{A}) \to \mathscr{S}_{\hat{\omega}}(M,\mathcal{N})$$ induced by the embedding $\iota : \mathsf{A} \times (-1,1) \stackrel{\sim}{\to} U \hookrightarrow M$. Let $
{\bf a}_{\varpi_k}' \in \mathscr{S}_{\hat{\omega}}(\mathsf{A})$ be 
given as in Figure \ref{fig1}(b); it involves an ${\rm SL}_n$-web given as a certain directed $n$-valent graph.
Then let $
{\bf a}_{\varpi_k} := (-1)^{\binom{k}{2} + \binom{n-k}{2}} \frac{1}{[n-k]_\omega! [k]_\omega!} 
{\bf a}_{\varpi_k}' \in \mathscr{S}_{\hat{\omega}}(\mathsf{A})$. Finally, let $$\alpha_{\varpi_k} := \iota_*(
{\bf a}_{\varpi_k}) \in \mathscr{S}_{\hat{\omega}}(M,\mathcal{N}).$$ For an $n-1$ variable polynomial $P(x_1,\ldots,x_{n-1})$, replace each monomial $x_1^{c_1} \cdots x_{n-1}^{c_{n-1}}$ by the element $\alpha_{\varpi_1}^{(c_1)} \cup \cdots \cup \alpha_{\varpi_{n-1}}^{(c_{n-1})}$ of $\mathscr{S}_{\hat{\omega}}(M,\mathcal{N})$, where $\alpha_{\varpi_k}^{(c_k)}$ is obtained from $\alpha^{(c_k)}$, i.e. the $c_k$ parallel copies of $\alpha$, by replacing each copy $\alpha$ by $\alpha_{\varpi_k}$. We denote the final resulting element of $\mathscr{S}_{\hat{\omega}}(M,\mathcal{N})$ by $\alpha^{[P]}$, which is said to be the result of threading $P$ along the framed oriented knot $\alpha$. 

\vspace{2mm}

We apply the threading operation to specific polynomials, which are ${\rm SL}_n$ generalizations of the Chebyshev polynomials. 
For any matrix $A \in {\rm SL}_n(\mathbb{C})$ and for each subset $I$ of $\{1,\ldots,n\}$, let $A_{I,I}$ be the $I\times I$ submatrix of $A$. For each $k=1,\ldots,n-1$ let
\begin{align}
    \label{eq.intro.D_k_A_def}
    D_k(A) := \sum_{I \subset \{1,\ldots,n\}, \, |I|=k} \det(A_{I,I}),
\end{align}
i.e. the sum of all principal minors of $A$ of size $k$. Note that $D_1(A)$ is the usual trace of $A$, and $D_k(A)$ is the trace for the $k$-th fundamental representation of ${\rm SL}_n$. For each $m \in \mathbb{Z}_{>0}$ and $k=1,\ldots,n-1$, there is a unique $n-1$ variable polynomial $\bar{P}_{m,k}$ with integer coefficients (in fact, $n$ should also be one of the parameters, which we omit), satisfying
\begin{align}
    \label{eq.intro.role_of_P_m_k_for_matrices}
    D_k(A^m) = \bar{P}_{m,k}(D_1(A),D_2(A),\ldots,D_{n-1}(A)), \qquad \forall A\in {\rm SL}_n(\mathbb{C}).
\end{align}
If $A$ is diagonalizable with distinct eigenvalues $\lambda_1,\ldots,\lambda_n$, then $D_k(A)$ coincides with the $k$-th elementary symmetric function in the eigenvalues
$$
e_k = \sum_{1\le i_1<i_2<\ldots<i_k\le n} \lambda_{i_1} \lambda_{i_2} \cdots \lambda_{i_k}.
$$
Consider the power-sum expression, by applying the $m$-th {\it Adams operation}, i.e. by replacing each $\lambda_i$ by its $m$-th power $\lambda_i^m$:
$$
e_k^{(m)} := \sum_{1\le i_1<i_2<\ldots<i_k\le n} \lambda_{i_1}^m \lambda_{i_2}^m \cdots \lambda_{i_k}^m,
$$
which is a symmetric polynomial in $\lambda_1,\ldots,\lambda_n$. Then $\bar{P}_{m,k}$ is what expresses this function $e_k^{(m)}$ as a polynomial in the elementary symmetric polynomials $e_1,\ldots,e_{n-1}$:
\begin{align}
    \label{eq.intro.role_of_P_m_k_for_symmetric_functions}
    e_k^{(m)} = \bar{P}_{m,k}(e_1,e_2,\ldots,e_{n-1}),
\end{align}
which is why $\bar{P}_{m,k}$ is called the {\it reduced power elementary polynomial} in \cite{BH23}. Here `reduced' indicates that we are working under the condition $\lambda_1 \lambda_2 \cdots \lambda_n = 1$.

\begin{theorem}[Theorem \ref{thm11}]
\label{thm.intro.Phi_of_knot}
    Assume that the root of unity $\hat{\omega}$ satisfies $[n]_\omega! \neq 0$ \eqref{eq.intro.condition_on_omega}. For any framed oriented knot $\alpha$ in an essentially marked 3-manifold $(M,\mathcal{N})$, the image $\Phi(\alpha) \in \mathscr{S}_{\hat{\omega}}(M,\mathcal{N})$ of $\alpha \in \mathscr{S}_{\hat{\eta}}(M,\mathcal{N})$ under the Frobenius homomorphism $\Phi$ is given by the result of threading the reduced power elementary polynomial $\bar{P}_{N,1}$ along $\alpha$:
    \begin{align}
        \label{eq.intro.image_of_framed_oriented_knot}
        \Phi(\alpha) = \alpha^{[\bar{P}_{N,1}]} \in \mathscr{S}_{\hat{\omega}}(M,\mathcal{N}). 
    \end{align}
    For any stated 
    framed tangle
    $\alpha = \alpha_1 \cup \cdots \cup \alpha_r$ given by disjoint union of stated framed 
    arcs and framed oriented knots, we have
    $$
    \Phi(\alpha) = \Phi(\alpha_1) \cup \cdots \cup \Phi(\alpha_r),
    $$
    where the symbols $\cup$ in the right hand side $(\sim) \cup \cdots \cup (\sim)$ is understood as being multilinear like the symbol $\otimes$, and each $\Phi(\alpha_j)$ is given by \eqref{eq.intro.Phi_on_arc} if $\alpha_j$ is a stated framed 
    arc and by \eqref{eq.intro.image_of_framed_oriented_knot} if $\alpha_j$ is a framed oriented knot. 
\end{theorem}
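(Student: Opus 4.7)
The proof begins with a functorial reduction to a single model computation. The disjoint-union statement follows at once from functoriality (Theorem \ref{thm.intro.Frobenius_map}(ii)) applied to the embedding of disjoint tubular/slab neighbourhoods of the components of $\alpha$, combined with the already-proved formula \eqref{eq.intro.Phi_on_arc} for arcs, so I focus on a single framed oriented knot. Its tubular neighbourhood $U \cong \mathsf{A}\times(-1,1)$ presents $\alpha$ as $\iota_*(\mathbf{a})$, and by the very definition of threading, $\alpha^{[\bar{P}_{N,1}]} = \iota_*(\mathbf{a}^{[\bar{P}_{N,1}]})$. Hence the theorem reduces to proving
\[
\Phi(\mathbf{a}) \;=\; \mathbf{a}^{[\bar{P}_{N,1}]} \qquad \text{in } \mathscr{S}_{\hat{\omega}}(\mathsf{A}).
\]
To invoke the tools of Theorem \ref{thm.intro.Frobenius_map}(iii)--(v), I embed the thickened annulus $\mathsf{A}\times(-1,1)$ into the thickening of a triangulable essentially bordered pb surface $\mathfrak{S}$ --- for instance a disk with one boundary puncture and one interior puncture, so that $\alpha$ is a small loop around the interior puncture --- and by functoriality it is enough to verify the push-forward identity inside $\mathscr{S}_{\hat{\omega}}(\mathfrak{S})$.

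Next I transport the question into a quantum torus algebra using the quantum trace compatibility. Fix an ideal triangulation $\lambda$ of $\mathfrak{S}$. By Theorem \ref{thm.intro.Frobenius_map}(v) we have $\mathbf{tr}_\lambda\circ\Phi = \Phi^{\mathbb{T}}\circ\mathbf{tr}_\lambda$, and because $\mathfrak{S}$ is essentially bordered and triangulable, $\mathbf{tr}_\lambda$ is injective on $\mathscr{S}_{\hat{\omega}}(\mathfrak{S})$ (a theorem of \cite{LY23}). Since $\mathbf{tr}_\lambda$ is a ring homomorphism, the threading of $\bar{P}_{N,1}$ along $\alpha$ translates into the corresponding polynomial in the images of the coloured curves, so it suffices to establish the identity
\[
\Phi^{\mathbb{T}}\bigl(\mathbf{tr}_\lambda(\alpha)\bigr) \;=\; \bar{P}_{N,1}\bigl(\mathbf{tr}_\lambda(\alpha_{\varpi_1}),\ldots,\mathbf{tr}_\lambda(\alpha_{\varpi_{n-1}})\bigr)
\]
in the quantum torus $\overline{\mathcal{X}}_{\hat{\omega}}(\mathfrak{S},\lambda)$.

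The final step is a computation inside the quantum torus, which should collapse to the defining relation \eqref{eq.intro.role_of_P_m_k_for_symmetric_functions} of $\bar{P}_{N,1}$. Using the web state-sum form of $\mathbf{tr}_\lambda$ from \cite{LY23}, the trace $\mathbf{tr}_\lambda(\alpha_{\varpi_k})$ for a simple loop $\alpha$ around the interior puncture can be identified with the $k$-th elementary symmetric function $e_k(\lambda_1,\ldots,\lambda_n)$ of the eigenvalue spectrum of the monodromy matrix along $\alpha$, where the $\lambda_i$ are Laurent monomials in the cluster variables $x_v$ satisfying $\lambda_1\cdots\lambda_n = 1$; the normalisation constant $(-1)^{\binom{k}{2}+\binom{n-k}{2}}/([n-k]_\omega!\,[k]_\omega!)$ in the definition of $\mathbf{a}_{\varpi_k}$ is calibrated exactly so as to yield $e_k$ without extra factors, and it is here that the hypothesis $[n]_\omega!\ne 0$ enters. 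Because $\Phi^{\mathbb{T}}$ sends each $x_v$ to $x_v^N$, it sends each $\lambda_i$ to $\lambda_i^N$, yielding
\[
\Phi^{\mathbb{T}}\bigl(\mathbf{tr}_\lambda(\alpha)\bigr) \;=\; e_1(\lambda_1^N,\ldots,\lambda_n^N) \;=\; e_1^{(N)} \;=\; \bar{P}_{N,1}(e_1,\ldots,e_{n-1}),
\]
where the last equality is \eqref{eq.intro.role_of_P_m_k_for_symmetric_functions}. This matches the right-hand side of the displayed identity and completes the argument.

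The hardest step, I expect, is the explicit eigenvalue description of the monodromy along the annular loop: one has to combine the web state-sum formula with the normalisation in the definition of $\mathbf{a}_{\varpi_k}$ to show that the $\varpi_k$-coloured trace is exactly $e_k$ of a common diagonal spectrum, and to verify that under $\Phi^{\mathbb{T}}$ the entry-wise $N$-th power of the monodromy matrix acts as the classical $N$-th Adams operation on that spectrum. Once this identification is in place, everything collapses to the symmetric-function identity \eqref{eq.intro.role_of_P_m_k_for_symmetric_functions}, which is precisely what defines $\bar{P}_{N,1}$.
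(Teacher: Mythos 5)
Your reduction to a single knot and the handling of the disjoint-union clause via neighbourhoods are sound in outline, and the reduction from a general essentially marked $(M,\mathcal{N})$ to a model surface via functoriality is the right starting move. However, once you land inside $\mathscr{S}_{\hat\omega}(\mathfrak{S})$ the argument develops two genuine gaps.

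First, the injectivity of the quantum trace you invoke does not hold for the surface you need. Your model surface must contain the knot as a non-boundary-parallel loop, and your ``monodromy around a puncture'' picture forces you to take $\mathfrak{S}$ with an interior puncture (e.g.\ a once-punctured monogon). But by Theorem \ref{thm-X}, for $n>2$ the extended trace $\tr^X_\lambda$ is injective only when $\mathfrak{S}$ has no interior punctures, and the reduced trace $\rdtr^X_\lambda$ is known injective only for $n\le 3$ or for polygons. So for $n>3$ (and already for $n=3$ in the extended version) you cannot conclude $\Phi(\alpha)=\alpha^{[\bar P_{N,1}]}$ from an equality of their traces. The argument silently breaks exactly in the regime the theorem is meant to cover.

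Second, the core computational claim --- that $\tr_\lambda(\alpha_{\varpi_k})$ equals the $k$-th elementary symmetric function $e_k$ of a common diagonal ``monodromy spectrum'' of Laurent monomials with product $1$, with the normalisation constants cancelling cleanly --- is precisely the hard content and is asserted rather than established. You acknowledge this in your last paragraph. No reference in \cite{LY23} is cited that gives this statement for ${\rm SL}_n$ with the normalisation of $\mathbf{a}_{\varpi_k}$, and the paper under discussion pointedly avoids such quantum-torus computations. Without this input, nothing reduces to the symmetric-function identity.

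For comparison, the paper's actual route is entirely different: it identifies $\mathscr{S}(\mathsf{A};R)$ with the representation ring $\mathsf{Rep}_q$ via Queffelec--Rose and Poudel (Theorem \ref{twice_sphere}), shows that the cutting homomorphism $\Theta_c:\mathscr{S}(\mathsf{A})\to\mathscr{S}(\mathbb{P}_2)=\mathcal{O}_q({\rm SL}_n)$ equals the module-trace map (Theorem \ref{thm.cutting_homomorphism_and_module-trace}), transports the Adams operation along this, and proves the required compatibility by duality with Lusztig's Frobenius homomorphism $\mathrm{Fr}$ on the modified quantum group (Theorems \ref{thm-dual2}, \ref{thm-L-comp-F}). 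The model case there is the bigon/$\mathcal{O}_q({\rm SL}_n)$, where injectivity of $\Theta_c$ holds by construction (Corollary \ref{r-inj2}), side-stepping both difficulties above. One further subtlety your write-up elides: the thickened annulus is not essentially marked, so the ``Frobenius for the annulus'' $\Psi$ is a separately constructed map (Proposition \ref{r-L-comp-F}), and its compatibility with $\iota_*$ is a theorem in its own right (Theorem \ref{thm10}) established via two commuting cutting chains, not a formal consequence of the functoriality statement (b).
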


Theorem \ref{thm.intro.Phi_of_knot}, which is the culminating result of the present paper, gives a complete skein-theoretic description of the image of a stated 
framed tangle in an essentially marked 3-manifold $(M,\mathcal{N})$  
under the Frobenius homomorphism $\Phi : \mathscr{S}_{\hat{\eta}}(M,\mathcal{N}) \to \mathscr{S}_{\hat{\omega}}(M,\mathcal{N})$. It generalizes the $n=2$ result of Bonahon and Wong \cite{BW16}, and confirms the conjecture of Bonahon and Higgins \cite{BH23}. 
We also prove 
\begin{align}     \label{eq.intro.image_of_framed_oriented_knot_k}
        \Phi(\alpha_{\varpi_k}) = \alpha^{[\bar{P}_{N,k}]} \in \mathscr{S}_{\hat{\omega}}(M,\mathcal{N}).
\end{align}
Note that the proof of \cite{BW16} for $n=2$ is by heavy computation. One general philosophy taken by the current paper is to avoid any serious computations, be it skein-theoretic or about non-commutative Laurent polynomials, and instead to come up with conceptual and structural arguments. As such, we believe that our proof gives a better understanding even for the known case $n=2$.

For a geometric description of $\Phi(\al)$ where $\al$ is a framed oriented knot see \OldS\ref{subsec-geomatric-description}.  For a connected surface without boundary we construct in \OldS\ref{subsec.Frobenius_without_boundaries} a Frobenius map for a quotient of $\cS(\fS)$, called the {\it projected skein algebra} defined in \cite{LS21} (see \eqref{def-projected-skein-punctured} and \eqref{def-projected-skein-closed}).

\subsection{Proof by representation theory of quantum groups}

Our proof of Theorem \ref{thm.intro.Phi_of_knot} is through representation theory of quantum groups. We first establish the following result about the quantum group Frobenius homomorphism $\Phi^\mathcal{O} : \mathcal{O}_\eta({\rm SL}_n) \to \mathcal{O}_\omega({\rm SL}_n)$ \eqref{eq.intro.Phi_O}, which is of independent interest; we notice that Bonahon and Higgins are also recently working on a version of this result, by a different approach than ours.
\begin{theorem}[Theorem \ref{thm-L-comp-F}(b)]
\label{thm.intro.Phi_O_on_D_k_q}
    One has the following, as an equality of elements of $\mathcal{O}_\omega({\rm SL}_n)$:
    \begin{align}
        \label{eq.intro.Phi_O_eq}
        \Phi^\mathcal{O}(D_k^\eta({\bf u})) = \bar{P}_{N,k} (D_1^\omega({\bf u}), \cdots, D_{n-1}^\omega({\bf u})).
    \end{align}
\end{theorem}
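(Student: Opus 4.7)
The plan is to prove Theorem~\ref{thm.intro.Phi_O_on_D_k_q} by leveraging the Hopf-algebra duality between $\Oq$ and $\Uq$ together with Lusztig's quantum Frobenius, thereby avoiding any direct manipulation of noncommutative quantum minors. The starting point is the identification of $D_k^q(\vec{u})$ with the character in $\Oq$ of the $k$-th fundamental $\Uq$-module. Realizing the fundamental module as the quantum exterior power $V_{\varpi_k} = \Lambda_q^k V$, whose basis is indexed by $k$-subsets $I \subset \{1,\dots,n\}$, the matrix coefficient corresponding to the pair $(v_I, v^I)$ equals the quantum principal minor $\det_q(\vec{u}_{I,I})$, so that summing along the diagonal yields
$$ \chi_{V_{\varpi_k}}^q \;=\; \sum_{|I|=k} \det_q(\vec{u}_{I,I}) \;=\; D_k^q(\vec{u}). $$

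Next, I would use the fact that $\Phi^{\mathcal O}\colon \Oe \to \Om$ is dual to the Lusztig quantum Frobenius $\mathrm{Fr}\colon U_{\omega}(\sln) \to U_{\eta}(\sln)$, which is itself a Hopf algebra map. Under Hopf duality, $\Phi^{\mathcal O}$ carries each matrix coefficient $c_{i,j}^V \in \Oe$ of a $U_\eta(\sln)$-module $V$ to the matrix coefficient $c_{i,j}^{V^{\mathrm{Fr}}} \in \Om$ of the pulled-back $U_\omega(\sln)$-module $V^{\mathrm{Fr}}$. Taking sums over diagonal indices produces the character-level identity
$$ \Phi^{\mathcal O}(\chi_V^\eta) \;=\; \chi_{V^{\mathrm{Fr}}}^\omega, $$
which I would then apply to $V = V_{\varpi_k}^\eta$, turning the left-hand side into the target expression $\Phi^{\mathcal O}(D_k^\eta(\vec{u}))$ by the first step.

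The crucial third step is to compute the formal character of the Frobenius twist $(V_{\varpi_k}^\eta)^{\mathrm{Fr}}$ in weight variables. Because $\mathrm{Fr}$ acts on the toral part so that a weight vector of classical weight $\mu$ in $V_{\varpi_k}^\eta$ becomes a weight vector of weight $N\mu$ in the pulled-back $U_\omega(\sln)$-module, the formal character in variables $z_1,\dots,z_n$ satisfies
$$ \chi_{(V_{\varpi_k}^\eta)^{\mathrm{Fr}}}(z_1,\dots,z_n) \;=\; \chi_{V_{\varpi_k}}(z_1^N,\dots,z_n^N) \;=\; e_k(z_1^N,\dots,z_n^N) \;=\; e_k^{(N)}(z_1,\dots,z_n). $$
By the defining identity \eqref{eq.intro.role_of_P_m_k_for_symmetric_functions}, this equals $\bar{P}_{N,k}(e_1,\dots,e_{n-1}) = \bar{P}_{N,k}\bigl(\chi_{V_{\varpi_1}}^\omega,\dots,\chi_{V_{\varpi_{n-1}}}^\omega\bigr)$, and transporting back through Step~1 yields the desired identity.

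The principal obstacle will be making the symmetric-function computation pass rigorously to the noncommutative algebra $\Om$. For this I would verify two points: the multiplicativity $\chi_V \cdot \chi_W = \chi_{V \otimes W}$ inside $\Om$, which holds because quantum characters lie in the commutative subalgebra of $\mathrm{ad}$-invariants; and the injectivity of the formal-character map on the subring generated by fundamental representations, so that polynomial relations among formal characters transport faithfully to relations in $\Om$. Both are standard consequences of the cobraided Hopf structure on $\Oq$, but require some care since $\omega$ is a root of unity; once they are secured, the argument goes through without any explicit manipulation of quantum minors or noncommutative Laurent polynomials, consistent with the structural philosophy announced in the introduction.
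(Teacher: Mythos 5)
Your proposal is correct and mirrors the paper's argument: identify $D_k^q(\mathbf{u})$ with the module-trace of $\Lambda_q(\varpi_k)$, invoke the Hopf duality between $\Phi^{\mathcal O}$ and Lusztig's quantum Frobenius $\mathrm{Fr}$, compute that the Frobenius pullback realizes the $N$-th Adams operation on formal characters, and transport back through the module-trace map using its ring-homomorphism property. The only slight imprecision is that the injectivity of the formal character map on the Grothendieck ring at a root of unity follows from Lusztig's classification of simple unital $\dot{\mathcal{U}}_\omega$-modules rather than from the cobraided Hopf structure of $\mathcal{O}_q({\rm SL}_n)$, but you correctly flag that this needs to be secured.
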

Here $D^\xi_k({\bf u}) \in \mathcal{O}_\xi({\rm SL}_n)$ is obtained by evaluating the element $D_k^q({\bf u}) \in \mathcal{O}_q({\rm SL}_n)$ at a root of unity $q = \xi$, where $D_k^q({\bf u})$ is the sum of all principal $k\times k$ `quantum' minors (see \eqref{eq.D_k_q}). So $D_k^q({\bf u})$ is a quantization of $D_k({\bf u})$, or of the $k$-th elementary symmetric polynomial $e_k$, and thus Theorem \ref{thm.intro.Phi_O_on_D_k_q} is a quantum lift of \eqref{eq.intro.role_of_P_m_k_for_matrices}--\eqref{eq.intro.role_of_P_m_k_for_symmetric_functions}. To prove this we take advantage of the duality between $\mathcal{O}_q({\rm SL}_n)$ and the quantized enveloping algebra $\mathcal{U}_q(\mathfrak{sl}_n)$. The evaluation of the `integral form' of the latter at a number $q = \xi$ is denoted by $\dot{\mathcal{U}}_\xi(\mathfrak{sl}_n)$. Denote Lusztig's Frobenius homomorphism \cite{GL93} by
$$
\Psi^\mathcal{U} : \dot{\mathcal{U}}_\omega(\mathfrak{sl}_n) \to \dot{\mathcal{U}}_\eta(\mathfrak{sl}_n)
$$
which is dual to and has appeared earlier in the literature than $\Phi^\mathcal{O}$ \eqref{eq.intro.Phi_O}. It induces a map
$$
\Psi^{\mathsf{Rep}} : \mathsf{Rep}(\dot{\mathcal{U}}_\eta(\mathfrak{sl}_n)) \to \mathsf{Rep}(\dot{\mathcal{U}}_\omega(\mathfrak{sl}_n))
$$
between the representation rings, each isomorphic to the ring of symmetric polynomials $(\mathbb{C}[\lambda_1,\ldots,\lambda_n]/(\lambda_1\cdots\lambda_n=1))^{S_n}$ through the character map. A straightforward computation on the $k$-th fundamental representation $V_{\varpi_k}$ shows that $\Psi^{\mathsf{Rep}}$ is given in terms of characters as the $N$-th Adams operation. This yields a version of \eqref{eq.intro.Phi_O_eq} for $\Psi^{\mathsf{Rep}}$. Then Theorem \ref{thm.intro.Phi_O_on_D_k_q} follows by applying the natural module-trace map $\mathcal{T}_\xi : \mathsf{Rep}(\dot{\mathcal{U}}_\xi(\mathfrak{sl}_n)) \to \mathcal{O}_\xi({\rm SL}_n)$, coming from the duality between $\dot{\mathcal{U}}_\xi(\mathfrak{sl}_n)$ and $\mathcal{O}_\xi({\rm SL}_n)$.

\vspace{2mm}

We now recall the result 
of Queffelec and Rose \cite{QR18}
(see also \cite{Pou22,DS})  saying that the (stated) ${\rm SL}_n$-skein algebra $\mathscr{S}_{\hat{\xi}}(\mathsf{A})$ for the twice-punctured sphere $\mathsf{A}$ is isomorphic to the representation ring $\mathsf{Rep}(\dot{\mathcal{U}}_\xi(\mathfrak{sl}_n))$, where ${\bf a}_{\varpi_k}$ is sent to $V_{\varpi_k}$ (Theorem \ref{twice_sphere}). We then verify by a straightforward computation that when we cut $\mathsf{A}$ through an ideal arc $c$ into the bigon $\mathbb{P}_2$, the cutting homomorphism $\Theta_c : \mathscr{S}_{\hat{\xi}}(\mathsf{A}) \to \mathscr{S}_{\hat{\xi}}(\mathbb{P}_2)$ coincides with the module-trace map $\mathcal{T}_\xi$ (Theorem \ref{thm.cutting_homomorphism_and_module-trace}). This morally gives the Frobenius homomorphism (Proposition \ref{r-L-comp-F})
$$
\Psi : \mathscr{S}_{\hat{\eta}}(\mathsf{A}) \to \mathscr{S}_{\hat{\omega}}(\mathsf{A})
$$
for the twice-punctured sphere $\mathsf{A}$, which sends $
{\bf a}_{\varpi_k} \in \mathscr{S}_{\hat{\eta}}(\mathsf{A})$ to $\bar{P}_{N,k}(
{\bf a}_{\varpi_1},\ldots,
{\bf a}_{\varpi_{n-1}}) \in \mathscr{S}_{\hat{\omega}}(\mathsf{A})$. By embedding the open annulus $\mathsf{A}$ into the once punctured disk with one puncture on its boundary component (Theorem \ref{thm10}), and by using Theorems \ref{thm.intro.Frobenius_map} and \ref{thm.intro.Phi_O_on_D_k_q}, we prove the sought-for Theorem \ref{thm.intro.Phi_of_knot} and \eqref{eq.intro.image_of_framed_oriented_knot_k}.

We note that, in a recent work of Higgins \cite{Hig24}, Theorems \ref{thm.intro.Phi_of_knot} and \ref{thm.intro.Phi_O_on_D_k_q}  are established for $n=3$ under the condition $\gcd(N,6)=1$.

\subsection{Acknowledgments} The authors thank 
Francis Bonahon, Valentin Buciumas, Francesco Costantino, Vijay Higgins, Myungho Kim, You Qi and  Adam Sikora
for valuable discussions. 

H.K. has been supported by KIAS Individual Grant (MG047204) at Korea Institute for Advanced Study. T.L. has been work supported by the NSF grant DMS-2203255.
Z.W. is supported by research scholarships from Nanyang Technological University and the University of Groningen.

\def\home{{\hat \omega}}
\def\Ud{{\dot {\mathcal U}_q}}
\def\Udq{{\dot{\mathcal U}_q}(sl_n)}
\def\Ude{{\dot{\mathcal U}_\heta}(sl_n)}
\def\Udo{{\dot {\mathcal U}_\home}(sl_n)}

\def\Oe{{\cO_\eta}}
\def\Oo{{\cO_\omega}}
\def\Oq{{\mathcal O}_q({\rm SL}_n)}
\def\SoM{{\cS_\home\MN}}
\def\SeM{{\cS_\heta\MN}}

\def\SAR{{\cS(\An;R)}}

\def\SeM{\cS_\heta(M, \cN)}
\def\cN{{\mathcal N}}
\def\MN{{(M,\cN)}}
\def\homega{{\hat \omega}}
\def\Rep{{\mathsf{Rep}}}
\def\Ch{{\mathsf{Ch}}}
\def\SoA{{ \cS_\homega(\An)}}
\def\Zq{{\BZ[q^{\pm1}]}}

\def\H{\mathfrak{h}}

\section{Notations and conventions} \label{ss.ground}

Let $\mathbb{N}, \BZ, \BC$ be respectively the set of all non-negative integers, the set of all integers, and the set of all complex numbers. 
A complex number $\xi$ is a 
root of 
unity if there exists a positive integer $d$ such that $\xi^d=1$, and the smallest such $d$ is called the order of $\xi$ and denoted by $\ord(\xi)$.

We will fix an integer $n
\ge 2$ and  work with the Lie algebra $
\sl_n(\BC)$. Denote $$\JJ=\{ 1, 2, \dots, n\}.$$ For $i\in \JJ$ its conjugate is defined by $$\bar i= n+1-i.$$ Let $\JJ_k$ be the set of $k$-elements subsets of $\JJ$. Let $S_n$ be the symmetric group on $n$ letters in $\JJ$.

For a sequence $\avec i= (i_1, \dots, i_m)$ of distinct numbers, define
 \begin{align}
     \label{eq.length_of_sequence}
     \ell (\avec i)= \# \{ (j,k) \mid \,  1\le j < k \le m
 , \,\, i_j > i_k   \}.
 \end{align} 
 If $\sigma\in S_n$, then $\ell(\sigma):= \ell (\sigma(1), \dots, \sigma(n))$ is the usual length of $\sigma$.

We use $\mathbb{Z}_n$ to denote the group $\mathbb{Z}/n\mathbb{Z}$ of integers modulo $n$.

Throughout the paper, $\Zhq$ is the ring of Laurent polynomial in the variable $\hq$ with integer coefficients. The element $$q= \qq^{2n^2}$$ is the usual quantum parameter appearing  
in the theory of quantum groups.  
For $m \in \mathbb{Z}$, by $q^{m/(2n^2)}$ we mean $\qq^m$. 
For  $m\in \BN$ we define the quantum integer $[m]_q$ and its factorial 
by
\begin{align}
    \label{eq.quantum_integer}
    [m]_q = \frac{q^m - q^{-m}}{q - q^{-1}} = q^{-m+1}+q^{-m+3}+\cdots +q^{m-1}, \quad
[m]_q!= \prod_{i=1}^m [i]_q, \quad [0]_q!=1.
\end{align}

The ground ring $R$ is usually a  commutative $\Zhq$-algebra which is a domain.

We will often use the following constants in $R$:
\begin{equation}\label{def-constants-tac}
	\ttt= (-1)^{n-1} q^{\frac{n^2-1}{n}}, \qquad
	\aaa= q^{(1-n)(2n+1)/4}, \qquad
	\ccc_i= q^{\frac{n-1}{2n}}(-q)^{n-i}, \quad i \in \JJ.
\end{equation}

We also use Kronecker’s delta notation and its sibling
\begin{align*}
    \delta_{i,j}=\begin{cases}
        1 & \text{if }i=j\\
        0 &  \text{if }i\neq j
    \end{cases},\qquad
    \delta_{i<j}=\begin{cases}
        1 & \text{if }i<j\\
        0 &  \text{if }i\geq j
    \end{cases}.
\end{align*}

\section{Quantized algebra of regular functions on ${\rm SL}_n$} \label{sec.Oqsln}

In this section we review the quantized algebra $\Oq$ of regular functions on ${\rm SL}_n$, and the quantum  Frobenius homomorphism for $\Oq$. Note that our $\Oq$ is defined over the ring $\Zq$, not over the field $\BQ(q)$ like in many other texts.

\subsection{Quantum matrices and the quantized function algebra $\mathcal{O}_q({\rm SL}_n)$} 
\def\baa{{\mathbf a}}

\begin{definition}[see \cite{KS,PW}]
\begin{enuma}

\item A $k \times k$ matrix $
{\bf u}= (
u_{ij})_{i,j=1}^k$ with entries in a $\Zq$-algebra is  a {\bf $q$-quantum matrix} if any $2\times 2$ submatrix $\begin{pmatrix}a & b \\ c& d\end{pmatrix}$ of it satisfies the relations
\begin{equation}\label{eq.Mq2}
ab = q ba, \quad ac = q ca, \quad bd = q db, \quad cd = q dc,\quad
bc = cb, \quad ad - da = (q-q^{-1}) bc.
\end{equation}
\item For such a $q$-quantum matrix ${\bf u}$, its {\bf quantum determinant} is defined by
\begin{equation}
\detq(\buu)
:=\sum_{\sigma\in 
S_k} (-q)^{\ell(\sigma)}u_{1\sigma(1)}\cdots u_{k\sigma(k)}
=\sum_{\sigma\in S_k} (-q)^{\ell(\sigma)}u_{\sigma(1)1}\cdots u_{\sigma(k)k}.
\notag
\end{equation}
\end{enuma}
\end{definition}
It is known that $\det_q({\bf u})$ commutes with each entry $u_{ij}$.

\bdf[see \cite{PW}]\label{def.OqSLn} 
\begin{enuma}
    \item The {\bf $q$-quantum matrix algebra} $\Mqn$ is the $\Zq$-algebra generated by entries $u_{ij}$ of the $n\times n$ matrix $\buu= (u_{ij})_{i,j=1}^n$ subject to the relations \eqref{eq.Mq2} for any $2\times 2$ submatrix $\begin{pmatrix}a & b \\ c& d\end{pmatrix}$.

    \item The {\bf quantized function algebra} $\Oq$ is defined as 
\[\Oq := \Mqn/({\det}_q \buu-1)\]
\end{enuma}

\edf

\def\Mz{\mathcal M_{\eta}}
\def\Mxn{\mathcal M_{\xi}}

\def\tOq{\widetilde{\Oq}}
\def\tu{{\tilde u}}
\def\degn{{ \mathsf{deg}^{(n)}}}
\def\Ox{{\cO_\xi}}

 By abuse of notation, we denote the image of $u_{ij}\in \Mqn$ under the natural projection $\Mqn \to \Oq$ also by $u_{ij}$.

It is known (e.g. \cite{PW}) that $\Oq$ is a Hopf algebra, with the comultiplication $\Delta$, the counit $\varepsilon$, and the antipode $S$
given by
\begin{align}
\Delta(u_{ij}) & = \sum_{k=1}^n u_{ik} \ot u_{kj}, \qquad \ve(u_{ij})= \delta_{ij}, \qquad 
S({u}_{ij} ) 
=
(-q)^{i-j} {\det}_q(\buu^{ji}). \label{e-Oq-ops}
\end{align}
Here $\buu^{ji}$ is the result of removing the $j$-th row and the $i$-th column from $\buu$.

For a non-zero $\xi\in \BC$, define 
$$\Mxn = \Mqn \ot_{\Zq} 
\BC, \qquad  \Ox= \Oq\ot_\Zq \BC,$$
 where $\BC$ is considered as a $\Zq$-module by $q\mapsto \xi$.

\def\uq{\mathcal U_{\omega}}

\def\OA{{\Oq}}
\def\flip{{\mathsf{flip}}}
\def\coin{{\mathsf{coin}}}

\def\UA{{\dot{\mathcal U  }}_q}
\def\DDD{{ \mathcal D} }
\def\Do{{\DDD_\omega}}
\def\Deta{{\DDD_\eta}}
\def\CA{{\mathcal A}}
\def\bY{{\Lambda}}
\def\Yo{\bY_\omega}
\def\Ye{\bY_\eta}
\def\Frac{{\mathsf{Fr}}}
\def\Uo{\dot{\mathcal U}_\omega}
\def\Uoq{\dot{\mathcal U}_q}
\def\Ue{\dot{\mathcal U}_\eta}
\def\Ch{{\mathsf {Ch}}}
\def\Rep{{\mathsf{Rep}}}
\def\Lo{{L_\omega}}
\def\Leta{{L_\eta}}

\no{
\subsection{$\BZ_n$-grading} For a word $w$ in the letters $u_{ij}$, define $\degn(w)\in \BZ_n$ to be its length mod $n$. Since $\degn$ is respected by all the defining relations of $\Oq$, it descends to a $\BZ_n$-grading of the algebra $\Oq$:
\be 
\Oq = \bigoplus_{d \in \BZ_n} \Oq _d, \qquad \Oq _d \, \Oq _
{d'}\subset \Oq _{d+d'}.
\ee
Here $\Oq_d$ is the 
span of all words $w$ with $\degn(w)=d$.
}

\subsection{Quantum elementary symmetric function}\label{subsec:quantum_elementary_symmetric_function} We review a quantum analog of the elementary symmetric function of the eigenvalues of a matrix; see \cite{DL}.

Recall that $\JJ_k$ is the set of $k$-elements subsets of $\JJ=\{1,2,\ldots,n\}$.
For $I,J\in \JJ_k$, let $M^I_J(\buu)\in \Oq$, called a {\bf quantum minor}, be the quantum determinant of the $I\times J$ submatrix of $\buu$. 
For $k\in\mathbb J$, we define 
\begin{align}
    \label{eq.D_k_q}
    D_k^{q}(\textbf{u}) = \sum_{I\in\mathbb J_k} M^I_I(\buu) \in \Oq,
\end{align}
the sum of all `principal' quantum minors of size $k$. Fix a non-zero  $\xi\in \BC$. Let $D_k^{\xi}(\buu)$ be the image of $ D_k^{q}(\buu)$ under the map $\Oq \to \Ox$.

 By  \cite{DL}, the elements $D_k^{q}(\bf u)$, $1\leq k\leq n-1 $, pairwise commute, and if $\xi$ is not a root of 
 unity then 
 the elements $D_k^\xi({\bf u})$, $1\le k \le n-1$, are algebraically independent over $\mathbb C$.
  We will remove the restriction on $\xi$:
\begin{lemma}\label{r.indep} 
For a non-zero $\xi \in \mathbb{C}$, the elements $D_k^{\xi}(\buu)$, 
$1\le k \le n-1$, are 
	 algebraically independent over $\mathbb C$.
\end{lemma}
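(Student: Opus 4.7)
The plan is to construct an explicit $\mathbb{C}$-algebra homomorphism from $\Ox$ onto a commutative algebra in which the images of $D_k^\xi(\buu)$ are manifestly algebraically independent, thereby bypassing the generic-$q$ hypothesis used in \cite{DL}. Concretely, I would consider the assignment
\[
\phi : u_{ii} \mapsto t_i, \qquad u_{ij} \mapsto 0 \ \ \text{for } i\neq j,
\]
valued in the commutative $\mathbb{C}$-algebra $A := \mathbb{C}[t_1,\ldots,t_n]/(t_1 t_2\cdots t_n -1)$.

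The first step is to verify that $\phi$ extends to a well-defined $\mathbb{C}$-algebra homomorphism $\phi:\Ox\to A$ by running through the relations \eqref{eq.Mq2} for each $2\times 2$ submatrix of $\buu$. If the submatrix has at most one diagonal entry of $\buu$, then after substitution every relation becomes trivially $0=0$; the case of two diagonal entries occurs only for the submatrix at rows $\{i,j\}$ and columns $\{i,j\}$ with $i<j$, and then $ad-da=(q-q^{-1})bc$ reduces to $t_i t_j = t_j t_i$, which holds in $A$ (the anti-diagonal configuration $i=l$ and $j=k$ is ruled out by $i<j$, $k<l$). The quantum determinant relation $\detq(\buu)=1$ descends to $t_1\cdots t_n = 1$, imposed by construction. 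This well-definedness check is the only somewhat computational part of the argument, and is the place where one must be careful.

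Next, one computes $\phi(D_k^\xi(\buu))$: for each $I\in\JJ_k$, the principal quantum minor $M_I^I(\buu)$ maps to $\detq$ of the diagonal matrix with entries $(t_i)_{i\in I}$, for which only the identity permutation contributes, giving $\prod_{i\in I} t_i$. Summing over $I\in \JJ_k$ yields the classical $k$-th elementary symmetric polynomial
\[
\phi(D_k^\xi(\buu)) = e_k(t_1,\ldots,t_n) \in A.
\]

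Finally, it remains to observe that $e_1,\ldots,e_{n-1}$ are algebraically independent in $A$. By the fundamental theorem of symmetric functions, $e_1,\ldots,e_n$ are algebraically independent in $\mathbb{C}[t_1,\ldots,t_n]$ and $\mathbb{C}[t_1,\ldots,t_n]$ is free of rank $n!$ over $\mathbb{C}[e_1,\ldots,e_n]$; by faithful flatness, $(e_n-1)\mathbb{C}[t_1,\ldots,t_n]\cap\mathbb{C}[e_1,\ldots,e_n] = (e_n-1)\mathbb{C}[e_1,\ldots,e_n]$, so any polynomial relation of the form $P(e_1,\ldots,e_{n-1})$ lying in this ideal must vanish upon the specialization $e_n\mapsto 1$, hence $P=0$. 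Applying $\phi$ to a hypothetical relation $P(D_1^\xi(\buu),\ldots,D_{n-1}^\xi(\buu))=0$ therefore forces $P=0$, completing the proof.
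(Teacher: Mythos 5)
Your proposal is correct and takes essentially the same route as the paper's own proof: both define the same diagonal evaluation homomorphism $\mathcal{O}_\xi \to \mathbb{C}[t_1,\ldots,t_n]/(t_1\cdots t_n-1)$ sending $u_{ii}\mapsto t_i$ and $u_{ij}\mapsto 0$ for $i\neq j$, compute that $D_k^\xi(\buu)\mapsto e_k$, and conclude via the algebraic independence of $e_1,\ldots,e_{n-1}$. The only difference is that you spell out the well-definedness check and the faithful-flatness justification for why $e_1,\ldots,e_{n-1}$ remain independent modulo $e_n=1$, while the paper cites \cite{Macdonald} for the latter.
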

\bpr By checking the defining relations it is easy to see that there is a $\BC$-algebra map
$$f: 
\mathcal{O}_\xi \to \BC[x_1, \dots, x_{n}]/(x_1 \cdots x_n=1)$$
given by $f(u_{ij})=0$ if $i\neq j$ and $f(u_{ii})= x_i$. Then $f(D_k^{\xi}(\buu))=e_k$, the $k$-th elementary symmetric polynomial defined by
\begin{align}
    \label{elementary_symmetric_polynomial}
    e_k = \sum_{1\le i_1 < \dots < i_k \le n} x_{i_1} \dots x_{i_k}.
\end{align} 
As $e_1, \dots , e_{n-1}$ are algebraically independent \cite{Macdonald}, so are $D_1^{\xi}(\buu), \dots, D_{n-1}^{\xi}(\buu)$.
\epr

We will refer to the elements $D_k^q({\bf u})\in \mathcal{O}_q({\rm SL}_n)$ and $D_k^\xi({\bf u}) \in \mathcal{O}_\xi$ as the {\bf quantum elementary symmetric functions}.

\def\Rx{R[x_1,x_2,\cdots,x_n]/(x_1x_2\cdots x_n=1)}
\def\Ru{R[\textbf{u}]}
\def\oq{\Oq}
\def\toq{\tOq}

\def\Mon{\mathcal M_{\omega}}

\subsection{Frobenius homomorphisms for $\Mqn$ and $\Oq$}\label{ss.Frobenius_map_for_Oq}

Recall that $\Mxn= \Mqn \ot _\Zq \BC$, where $q 
\mapsto \xi$ in the tensor factor $\mathbb{C}$.

\begin{theorem}\label{Fro_Oq} Assume that $\omega\in \BC$ is a root of unity. Let $N= \ord(\omega^2)$ and $\eta = \omega^{N^2}$. 
\begin{enuma}
\item 	There exists a  unique $\BC$-algebra homomorphism
	$\Phi  :\Mz\rightarrow \Mon$, given by 
	\begin{align}
	    \label{eq.Phi_on_u_ij}
        \Phi  (u_{ij}) = u_{ij}^N \quad\mbox{for all $i,j\in \JJ$.}
	\end{align}
	The map $\Phi$ satisfies
	\be 
	\Phi  ({\det}_\eta(\buu)) = ({\det}_\omega(\buu))^N.
	\label{eq.det}
	\ee
	
	\item  
   The map $\Phi  $ descends to an injective  Hopf algebra homomorphism $\Oe \to \Oo$, also denoted by 
    $$\Phi: \Oe \to \Oo.$$
\end{enuma}
\end{theorem}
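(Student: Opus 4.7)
The plan is to reduce (a) to a check of $2 \times 2$ quantum matrix relations for $N$-th powers, then deduce (b) by modding out by $\det - 1$ and verifying Hopf compatibility and injectivity. First, uniqueness in (a) is immediate since the $u_{ij}$ generate $\mathcal{M}_\eta$ as an algebra. For existence of $\Phi$ as an algebra map, I verify that the $u_{ij}^N \in \mathcal{M}_\omega$ satisfy the $\eta$-version of the defining relations \eqref{eq.Mq2}; since these involve only $2\times 2$ submatrices, I reduce to a generic submatrix $\begin{pmatrix}a & b \\ c & d\end{pmatrix}$ of $\mathbf{u}$. The four $\omega$-commutations $xy = \omega yx$ iterate to $x^N y^N = \omega^{N^2} y^N x^N = \eta y^N x^N$, and $bc = cb$ lifts trivially. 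The crux is the Manin-type relation, which should upgrade to $a^N d^N - d^N a^N = (\eta - \eta^{-1}) b^N c^N$; but $\eta^2 = \omega^{2N^2} = (\omega^{2N})^N = 1$ forces $\eta = \pm 1$, so $\eta - \eta^{-1} = 0$ and it suffices to show $a^N d^N = d^N a^N$. For this I set $e := ad - \omega bc$ (central in the subalgebra generated by $a,d,bc$), record $ad = e + \omega bc$, $da = e + \omega^{-1}bc$, $a(bc) = \omega^2(bc)a$, $d(bc) = \omega^{-2}(bc)d$, and derive inductively the closed formula
\[
a^m d^m - d^m a^m = \sum_{k=1}^m \bigl(\omega^{mk} - \omega^{-mk}\bigr) \binom{m}{k}_\omega e^{m-k}(bc)^k ;
\]
at $m = N$ every factor $\omega^{Nk} - \omega^{-Nk}$ vanishes since $\omega^N = \pm 1$.

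Next, the determinant identity $\Phi({\det}_\eta \mathbf{u}) = ({\det}_\omega \mathbf{u})^N$ is a classical statement \cite{PW}. I would re-derive it by expanding $({\det}_\omega \mathbf{u})^N$ using the quantum multinomial theorem: the vanishing $\binom{N}{k}_\omega = 0$ for $0 < k < N$ (coming from $\ord(\omega^2) = N$, which makes $[N]_\omega = 0$ while $[j]_\omega \ne 0$ for $0 < j < N$) collapses all cross-permutation terms, leaving exactly $\sum_\sigma (-\eta)^{\ell(\sigma)} u_{1\sigma(1)}^N \cdots u_{n\sigma(n)}^N = \Phi({\det}_\eta \mathbf{u})$. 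A conceptual alternative is to observe that both sides act as the same scalar on the top quantum exterior power, exploiting that $\det_q$ is central and grouplike.

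For (b), the descent of $\Phi$ to $\mathcal{O}_\eta \to \mathcal{O}_\omega$ is automatic from the determinant identity, since $\det_\eta \mathbf{u} = 1$ maps to $(\det_\omega \mathbf{u})^N = 1$. Compatibility with the counit is trivial, and compatibility with the antipode reduces to that with the coproduct; for the latter, I must verify
\[
\sum_k u_{ik}^N \otimes u_{kj}^N = \Bigl(\sum_k u_{ik} \otimes u_{kj}\Bigr)^N \quad \text{in } \mathcal{O}_\omega \otimes \mathcal{O}_\omega ,
\]
which holds because $(u_{ik}\otimes u_{kj})(u_{il}\otimes u_{lj}) = \omega^2 (u_{il}\otimes u_{lj})(u_{ik}\otimes u_{kj})$ for $k<l$ makes the $n$ summands $\omega^2$-commute, and the standard $q$-multinomial theorem at $q = \omega^2$ (a primitive $N$-th root of unity) annihilates all cross terms. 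The hard part, which I expect to be the main obstacle, is injectivity: I plan to use a PBW-type basis of $\mathcal{O}_q(\mathrm{SL}_n)$ consisting of ordered monomials in the $u_{ij}$ modulo $\det_q - 1$, and check via the Bergman Diamond Lemma that their $\Phi$-images are $\mathbb{C}$-linearly independent in $\mathcal{O}_\omega$; this can be streamlined by the Parshall--Wang result making $\mathcal{O}_\omega(\mathrm{SL}_n)$ a free module of rank $N^{\dim\mathrm{SL}_n}$ over the central subalgebra $\Phi(\mathcal{O}_\eta(\mathrm{SL}_n))$.
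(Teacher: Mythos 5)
Your proposal is essentially correct in outline and reaches the same conclusion, but it takes a substantially more hands-on route for part (a) than the paper does. The paper's Appendix \ref{appendix-bigon} invokes Lemma \ref{rqM}: a matrix $\bbb$ is a $q$-quantum matrix if and only if $\bbb\otimes\avec x$ and $\bbb^T\otimes\avec x$ are $q$-points whenever $\avec x$ is an algebraically independent $q$-point, and then the single identity $(x_1+\cdots+x_n)^N=x_1^N+\cdots+x_n^N$ (quantum multinomial formula at a primitive $N$-th root of unity) packages the entire verification that $(u_{ij}^N)$ is an $\eta$-matrix, including the Manin relation. By contrast you verify the $2\times2$ relations directly. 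Your core observation that $\eta=\pm1$ forces the Manin relation into $a^Nd^N=d^Na^N$ is correct, and your closed commutator formula is plausible (it checks out at $m=1,2$), but you can avoid both the central element $e$ and the induction on $(bc)^k$ by proving the stronger and simpler fact that $a^N$ already commutes with $d$: the recursion
\begin{equation*}
a^m d - d\,a^m = (\omega^{2m-1}-\omega^{-1})(bc)\,a^{m-1}
\end{equation*}
follows by a one-line induction and vanishes at $m=N$ since $\omega^{2N}=1$.

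There is a genuine gap in your first proposed derivation of \eqref{eq.det}. The quantum multinomial theorem requires the summands to mutually $q$-commute with a single parameter, but the $n!$ monomials $u_{1\sigma(1)}\cdots u_{n\sigma(n)}$ occurring in $\det_\omega(\buu)$ do \emph{not} $q$-commute for $n\ge 3$: for $i<j$ and $k<l$ the entries $u_{ik}$ and $u_{jl}$ satisfy the Manin relation $u_{ik}u_{jl}-u_{jl}u_{ik}=(\omega-\omega^{-1})u_{il}u_{jk}$ rather than a $q$-commutation, so expanding $(\det_\omega\buu)^N$ produces cross-terms that the $\omega$-binomial coefficients cannot be expected to kill. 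Your "conceptual alternative" via the action on the top quantum exterior power is the correct route, and matches the coaction $\al:A_q[y_1,\dots,y_n]\to A_q[y_1,\dots,y_n]\ot\Oq$ that the paper uses in Lemma \ref{lem-trace-fundamental-representations-dot}; the paper itself simply cites \cite{PW} for \eqref{eq.det}. Part (b) is handled correctly: the summands of $\Delta(u_{ij})$ genuinely do $\omega^2$-commute, a bialgebra map between Hopf algebras automatically respects antipodes, and your PBW-basis injectivity argument is in substance the same as the paper's use of Proposition \ref{r.basisOq}: $\Phi$ maps the ordered-monomial basis of $\Oe$ (indexed by $\Gamma=\Mat_n(\BN)/(\Id)$) to the sub-basis indexed by $N\Gamma$, up to nonzero scalars.
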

The above maps $\Phi$ are the quantized-function-algebra versions of what can be called the {\bf Frobenius homomorphisms}.

For odd $\ord(\omega^2)$, Parshall and Wang \cite{PW} gave a proof of Theorem \ref{Fro_Oq}, which can be modified for all roots of unity. For completeness we present a proof in 
 Appendix \ref{appendix-bigon}.

\def\cT{\mathcal T}

\section{Lusztig's quantum Frobenius homomorphism}\label{sec.Lusztig_Frobenius_map}

In this section we use Lusztig's results on quantum groups
 to calculate $\Phi(D_k^\eta(\buu))$, i.e. the image of the quantum 
elementary symmetric function $D^\eta_k({\bf u}) \in \Oe$ under the Frobenius homomorphism $\Phi : \Oe \to \Oo$.

Towards this goal, we first review Lusztig's modified quantum group $\UA$, which is defined over $\Zq$, and is a perfect dual of $\Oq$. We review its specialization $\Uo$ at a root of unity $\omega$.   
The Grothendieck ring $\Rep_\omega$ 
of the category of finite-dimensional unital representations of 
$\Uo$ embeds naturally into
 $\Oo$ via the so-called module-trace map. The image of this embedding is exactly the subalgebra generated by the elements $D^\omega_k(\buu)$.
 The main result of the section, Theorem \ref{thm-L-comp-F}, which we establish essentially by assembling the results known in the literature, is that the restriction of the Frobenius homomorphism $\Phi: \Oe \to \Oo$ onto 
 $\Rep_\eta$ is the well-known Adams operation defined using the power sum in symmetric function theory. From this we find out that the `reduced power elementary polynomials' $\bar{P}_{N,k}$, which are $(n-1)$-variable polynomials serving as ${\rm SL}_n$ analogs of the Chebyshev polynomials of the first type for ${\rm SL}_2$, and which are known to express the `power sum' elementary symmetric functions as polynomials in the elementary symmetric functions, in fact work in the same way for the quantum elementary symmetric functions. More precisely, we obtain the identity \begin{align}
 \label{role_of_P_n_k_for_quantum_elementary_symmetric}
 \Phi(D^\eta_k({\bf u})) = \bar{P}_{N,k}(D^\omega_1({\bf u}), D^\omega_2({\bf u}), \ldots, D^\omega_{n-1}({\bf u})).
 \end{align}
 In principle, for each parameter $n$, $k$ and $N$, this identity can be verified by a direct computation involving elements of $\mathcal{O}_\omega$. However, in reality, as the parameters grow large, the computation quickly gets too complicated to perform by hand, nor such a computation would easily yield a proof for general parameters. We provide a conceptual proof using representation theory, which practically requires no computation.
 For proofs we use the fact that the Frobenius homomorphism for $\Oq$ and Lusztig's Frobenius homomorphism for $\UA$ are dual to each other.

The results of this section, including \eqref{role_of_P_n_k_for_quantum_elementary_symmetric},
will be interpreted skein-theoretically in later sections, and will play an important role in our paper.

\def\W{X}
\def\classic{{}}

\def\rk{{\mathsf{rank}}}

\subsection{Representations of $\sl_n(\BC)$, symmetric polynomials, and Adams operations} 
\label{ss.representations_of_sln}

We quickly review some representation theory of the Lie algebra $\sl_n(\BC)$, with the goal to fix the notations. For details see \cite{FH}.

Recall that $\sl_n(\BC)$ consists of all traceless $n\times n$ matrices with entries in $\BC$. The Cartan subalgebra $\H\subset \sl_n(\BC)$  consists of all diagonal traceless matrices. The {\bf weight lattice} $X\subset \Hom_\BC(\H, \BC)$ (which is usually denoted by $P$) is the free abelian group generated by $w_1, \dots, w_n$ subject to $w_1+ \dots + w_n=0$. Here 
$w_i: \H \to \BC$ is defined by $$w_i(D)= D_{ii},$$ the $i$-th diagonal entry of $D\in \H$. The fundamental weights $\varpi_
{k}$, are
\be\label{def-fundamental-weight}
\varpi_
{k}= w_1 + \dots + w_
{k}, \qquad  
k=1, \dots, n-1.
\ee
The $\BN$-submodule $X_+$ of $X$ spanned by $\varpi_1, \dots, \varpi_{n-1}$ is the set of {\bf  dominant weights}.

    The standard partial order $\le$ on $X$ is defined so that  $\lambda'\leq \lambda$ if and only $\lambda-\lambda'  = \sum_{1\leq i\leq n-1}
    c_i(w_i - w_{i+1})$, where $
    c_i\in\mathbb N$ for $1\leq i\leq n-1$. We write $\lambda' < \lambda$ if $\lambda' \le \lambda$ and $\lambda' \neq \lambda$.
    
The Weyl group is the symmetric group $S_n$, which acts on $\W$ by $\sigma(w_i) = w_{\sigma(i)}$ for $\sigma\in S_n$. The group ring $\BC[\W]$ has a basis given by the set of all $e^x$, $x \in \W$; here, $e^x e^{x'} = e^{x+x'}$. Using $$x_i= e^{w_i},$$ 
 we have $$\BC[\W]=\BC[x_1, \dots, x_n]/(x_1 
 \cdots x_n=1).$$ 
The subset $\BC[\W]^{S_n}\subset \BC[\W] $ of all elements fixed by $S_n$ is a $\BC$-subalgebra  isomorphic to $\BC[e_1, \dots, e_{n-1}]$, where 
$e_k$ is the $k$-th elementary symmetric polynomial in $x_1,\ldots,x_n$, as defined in \eqref{elementary_symmetric_polynomial}.

Let $V$ be a finite-dimensional $\sl_n(\BC)$-module. For $\mu\in X$  the {\bf  $\mu$-weight subspace} is
$$ V^\mu:= \{ v \in V \mid D v = \mu(D) v
\ \text{for all} \ D \in \H\}.$$
 The {\bf character} of $V$ is defined by
\be
\Ch(V) = \sum _{\mu \in X} \dim(V^\mu) e^\mu \in \BZ[X] \subset \BC[X].
\ee
We say that $\mu \in X$ is a {\bf weight of $V$} if $\dim(V^\mu) \neq 0$.

An $\sl_n(\BC)$-module  $V$ is said to be {\bf of highest weight $\lambda \in X$} if $V^\lambda$ is spanned by a non-zero vector $v$ and $N_+ v=0$, where $N_+\subset \sl_n(\BC)$ consists of all strictly upper triangular matrices in $\sl_n(\BC)$.
Note that $\lambda$ is a weight of $V$. It is well known that any other weight $\lambda'$ of $V$ is strictly less than $\lambda$, i.e. $\lambda' < \lambda$, which justifies the terminology `highest' weight.

For every  $\lambda\in  X_+$ there is a simple $\sl_n(\BC)$-module $\Lambda^\classic(\lambda)$ 
of highest weight $\lambda$.
Let $\Rep_\BC$ be the Grothendieck ring of the category of finite-dimensional $\sl_n(\BC)$-modules, with the ground ring extended from $\BZ$ to $\BC$. The following is well known; see e.g. \cite{FH}.

\bpro \label{rRepC}
\begin{enuma}
\item The set $ \{\Lambda^\classic(\lambda)  )\mid \lambda\in X_{+}\}$ is a 
vector space basis of $\Rep_\BC$. 

\item The character map 
 yields a $\BC$-algebra isomorphism 
 \be 
 \Ch: \Rep_\BC 
 \xrightarrow{\cong} \BC[\W]^{S_n}. \label{eqRepC}
 \ee
Moreover, for each $k=1,\ldots,n-1$ we have
 \be 
 \label{eq.Ch_sends_Lambda_varpi_i_to_e_i}
 \Ch( \Lambda^\classic(\varpi_
 {k})  ) = e_
 {k}.
 \ee
\end{enuma}
\epro

For  $
m \in \BN$, the $\BC$-algebra 
endomorphism of $\BC[x_1, \dots, x_n]$ given on the generators by $$ x_i 
\mapsto x_i^
{m} \quad\mbox{for each $i=1,\ldots,n$,}$$ descends to a $\BC$-algebra 
endomorphism 
 \begin{align}
     \label{Adams}
     \Psi_
     {m}: \BC[\W]^{S_n} \to \BC[\W]^{S_n},
 \end{align}
 known as the {\bf $
 m$-th Adams operation} \cite{BtD}. Since $\BC[\W]^{S_n} = \BC[e_1, \dots,e_{n-1}]$, for $
 m\in \BN$ and $
 k=1,\dots, n-1$ 
 there exists a unique polynomial  $\bar P_{
 m,k}\in \mathbb Z[y_1,y_2,\cdots,y_{n-1}]$ such that
\be \label{eq-def-barP} 
\Psi_m(e_k) = \bar P_{
m,k}(e_1, \dots, e_{n-1}).
\ee
This polynomial $\bar P_{
m,k}$, defined in the context of symmetric polynomials, is called a {\em reduced power elementary polynomial} in \cite{BH23}.

It will become useful to recall the following role of reduced power elementary polynomials in the context of minors of matrices. Suppose that $A$ is an $n$ by $n$ matrix with entries in $R$. 
For each $k=1,\ldots,n-1$, we define $D_k(A) = \sum_{I\in\mathbb J_{k}}\det(A_{I,I})$ as in \eqref{eq.intro.D_k_A_def}, where $\mathbb{J}_k$ is the set of all $k$-subsets of $\mathbb{J}=\{1,\ldots,n\}$, and $A_{I,I}$ is the $I\times I$ submatrix of $A$. 

\begin{lemma}[{\cite[Lem.5]{BH23}}]\label{lem-PD}
	For any two integers $m$ and $k$ with $m>0$, $1\leq k\leq n-1$,
	the polynomial $\bar{P}_{m,k}$ is the unique polynomial in $\mathbb Z[y_1,y_2,\cdots,y_{n-1}]$  such that \\ $$
	\bar{P}_{m,k}(D_1(A),D_2(A),\cdots,D_{n-1}(A)) = D_{k}(A^{m})$$ for every
	$A\in \SLn(\mathbb C)$.
\end{lemma}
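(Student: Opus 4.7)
The plan is to exploit the fact that $D_k(A)$ equals the $k$-th elementary symmetric polynomial evaluated at the eigenvalues of $A$, and combine this with the definition of $\bar P_{m,k}$ via the Adams operation.

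For any $n\times n$ matrix $A$ over $\BC$, the principal-minor sum $D_k(A)$ is (up to sign) a coefficient of the characteristic polynomial $\det(xI-A)$, so $D_k(A)=e_k(\lambda_1,\dots,\lambda_n)$ whenever $\lambda_1,\dots,\lambda_n$ are the eigenvalues of $A$ (counted with multiplicity). For $A\in\SLn(\BC)$ we have $\lambda_1\cdots\lambda_n=1$, so $(e_1(\lambda),\dots,e_{n-1}(\lambda))$ takes values in the quotient $\BC[\W]^{S_n}=\BC[x_1,\dots,x_n]/(x_1\cdots x_n-1)^{S_n}$ from Section~\ref{ss.representations_of_sln}. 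Since $A^m$ has eigenvalues $\lambda_1^m,\dots,\lambda_n^m$,
\[
D_k(A^m)=e_k(\lambda_1^m,\dots,\lambda_n^m)=\Psi_m(e_k)(\lambda_1,\dots,\lambda_n),
\]
where $\Psi_m$ is the $m$-th Adams operation \eqref{Adams}. Applying the defining identity \eqref{eq-def-barP}, namely $\Psi_m(e_k)=\bar P_{m,k}(e_1,\dots,e_{n-1})$ in $\BC[\W]^{S_n}$, and specializing to the eigenvalues of $A$ yields
\[
D_k(A^m)=\bar P_{m,k}(D_1(A),\dots,D_{n-1}(A))
\]
whenever $A$ is diagonalizable. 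Since both sides are polynomial in the entries of $A$ and diagonalizable matrices are Zariski-dense in $\SLn(\BC)$, the identity extends to all of $\SLn(\BC)$.

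For uniqueness, I would verify that the map $(D_1,\dots,D_{n-1}):\SLn(\BC)\to\BC^{n-1}$ is surjective. Given any $(c_1,\dots,c_{n-1})\in\BC^{n-1}$, the monic polynomial $x^n-c_1x^{n-1}+c_2x^{n-2}-\cdots+(-1)^{n-1}c_{n-1}x+(-1)^n$ has $n$ complex roots whose product equals $1$, so the diagonal matrix with those roots on its diagonal lies in $\SLn(\BC)$ and realizes the prescribed values $D_i=c_i$. Consequently, any $Q\in\BZ[y_1,\dots,y_{n-1}]$ satisfying $Q(D_1(A),\dots,D_{n-1}(A))=0$ for all $A\in\SLn(\BC)$ must be the zero polynomial, giving the desired uniqueness.

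The argument involves no genuine obstacle: it is essentially a bookkeeping translation of the symmetric-polynomial identity \eqref{eq-def-barP} into a matrix identity, with the density of diagonalizable matrices in $\SLn(\BC)$ handling the nondiagonalizable case and the surjectivity above handling uniqueness. The only mild point to be careful about is that $\bar P_{m,k}$ indeed has integer coefficients, which is already built into its definition in \eqref{eq-def-barP}, since $\Psi_m$ sends $\BZ[\W]^{S_n}$ into itself and $e_1,\dots,e_{n-1}$ are a polynomial $\BZ$-basis of $\BZ[\W]^{S_n}$.
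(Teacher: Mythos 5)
Your proof is correct. The paper itself does not give a proof of this lemma — it cites it to~\cite[Lem.~5]{BH23} — but the informal discussion in the introduction around \eqref{eq.intro.role_of_P_m_k_for_matrices}--\eqref{eq.intro.role_of_P_m_k_for_symmetric_functions} follows exactly the reasoning you give: identify $D_k(A)$ with the $k$-th elementary symmetric function of the eigenvalues of $A$, note that the $m$-th Adams operation on symmetric functions corresponds to replacing $A$ by $A^m$, and read off the identity from the defining equation \eqref{eq-def-barP}; your density/surjectivity observations for the non-diagonalizable case and for uniqueness are standard and fill in what the paper leaves implicit (one could even skip the density step, since $D_k(A)=e_k(\lambda)$ and the eigenvalues of $A^m$ being $\lambda_i^m$ both hold for arbitrary $A$ by triangularization).
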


In this paper we will be studying various versions of Frobenius homomorphisms for different contexts, eventually including the stated ${\rm SL}_n$-skein algebras, and investigate how 
these different versions are related to 
one another. As an example, we shall see that the $N$-th Adams operation $\Phi_N$ can be interpreted as the representation-theoretic Frobenius homomorphism; in a sense, the entire present section is devoted to understanding of this statement. The corresponding reduced power elementary poylnomials $\bar{P}_{N,
k}$ will play the role of ${\rm SL}_n$ analogs of the Chebyshev polynomials for ${\rm SL}_2$. Indeed, as pointed out in \cite{BH23}, one can see that when $n=2$ and $
k=1$, the single-variable polynomial $\bar{P}_{N,1}$ coincides with the $N$-th Chebyshev polynomial $T_N$ of the first type.

\subsection{Lusztig's modified quantized enveloping algebra} 
\label{sub-modified-qaantized-algebra}
We now review Lusztig's integral modified quantized enveloping algebra of $\sl_n(\BC)$, following \cite{GL93,GL09}. This integral version of the quantum group is the one 
having $\Oq$ as a perfect dual (in the sense we explain in \OldS\ref{ss.duality_between_UA_and_Oq}).

Let $\UA$ be Lusztig's modified quantized enveloping algebra, denoted by $_\cA \dot { \mathbf U }$ in \cite{GL93}. It is a $\Zq$-algebra generated by symbols $1_\lambda, E_i^{(
r)}1_\lambda, F_i^{(
r)}1_\lambda$, $i=1, \dots, n-1,\,
r\in \BN_{\ge 1}, \lambda \in X$, subject to relations described in \cite{GL93}. We use the convention $E_i^{(0)}1_\lambda = F_i^{(0)}1_\lambda = 1 _\lambda$. We have
$1_\lambda 1 _{\lambda'} = \delta_{\lambda, \lambda'} 1 _\lambda$. In particular,
each $1_\lambda$ is an idempotent.

\no
{Moreover
\begin{align*}
	1_\lambda 1 _{\lambda'} & = \delta_{\lambda, \lambda'} 1 _\lambda\\
	\UA &= \bigoplus_{\lambda, \lambda'\in X} 1_\lambda \UA 1_{\lambda'}.
\end{align*}
}

Note that $\UA$ does not have
the element 1 which should be the sum of all $1_\lambda, \lambda \in X$. 
Moreover $\UA$ is a topological Hopf algebra where the coproduct $\Delta(x)$ is in general an infinite sum of elements of $\UA \ot \UA$ and belongs to a certain completion of $\UA \ot \UA$.

For each dominant weight $\lambda\in X_+$ there exists a unique $\UA$-module $\bY_q(\lambda)$, denoted by $_\cA \Lambda(\lambda)$ in \cite{GL93}, 
which can be interpreted as a quantum deformation of the simple $\sl_n(\BC)$-module $\Lambda^\classic(\lambda)$ of highest weight $\lambda$ and 
which satisfies the following property:
	for each  $\mu\in X$ the $\Zq$-module $M^\mu:=1_\mu (\bY_q(\lambda))$ is free with rank 
	\be 
    \rk_{\Zq} M^
    {\mu}= \dim_\lambda(\mu),
	\label{eq.rank}
	\ee
	where $\dim_\lambda(\mu)$ is the dimension of the $\mu$-weight subspace of  
    $\Lambda^\classic(\lambda)$.

\subsection{Duality between $\UA$ and $\Oq$}\label{ss.duality_between_UA_and_Oq}

The algebras
$\UA$ and  $\OA$ are Hopf dual to each other. This means that there is a 
$\Zq$-bilinear form 
\be \la \cdot, \cdot \ra:  \OA  \ot _\Zq \UA   \to \Zq
\label{eq.formZ}
\ee satisfying certain conditions, among 
which are: for $x, x_1, x_2 \in \UA$ and $y, y_1, y_2\in \Oq$, 
\begin{align}
\text{if} \ \Delta(y) = \sum y'\otimes y'', \ \text{then }
\langle y, x_1x_2\rangle &= \sum
		\langle y', x_1\rangle\langle y'',x_2\rangle,   \notag  \\
\text{if} \ \Delta(x) = \sum x'\otimes x'', \ \text{then }		\langle y_1y_2,x\rangle &= 
\sum \langle y_1,x'\rangle \langle y_2,x''\rangle \label{eq-dot-paring},
\end{align}
using the usual Sweedler notation. 
Although the last sum of \eqref{eq-dot-paring} is a priori  an infinite sum in general, there are only finitely many non-zero terms.

\def\can{{\mathsf{can}}}
\def\cT{\mathcal T}

The bilinear form \eqref{eq.formZ} is {\em perfect} in the following sense. The $\Zq$-module $\UA$ has  a free basis $\dot{ \mathbf B}$,  the {\em canonical basis} \cite{GL93}. For each $b \in \dot{ \mathbf B}$ there exists $b^*\in \OA$ such that the set $\{b^* 
\,|\, b \in \dot{ \mathbf B}\}$ is a $\Zq$-basis of $\OA$ (the {\em dual canonical basis})  \cite{Kashiwara93, GL93}. Moreover \cite{GL93}
\be\label{eq-cononical-basis}
 \la (b')^*, b \ra = \delta_{b, b'}, \ \text{for all} \ b, b'\in \dot{ \mathbf B}.
\ee

In particular, the $\Zq$-linear map $\Oq \to (\UA)^*:= \Hom_\Zq(\UA, \Zq)$, induced by the form \eqref{eq.formZ}, is injective. We will identify $\Oq$ with the image of this map.

\subsection{Module-trace map}\label{ss.module-trace_map} We explain how certain $\UA$-modules give rises to elements in $\Oq$.

Let $V$ be an $\UA$-module 
that is free over $\Zq$ of finite rank. For $v\in V$ and $f\in \Hom_\Zq(V, \Zq)$, the $\Zq$-linear function $\UA \to \Zq$, defined by
$u \to f(u v)$, is called a {\em matrix entry} function on $\UA$.

Define $\cT(V) \in (\UA)^*$, called the {\bf module-trace map} of $V$ 
as follows: for  $u\in \UA$,
$$
\cT(V)(u)={\rm tr}_V(u),
$$  
where the right hand side is the trace of the action of $u$ on $V$.
All the matrix entry functions are in $\Oq$.
In particular,  $\cT(V)\in \Oq$, as $\cT(V)$ is the sum of the diagonal matrix entry functions.

\blem[module-trace of a fundamental representation of $\UA$] \label{lem-trace-fundamental-representations-dot}
For $k=1,\ldots,n-1$, we have
\be 
\cT( \Lambda_q(\varpi_k))= D_{k}^{q} ({\bf u}).
\ee
\elem
\bpr Consider the anti-symmetric $q$-polynomial module over $\Zq$:
$$A_q[y_1,\dots, y_n] := \Zq\la y_1, \dots, y_n\ra/ (y_i y_j + q y_j y_i, y^2_i; 1\le  i< j\le n).$$
There is an $\Oq$-coaction $\al: A_q[y_1,\dots, y_n]  \to A_q[y_1,\dots, y_n]  \ot \OA$ given by $$\al (y_i) 
:=\sum _{j=1}^{n} y_j\otimes_{\Zq} u_{ji}.$$ The $\Zq$-module $
A_q[y_1,\dots, y_n] $ is free of rank $2^n$ with basis $y_J:= y_{j_1} \dots y_{j_
{k}}$, $J= (j_1 < \dots < j_
{k})\in \JJ_
{k}$, $1\le k \le n-1$. For each $k=1,\ldots,n-1$, the $\Zq$-submodule $A^k_q[y_1,\dots, y_n]$ spanned by $y_J, J \in \JJ_k$, is a subcomodule. Dually, $A^k_q[y_1,\dots, y_n]$ is a $\UA$-module of highest weight $\varpi_k$. Hence 
\begin{align}\label{eq.Dk}
	 \cT( \Lambda_q(\varpi_k))= \sum _{J \in \JJ_k} \text{coefficient of $y_J$ in $
     \alpha(y_J)$  } = \sum _{J \in \JJ_k} M^J_J = D_{k}^{
     q} ({\bf u}).
\end{align}
\epr

\def\cU{{\mathcal U}}
\def\Yx{\Lambda_\xi}
\def\Ux{\dot\cU_\xi}
\subsection{Specializing $q$ to a complex number}
We now review some of Lusztig's results \cite{GL92,GL93} on representations of $\UA$ when $q$ is evaluated at a non-zero complex number.

For a non-zero $\xi\in \BC$ define, similarly to the case of $\Ox$, 
$$\Ux:= \UA \ot _\Zq \BC, \qquad \Yx(\lambda):= \bY_q(\lambda) \ot _\Zq \BC$$ where 
$\BC$ is considered as an $\Zq$-module by $q 
\mapsto\xi$. Then $\Yx(\lambda)$ is an $\Ux$-module.

An $\Ux$-module $M$ is {\em unital} if \begin{itemize}
	\item for any $m\in M$ we have $1_\lambda m =0$ for all but finitely many $\lambda\in X$, and
	\item for any $m\in M$ we have $ m= \sum _{\lambda \in X } 1_\lambda m$.
\end{itemize}
Let $M$ be a finite-dimensional unital $\Ux$-module. Then $M^\lambda: = 1_\lambda M$ is called the $\lambda$-weight subspace. If $M^\lambda\neq 0$ then $\lambda$ is called a weight of $M$.
We have the {\em weight decomposition }
$$
M = \bigoplus_{\lambda \in X} M^\lambda.
$$

Each $M^\lambda$ is finite-dimensional, and $M^\lambda  =0$ for all but finitely many $\lambda\in X$. Define the formal character
$$ 
\Ch_{\xi}(M) = \sum _{\lambda\in X} (\dim M^\lambda) \, e^{\lambda} \in \BZ[X].
$$

{
Note that $\Ux$-submodules of $M$ and $\Ux$-quotients of $M$ are also unital. 
Let $\Rep_\xi$ be the Grothendieck ring of {the category of} finite-dimensional unital $\Ux$-representations, with the ground ring extended from $\BZ$ to $\BC$. 
}

\bthm[\cite{GL92,GL93}]\label{thm-rep-dotU}

\begin{enuma}
\item
For $\lambda\in X_+$ the $\Ux$-module $\Lambda_\xi(\lambda)$ has a unique simple quotient $L_\xi(\Lambda)$. Moreover $\dim_\BC(L_\xi(\lambda))^\lambda =1 $. 
The map $\lambda \mapsto \Lambda_\xi(\lambda)$ is a bijection from $X_+$ to the set of all isomorphism classes of simple unital $\Ux$-modules.

\item For $
k=1, \dots, n-1$, we have
$ 
L_\xi(\varpi_
{k}) = \Yx(\varpi_
{k}) 
$ and 
consequently $\Ch_\xi( L_\xi(\varpi_
{k}) )= e_
{k}$.

\item The algebra $\Rep_\xi$ is the $\BC$-polynomial algebra in the variables $L_\xi(\varpi_1), \dots, L_\xi(\varpi_{n-1})$,
\be \Rep_\xi = \BC[L_\xi(\varpi_1), \dots, L_\xi(\varpi_{n-1})].
\label{eq.iso15}
\ee
 Moreover, $\Ch_\xi$ gives an algebra isomorphism  $\Ch_\xi: \Rep_\xi \rightarrow \BC[X]^{S_n}$.

\end{enuma}
\ethm
\bpr 
We refer to \cite{GL93} for parts (a) and (b), and here prove (c) only. From the definition it is easy to show that $\Ch_\xi: \Rep_\xi \rightarrow \BC[X]$ is an algebra homomorphism.

 If $\Ux$-modules $M, M_1, M_2$ satisfy $M= M_1+ M_2$ in $\Rep_\xi$,  then for any $\mu \in X$,
\be 
\dim_\BC(M^\mu) = \dim_\BC(M_1^\mu)+ \dim_\BC(M_2^\mu).
\label{eq24}
\ee

By part (a) we have $ \Rep_\xi ={\rm span}_\mathbb{C}\{ L_\xi(\lambda) \, |\, \lambda \in X_+\}.
$

Let $\lambda\in X_+$.
There is an $\Ux$-submodule $M_\xi$ of $\Lambda_\xi(\lambda)$ such that 
\be \Lambda_\xi(\lambda)= L_\xi(\lambda) + M_\xi,\ \text{as an equality in $\Rep_\xi$}.
\label{eq25}
\ee

Since $\dim_\BC(\Lambda_\xi(\lambda)^\lambda ) = \dim_\BC(L_\xi(\lambda)^\lambda )=1$ and all weights $\mu \neq \lambda$ of $\Lambda_\xi(\lambda)$ are  less than $\lambda$, 
identity \eqref{eq24} shows that all weights of
$M_\xi$ are less than $\lambda$. Hence in $\Rep_\xi$ we have
$$ M_\xi = \sum_{ \mu \in X_+, \, \mu < \lambda } c_\mu L_\xi(\mu),
$$
for some $c_\mu \in \mathbb{Z}$.

Combining with \eqref{eq25}, and 
by induction on $\lambda$, we get that each $L_\xi(\mu)$ is a $\BZ$-linear combination of $\{ \Lambda_\xi(\mu) \mid \mu\in X_+\}$.
This shows
\be 
{\rm span}_\mathbb{C}\{ \Lambda_\xi(\lambda) \, |\, \lambda \in X_+\} = {\rm span}_\mathbb{C}\{ L_\xi(\lambda) \, |\, \lambda \in X_+\} = \Rep_\xi.
\ee
Since $\Rep_\BC$ has $\{ \Lambda(\lambda) \, |\, \lambda \in X_+\}$ as a 
$\BC$-vector space basis (Proposition \ref{rRepC}(a)), there is a surjective $\BC$-linear map $f: \Rep_\BC \onto \Rep_\xi$ given by $f(\Lambda(\lambda))= \Lambda_\xi(\lambda)$. The composition of the following $\BC$-linear maps
$$ \Rep_\BC \xtwoheadrightarrow{f} \Rep_\xi \xrightarrow[]{\Ch_
{\xi}} \BC[X]$$
is the classical character map $\Ch$, which maps $\Rep_\BC$ bijectively onto $\BC[X]^{S_n}$ by Proposition \ref{rRepC}(b). It follows that $f$ is 
a vector space isomorphism, and $\Ch_\xi$  maps $\Rep_\xi$ bijectively onto $\BC[X]^{S_n}$. As $\Rep_\BC$ is the free $\BC$-polynomial algebra in the variables $\Lambda(\varpi_1), \dots, \Lambda(\varpi_{n-1})$, we get \eqref{eq.iso15}.
\epr

\def\Ox{\mathcal O_{\xi}}

The form \eqref{eq.formZ} specializes to a $\BC$-bilinear  form $$\la \cdot, \cdot \ra_\xi : 
\Ox \ot_\BC \Ux
\to \BC.$$ From  
\eqref{eq-cononical-basis} we have

\blem
\label{lem.inj}
 The map $\Ox
 \to (\Ux)^*$ induced by the form $\la \cdot, \cdot \ra_\xi$ is injective.
\elem
We will identify 
$\Ox$ with a subset of $(\Ux)^*$ by the above embedding.

\def\cT{{\mathcal T}}

\subsection{Module-trace map at a complex number}\label{ss.module-trace_map_at_a_complex_number}

For a finite-dimensional $\Ux$-module $V$ define $\cT_\xi(V)\in (\Ux)^*$, called the {\bf module-trace map} of $V$, so that for $u\in \Ux$ the value $\cT_\xi(V)(u)$ is 
 the trace of the action of $u$ on $V$:
 $$
 \mathcal{T}_\xi(V)(u) = {\rm tr}_V(u).
 $$
 From the properties of the trace, we have
\begin{align*}
\cT_\xi(V \ot V')& = \cT_\xi(V) \cT_\xi(V'), \\
\cT_\xi(V_2)& = \cT_\xi(V_1) + \cT_\xi(V_3), \quad \text{if } \ 
0 \to V_1 \to V_2 \to V_3 \to 0 \quad \text{is exact}.
\end{align*}
 Hence we can extend $\cT_\xi$ to a $\BC$-algebra homomorphism
$
\cT_\xi: \Rep_\xi \to (\Ux)^*.
$
\bpro \label{r-inj}
\begin{enuma}
\item We have $\cT_\xi (\Rep_\xi) \subset \Ox$.

\item The map $\cT_\xi: \Rep_\xi \to \Ox$ is injective.
\end{enuma}
\epro
\bpr (a) Since the algebra $\Rep_\xi$ is generated by $\Lambda_\xi(\varpi_
{k})$, $1\le k\le n-1$, it is enough to show that $\cT_\xi(\Lambda_\xi(\varpi_
{k}))\in \Ox$ for each $
k=1, \dots, n-1$. Since $\cT(\Lambda_q(\varpi_
{k})) \in \Oq$, we have $\cT_\xi(\Lambda_\xi(\varpi_
{k}))\in \Ox$.

(b) By Lemma \ref{lem-trace-fundamental-representations-dot}, we have $\cT_\xi(\Lambda_\xi(\varpi_
{k}))= D_
{k}^\xi(\buu)$. By 
Lemma \ref{r.indep}, the elements 
$D_
{k}^\xi(\buu), 
k=1, \dots, n-1$, are algebraically independent. Hence by
\eqref{eq.iso15} and Theorem \ref{thm-rep-dotU}(b), it follows that $\cT_\xi$ is injective.
\epr

\def\Ad{{\mathsf{\Psi}}}

\subsection{Roots of 
unity: the Adams operation}\label{subsection-root-Adams}
Assume that $\omega$ is a root of 
unity. Let $N= \ord(\omega^2)$ and $\eta = \omega^{N^2}$. Let $\Ad\colon\Rep_\eta \to \Rep_\omega$ be the following composition
$$  \Ad\colon \Rep_\eta \xrightarrow{\Ch_\eta } \BC[X]^{S_n} \xrightarrow{\Psi_N } \BC[X]^{S_n} \xrightarrow{\Ch_\omega^{-1} } \Rep_\omega,$$
which is a $\mathbb C$-algebra embedding because the $N$-th Adams operation $\Psi_N$ in \eqref{Adams} is  a $\mathbb C$-algebra embedding. As hinted before, we shall be seeing that this map $\Ad\colon\Rep_\eta \to \Rep_\omega$ can be viewed as the Frobenius homomorphism for the representation rings.

By Theorem \ref{thm-rep-dotU} we have $\Ch_\xi( L_\xi(\varpi_{k}) )= e_{k}$, while by definition \eqref{eq-def-barP}, we have $\Psi_N(e_{k}) = \bar P_{N,k}(e_1, \dots, e_{n-1})$. Hence
\be 
\Ad( L_\eta(\varpi_{k}))= \bar P_{N,k} (L_\omega(\varpi_1) , \dots, L_\omega(\varpi_{n-1})). \label{eq57a}
\ee
This equation presents the role of the reduced power elementary polynomial $\bar{P}_{N,k}$ in the context of the fundamental simple modules in the representation rings of quantized enveloping algebras at roots of unity, in relation to the Adams operation.

\def\Fr{{\mathsf{Fr}}}
\def\pb{{\mathsf{pb}}}
\subsection{Roots of unity: Lusztig's quantum Frobenius homomorphism} We now discuss Lusztig's quantum Frobenius homomorphism.
Assume that $\omega$ is a root of unity. Let $N= \ord(\omega^2)$ and $\eta = \omega^{N^2}$.

By \cite[Theorem 35.1.9]{GL93},  there exists a unique Hopf $\BC$-algebra homomorphism 
$$\Fr: \Uo \to \Ue,$$ called  
Lusztig's quantum Frobenius homomorphism,  such that
\begin{align}\label{eq.Lusztig_Frobenius}
    \begin{split}
    \Fr(E_i^{(r)} 1 _\lambda) &= \begin{cases} (E_i^{(r/N)} 1 _{\lambda/N})  &  \mbox{if } N|r\  
    \mbox{and} \ \lambda \in N X \subset X, 
	\\
		0 &\text{otherwise,}
	\end{cases} \\
	\Fr(F_i^{(r)} 1 _\lambda) &= \begin{cases} (F_i^{(r/N)} 1 _{\lambda/N})  &  \mbox{if } N|r\  
    \mbox{and} \ \lambda \in N X \subset X, \\
		0 &\text{otherwise.}
	\end{cases}
    \end{split}
\end{align}

If $V$ is a $\Ue$-module, then by pulling back via $\Fr: \Uo \to \Ue$ we can consider $V$ as an $
\Uo$-module, which is denoted by $\Fr^\pb(V)$.  On the other hand, the map $\Fr: \Uo \to \Ue$ has the linear  dual $\Fr^*: (\Ue)^* \to (\Uo)^*$, where $\mathcal{V}^*= \Hom_\BC(
\mathcal{V}, \BC)$ for a $\BC$-vector space $
\mathcal{V}$.

\blem\label{lem.module-trace_and_Fr}
For a $\Ue$-module $V$, we have
\be 
\cT_\omega( \Fr^\pb(V)) = \Fr^*( \cT_\eta(V)  ).
\label{eq48}
\ee
\elem
\bpr This follows straightforwardly from the definition; for $u\in \Uo$,
$$ \cT_{\omega}( \Fr^\pb(V)) (u)=  
{\rm tr}_{\Fr^\pb(V)}(u) = 
{\rm tr}_V(\Fr(u))
=  \cT_{\eta}(V)(\Fr(u)   ) 
= \Fr^*( \mathcal{T}_\eta(V)) (u).$$
This proves the lemma.
\epr

\bpro\label{r-PhiPsi} 
We have $\Fr^\pb= \Ad\colon \Rep_\eta \to \Rep_\omega$.
\epro
\begin{proof}

	Let a finite-dimensional unital $\Ue$-module  $V$ have weight 
   space decomposition
	$$V=
    \textstyle \bigoplus_{\lambda\in X} V^{\lambda},  \quad V^{\lambda} = 1_{\lambda}\cdot V.$$ 
	Because $\Fr(1_{N\lambda})= 1 _\lambda$, when considered as a subspace of the pull back $\Fr^\pb(V)$, 
 each $V^{\lambda}$ is a $N\lambda$-weight subspace.
 Hence, 
	$$\Ch_\omega(\Fr^\pb(V)) = \sum_{\lambda\in X}(\text{dim}_{\mathbb C} V^{\lambda}) e^{N\lambda} =  \Psi_N(\sum_{\lambda\in X}(\text{dim}_{\mathbb C} V_{\lambda}) e^{\lambda}) = \Psi_N (\Ch_\eta (V)).$$
	This proves the Proposition.
\end{proof}

\def\End{{\mathsf{End}}}

\renewcommand\vec{\avec}
\def\bb{{\avec b}}

\subsection{Duality between two Frobenius homomorphisms} 
The following is probably well known.

\bthm\label{thm-dual2}
Assume that $\omega$ is a root of unity. Let $N= \ord(\omega^2)$ and $\eta = \omega^{N^2}$. Then Lusztig's quantum Frobenius homomorphism $\Fr: \Uo \to \Ue$ is dual to the Frobenius homomorphism $\Phi: \Oe \to \Oo$. This means that for all $x\in \Uo$ and $y\in \Oe$,
\be
\la y,  \Fr(x) \ra_{\eta} =  \la \Phi(y), x \ra_{\omega}
\label{eq.dual0}
\ee
\ethm
By the injectivity of Lemma \ref{lem.inj}, the theorem means that the linear dual map $\Fr^*: (\Ue)^* \to (\Uo)^*$ is equal to $\Phi$ on the subset $\Oe\subset (\Ue)^*$. We give a proof of this theorem, for completeness.

\bpr  

The Hopf duality reduces \eqref{eq.dual0} to the case when $x$ is one of $E^{(r)}_t 1_\lambda$, $F^{(r)}_t 1_\lambda$ and $y$ is one of $u_{ij}$, which will be proved by  explicit calculations.

Let $\Mat_n(\BC)$ be the set of all $n\times n$ matrices with entries in $\BC$. Let $E_{i,j}\in \Mat_n(\BC)$ be the matrix whose entries are 0 except for the 
$(i,j)$-th entry, which is 1.
The $\Uo$-module $\Lambda_\omega(\varpi_1)$ is $\BC^n$ as a vector space, and its representation $\rho: \Uo\to  \Mat_n(\BC)$ is given by
	\begin{equation}\label{eq-dot-representation-first-weigt}
		\begin{split}
			\rho(E_i^{(r)} 1_{\lambda}) = \left\{
			\begin{array}{ll}
				E_{t,t} & \mbox{if $\lambda = w_{t}$ for some $1\leq t\leq n$ and $r=0$},\\
				E_{i,i+1} & \mbox{if $\lambda = w_{i+1}$ and $r=1$}, \\
				0 & \mbox{otherwise},
			\end{array}
			\right.\\
			\rho(F_i^{(r)} 1_{\lambda})= \left\{
			\begin{array}{ll}
				E_{t,t} & \mbox{if $\lambda = w_{t}$ for some $1\leq t\leq n$ and $r=0$},\\
				E_{i+1,i} & \mbox{if $\lambda = w_i$ and $r=1$}, \\
				0 & \mbox{otherwise}.
			\end{array}
			\right.
		\end{split}
	\end{equation}
	For any $x\in\Uo$ and $i,j \in \JJ=\{1,\ldots,n\}$, 
    we have
    \begin{align}
        \label{eq.pair_with_u_ij}
        \langle u_{ij},x \rangle_\omega = \mbox{the $(i,j)$-th entry of $\rho(x)$.}
    \end{align}
	
	Recall that $ \Delta(x)$, for $x\in 
    \Uo$,  is in general an infinite sum of elements in $\Uo \ot \Uo$ (with  only a finite number of them acting non-trivially on the tensor product of any two finite-dimensional unital $\Uo$-modules). Define $\Delta^{(m)}(x)$ for $m \ge 2$ recursively by
	$$ \Delta^{(2)}= \Delta, \qquad \Delta^{(m)}= (\Delta \ot \id^{\ot (m-2)  }) \circ \Delta^{(m-1)}, \quad  m \ge 3.$$
	\def\vl{{\avec \lambda}}
	For example, for $\lambda\in X$.
	\be 
		\Delta^{(N)}(1_{\lambda}) = \sum_{\vl =(\lambda_1,\ldots,\lambda_N) \in X^N(\lambda)} 1_{\lambda_1}\otimes\cdots\otimes 1_{\lambda_N}.
		\label{eq.coprod0}
		\ee
where $X^N(\lambda)= \{ \vl=
(\lambda_1,\ldots,\lambda_N) \in X^N \mid \sum \lambda_j= \lambda\} $. 

Let $1\leq t\leq n-1$, $r$ be a positive integer, and $\lambda\in X$. 	
To describe $\Delta^{(N)}(E_t^{(r)} 1_{\lambda})$ we use the following notations. Let $\varsigma_t = E_{t,t} - E_{t+1, t+1}$, an element of the Cartan subalgebra.
For  ${\vec b}=(b_1,b_2,\cdots,b_N)\in\mathbb N^{N}$, $\vl=(\lambda_1,\ldots,\lambda_N)$, and $0\leq i\leq N-1$, define 
$$ s_i({\vec  b}) = \sum_{j=i+1}^N b_j, 
	\qquad
	s({\vec  b}) = \sum_{1\leq i\leq N-1} b_is_i({\vec  b}), \qquad f_t(\bb, \vl)= \sum_{1\leq l\leq N-1} s_l({\vec  b}) \lambda_l( \varsigma_t). $$

From \cite[Proposition 4.8]{GL90} and the definition of the coproduct  we have, in $\Uo$,
\begin{align}
\Delta^{(N)}(E_t^{(
r)} 1_{\lambda})
			&=\sum_{{\vec  b}\in\mathbb N^N, s_0({\vec  b})=
            r; \ \vl \in X^N(\lambda)}\omega^{s({\vec  b})+
			f_t(\bb, \vl)
			} 
			\big(
			E_t^{(b_1)} 1_{\lambda_1}
			\otimes 
			\cdots
			\otimes 		E_t^{(b_{N})} 1_{\lambda_N}\big),  \label{eq.DeltaE}\\
			\Delta^{(N)}(F_t^{(
            r)} 1_{\lambda})
			&=\sum_{{\vec  b}\in\mathbb N^N, s_0({\vec  b})=
            r; \ \vl \in X^N(\lambda)}\omega^{-s({\vec  b})-
			f_t(\bb, \vl)
			} 
			\big(
			F_t^{(b_N)} 1_{\lambda_N}
			\otimes 
			\cdots 
			\otimes 		F_t^{(b_{1})} 1_{\lambda_1}\big),  \label{eq.DeltaF}
\end{align}

Let us prove \eqref{eq.dual0} for $y=u_{ij}$ and $x= 1_\lambda$. By \eqref{eq.Phi_on_u_ij}, \eqref{eq-dot-paring} and \eqref{eq.coprod0}, the right hand side of \eqref{eq.dual0} equals
\begin{align*}
	\langle u_{ij}^N, 1_{\lambda} \rangle_{\omega} = 
	\sum_{\vl\in X^N(\lambda)} 	\langle u_{ij}, 1_{\lambda_1}  \rangle_{\omega}\cdots 	\langle u_{ij}, 1_{\lambda_N}  \rangle_{\omega}=
	\begin{cases}
		1 & \mbox{if } i=j \mbox{ and } \lambda = N w_i,\\
		0 & \text{otherwise},
	\end{cases}
\end{align*}
where for the second equality we used \eqref{eq-dot-representation-first-weigt} and \eqref{eq.pair_with_u_ij}.
The left hand side of \eqref{eq.dual0} is
\begin{align*}
	 \langle u_{ij}, \Fr(1_{\lambda})  \rangle_{\eta} =	
	\begin{cases}
		1 & \mbox{if } i=j \mbox{ and } \lambda = N w_i,\\
		0 & \text{otherwise},
	\end{cases}
\end{align*}
where we used \eqref{eq.Lusztig_Frobenius} and \eqref{eq.pair_with_u_ij}.
This proves \eqref{eq.dual0} for $y=u_{ij}$ and $x= 1_\lambda$.

Let us prove \eqref{eq.dual0} for $y=u_{ij}$ and $x=  E^{(r)}1_\lambda$ with $r>0$.
Assume that ${\vec  b}=(b_1,\cdots,b_N)\in\mathbb N^N$ such that
$s_0({\vec  b}) = r$.
Then equations \eqref{eq-dot-representation-first-weigt} and \eqref{eq.pair_with_u_ij} imply that 
$$\langle u_{ij}, 	E_t^{(b_l)} 1_{\lambda_l} \rangle_{\omega}=
\left\{
\begin{array}{ll}
	1& \mbox{if $\lambda_l = w_{t}$, $b_l=0$, and $i=j=t$},\\
	1 & \mbox{if $\lambda_l = w_{t+1}$, $b_t=1$, $i=t$, and $j=t+1$}, \\
	0 & \mbox{otherwise}.
\end{array}
\right.$$
Since $r>0$, we have $b_l\neq 0$ for at least one $l$. This shows that
$\prod_{1\leq l\leq N} \langle  u_{ij}, E_t^{(b_l)}1_{\lambda_l}  \rangle_{\omega} = 0$ if $i=j$. 
Then we get
\begin{align*}
    \prod_{1\leq l\leq N} \langle u_{ij}, E_t^{(b_l)}1_{\lambda_l} \rangle_{\omega} =
	\left\{
	\begin{array}{ll}
		1 & \mbox{if $\lambda_l = w_{t+1}$ and $b_l=1$ for all $1\leq l \leq N$, \, $i=t$, and $j=t+1$}, \\
		0 & \mbox{otherwise}.
	\end{array}
	\right.
\end{align*}

If $b_1=\cdots=b_N=1$ and $\lambda_1 =\cdots =\lambda_N = w_{t+1}$, then  $s({\vec  b}) + f_t(\bb, \vl)=0$. Hence equations \eqref{eq.Phi_on_u_ij}, \eqref{eq-dot-paring} and \eqref{eq.DeltaE} show that the right hand side of \eqref{eq.dual0} equals
\begin{align}\label{pairing-E}
	\langle u_{ij}^N, E_t^{(r)} 1_{\lambda} \rangle_{\omega}
	=
	\left\{
	\begin{array}{ll}
		1 & \mbox{if $\lambda =N w_{t+1}$, $r=N$, $i=t$, and $j=t+1$}, \\
		0 & \mbox{otherwise}.
	\end{array}
	\right.
\end{align}
Meanwhile, from the value of $\Fr$ given by \eqref{eq.Lusztig_Frobenius}, and from equations \eqref{eq-dot-representation-first-weigt} and \eqref{eq.pair_with_u_ij}, we see that the left hand side of \eqref{eq.dual0} equals
\begin{align}\label{pairing-e}
\langle u_{ij}, \Fr(E_t^{(r)} 1_{\lambda}) \rangle_{\eta}
	=
	\left\{
	\begin{array}{ll}
		1 & \mbox{if $\lambda=Nw_{t+1}$, $r=N$, $i=t$, and $j=t+1$}, \\
		0 & \mbox{otherwise}.
	\end{array}
	\right.
\end{align}
Equations \eqref{pairing-E} and \eqref{pairing-e} show that \eqref{eq.dual0} holds for $y=u_{ij}$ and $x=  E^{(r)}1_\lambda$ with $r>0$.

Similarly, we can show  \eqref{eq.dual0} for $y=u_{ij}$ and $x=  F^{(r)}1_\lambda$ with $r>0$.
\epr

\def\End{{\mathsf{End}}}
\def\CL{{\mathcal L}}

\subsection{Adams operations and Frobenius homomorphism} We now show that the Adams operation is the restriction of the Frobenius 
 homomorphism in the following sense.

\begin{theorem} 
\label{thm-L-comp-F}Assume that $\omega\in \BC$ is a root of unity.
	Let $N=\ord(\omega^2)$ and $\eta=\omega^{N^2}$.

    \begin{enuma}
	\item The following diagram commutes:  
	\begin{equation}\label{eq57}
	\begin{tikzcd}
		\Rep_\eta \arrow[r,hook, "\cT_\eta"]
		\arrow[d, "\Ad"] 
		&    \Oe \arrow[d, "\Phi"] \\
	\Rep_\omega  \arrow[r, hook, "\cT_\omega"] 
		&  	\Oo.
	\end{tikzcd}
	\end{equation}
\item Consequently, for $k=1, \dots, n-1$,
\be 
\label{eq.role_of_bar_P_N_i_for_quantum_minors}
\Phi(D_{k}^\eta(\buu)) = 
\bar P_{N,k}(D_{1}^\omega(\buu), \dots, D_{n-1}^\omega(\buu)).
\ee	 
\end{enuma}

\end{theorem}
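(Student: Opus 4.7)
The plan is to assemble the theorem from three ingredients established earlier in the section: Proposition \ref{r-PhiPsi} (identifying the Adams operation with pullback along Lusztig's Frobenius), Lemma \ref{lem.module-trace_and_Fr} (module-trace is compatible with pullback), and Theorem \ref{thm-dual2} (the two Frobenius homomorphisms are dual). With these in hand, (a) becomes a one-line diagram chase, and (b) is an immediate corollary via the known formula \eqref{eq57a}.

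For part (a), I would argue on classes of finite-dimensional unital $\Ue$-modules, which span $\Rep_\eta$ over $\BC$. For such a module $V$, I would write the chain
\[
\cT_\omega(\Ad(V)) \;=\; \cT_\omega(\Fr^\pb(V)) \;=\; \Fr^*(\cT_\eta(V)) \;=\; \Phi(\cT_\eta(V)),
\]
where the first equality is Proposition \ref{r-PhiPsi}, the second is Lemma \ref{lem.module-trace_and_Fr}, and the third is Theorem \ref{thm-dual2}, applied under the identification $\Oe \hookrightarrow (\Ue)^*$ of Lemma \ref{lem.inj} (so that the a priori element $\Fr^*(\cT_\eta(V))$ of $(\Uo)^*$ is recognized as $\Phi(\cT_\eta(V))\in\Oo$). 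Since both $\cT_\omega\circ \Ad$ and $\Phi\circ \cT_\eta$ are $\BC$-linear, agreement on spanning classes yields commutativity of \eqref{eq57}.

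For part (b), I specialize (a) to $V = L_\eta(\varpi_k) = \Lambda_\eta(\varpi_k)$, with the equality coming from Theorem \ref{thm-rep-dotU}(b). By Lemma \ref{lem-trace-fundamental-representations-dot} (specialized to $q=\eta$), $\cT_\eta(L_\eta(\varpi_k)) = D_k^\eta(\buu)$, and by the same lemma at $\omega$, $\cT_\omega(L_\omega(\varpi_i)) = D_i^\omega(\buu)$. Equation \eqref{eq57a} expresses $\Ad(L_\eta(\varpi_k)) = \bar P_{N,k}(L_\omega(\varpi_1),\dots, L_\omega(\varpi_{n-1}))$ inside $\Rep_\omega$. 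Since $\cT_\omega$ is a $\BC$-algebra homomorphism (which we have by construction, as trace is multiplicative under tensor products and additive on short exact sequences -- recalled just before Proposition \ref{r-inj}), applying $\cT_\omega$ distributes through the polynomial $\bar P_{N,k}$, and combined with (a) yields
\[
\Phi(D_k^\eta(\buu)) \;=\; \cT_\omega\bigl(\Ad(L_\eta(\varpi_k))\bigr) \;=\; \bar P_{N,k}\bigl(D_1^\omega(\buu),\dots, D_{n-1}^\omega(\buu)\bigr),
\]
which is \eqref{eq.role_of_bar_P_N_i_for_quantum_minors}.

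There is no serious obstacle at this stage: all the substantive work has been packaged into the preceding subsections, and this theorem is a mechanical synthesis. The only care required is bookkeeping to ensure that $\cT_\omega$ is treated as a ring homomorphism (so that the polynomial expression $\bar P_{N,k}$ passes through it), and that the identification $\Fr^*|_{\Oe} = \Phi$ from Theorem \ref{thm-dual2} is invoked through the injectivity of $\Oe\hookrightarrow(\Ue)^*$ from Lemma \ref{lem.inj}. Both are essentially formal observations given the prior development.
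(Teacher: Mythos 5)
Your proof is correct and takes essentially the same route as the paper: part (a) is assembled from Proposition \ref{r-PhiPsi}, Lemma \ref{lem.module-trace_and_Fr}, and Theorem \ref{thm-dual2}, and part (b) specializes to $L_\eta(\varpi_k)$ and pushes $\bar P_{N,k}$ through the ring homomorphism $\cT_\omega$ using \eqref{eq57a} and Lemma \ref{lem-trace-fundamental-representations-dot}. The paper's proof is the same synthesis, just phrased slightly more compactly.
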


\begin{proof} (a)  The commutativity of the diagram \eqref{eq57} is equivalent to the equation
\begin{align}
\label{eq.module-trace_and_Psi_Phi}
    \Phi\circ \cT_\eta = \cT_\omega \circ \Psi,
\end{align}  
which in turn is equivalent to equation \eqref{eq48} of Lemma \ref{lem.module-trace_and_Fr},
since $\Phi=\Fr^*$ by Theorem \ref{thm-dual2} and $\Ad= \Fr^\pb$ by Lemma \ref{r-PhiPsi}.

(b) By Lemma \ref{lem-trace-fundamental-representations-dot} and Theorem \ref{thm-rep-dotU}(b) we have
\begin{align}
\label{eq.quantum_elementary_symmetric_as_module-trace}
    D^\xi_{k}({\bf u})= \cT_\xi(L_\xi(\varpi_{k}))
\end{align}
for each $k=1,2,\ldots,n-1$ and any non-zero $\xi \in \mathbb{C}$. 
Observe that
\begin{align*}
	\Phi (D^\eta_{k}({\bf u}))  
    & = \Phi(\cT_\eta(L_\eta(\varpi_{k}))) \quad (\because \mbox{equation \eqref{eq.quantum_elementary_symmetric_as_module-trace}}) \\
	& = \mathcal T_{\omega}(\Ad(L_{\eta}(\varpi_{k}))) \quad (\because \mbox{equation \eqref{eq.module-trace_and_Psi_Phi} of part (a)})  \\ 
	& = \mathcal T_{\omega}(\bar P_{N,k} ( L_\omega(\varpi_1), \ldots, L_\omega(\varpi_{n-1}) )) \quad (\because \mbox{equation \eqref{eq57a}}) \\
	& =\bar P_{N,k}( \mathcal{T}_{\omega}(L_\omega(\varpi_1)), \ldots, \mathcal{T}_{\omega}(L_\omega(\varpi_{n-1}))) \quad (\because\mbox{$\mathcal{T}_{\omega}$ is a ring  homomorphism}) \\
	& =\bar P_{N,k}( D^\omega_1({\bf u}), \ldots, D^\omega_{n-1}({\bf u})) \quad (\because\mbox{equation \eqref{eq.quantum_elementary_symmetric_as_module-trace}}).
\end{align*}
\epr

Equation \eqref{eq.role_of_bar_P_N_i_for_quantum_minors} presents the role of the reduced power elementary polynomial $\bar{P}_{N,k}$ in the context of the quantum elementary symmetric polynomials (i.e. sums of principal minors of each size) in the quantized function algebras at roots of unity, in relation to the Frobenius homomorphism.

\begin{remark}
    Francis Bonahon informed us that he and Vijay Higgins prove \eqref{eq.role_of_bar_P_N_i_for_quantum_minors} in their upcoming work, by a proof different from ours.
\end{remark}

The roles of $\bar{P}_{N,i}$ studied in this section will later be translated into the language of stated ${\rm SL}_n$-skein algebras, and we will see that the results obtained this section are what let us bypass difficult skein-theoretic computations.

\def\SO{\cS_{\hat\omega}(\fS)}

\section{Stated ${\rm SL}_n$-skein modules and algebras}\label{sec.stated_SLn-skein_modules_and_algebras}

The elements of the ${\rm SL}_n$-skein module of a $3$-manifold $M$ are linear combinations of isotopy classes of {\it $n$-webs} in $M$, each of which is a union of framed oriented knots and certain graphs embedded in $M$, subject to special local relations called {\it skein relations}, which in turn are modeled on the category of representations of Lie algebras and quantum groups. When $M$ is given as a surface $\fS$ times interval $(-1,1)$, this module is equipped with a natural product structure, and the resulting ${\rm SL}_n$-skein algebra of $\fS$ is known to provide a quantization of the ${\rm SL}_n$-character variety of $\fS$. Later, {\it stated} versions of the ${\rm SL}_n$-skein modules and algebras of `marked' 3-manifolds and `punctured-bordered' (in short, `pb') surfaces have been studied, which behave well under cutting and gluing of the underlying manifolds.

Here we give a review of the literature, on the construction and important properties. Especially, we review the ${\rm SL}_n$-quantum trace map which maps the ${\rm SL}_n$-skein algebra of a surface to a quantum torus algebra.

\def\pM{\partial M}
\def\vk{\varkappa}
\def\drQ{\partial_r(Q)}
\def\pal{\partial \al}
\def\bpp{\beta^\perp}
\def\p{\partial }
\def\MM{\mathbb M}

\newcommand\FIGc[3]{\begin{figure}[htpb]
\includegraphics[height=#3]{draws/#1.eps}
\caption{#2}\label{f.#1}
\end{figure}}

\subsection{Marked $3$-manifolds and $n$-webs} We introduce the category of marked 3-manifolds, and its subcategory of punctured bordered  surfaces.
\label{ss.marked} 

A {\bf marked {$3$}-manifold} is a pair $(M, \cN),$ where $M$ is a smooth oriented $3$-manifold with (possibly empty) boundary $\p M$, and $\cN$, called the marking, is an oriented 1-dimensional submanifold of $\p M$. We require that each connected component of $\cN$ is an open interval. 

An {\em embedding} of a marked 3-manifold $\MN$ into a marked 3-manifold $(M', \cN')$ is an orientation-preserving proper embedding $f: M \embed M'$  that maps $\cN$ into $\cN'$ preserving their orientations. Two such embeddings $f$ and $f'$ are {\em isotopic} if there is a continuous family of diffeomorphisms $H_t: M'\to M', t\in [0,1]$, such that $
H_0=\id, H_1\circ 
F= f'$, and $H_t(\cN')\subset \cN'$ for all $t\in [0,1]$. 

The {\bf category of marked 3-manifolds} $\MM$ has marked 3-manifolds as objects, and 
 a morphism  from $\MN$ to $(M', \cN')$ is an isotopy class of embeddings from $\MN$ to $(M', \cN')$. 

\bdf\label{def-n-web}
An  {\bf $n$-web} $\alpha$ in $\MN$ is a disjoint union of finite number of oriented circles and a finite directed graph properly embedded into $M$ such that
\benu[nosep]
\item  Every vertex of $\alpha$ is either a sink or a source and either $1$-valent or $n$-valent. 
We denote the set of 1-valent vertices, called {\bf endpoints} of $\al$, by $\pal$. 
\item Each edge of the graph  is a smooth embedding of the closed interval $[0,1]$ into $M$.
\item $\alpha$ is equipped with a {\bf framing} which is a continuous non-vanishing vector field transversal to $\alpha$. In particular, the framing at a vertex is  transversal to all incident edges.
\item[(4)] The set of half-edges at every $n$-valent vertex is cyclically ordered. 
\item[(5)] $\al \cap \p M=\al \cap \cN=\p \al$,  and the framing at these endpoints is a tangent vector of $\cN$, pointing in the direction of the orientation of $\cN$. We call such tangent vector {\bf positive}.
\eenu
\edf

The  {\bf height order} on $\p \al$  is the  partial order in which  two points $x,y\in \p \al$ are comparable if and only if they belong to the same 
component of the marking $\mathcal{N}$, and $x > y$, or {\bf $x$ is higher than $y$}, if going along the positive direction of the marking we encounter $y$ first. We say that $x$ and $y$ are {\bf consecutive} if there is no $z\in \p \al$ such that $ x > z > y$ or $ y > z > x$.

A  {\bf state} of an $n$-web $\alpha$ is a map  $\partial \alpha\rightarrow \mathbb J=\{1,2,\ldots,n\}$.  
An $n$-web $\alpha$ together with the choice of a state is called a {\bf stated $n$-web}.
We consider (stated) $n$-webs up to isotopy within their 
classes. 
By convention, the empty set is considered a stated $n$-web which is isotopic only to itself.

 Recall that for $i\in \JJ$ its conjugate $\bi$ is $n+1-i$. Also, $S_n$ is the symmetric group of $\JJ$.

 The {\bf stated ${\rm SL}_n$-skein module}  $\cS_{\hq}\MN$ 
 is the $\Zhq$-module freely spanned by the isotopy classes of stated $n$-webs in $\MN$ modulo the following defining relations using the constants $\ttt, \aaa, \ccc_i$ defined in \eqref{def-constants-tac} and the quantum integer $[n]_q = \frac{q^n-q^{n-1}}{q-q^{-1}} = q^{-n+1}+q^{-n+3}+\cdots+q^{n-1}$ in \eqref{eq.quantum_integer}. The first batch of relations regarding the interior of $M$ are:
\begin{gather}
q^{\frac 1n} \cross{}{}{p}{>}{>} - q^{-\frac 1n}\cross{}{}{n}{>}{>}
= (q-q^{-1})\walltwowall{}{}{>}{>}, \label{e.pm} \\
\kink = \ttt \horizontaledge{>}, \label{e.twist}\\
\circlediag{<} = (-1)^{n-1} [n]_{q} \TanglePic{0.9}{0.9}{}{}{}, \label{e.unknot}\\
\sinksourcethree{>}=(-q)^{\binom{n}{2}}\cdot \sum_{\sigma\in S_n}
(-q^{(1-n)/n})^{\ell(\sigma)} \coupon{$\sigma_+$}{>}. \label{e.sinksource}
\end{gather}
where the ellipse enclosing $\sigma_+$ is the minimum crossing positive braid, whose crossings are positive
$\cross{}{}{p}{>}{>}$,  representing a permutation $\sigma\in S_n$ and $\ell(\sigma)$ is the length of $\sigma\in S_n$, which is the minimum number of transpositions $(1 2)$, $(2 3)$, \ldots, $(n-1 \, n)$ needed to express $\sigma$. For example, the minimum positive braid corresponding to the element $\sigma=(1 2 3) \in S_n$ is 
$
\raisebox{-.2in}{
\begin{tikzpicture}
	\fill[gray!20] 
    (0,0.2) 
    rectangle 
    (1.6,1.4);
	\begin{knot}
		\draw[-o-={0.9}{>}] (0,1.3) -- (1.6,0.3);
		\draw[-o-={0.9}{>}] (0,0.8) -- (1.6,1.3);
		\draw[-o-={0.9}{>}] (0,0.3) -- (1.6,0.8);
		\strand[edge]  (0,1.3) -- (1.6,0.3);
		\strand[edge] (0,0.8) -- (1.6,1.3) (0,0.3) -- (1.6,0.8);
	\end{knot}
\end{tikzpicture}}
$, where $\ell(\sigma)=\ell( (12)(13)) = 2$. 
The second batch of relations regarding $\mathcal N$ are:
\begin{align}
\vertexnearwall{white} & = \aaa \sum_{\sigma \in S_n} (-q)^{\ell(\sigma)} \nedgewall{<-}{white}{$\sigma(n)$}{$\sigma(2)$}{$\sigma(1)$}
\label{e.vertexnearwall}\\
\capwall{<-}{right}{white}{$i$}{$j$} & = \delta_{\bar j,i} \ccc_i,\label{e.capwall}\\
\capnearwall{white} &= 
\sum_{i \in \JJ} (\ccc_{\bar i})^{-1} \twowall{<-}{white}{black}{$i$}{$\bar{i}$}
\label{e.capnearwall}\\
\crosswall{<-}{p}{white}{white}{$i$}{$j$}
&=q^{-\frac{1}{n}}\left(\delta_{{j<i}}(q-q^{-1})\twowall{<-}{white}{white}{$i$}{$j$}+q^{\delta_{i,j}}\twowall{<-}{white}{white}{$j$}{$i$}\right).
\label{e.crossp-wall}
\end{align}
Each shaded rectangle in relations \eqref{e.pm}-\eqref{e.crossp-wall} is the projection of a small open cube 
in $M$. 
The oriented graphs and graphs with small white or black circles contained in the shaded rectangles represent parts of stated $n$-webs with framing  pointing to  readers. 
Small white circles in each equation represent an arbitrary orientation (left-to-right or right-to-left) of the edges, consistent for the entire equation. The black circle represents the opposite orientation.
When a boundary edge of a shaded area is directed, the direction indicates the height order of the endpoints of the stated $n$-webs on that directed line, where going along the direction increases the height, and the involved endpoints are consecutive in the height order. The height order outside the drawn part can be arbitrary.
For detailed explanation for relations \eqref{e.pm}-\eqref{e.crossp-wall},
see \cite{LS21}.

To simplify notations, we will write 
$\cS_{\hq}\MN$ as $\cS\MN$.

\subsection{Punctured bordered surfaces and stated ${\rm SL}_n$-skein algebras}
\label{subsec:punctured_bordered_surface_and_n-web}

\begin{definition}\label{def.pb_surface}
A {\bf punctured bordered
surface $\fS$} (or a {\bf pb surface}) is a surface of the form $\fS =\bfS\setminus \cV$, where $\bfS$ is a compact oriented 2-dimensional manifold with (possibly empty) boundary $\partial \bfS$, and $\cV\subset 
\bfS$ is a finite set such that every component of $\partial \bfS $ intersects $\cV$. Each connected component of $\partial \fS =\partial 
\bfS \setminus \cV$ is diffeomorphic to the open interval $(0,1)$ and is called a {\bf boundary edge}. A point $x\in \cV$ is called an {\bf ideal point}, or a {\bf puncture}, of $\fS$. 
A point $x \in \mathcal{V}$ that does not lie in $\p \bfS$ is called an {\bf interior puncture}.

A pb surface $\fS$ is {\bf essentially bordered} if every connected component of it has non-empty boundary.
\end{definition}

An \term{ideal arc} in $\fS$ is an embedding $c:(0,1)\embed\fS$ which can be extended to an immersion $\bar c : [0,1] \to \bfS$ such that $\bar c(0), \bar c(1) \in 
\mathcal{V}$. An ideal arc $c$ is \term{trivial} if the extended map $\bar c$ can be homotoped
rel its boundary to a point.

A closed interval properly embedded in $\fS$ is called a \term{$\pfS$-arc}. A $\pfS$-arc is \term{trivial} if it is homotopic 
rel its boundary points to a subinterval of $\pfS$.

Let $\fS$ be a pb surface.
For each boundary component $b$ of $\fS$, we select one point $x_b\in b$. 
Define 
\begin{align}
    \label{marked_3-manifold_from_surface}
    \mbox{$M_\fS=\fS\times(-1,1)$ \quad and \quad
$\cN_\fS=
\bigcup_{b}(\{x_b\}\times (-1,1))$,}
\end{align}
where the union 
goes over all boundary components $b$ of $\fS$ and the orientation of $\mathcal N_\fS$ is the positive orientation of $(-1,1)$. 
Then $(M_\fS,\mathcal N_\fS)$ is a marked $3$-manifold defined up to isotopy, called the {\bf thickening} of ${\fS}$, denoted as $\widetilde{\fS}$.
Define 
\begin{align}
    \label{stated_SLn-skein_algebra}
    \cS_\hq(\fS)=
\cS(\fS) := \cS(\tfS) = \cS(M_\fS,\mathcal N_\fS),
\end{align}
called the {\bf stated ${\rm SL}_n$-skein algebra} of $\fS$.
For two stated $n$-webs $\al, \beta$ in 
$(M_\fS,\mathcal N_\fS)$, its product $\al \beta\in
\cS(\fS)$ is defined as the result of stacking $\al$ above $\beta$. This means that
we first isotope so that $\al \subset \fS \times (0,1)$ and $\beta \subset \fS\times(-1,0)$, then let $\al \beta := \al \cup \beta\in \cS(\fS)$.

We often identify $\fS$ as the subset $\fS \times \{0\}$ of $\tfS $. For a point $(x,t)\in \tfS = \fS \times (-1,1)$, its {\bf height} is $t$. A tangent vector to $\tfS$ at $(x,t)$ is \term{upward vertical} if it is along the positive direction of the component $(-1,1)$. We denote by $\pr: \tfS \to \fS$ the projection onto the first component. 
When we consider (stated) $n$-webs in $\widetilde{\fS}$, we are allowed to use isotopies preserving the height (partial) order on endpoints to isotope their endpoints away from  $\{x_b\}\times (-1,1)$ for any boundary component $b$ of $\fS$.

Every (stated) $n$-web $\alpha$ in $\widetilde{\fS}$ can be isotoped to a \term{vertical position}, where
\begin{itemize}
\item the framing is upward vertical everywhere,
\item $\al$ is in general position with respect to the projection $\pr: \tfS \to \fS$, and
\item at every $n$-valent vertex, the cyclic order of half edges, after projected onto $\fS$, is compatible with the positive orientation of $\fS$ (counterclockwise if drawn on the pages of the paper).
\end{itemize}

\begin{definition}\label{def-diag}
Suppose that $\al$ is an $n$-web in $\tfS$ in a vertical position.
The projection $
\pr(\al)\subset \fS$, together with the usual over/underpassing information at each double point (forming a crossing of $\alpha$), and the partial order on $\partial 
(\pr(\al)) = \pr(\partial \al)$ induced from the height order on $\partial \al$, is called the {\bf diagram} of $\al$.

An {\bf $n$-web diagram} is the diagram of an $n$-web.

A {\bf stated $n$-web diagram} is the diagram of a stated $n$-web, where we also remember the induced state.
\end{definition}
Whenever clear, we often identify an $n$-web or a stated $n$-web with its diagram, by a slight abuse of notation.

The orientation of a boundary edge $e$ of $\fS$ is \term{positive} if it is induced from the orientation of $\fS$. In pictures the convention is that the positive orientation of a boundary edge is the counterclockwise one (considered from inside $\fS$). If the height order of an $n$-web diagram $\al$ is given by the positive orientation, i.e. the height order increases when following the positive direction on each boundary edge, then we say that $\al$ has \term{positive order}. One defines \term{negative order} similarly, using the \term{negative orientation}, which is the opposite of the positive orientation.

We use $\mathsf A$ to denote the twice-punctured sphere, which is diffeomorphic to a once-punctured open disc; see Figure \ref{fig-twice-punctured-sphere}(a).
There is an $n$-web ${\bf a}$ in $\mathsf{A}\times (-1,1)$ as shown in Figure \ref{fig-twice-punctured-sphere}(b).
An $n$-web $
\alpha$ in a marked $3$-manifold $(M,\mathcal N)$ is called a {\bf framed oriented knot} if $
\alpha$ consists of a single oriented circle. 
Then there exists an embedding $f_
{\alpha}\colon \mathsf{A}\times (-1,1)\rightarrow \mathring{M}$ into the interior of $M$ such that 
$f_
{\alpha}(
{\bf a}) = 
\alpha$.

\begin{figure}
\centering
\begin{tikzpicture}[baseline=0cm,every node/.style={inner sep=2pt}]
\draw[wall,fill=gray!20,dotted] (0,0) circle[radius=1];
    \draw[fill=white] (0,0) circle[radius=0.1];
\path (0,-1.5)node{(a)};

\begin{scope}[xshift=3.5cm]
\draw[wall,fill=gray!20,dotted] (0,0) circle[radius=1];
    \draw[fill=white] (0,0) circle[radius=0.1];
    \draw[decoration={markings, mark=at position 0.25 with {\arrow{>}}}, postaction={decorate}] 
        (0,0) circle[radius=0.5];
    \path (0,-1.5)node{(b)};
\end{scope}
\end{tikzpicture}
\caption{(a) The twice punctured sphere $\mathsf A$ \qquad (b) A knot in $\mathsf A$}\label{fig-twice-punctured-sphere}
\end{figure}
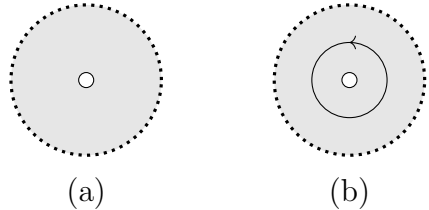

The {\bf bigon} $\PP_2$ is the pb surface obtained from a closed disk by removing two points on its boundary (see  
\OldS\ref{subsec.polygons} for general polygons). The bigon
will play an important role in the present paper. 
A {\bf based bigon} is a bigon together with the choice of a distinguished ideal point, called the {\bf based vertex}. 
In pictures
a based bigon is depicted with the based vertex at the top, and we can define the \term{left edge $e_l$} the \term{right edge $e_r$}, as in Figure \ref{fig-bigon}(a). We often depict $\PP_2 $ as the square $[-1,1] \times (-1,1)$, as in Figure \ref{fig-bigon}(b).
There is an $n$-web $u$ in $\mathbb P_2\times (-1,1)$ as shown in Figure \ref{fig-bigon}(c).
An $n$-web $\gamma$ in $\MN$ that consists of a connected graph with no $n$-valent vertex, two $1$-valent vertices and one edge is called a
{\bf framed $\mathcal N$-arc} in $\MN$.
Then there exists a proper embedding 
$f_\gamma\colon \mathbb P_2\times (-1,1) \rightarrow M$ such that $f_\gamma(u) = \gamma$.

\begin{figure}
\centering
\begin{tikzpicture}[baseline=0cm,every node/.style={inner sep=2pt}]
\draw[wall,fill=gray!20] (0,0) circle[radius=1];
\draw[fill=white] (0,1) circle[radius=0.1] (0,-1) circle[radius=0.1];
\path (-135:1)node[below left]{$e_l$} (-45:1)node[below right]{$e_r$};
\path (0,-1.5)node{(a)};

\begin{scope}[xshift=3.5cm]
\fill[gray!20] (-0.9,-1) rectangle (0.9,1);
\draw[wall] (-0.9,-1) -- (-0.9,1) (0.9,-1) -- (0.9,1);
\path (-0.9,-0.5)node[below left]{$e_l$} (0.9,-0.5)node[below right]{$e_r$};
\path (0,-1.5)node{(b)};
\end{scope}

\begin{scope}[xshift=7cm]
\fill[gray!20] (-0.9,-1) rectangle (0.9,1);
\draw[wall] (-0.9,-1) -- (-0.9,1) (0.9,-1) -- (0.9,1);
\draw[edge,-o-={0.5}{>}] (-0.9,0)node[left]{$i$} -- (0.9,0)node[right]{$j$};
\path (0,-1.5)node{(c)};
\end{scope}

\end{tikzpicture}
\caption{(a) \& (b) (based)
bigon $\PP_2$. \qquad (c) The stated arc in $\PP_2$}\label{fig-bigon}
\end{figure}
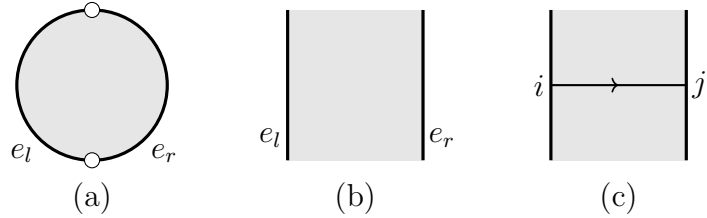

Framed oriented knots and framed $\cN$-arcs are special $n$-webs, which are important in our considerations.

\def\sign{\mathrm{sign}}

\subsection{Change of ground ring}
\label{ss.change-coef} For a commutative $\Zhq$-algebra $R$ define
$$ \cS(M,\cN;R)= \SMN \otimes_\Zhq  R,\quad
\cS(\fS;R)= \cS(\fS) \otimes_\Zhq  R.$$

When the $\Zhq$-algebra $R$ is clear, for any element $x\in \cS(M,\cN)$, we still use $x$ to denote the projection of $x$ in $\cS(M,\cN;R)$.

For a non-zero complex number $\hat\xi$, we have a $\Zhq$-algebra structure 
on $\mathbb C$ given by $\hat q\mapsto \hat\xi$; denote this $\mathbb{C}$ by $\mathbb{C}_{\hat\xi}$.  
We use $\cS_{\hat\xi}\MN$ (resp. $\cS_{\hat\xi}(\fS)$) to denote 
$\cS(M,\cN;\mathbb C_{\hat\xi})$ (resp. $\cS(\fS;\mathbb C_{\hat\xi})$).
For $x\in \SMN$ let $$[x]_{\hat\xi}\in \cS_{\hat\xi}\MN$$ be the image of $x$ under the natural map $\SMN \to \cS_{\hat\xi}\MN$.
When the complex number $\hat\xi$ is clear, we will use $x$ to denote $[x]_{\hat\xi}$.

\subsection{Functoriality}
\label{ss.functor} 
An embedding of marked 3-manifolds $f: \MN\embed (M', N')$ induces 
 a $\Zhq$-module homomorphism $$f_{\ast}=\cS(f): \SMN \to \cS(M', \cN'),$$ 
mapping each $n$-web $\alpha$ to $f(\alpha)$. 
This homomorphism depends only on the isotopy class of $f$.

Hence,  $\cS(\cdot )$ defines a functor from the category $\MM$ of marked $3$-manifolds to the category of $\Zhq$-modules.

\subsection{Embedding of punctured bordered surfaces} \label{sec.embed}

A proper embedding $f:\fS _1 \embed \fS _2$ of pb surfaces yields a $\Zhq$-linear map $f_\ast: \cS(\fS_1) \to \cS(\fS_2)$ as follows.  
Suppose that $\al$ is a stated $n$-web diagram in $\fS_1$ with negative order; see Definition \ref{def-diag} and the paragraph following it.  
Let $[\al]\in \cS(\fS_1)$ be the element determined by $\al$. Define $f_\ast([\al]) = [f(\al)] \in \cS(\fS_2)$, where $f(\al)$ is given the negative boundary order. Clearly, $f_\ast$ is a well-defined $R$-linear map and does not change under ambient isotopies of $f$. In general, $f_\ast$ is not an algebra homomorphism.  
If $f$ maps the set of the boundary components of $\fS_1$ injectively to the set of the boundary components of $\fS_2$, we say $f$ is a {\bf strict embedding}. If $f$ is a strict embedding, then  
$f_\ast: \cS(\fS_1) \to \cS(\fS_2)$ is an algebra homomorphism \cite{LS21}.

\def\cal{\mathcal}

\def\ori{{\mathrm{or}}}
\subsection{Mod $n$ grading for $\SMN$} \label{sec-grading}
We introduce a $\BZ_n$-grading on $\SMN$.

A $n$-web $\al$  defines a homology class  $[\al]_{\mathsf {homol}  }\in H_1(M, \cN;\BZ_n)$ as follows.  
First on each loop component of $\al$ choose a point and declare it a vertex. Now $\al$ is a directed graph, and as such it defines a 1-chain  in $(M, \cN)$ with integer coefficients. This 1-chain is closed mod $n$ and hence defines an element in $H_1(M, \cN;\BZ_n)$, which does not depend on the choice of the vertices on loop components.

For each defining relation of the stated ${\rm SL}_n$-skein module $\cS\MN$, all involved (stated) $n$-webs represent the same element in $H_1(M,\cN; \mathbb{Z}_n)$.
Hence we have a $\BZ_n$-grading 
\be 
 \SMN = \bigoplus _{x \in H_1(M, \cN;\BZ_n)  } \SMN_x,
 \label{eq-grading}
\ee
where $\SMN_x$ is spanned by elements represented by stated $n$-webs of homology class  $x$.

Clearly a morphism $f: \MN\to (M', \cN')$ respects the $\BZ_n$-grading, meaning that
\be 
\cS(f)  (\SMN_x  ) \subset  \cS(M', \cN')_{f_*(x) }.
\ee

\def\SS{\cS(\fS)}

For a pb surface $\fS$, the grading \eqref{eq-grading} is an algebra grading, meaning that
\be 
\SS_x\,  \SS_{x'}  \subset \SS _{x+ x'}.
\ee

\subsection{Essentially marked 3-manifolds}\label{subsec:essentially_marked_3-manifolds} 
A marked 3-manifold $\MN$ (\OldS\ref{ss.marked}) is called an {\bf essentially marked} 3-manifold if each connected component of $M$ intersects $\cN$.

\begin{lemma}\label{lem-span-arcs} \begin{enuma}
\item Assume that $\MN$ is an essentially marked 3-manifold. Then $\SMN$ is spanned by stated $n$-webs whose components consist only of stated framed $\cN$-arcs.

\item Assume that $\fS$ is an essentially bordered pb surface (Definition \ref{def.pb_surface}). Then $\cS(\fS)$ is spanned by stated $n$-web diagrams of the form $\alpha = \bigsqcup_{i=1}^r \alpha_i$, where each $\alpha_i$ is a stated $\partial \fS$-arc 
and
moreover $\alpha = \alpha_1 \alpha_2 \cdots \alpha_{r}$ as elements of $\cS(\fS)$. In particular, the algebra $\cS(\fS)$ is generated by stated $\pfS$-arcs.
\end{enuma}
\end{lemma}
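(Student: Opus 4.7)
My plan for part (a) is to reduce every non-arc component of a stated $n$-web $\alpha$ in $\MN$ to a linear combination of arcs by pushing it to the marking and applying the defining relations of $\SMN$. Call a component of an $n$-web \emph{non-arc} if it is either a closed loop or a graph containing $n$-valent vertices. I would induct on the total number $N(\alpha)$ of $n$-valent vertices plus loop components of $\alpha$; the base case $N(\alpha)=0$ is exactly the desired conclusion.

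For an $n$-valent vertex $v$, I would first choose a path in $M$ from $v$ to a point on $\cN$, which exists by the essentially marked assumption, and then use an ambient isotopy to slide $v$ along a tubular neighborhood of this path until it lies in a small half-ball near the wall, with the $n$ incident half-edges meeting $\cN$ transversally. Any crossings with other strands introduced during this isotopy can be resolved using relation \eqref{e.pm}. Once $v$ is isolated near the wall, relation \eqref{e.vertexnearwall} rewrites the local picture as a linear combination of configurations in which $v$ is replaced by $n$ arcs terminating on $\cN$; each summand has strictly smaller $N$. For a loop component $K$, the analogous plan is to choose a path from a point of $K$ to a point of $\cN$, isotope so that a small portion of $K$ lies in the local picture of relation \eqref{e.capnearwall}, and then apply that relation to replace $K$ by a sum of configurations involving pairs of arcs meeting the wall, again strictly reducing $N$. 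Iterating these two reductions, one expresses $\alpha$ as a linear combination of stated $n$-webs whose components are all framed $\cN$-arcs.

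For part (b), the essentially bordered assumption on $\fS$ implies that $\tfS$ is essentially marked, so part (a) already shows that $\cS(\fS)$ is spanned by stated $n$-webs of the form $\alpha = \bigsqcup_{i=1}^{r} \alpha_i$ with each $\alpha_i$ a stated framed $\cN_\fS$-arc, which in vertical position has diagram a stated $\pfS$-arc. To verify the factorization $\alpha = \alpha_1 \alpha_2 \cdots \alpha_r$ in $\cS(\fS)$, I would isotope the components so that $\alpha_i \subset \fS \times (t_i - \varepsilon, t_i + \varepsilon)$ for distinct heights $t_1 > t_2 > \cdots > t_r$ chosen compatibly with the existing height order on $\partial \alpha$; this isotopy does not change the element of $\cS(\fS)$, and by the definition of the product as vertical stacking the resulting $n$-web is precisely $\alpha_1 \alpha_2 \cdots \alpha_r$. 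The final sentence that $\cS(\fS)$ is generated by stated $\pfS$-arcs is then immediate.

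The main obstacle I anticipate is the careful bookkeeping in part (a): when pushing an $n$-valent vertex or a portion of a loop toward the marking, crossings with other components of the web must be resolved using \eqref{e.pm} and \eqref{e.crossp-wall}, new endpoints accumulating on a marking component must be assigned a consistent height order, and one must verify that the complexity $N$ strictly decreases in every summand produced by \eqref{e.vertexnearwall} and \eqref{e.capnearwall} so the induction actually terminates. A minor additional subtlety in (b) is ensuring that the horizontal reordering of arcs does not alter the relative height order of endpoints sharing a marking component, which is arranged by the choice of the levels $t_i$.
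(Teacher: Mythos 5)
Your part (a) follows the same strategy as the paper: push $n$-valent vertices to the marking and dissolve them via \eqref{e.vertexnearwall}, then remove loops via \eqref{e.capnearwall}. Your inductive framework and the attention to crossings introduced en route are more detailed than the paper's two-line sketch, but the underlying mechanism is identical.

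Part (b), however, has a genuine gap. You claim that, given the stated $n$-web $\alpha = \bigsqcup_{i=1}^r \alpha_i$ produced by part (a), one can isotope the arcs $\alpha_i$ into disjoint horizontal slabs $\fS \times (t_i - \varepsilon, t_i + \varepsilon)$ with $t_1 > \cdots > t_r$ ``chosen compatibly with the existing height order on $\partial\alpha$.'' This is false in general. If, say, $\alpha_1$ has endpoints $x,y$ and $\alpha_2$ has endpoints $z,w$ on the same marking component with height order $x > z > y > w$, then no choice of slabs, and in fact no isotopy of $\tfS$ preserving $\cN$, can realize this interleaved order as a stacked one: any ambient isotopy preserves the height order on the marking, and the interleaved order is incompatible with any assignment $\alpha_i \subset \fS \times (t_i - \varepsilon, t_i + \varepsilon)$. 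The factorization $\alpha = \alpha_1 \cdots \alpha_r$ therefore does not hold for such $\alpha$, and since the output of part (a) has no control over the boundary height orders, the class of webs you are trying to write as products is too large. What is actually needed is relation \eqref{e.crossp-wall}, which rewrites a pair of out-of-order consecutive endpoints on the marking as a linear combination of reordered configurations (up to scalars and lower-complexity terms); iterating this expresses an arbitrary $\alpha$ as a linear combination of webs whose height orders \emph{do} admit a stacking, and only for those does the product identity you want hold on the nose. This is exactly the third step in the paper's proof of (b). You cite \eqref{e.crossp-wall} in your discussion of part (a), so the ingredient is available to you; it simply needs to be invoked in (b) in place of the isotopy argument.
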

\begin{proof}
(a) Using relation \eqref{e.vertexnearwall}, we can kill all sinks and sources. Then relation \eqref{e.capnearwall} helps to remove all knot components.

(b) By Lemma 5.1 of \cite{LS21}, the $R$-module $\cS(\fS)$ is spanned by stated $n$-web diagrams $\alpha$ without sinks, sources, nor crossings. Relation \eqref{e.capnearwall} helps to remove all knot components from $\alpha$, so that we can assume that each $\alpha$ is the disjoint union of stated arcs. Then relation \eqref{e.crossp-wall} helps to reorder the height order of boundary points, so that $\cS(\fS)$ is spanned by stated $n$-web diagrams $\alpha$ of the form $\alpha = 
\alpha_1 
\alpha_2 \cdots 
\alpha_
{r}$, where the $
\alpha_i$'s are disjoint stated arcs.
\end{proof}

\subsection{Reversing orientations of webs} 
An {\bf orientation} of a web
consists of orientations of all its loop components and directions of all its edges.
Let $\cev{\alpha}$ denote an $n$-web $\alpha$ with its orientation reversed (and unchanged framing). 
\begin{lemma}\cite{LS21,LY23}
   The orientation-reversal map $\cev {\,\cdot\,}: \cS\MN\to \cS\MN$
is a well-defined $\Zhq$-module automorphism.
In particular, it is a $\Zhq$-algebra automorphism when $\MN$ is the thickening of a pb surface.
\end{lemma}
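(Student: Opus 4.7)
The plan is to check that the $\mathbb{Z}[\hat q^{\pm1}]$-linear extension of the set-theoretic map $\alpha\mapsto\cev\alpha$ on isotopy classes of stated $n$-webs descends to the quotient $\cS\MN$ defined by relations \eqref{e.pm}--\eqref{e.crossp-wall}. First I would verify that reversing all edge directions and loop orientations preserves the isotopy class, leaves the framing untouched, keeps the state map $\partial\alpha\to\mathbb{J}$ fixed (endpoints do not move), and carries sink vertices to source vertices and vice versa with the cyclic order of half-edges unchanged. Hence $\cev{\,\cdot\,}$ is a well-defined involution on the free $\Zhq$-module spanned by isotopy classes of stated $n$-webs, and for the module statement it suffices to show that each defining relation is mapped to another instance of a defining relation.

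The bulk of the work is a case-by-case verification. The wall relations \eqref{e.capwall}, \eqref{e.capnearwall}, and \eqref{e.crossp-wall} are drawn with the convention that the small white/black circles stand for an arbitrary but consistent choice of orientation, so the equations are already posited for both orientation choices; reversing orientations simply swaps the two instances. The crossing and twist relations \eqref{e.pm}--\eqref{e.twist} and the unknot relation \eqref{e.unknot} involve only the over/under data, the framing, and the value $(-1)^{n-1}[n]_q$, none of which depend on the choice of orientation. For \eqref{e.sinksource} and \eqref{e.vertexnearwall} the orientation reversal turns a source vertex into a sink vertex (or vice versa); the minimum-crossing positive braid representing $\sigma$ becomes, after reversing all strand orientations, the same braid representing $\sigma^{-1}$, and since $\ell(\sigma^{-1})=\ell(\sigma)$ the substitution $\sigma\mapsto\sigma^{-1}$ rewrites $\cev{(\text{source relation})}$ as precisely the sink version of the same relation (which is itself a defining relation, obtained from \eqref{e.sinksource} by vertical reflection inside the shaded cube). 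Step three is the observation that $\cev{\cev\alpha}=\alpha$, so the induced endomorphism is its own inverse, hence an automorphism of $\cS\MN$.

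For the thickening $\widetilde\fS$ of a pb surface, the product of $[\alpha]$ and $[\beta]$ in $\cS(\fS)$ is defined by isotoping $\alpha$ to $\fS\times(0,1)$ and $\beta$ to $\fS\times(-1,0)$ and taking their disjoint union. The orientation-reversal operation commutes with this height isotopy and with disjoint union, so $\cev{\alpha\beta}=\cev\alpha\,\cev\beta$ as stated $n$-webs in $\tfS$, which promotes the module automorphism to an algebra automorphism.

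The expected obstacle is the bookkeeping in the sink-source relation \eqref{e.sinksource}: one must check that the cyclic order of half-edges at the reversed vertex matches the convention used when reading the relation at a sink (they match, not anti-match, because the cyclic order is intrinsic to the vertex and not to the direction of the edges), and that the sign $(-q)^{\binom{n}{2}}$ and the exponent $(1-n)/n$ of $q^{\ell(\sigma)}$ transform correctly under the reindexing $\sigma\mapsto\sigma^{-1}$, using only that $\ell(\sigma^{-1})=\ell(\sigma)$ in $S_n$.
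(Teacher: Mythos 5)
The paper gives no proof of this lemma; it merely cites \cite{LS21,LY23}, so your argument must stand on its own. Your overall plan---check invariance of the defining relations, observe that the map is an involution, and exploit that stacking commutes with orientation reversal for the algebra statement---matches the strategy used in \cite{LS21}, and most of it is fine.

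The weak point is your treatment of \eqref{e.sinksource}, and it is a genuine gap. You write that the orientation-reversed relation is ``the sink version of the same relation, obtained from \eqref{e.sinksource} by vertical reflection inside the shaded cube.'' A vertical reflection $(x,y,z)\mapsto(-x,y,z)$ of the cube is orientation-reversing on the ambient 3-manifold, so it is \emph{not} an ambient isotopy of framed $n$-webs: applying it would switch all over-crossings to under-crossings. In particular it does not carry a defining relation to a defining relation, so one cannot invoke it to declare the ``sink version'' already a relation. To close the gap you should either (i) use a $180^\circ$ rotation in the plane of $\fS$, which \emph{is} orientation-preserving and framing-preserving, and then carefully track what it does to the cyclic order of half-edges at the $n$-valent vertices and to the permutation $\sigma$ in the coupon (this is where the identity $\ell(\sigma)=\ell(\sigma^{-1})$ actually enters), or (ii) derive the reversed sink-source relation from the stated relations directly, e.g.\ via the cap relations \eqref{e.capwall}--\eqref{e.capnearwall} or the Hopf-algebra/antipode machinery set up in \cite{LS21}. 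The braid-reversal observation by itself is not a proof. A smaller imprecision of the same flavor occurs with \eqref{e.unknot}: the relation is drawn with one orientation, and the reason the reversed version also holds is not that ``it does not depend on orientation'' but that a small framed unknot is ambient-isotopic to its orientation-reversal via a rotation about a diameter in its plane; state that isotopy rather than asserting orientation-independence. The boundary relations are indeed already posited for both orientations via the white/black circle convention, so that part of your argument is sound.
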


\subsection{Reflection} \label{sec.reflection}

\def\refl{{\bf r}}

  A \term{$\Zhq$-algebra with reflection} is a $\Zhq$-algebra $A$ equipped with a $\BZ$-linear anti-involution $\refl$, called the \term{reflection}, such that $\refl(\qq)=\qq^{-1}$. In other words, $\refl: A \to A$ is a $\BZ$-linear map such that for all $x,y \in A$,
	\[
    \refl(xy)=\refl(y)\refl(x),\qquad 
    \refl(\qq x)=\qq^{-1} \refl(x),\qquad 
    \refl^2=\id.
    \]

\begin{proposition}[Theorem 4.9 of \cite{LS21}]\label{r.reflection}
	Assume that $\fS$ is a pb surface. There is a unique reflection $\refl: \SS \to \SS$ such that if $\al$ is a stated $n$-web diagram then $\refl(\al)$ is obtained from $\al$ by switching all the crossings and reversing the height order on each boundary edge. 
\end{proposition}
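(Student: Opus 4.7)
\medskip

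\noindent\textbf{Proof proposal.}
Uniqueness is immediate: by Lemma \ref{lem-span-arcs}, $\cS(\fS)$ is generated as a $\Zhq$-module by stated $n$-web diagrams, and the required anti-involution property $\refl(\hq x) = \hq^{-1} \refl(x)$ together with the prescribed action on stated diagrams pins down $\refl$ on all of $\cS(\fS)$. So the task is existence.

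The plan is to define $\refl$ first on the free $\Zhq$-module $\mathcal{F}$ spanned by stated $n$-web diagrams in $\fS$, declaring
\[
\refl\bigl(p(\hq)\,D\bigr) := p(\hq^{-1})\,\refl(D),
\]
where $\refl(D)$ is the stated diagram obtained from $D$ by switching every crossing and reversing the induced height order on each boundary edge (states, framings, and planar projection of $\p D$ unchanged). To descend $\refl$ to $\cS(\fS)$, I would verify that each of the skein defining relations \eqref{e.pm}--\eqref{e.crossp-wall} is preserved when one simultaneously applies the diagrammatic reflection and substitutes $q \mapsto q^{-1}$ (equivalently $\hq \mapsto \hq^{-1}$). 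Relation \eqref{e.pm} is obvious: swapping the two crossings and $q \leftrightarrow q^{-1}$ multiplies both sides by $-1$. Relation \eqref{e.unknot} holds because $[n]_q$ is invariant under $q \mapsto q^{-1}$. For the twist relation \eqref{e.twist} one notes that $\ttt = (-1)^{n-1} q^{(n^2-1)/n}$ satisfies $\ttt|_{q\to q^{-1}} = \ttt^{-1}$, and the reflected kink is the opposite kink, which by a standard consequence of \eqref{e.pm} and \eqref{e.twist} is $\ttt^{-1}$ times the straight strand. The wall relations \eqref{e.vertexnearwall}--\eqref{e.capnearwall} are transformed into versions with the opposite height order on the involved boundary edge; these follow from the originals by first commuting consecutive endpoints via \eqref{e.crossp-wall} and using $\ell(\sigma) \mapsto \binom{n}{2} - \ell(\sigma)$ under $\sigma \mapsto w_0 \sigma$ with $w_0$ the longest element.

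The main obstacle is the sink-source relation \eqref{e.sinksource} together with the corner relations involving positive braids $\sigma_+$. Under reflection, each $\sigma_+$ becomes the minimum-crossing negative braid $\sigma_-$ representing $\sigma$, while the coefficient scalar is sent to its $q \mapsto q^{-1}$ image. One then has to show
\[
\sum_{\sigma \in S_n} (-q^{(n-1)/n})^{\ell(\sigma)}\,\sigma_-
\;=\;
\sum_{\sigma \in S_n} (-q^{(1-n)/n})^{\ell(\sigma)}\,\sigma_+
\]
in $\cS(\fS)$, up to the overall $(-q^{\pm \binom{n}{2}})$ prefactor. This is proved by repeatedly applying \eqref{e.pm} to rewrite each negative crossing as a linear combination of a positive crossing and an uncrossed pair, and then reorganizing the sum using the involution $\sigma \mapsto w_0 \sigma$; equivalently, one may invoke that the Hecke-algebra symmetrizer appearing in \eqref{e.sinksource} is known to be invariant under $q \mapsto q^{-1}$ combined with mirror reflection of the braid, which is the classical statement that the $q$-antisymmetrizer of $H_n(q)$ is fixed by the bar involution.

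Once well-definedness is established, $\refl$ descends to a $\BZ$-linear map $\refl: \cS(\fS) \to \cS(\fS)$ with $\refl(\hq) = \hq^{-1}$. The anti-multiplicativity is geometric: the product $\alpha\beta$ is by definition $\alpha$ stacked above $\beta$ in $\fS \times (-1,1)$; reversing boundary height orders swaps ``above'' and ``below'', so $\refl(\alpha\beta) = \refl(\beta)\refl(\alpha)$. Finally, $\refl^2 = \id$ is clear since switching all crossings twice, reversing heights twice, and inverting $\hq$ twice each give the identity. This completes the construction.
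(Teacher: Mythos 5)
The paper does not prove this proposition at all; it is imported verbatim from Theorem 4.9 of \cite{LS21} and carries no proof in this text. Your sketch takes what is almost certainly the same route as \cite{LS21}: define the map on the free module of stated diagrams with the bar on scalars, then verify each defining relation \eqref{e.pm}--\eqref{e.crossp-wall} is preserved. The structure (uniqueness from spanning, anti-multiplicativity from the geometry of stacking, $\refl^2=\id$ from each ingredient being an involution) is sound, and your identification of the sink-source relation \eqref{e.sinksource}, via bar-invariance of the Hecke-algebra $q$-antisymmetrizer, as the single non-routine verification is the correct diagnosis. Two places where your account is looser than a finished proof should be: (i) when you pass from $\sigma_+$ to $\sigma_-$ you write the two sums ``up to the overall $(-q^{\pm\binom n2})$ prefactor'' --- the prefactors differ ($(-q)^{\binom n2}$ vs.\ $(-q^{-1})^{\binom n2}$, a discrepancy of $q^{-n(n-1)}$), so they must be carried through rather than quotiented away, and they are exactly what the bar-invariance of the antisymmetrizer produces when each $\sigma_-$ is resolved into positive braids via the length formula $\ell(w_0\sigma)=\binom n2-\ell(\sigma)$; and (ii) the boundary relations \eqref{e.vertexnearwall}--\eqref{e.crossp-wall} each become, after reflection, a genuinely new identity with reversed height order along the marking, and deriving each from the original requires the height-exchange moves together with the behaviour of the constants $\aaa$ and $\ccc_i$ under $q\mapsto q^{-1}$ --- this is more than ``follows by commuting consecutive endpoints'' and is where a written-out proof would spend its effort. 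Neither point is a conceptual gap; they are bookkeeping omissions in a sketch whose skeleton is correct and which matches the standard construction.
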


Let $\fS$ be an essentially bordered pb surface.
An element $x\in \cS(\fS)$ is \term{reflection-normalizable} if $\refl(x) = \hq^{2m} x$ for some $m\in\mathbb Z$. 
In this case, such $m$ is unique because $\cS(\fS)$ is a free module over $\Zhq$ (see \cite{LS21}).
We define the \term{reflection-normalization} by
\begin{equation}\label{eq.reflec}
	[x]_{\norm}:= \hq^{m}x.
\end{equation}
We have $
\refl([x]_{\norm})= [x]_{\norm}$, i.e. $[x]_{\norm}$ is reflection invariant. 
We will see in \OldS\ref{sub-frame} that
the reflection-normalization 
agrees with the Weyl-normalization of a Laurent monomial in a quantum torus algebra.

\subsection{Cutting homomorphisms}\label{subsec.cutting_homomorphism}

Let $(M,\mathcal{N})$ be any marked $3$-manifold, and $D$ be a properly embedded closed disk in $M$ such that there is no intersection between $D$ and the closure of $\cN$.
 After removing an open neighborhood of $D$,  which is diffeomorphic to $D\times (-1,1)$, we get a new 3-manifold $M'$ whose boundary contains two copies $D_1$ and $D_2$ of $D$
such that gluing $D_1$ to $D_2$ yields $M$ together with a surjective smooth map
$\text{pr}\colon M'\rightarrow M$.

Let $\beta\subset D$ be an embedded oriented open interval. Suppose that $\text{pr}^{-1}(\beta) = \beta_{1}\cup \beta_2$ with
   $\beta_1\in D_1$ and $\beta_2\in D_2$. We cut 
   $M$ along 
   $D$ to  obtain a new marked 3-manifold   $(M', \cN')$, where we put $\cN' = \cN\cup \beta_1\cup \beta_2$. We will  denote   $(M', \cN')$ as  $\mathsf{Cut}_{(D,\beta)}(M, \cN )$, which is defined up to isomorphism.

For a stated $n$-web $\alpha$ in $\MN$, we say that $\alpha$
 is {\bf $(D,\beta)$-transverse} if 
the vertices of $\alpha$ are not in $D$, $\alpha$ is transverse to $D$, $(\alpha\cap D)\subset \beta$, and the framing at every
point of $\alpha \cap D =\alpha\cap \beta$ is a positive tangent vector of $\beta$.
 For any map $s\colon\alpha\cap \beta\rightarrow\mathbb J=\{1,\ldots,n\}$, 
 define a stated $n$-web $\alpha_s$ in $\Cut_{(D, \beta)}(M,\mathcal{N})$ 
 to be the lift of $\alpha$ such that for every $P\in \alpha\cap\beta$ the two newly created boundary points of $\alpha_s$ corresponding to $P$ both  have the state $s(P)$. 
 
 \begin{theorem}[\cite{LS21}]\label{thm.cutting_homomorphism_for_3-manifolds}
Let $D$ be a closed disk properly embedded in a marked $3$-manifold $\MN$
and let $\beta$ be an oriented open interval embedded in $D$.  Then there is a unique $\Zhq$-module homomorphism
\begin{equation*}
\Theta_{(D,\beta)}\colon \cS\MN\rightarrow
\cS(\Cut_{(D, \beta)}(M,\mathcal{N}))
\end{equation*}
sending every $(D,\beta)$-transverse stated 
$n$-web $\alpha$ in $\MN$ to the sum of all of its lifts
$$
\Theta_{(D,\beta)}(\alpha) = \sum_{s\colon \alpha\cap \beta\rightarrow\mathbb J} 
\alpha_s.
$$
 \end{theorem}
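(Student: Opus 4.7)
The proof splits into uniqueness and existence. Uniqueness is the easy half: by a standard transversality argument, every stated $n$-web in $\MN$ is isotopic to a $(D,\beta)$-transverse one (a generic perturbation puts it in general position with respect to $D$, and the framing condition at each intersection point can be arranged by a small local rotation together with a slide of the intersection point into $\beta$). Hence $(D,\beta)$-transverse stated $n$-webs span $\cS\MN$, and the prescribed formula $\Theta_{(D,\beta)}(\alpha)=\sum_s \alpha_s$ determines the map on a spanning set.

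For existence, I would first define a candidate map $\widetilde{\Theta}$ on the free $\Zhq$-module generated by $(D,\beta)$-transverse stated $n$-webs, taken modulo isotopy through transverse webs, using the given formula. To see that $\widetilde{\Theta}$ descends to $\cS\MN \to \cS(\Cut_{(D,\beta)}(M,\cN))$, I need to check two things: (i) the formula is invariant under general ambient isotopy (not necessarily preserving transversality), and (ii) the defining skein relations \eqref{e.pm}--\eqref{e.crossp-wall} of $\cS\MN$ are mapped to valid identities in $\cS(\Cut_{(D,\beta)}(M,\cN))$.

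For (i), any isotopy between two transverse webs can be decomposed, by general position, into (a) isotopies supported in the complement of a neighborhood of $D$, (b) finger moves that introduce or remove a pair of adjacent transverse intersections of $\alpha$ with $\beta$, and (c) height-swaps of two consecutive intersection points along $\beta$. Case (a) is automatic. For a height-swap in (c), the sum over states at the two newly created consecutive boundary points on the two walls $\beta_1,\beta_2$ is invariant under interchange of the states, which follows from applying relation \eqref{e.crossp-wall} on both sides of the cut. For a finger move in (b), summing over the common state of the two newly created intersection points matches the left-hand side of relation \eqref{e.capnearwall} on each side of the cut, which expresses it in terms of a trivial arc on $\beta_1 \cup \beta_2$ that can be capped away; the resulting web in $\Cut_{(D,\beta)}(M,\cN)$ is isotopic to the one before the finger move. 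For (ii), each defining skein relation of $\cS\MN$ is supported in a small ball $B \subset M$. After an isotopy, I may arrange that $B$ is either disjoint from $D$, in which case both sides of the relation lift identically and the identity holds term by term, or $B$ meets $D$ in a small disk inside $\beta$, in which case the relation takes place near the walls $\beta_1 \cup \beta_2$ of $\Cut_{(D,\beta)}(M,\cN)$; summing over all states of the intersection points then matches the corresponding relations \eqref{e.vertexnearwall}--\eqref{e.crossp-wall} already present in $\cS(\Cut_{(D,\beta)}(M,\cN))$.

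The main obstacle is the finger-move and height-swap analysis in (i)(b)--(c), since the constants $\aaa, \ccc_i$ from \eqref{def-constants-tac} and the precise coefficients appearing in \eqref{e.vertexnearwall}--\eqref{e.crossp-wall} are rigged exactly so that these sum-over-state identities hold. The cleanest way to handle this is to reduce to a finite list of model configurations (a single strand crossing $D$ transversally inside $\beta$; a pair of strands with heights to be swapped; a small cap/cup created by a finger move) and verify the identities directly from the wall relations. Once (i) and (ii) are established, $\widetilde{\Theta}$ factors through $\cS\MN$ and gives the desired homomorphism $\Theta_{(D,\beta)}$; linearity in $\Zhq$ is built into the construction.
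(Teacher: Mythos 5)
The paper states this theorem with the citation \cite{LS21} and gives no proof of its own; it is an imported result, so there is no in-paper argument to compare your proposal against. That said, your outline is a plausible rendering of the standard approach — span by transverse webs for uniqueness; for existence, check invariance under isotopy and compatibility with the defining relations — and a few points deserve comment.

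First, your item (i)(c) (height swaps along $\beta$) is a move belonging to the \emph{surface} version, Theorem~\ref{t.splitting2}, where an arbitrary linear order $h$ on $\alpha\cap e$ is chosen and one must show independence of this choice (using \eqref{e.crossp-wall}). In the present $3$-manifold version no such choice is made: the height order on the new endpoints of $\alpha_s$ is the one induced by the positions of the points of $\alpha\cap\beta$ along the interval $\beta$. Since $\alpha$ is embedded, two of its intersection points with $\beta$ never coincide, and hence cannot swap their linear order along $\beta$ during an isotopy through embeddings; a height swap is therefore not a codimension-one event here. The only genuine local event you need to analyze is the tangency (your finger move). Including (c) is not an error, but invoking \eqref{e.crossp-wall} for it misdescribes what the verification must actually consist of.

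Second, the $(D,\beta)$-transversality condition also constrains the framing (it must be a positive tangent of $\beta$ at every intersection point), and a generic isotopy of $\alpha$ violates this almost always. You mention ``a small local rotation together with a slide of the intersection point into $\beta$'' in the uniqueness half, but the same normalization has to be built into the existence half: one must argue that any ambient isotopy between transverse webs can be modified, by such rotations and slides that do not change the framing class, to stay $(D,\beta)$-transverse except at finitely many tangencies. As written, your item (i) decomposition is only a general-position statement about intersection points with $D$, not about the framing, so this step is missing. Also, the finger-move check in (i)(b) involves matching the constants $\ccc_i$ and $\ccc_{\bar i}^{-1}$ coming from \eqref{e.capwall} and \eqref{e.capnearwall}; a naive combination produces an extra factor of $\ttt$, which must be absorbed by the kink relation \eqref{e.twist} associated to the twist in the finger. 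This is exactly the sort of coefficient bookkeeping your proposal defers to ``a finite list of model configurations,'' which is fine as a plan, but the twist contribution should be flagged explicitly. Finally, in item (ii) the second sub-case is unnecessary: since $D$ is disjoint from the closure of $\cN$ and every defining relation is supported in a ball either in the interior or touching $\cN$, the ball can always be isotoped off $D$, so the relations always lift term by term.
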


We now present the cutting homomorphism for pb surfaces.

Let $e$ be an ideal arc lying in the interior of a pb surface $\fS$; see Definition \ref{def.pb_surface} and the paragraph following it. 
Let $\Cut_e(\fS)$ be a pb surface obtained from $\fS$ by cutting along $e$. In particular, there exist distinct boundary edges $e_1$ and $e_2$ of $\Cut_e(\fS)$ 
such that $\fS= \Cut_e(\fS)/(e_1=e_2)$, with $e=e_1=e_2$. Let 
$$p_e: \Cut_e(\fS) \to \fS$$ 
be the natural projection map.

An $n$-web diagram $\alpha$ in $\fS$ is \term{$e$-transverse} if the $n$-valent vertices of $\alpha$ are not in $e$ and $\alpha$ is transverse to $e$. Assume that $\alpha$ is an $e$-transverse stated $n$-web diagram in $\fS$. Let $h$ be a linear order on the set $\alpha \cap e$. 
For a map $s: \alpha \cap e\to \JJ$, let $\alpha_{h,s}$ be the stated $n$-web diagram in $\Cut_e(\fS)$ given by $p^{-1}_e(\alpha)$ with the height order on its points in $e_1 \cup e_2$ induced (via $p_e$) from $h$, and the states on its points in $e_1 \cup e_2$ induced (via $p_e$) from $s$.

\begin{theorem}\cite{LS21}\label{t.splitting2}
Suppose that $e$ is an interior ideal arc of a pb surface $\fS$. Then there is a unique $\Zhq$-algebra homomorphism 
$$\Theta_e: \skein(\fS) \to \skein(\Cut_e(\fS))$$ 
such that if $\alpha$ is an $e$-transverse stated $n$-web diagram in $\fS$ and $h$ is any linear order on $\alpha \cap e$, then
\begin{equation}\label{eq.cut00}
\Theta_e(\al) =\sum_{s: \alpha \cap e\to \JJ} \alpha_{h,s}.
\end{equation}
If in addition $\fS$ is essentially bordered (Definition \ref{def.pb_surface}), then $\Theta_e$ is injective.
\end{theorem}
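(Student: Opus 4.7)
The plan is to construct $\Theta_e$ by the state-sum formula \eqref{eq.cut00}, verify that it is well-defined on $\cS(\fS)$ and multiplicative, and finally prove injectivity when $\fS$ is essentially bordered --- the injectivity being the main obstacle.

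For existence, I would first define a map on the free $\Zhq$-module spanned by stated $n$-webs: given an $e$-transverse representative $\alpha$ and any linear order $h$ on $\alpha \cap e$, send $\alpha$ to $\sum_s \alpha_{h,s}$. Two independent things must be verified. (i) The output is independent of $h$: swapping two consecutive points in $h$ translates, after the cut, into swapping two consecutive boundary heights on each of the two new boundary edges $e_1, e_2$ of $\Cut_e \fS$; once one sums over all states at these boundary points, the boundary skein relations \eqref{e.crossp-wall} and \eqref{e.capwall}--\eqref{e.capnearwall} force invariance under this swap. (ii) The map descends through the defining skein relations \eqref{e.pm}--\eqref{e.crossp-wall} of $\cS(\fS)$: each such relation is local, supported in a small ball of $\tfS$ that can be isotoped off $e$; the relation then holds on each summand of the state-sum simultaneously, so the sum respects the relation. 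Together, (i) and (ii) produce the $\Zhq$-module homomorphism $\Theta_e$, and uniqueness is automatic because $e$-transverse stated $n$-webs span $\cS(\fS)$.

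For multiplicativity, take stated $n$-webs $\alpha, \beta$ realized so that $\alpha \subset \fS \times (0,1)$ and $\beta \subset \fS \times (-1,0)$, so $\alpha\beta = \alpha \sqcup \beta$ in $\tfS$. Then $(\alpha\beta) \cap e = (\alpha \cap e) \sqcup (\beta \cap e)$, and choosing a linear order $h$ placing all points of $\alpha \cap e$ above all points of $\beta \cap e$, the state-sum factors as
\[
\Theta_e(\alpha\beta) = \sum_{s_\alpha,\, s_\beta} (\alpha\beta)_{h,\, s_\alpha \sqcup s_\beta} = \Theta_e(\alpha)\,\Theta_e(\beta),
\]
so $\Theta_e$ is an algebra homomorphism.

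The main obstacle is the injectivity statement when $\fS$ is essentially bordered. My plan is to extend $e$ to an ideal triangulation $\lambda$ of $\fS$ --- such a triangulation exists for essentially bordered $\fS$, with the degenerate cases (e.g.\ $e$ cutting off a monogon or bigon) handled separately by direct inspection. Iterating cutting along all interior edges of $\lambda$ produces the triangulation map \eqref{eq.TriDecomp}
\[
\cS(\fS) \longrightarrow \bigotimes_{\tau \in \cF_\lambda} \cS(\tau),
\]
which factors through $\Theta_e$ by construction. The heart of the argument is that this composite is injective; I would establish this by exhibiting a $\Zhq$-basis of $\cS(\fS)$ consisting of stated $n$-web diagrams in suitable reduced position with respect to $\lambda$, and showing that these map to a linearly independent family in the tensor product, using the explicit Hopf-algebra description $\cS(\PP_2) \cong \mathcal{O}_q({\rm SL}_n)$ in \eqref{eq.P222} together with the cobraided tensor product description of $\cS(\tau)$ for an ideal triangle $\tau$. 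Since $\Theta_e$ is the first arrow in an injective composite, it is itself injective.
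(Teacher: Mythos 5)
The theorem is cited from~\cite{LS21}; the present paper does not reprove it, so there is no in-paper argument to compare against beyond the citation. Your reconstruction of the existence and multiplicativity parts is the standard one: well-definedness (independence of the chosen order $h$ and compatibility with the defining skein relations) reduces to a local computation near $e$, where the summed-over-states reordering identity is a consequence of the boundary relation \eqref{e.crossp-wall}, applied on both newly created edges $e_1,e_2$ and summed; the interior relations \eqref{e.pm}--\eqref{e.sinksource} are local and can be pushed off $e$; and the product formula factorizes exactly as you write. None of this is a gap, though the reordering computation is a genuine calculation and not a formality.

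Your injectivity argument, however, takes a detour that reverses the usual logical order and brings avoidable difficulty. You propose to prove that the \emph{full triangulation map} $\cS(\fS) \to \bigotimes_{\tau\in\cF_\lambda}\cS(\tau)$ is injective by tracking a basis, and then deduce injectivity of the single cut $\Theta_e$ because it is a factor in that composite. This is logically sound and not circular, provided (as holds in~\cite{LS21}) the basis of $\cS(\fS)$ is constructed independently of the cutting homomorphism. But it forces you to control the interaction of \emph{all} interior edges of $\lambda$ at once in $\mathcal{O}_q(\SLn)^{\ot 2|\cF_\lambda|}$, when the statement concerns a single cut; and the plan does not even get off the ground when $e$ cannot be completed to an ideal triangulation (e.g.\ $\fS$ is a bigon, or $e$ is a \emph{trivial} ideal arc cutting off a monogon), so those cases must be excised by hand. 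The argument in~\cite{LS21} runs the other way: injectivity of a single $\Theta_e$ is proved directly by a local argument in a neighborhood of $e$ --- essentially by exhibiting a left inverse built out of the Hopf comodule structure of $\cS(\Cut_e\fS)$ over $\cS(\PP_2)$, using corner arcs and the counit --- and injectivity of the iterated triangulation map is then an immediate corollary. That local route handles the non-triangulable essentially bordered cases uniformly, with no case split, and avoids the global basis bookkeeping. Your approach should work with enough care, but you would be well advised to prove injectivity of the single cut directly and then get \eqref{eq.TriDecomp} for free, rather than the reverse.
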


If $e,e'$ are two disjoint ideal arcs lying in the interior of $\fS$, then we have $\Theta_e\Theta_{e'}= \Theta_{e'}\Theta_e$.

\subsection{Polygons}\label{subsec.polygons}
We will define polygons as special pb surfaces and explain the relationship between the bigon and $\Oq$, the quantized algebra of functions on $\SLn$ defined in Section \ref{sec.Oqsln}.

For a positive integer $k$, an \term{ideal $k$-gon}, or simply a \term{$k$-gon}, is a pb surface obtained as the result of removing $k$ points on the boundary of the standard closed disk.
A \term{based $k$-gon} is a $k$-gon with one distinguished ideal point, called the \term{based vertex}. Given two based $k$-gons, there is a unique, up to isotopy, orientation-preserving diffeomorphism between them, preserving the based vertices. In this sense, the based $k$-gon is uniquely determined as a pb surface, and we denote it by $\PP_k$.

Thus, $\PP_1$ is the (based) monogon. By \cite[Theorem 6.1]{LS21}, we have an isomorphism 
\begin{align}
\label{monogon_skein_algebra}
    R\cong \skein(\PP_1),
\end{align}
given by $x \mapsto x\cdot \emptyset$. We will often identify $\skein(\poly_1)\equiv R$.

For the based bigon $\mathbb{P}_2$, we have:

\bthm [{\cite[Theorem 6.3]{LS21}}]\label{thm-iso-Oq-P2}
There is a unique $\Zhq$-algebra isomorphism  
$$\Oq \xrightarrow{\cong}\cS(\PP_2)$$ 
sending $u_{ij}$ (in Definition \ref{def.OqSLn}) to the stated arc in Figure \ref{fig-bigon}(c) for all $i,j \in \JJ=\{1,\ldots,n\}$.
\ethm 

We will identify $\Oq$ with $\cS(\PP_2)$ via the above isomorphism.

Then the Hopf algebra structure of $\Oq$ induces a Hopf algebra structure on $\cS(\PP_2)$. We still use $\Delta$, $\varepsilon$, and $S$ to denote the coproduct, the counit, and the antipode of $\cS(\PP_2)$, respectively. The algebra homomorphisms $\Delta$, $\varepsilon$, and the anti-algebra homomorphism $S$ are given by \cite{LS21}
\begin{align}\label{eq-Hopf-bigon}
\Delta(u_{ij}) = \sum_{k} u_{ik}\otimes u_{kj}, \quad \varepsilon(u_{ij}) = \delta_{ij}, \quad S(u_{ij}) = (-q)^{i-j}\cev{u}_{\bj \bi},
\end{align}
where $\cev{u}_{ij}$ is $u_{ij}$ with the reverse orientation. 
Note that the cutting homomorphism $\Theta_e$ for an internal ideal arc $e$ in $\PP_2$ connecting the two punctures matches the coproduct $\Delta$; see \cite{LS21}.

\begin{figure}
	\centering
	\begin{tikzpicture}
	\draw[line width =1.2pt,fill=gray!20] (0,0)--(4,0)--(2,3.4)--cycle;
	\draw[fill=white] (0,0) circle[radius=0.1] ;
	\draw[fill=white] (4,0) circle[radius=0.1] ;
	\draw[fill=white] (2,3.4) circle[radius=0.1];
	\draw[edge,-o-={0.5}{>}] (0.75,1.25)node[left]{$i_1$} -- (1.5,0)node[below]{$j_1$};
	\draw[edge,-o-={0.5}{>}] (2.5,0)node[below]{$i_2$} -- (3.25,1.25)node[right]{$j_2$};
	\draw[edge,-o-={0.5}{>}] (1.5,2.5)node[left]{$i_3$} -- (2.5,2.5)node[right]{$j_3$};
	\draw[wall,<-] (3.5,0) -- (3,0);
	\node [right]  at (1.125,0.75) {$b$} ;
	\node [left]  at (2.825,0.75) {$c$} ;
	\node [below]  at (2,2.5) {$d$} ;
\end{tikzpicture}
	\caption{Oriented arcs $b,c,d$ in $\PP_3$. We use $b_{i_1j_1}$ to denote the stated arc in $\PP_3$ as shown in the picture (the same for $c_{i_2j_2}$ and $d_{i_3j_3}$).}\label{P3}
\end{figure}
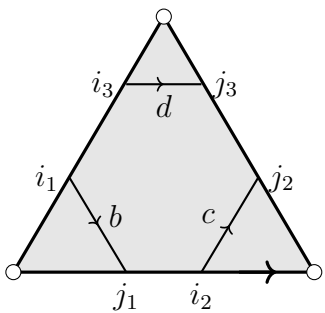

In Figure \ref{P3}, there are three oriented arcs in the based triangle $\mathbb P_3$; the based vertex is the top one.
An open tubular neighborhood $N(b)$ of $b$ is homeomorphic to the based bigon $\mathbb P_2$, where the left edge of $N(b)$ is
the one containing the beginning point of $b$ (determined by the direction of $b$). Note that $b$ in this bigon $N(b) \cong \mathbb{P}_2$ then looks like the arc in Figure \ref{fig-bigon}(c). Let $\cS(b) := \cS(N(b))$,
which is isomorphic to $\mathcal{O}_{q}(\text{SL}_n)$ by Theorem \ref{thm-iso-Oq-P2}. The embedding 
\begin{align}
\label{P2_to_P3}
    N(b) \hookrightarrow \mathbb{P}_3
\end{align} 
induces an algebra homomorphism
$\iota^* : \cS(b) \to \cS(\mathbb P_3)$.

\begin{lemma}[{\cite[Corollary 8.14]{LS21}}]\label{lem_P3}
	The $\Zhq$-linear map $\cS(b) \otimes_{\Zhq} \cS(c) \to \cS(\mathbb P_3)$ given by $x \otimes y \mapsto \iota^*(x)\iota^*(y)$ is bijective.
	
	In particular, the embedding $\PP_2 \embed \PP_3$, induced by one of the oriented arcs $b$, $c$, or $d$ analogously to \eqref{P2_to_P3}, induces an injective algebra homomorphism $\cS(\PP_2;R) \embed \cS(\PP_3; R)$ for any commutative $\Zhq$-algebra $R$. 
\end{lemma}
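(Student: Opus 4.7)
The plan is to establish bijectivity of the $\Zhq$-linear multiplication map $m : \cS(b) \otimes_{\Zhq} \cS(c) \to \cS(\PP_3)$, $x \otimes y \mapsto \iota^*(x)\iota^*(y)$, from which the second statement follows at once: after base change to $R$, the composition of $\cS(\PP_2;R) \to \cS(b;R) \otimes_R \cS(c;R)$, $x\mapsto x\otimes 1$, with the resulting bijection, is the desired injective algebra map (since both $\cS(b;R)$ and $\cS(c;R)$ are free over $R$ because $\cS(\PP_2)$ is free over $\Zhq$).

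For surjectivity I would first appeal to Lemma \ref{lem-span-arcs}(b) to reduce the problem to showing that every stated $\partial\PP_3$-arc $\alpha$ lies in the $\Zhq$-span of products $\iota^*(x)\iota^*(y)$ with $x\in\cS(b)$, $y\in\cS(c)$. Each such $\alpha$ has endpoints on two of the three boundary edges of $\PP_3$, so up to isotopy it is of type $b$, $c$, or $d$ in the notation of Figure \ref{P3}. The first two are immediately in the image. The key step is then to rewrite each stated arc of type $d$, which runs between the two edges incident to the based vertex, as a sum of products of a type-$b$ arc and a type-$c$ arc. This is achieved by pushing $\alpha$ close to the based vertex and applying the sink/source relation \eqref{e.vertexnearwall} there, followed by the boundary reordering relations \eqref{e.capnearwall}--\eqref{e.crossp-wall}; the effect is precisely the splitting $u_{ij}\mapsto \sum_k u_{ik}\otimes u_{kj}$ of the bigon coproduct \eqref{eq-Hopf-bigon}, now realized geometrically inside $\PP_3$ as a sum over an intermediate state $k\in\JJ$.

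For injectivity I would use the cutting homomorphism of Theorem \ref{t.splitting2} applied to an interior ideal arc $e$ of $\PP_3$ chosen so that, after further shrinking, $b$ and $c$ lie in distinct connected components of a region obtained by iterated cuts, each of which is a bigon. The cutting homomorphism is injective since $\PP_3$ is essentially bordered, and under the resulting embedding $\cS(\PP_3) \hookrightarrow \cS(\PP_2)\otimes\cS(\PP_2)$ the subalgebras $\iota^*(\cS(b))$ and $\iota^*(\cS(c))$ land in disjoint tensor factors. Since bigon skein algebras are free over $\Zhq$ (\OldS\ref{ss.change-coef}), a rank/basis matching shows that $m$ has trivial kernel.

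The main obstacle is the surjectivity step, i.e.\ expressing type-$d$ arcs as products of type-$b$ and type-$c$ arcs. Conceptually this is transparent, being the cobraided Hopf coproduct of $\Oq$ viewed inside $\cS(\PP_3)$, but carrying it out requires careful bookkeeping of the height orders of boundary endpoints at the based vertex, of the signs and of the powers of $\hq$ accumulated from the successive applications of \eqref{e.vertexnearwall}, \eqref{e.capnearwall}, and \eqref{e.crossp-wall}. Once this is settled, injectivity and the algebra embedding $\cS(\PP_2;R) \embed \cS(\PP_3;R)$ are formal consequences.
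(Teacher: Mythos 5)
The paper does not prove this lemma; it is quoted verbatim from \cite[Corollary 8.14]{LS21}, so there is no in-paper proof to compare against, and your argument must be judged on its own. It contains two genuine gaps.

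The surjectivity reduction is insufficient. Lemma~\ref{lem-span-arcs}(b) gives spanning elements of the form $\alpha_1\cdots\alpha_r$ with each $\alpha_i$ a $\partial\PP_3$-arc, and you argue that it suffices to place each \emph{single} arc into the image of $m$. But the image of $m$ is only the $\Zhq$-span of $\iota^*(x)\iota^*(y)$ with $x\in\cS(b)$, $y\in\cS(c)$, and that span has no a~priori reason to be closed under multiplication. Once you have expanded every $d$-arc through the coproduct relation you are left with products of $b$- and $c$-arcs in an arbitrary interleaved order (for instance $b\,c\,b\,c\cdots$), and the real content of the statement is that every such word can be sorted into the form $(\text{product of $b$'s})\cdot(\text{product of $c$'s})$. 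That sorting, performed via the height-exchange relation \eqref{e.crossp-wall} at the shared boundary edge and encoded abstractly in the cobraided product \cite[Theorem 8.2]{LS21}, is exactly the step you leave out: your "main obstacle" paragraph identifies only the $d$-arc expansion and its bookkeeping, whereas the genuine obstacle is the interleaving.

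The injectivity argument cannot work as you describe it. Cutting $\PP_3$ along an interior ideal arc always leaves a $\PP_3$ factor behind: one cut produces an embedding $\cS(\PP_3)\hookrightarrow\cS(\PP_2)\otimes\cS(\PP_3)$, and iterating never eliminates the triangle component. There is no sequence of cuts along ideal arcs that decomposes $\PP_3$ into a disjoint union consisting solely of bigons, so the claimed embedding $\cS(\PP_3)\hookrightarrow\cS(\PP_2)\otimes\cS(\PP_2)$ in which $\iota^*(\cS(b))$ and $\iota^*(\cS(c))$ land in separate tensor factors does not exist. A correct injectivity argument either compares explicit $\Zhq$-bases (say the Parshall--Wang monomial basis of $\Oq\cong\cS(\PP_2)$ against a known basis of $\cS(\PP_3)$) or derives the statement from the general gluing bijection of \cite[Proposition~8.1]{LS21}, which the present paper itself quotes in \OldS\ref{ss.Proof_of_Thm_6_1a_for_surfaces} as the $R$-linear isomorphism $f$. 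Your deduction of the final clause, $\cS(\PP_2;R)\embed\cS(\PP_3;R)$, from the bijectivity of $m$ is correct: tensor with $R$ over $\Zhq$ and compose with $x\mapsto x\otimes 1$, using that $\cS(\PP_2)$ is $\Zhq$-free.
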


\subsection{Reduced stated $\SLn$-skein algebras}\label{ss.reduced_stated_SLn-skein_algebras}

Let $v$ be an ideal point of a pb surface $\fS$ that is not an interior puncture; see Definition \ref{def.pb_surface}.  
The \term{corner arcs} $C(v)_{ij}$ and $\ceC(v)_{ij}$, where $i,j \in \JJ$, are special stated $n$-web diagrams in $\fS$ as depicted in Figure \ref{fig-corner}. We also denote by $C(v)$ (resp. $\ceC(v)$) the arcs $C(v)_{ij}$ (resp. $\ceC(v)_{ij}$) without states. Note that $C(v)$ is a trivial $\pfS$-arc if and only if $v$ is a \textbf{monogon vertex}, i.e., if the connected component of $\fS$ containing $v$ is a monogon $\mathbb{P}_1$, having $v$ as its only ideal point.

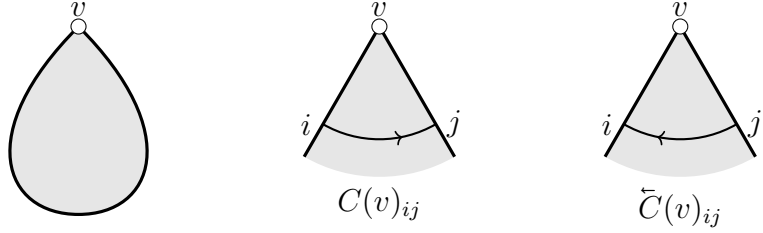
\begin{figure}
	\centering
	\begin{tikzpicture}
\draw[wall,fill=gray!20] (0,0) ..controls (-135:2) and (-1,-2.5)..
	(0,-2.5) ..controls (1,-2.5) and (-45:2).. (0,0);
\draw[fill=white] (0,0)circle(0.1) node[above]{$v$};

\begin{scope}[xshift=4cm]
\fill[gray!20] (0,0) -- (-120:2)
	arc[radius=2,start angle=-120,end angle=-60] -- cycle;
\draw[wall] (-120:2) -- (0,0) -- (-60:2);
\draw[edge,-o-={0.7}{>}] (-120:1.5) node[left]{$i$}
	arc[radius=1.5,start angle=-120,end angle=-60] node[right]{$j$};
\draw (0,-2)node[below]{$C(v)_{ij}$};
\draw[fill=white] (0,0)circle(0.1) node[above]{$v$};
\end{scope}

\begin{scope}[xshift=8cm]
\fill[gray!20] (0,0) -- (-120:2)
	arc[radius=2,start angle=-120,end angle=-60] -- cycle;
\draw[wall] (-120:2) -- (0,0) -- (-60:2);
\draw[edge,-o-={0.7}{>}] (-60:1.5) node[right]{$j$}
	arc[radius=1.5,start angle=-60,end angle=-120] node[left]{$i$};
\draw (0,-2)node[below]{$\cev{C}(v)_{ij}$};
\draw[fill=white] (0,0)circle(0.1) node[above]{$v$};
\end{scope}
\end{tikzpicture}
	\caption{Monogon vertex and corner arcs}\label{fig-corner}
\end{figure}

For an ideal point $v$ that is neither an interior puncture nor a monogon vertex, let
\[C_v = \{C(v)_{ij} \mid i <j \} , \quad \ceC_v = \{\ceC(v)_{ij} \mid i <j\}.\]
If $v$ is a monogon vertex, let $C_v = \ceC_v = \emptyset$.
An element of $C_v$ or $\ceC_v$ is called a \term{bad arc} at $v$. Let $\Ibad \lhd \SS$ be the two-sided ideal generated by all bad arcs. The quotient algebra
\begin{align}
    \label{reduced_stated_skein_algebra}
    \overline{\cS}(\fS)=\overline{\cS}_{\hat q}(\fS):= \cS(\fS)/\Ibad
\end{align}
is called the \term{reduced stated $\SLn$-skein algebra} of $\fS$.

Suppose that $
e$ is an ideal arc in $\fS$. It is easy to see that the cutting homomorphism $\Theta_
{e}: \skein(\fS) \to \skein(\Cut_
{e}(\fS))$ reduces to an algebra homomorphism  
$\overline \Theta_
{e}: \overline{\cS}(\fS) \to \overline{\cS}(\Cut_
{e}(\fS))$, called the cutting homomorphism for the reduced stated $\SLn$-skein algebras.

\subsection{Quantum tori and Quantum torus frame}\label{sub-frame}

For an integer antisymmetric $r\times r$ matrix $Q=(Q_{ij})_{i,j\in\{1,\ldots,r\}}$, the {\bf quantum torus} is defined as the associative algebra  
\be
\bT(Q)=\bT_\hq(Q):= \Zhq\la x_1^{\pm1}, \dots, x_r^{\pm 1}\ra/( x_i x_i = \hq^{2 Q_{ij}} x_j x_i)
\label{eqTorus}
\ee
For $\bk=(k_1, \dots, k_r)\in \BZ^r$, let the {\bf Weyl-normalized} Laurent monomial $x^\bk$ be
\begin{align}\label{Weyl-ordering}
    x^\bk=[x_1^{k_1} \dots x_r^{k_r}] := \hq^{- \sum_{i<j} Q_{ij} k_i k_j} x_1^{k_1} \dots x_r^{k_r}.
\end{align} 
Then $\{ x^\bk \mid \bk \in \BZ^r\}$ is a free $\Zhq$-basis of $\bT_\hq(Q)$.

\begin{remark}\label{rem.Weyl-ordering} We say that the elements $a_1,\ldots,a_r$ of a $\Zhq$-algebra $A$   are {\bf $\hq$-commuting} if there are integers $m_{ij}$ such that $a_i a_j = \hq^{2m_{ij}} a_ja_i$ for all $i,j\in\{1,\ldots,r\}$. Assume further that $A$ is torsion free over $\Zhq$. Then the numbers $m_{ij}$ are unique, and we define  the Weyl-normalized product
    $$
    [a_1 \cdots a_r]_{\rm Weyl} := \hq^{- \sum_{i<j}  m_{ij}} a_1 \dots a_r.
    $$
    An example is the above $x^{\bf k}$ in $\mathbb{T}(Q)$.

    One minor remark is that what we call $\hq$-commuting is written as `$q$-commuting' in \cite{LY23}.
\end{remark}

For a non-zero complex number $\home$ define $\bT_\home(Q)= \bT_\hq(Q) \ot_\Zhq \BC$, where $\BC$ is considered as a $\Zhq$-module by $\hq \mapsto \home$. For $\heta = \home ^{N^2}$ the $\BC$-linear map
\be 
\Phi^\bT: \bT_\heta(Q) \to \bT_\home(Q), \qquad  \Phi^\bT(x^\bk) = x^{N \bk}, \label{eqFrDef}
\ee
is an algebra monomorphism, called the Frobenius homomorphism 
for quantum tori.

Assume that $A$ is an $R$-domain and $S\subset A\setminus\{0\}$ is a subset of non-zero elements. 
We say that $S$ \term{weakly generates} $A$ if we have the following: 
for any element $a\in A$, there exists a product $\fm$ of elements in $S$ such that $a\fm$ is contained in the subalgebra of $A$ generated by $S$. The following notion, studied in \cite{LY23}, will become useful:

\begin{definition}\label{def.qtf}
    Let $A$ be an $R$-domain. A finite set $S=\{a_1, \dots, a_r\} \subset A$ is a {\bf quantum torus frame} for $A$ if the following conditions are satisfied:
    \begin{enumerate}
        \item the elements of the set $S$ are $\hq$-commuting and each element of $S$ is non-zero,
        \item the set $S$ weakly generates $A$,
        \item the set $\{a_1 ^{n_1} \dots a_r^{n_r} \mid n_i \in \BN \}$ is $R$-linearly independent.
    \end{enumerate}
\end{definition}

\subsection{$\SLn$-quantum trace maps}\label{subsec.SLn_quantum_trace_maps}

In this subsection we review the ${\rm SL}_n$-quantum trace maps constructed in \cite{LY23}. Certain compatibility regarding the ${\rm SL}_n$-quantum trace maps will constitute part of the main results of the present paper.

\begin{definition}\label{def.triangulation}
    A pb surface is called {\bf triangulable} if each connected component contains an ideal point and falls into none of the following: a sphere with less than three punctures, $\mathbb P_1$, or $\mathbb P_2$. An {\bf ideal triangulation} of a triangulable pb surface is a collection of ideal arcs with the following properties: 
    \begin{enumerate}[label={\rm (\arabic*)}]
        \item any two of these ideal arcs are disjoint and non-isotopic,
        \item this collection of ideal arcs is maximal under condition (1).
    \end{enumerate}
\end{definition}

We may realize $\mathbb{P}_3$ as the following standard ideal triangle:
\begin{equation}
    \stdT=\{(i,j,k)\in\reals^3\mid i,j,k\ge0,i+j+k=n\}\setminus\{(0,0,n),(0,n,0),(n,0,0)\},
\end{equation}
where $(n,0,0)$ is the based vertex. Here $(i,j,k)$ (or $(ijk)$ for brevity) are the barycentric coordinates. Let $v_1=(n,0,0)$, $v_2=(0,n,0)$, $v_3=(0,0,n)$. The edge following $v_i$ in the clockwise orientation is denoted $e_i$. We will draw $\stdT$ in the standard plane as an equilateral triangle with $v_1$ at the top. See the left picture in Figure~\ref{fig-coords} for an example.

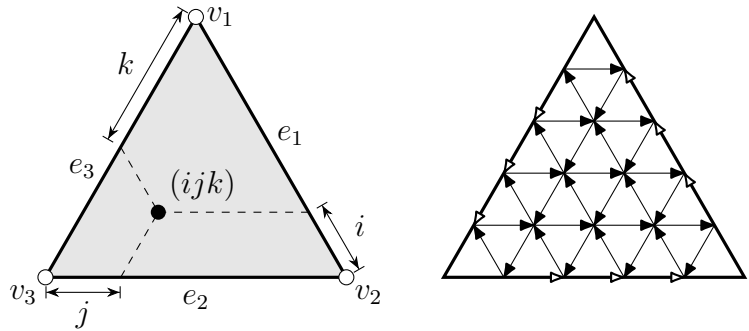
\begin{figure}
    \centering
    \begin{tikzpicture}[baseline=0cm]
\draw[wall,fill=gray!20] (0,0) -- (4,0) -- (60:4) -- cycle;
\draw[dashed] (1,0) -- ++(60:1)coordinate(ijk) node[above right]{$(ijk)$}
	-- ++(120:1) (ijk) -- ++(2,0);
\fill[black] (ijk)circle[radius=0.1];
\begin{scope}[{Bar[]Stealth[]}-{Stealth[]Bar[]}]
\draw (0,0)++(0,-0.2) -- ++(1,0);
\draw (4,0)++(30:0.2) -- ++(120:1);
\draw (60:4)++(150:0.2) -- ++(-120:2);
\end{scope}
\path (0.5,-0.5)node{$j$} (4,0)++(120:0.5)++(30:0.5)node{$i$}
	(60:3)++(150:0.5)node{$k$};
\path (2,-0.3)node{$e_2$} (4,0)++(120:2)++(30:0.3)node{$e_1$}
	(60:1.5)++(150:0.3)node{$e_3$};

\draw[fill=white] (60:4)circle(0.1)node[right]{$v_1$}
	(4,0)circle(0.1)node[below right,inner sep=3pt]{$v_2$}
	(0,0)circle(0.1)node[below left,inner sep=3pt]{$v_3$};
\end{tikzpicture}
    \quad
    \begin{tikzpicture}[baseline=0cm]
\draw[wall] (0,0) -- (4,0) -- (60:4) -- cycle;

\foreach \x in {0.8,1.6,2.4,3.2}
\draw (4,0)++(120:\x) edge (4-\x,0) -- (60:\x) -- (\x,0);

\begin{scope}[thick,tips,{Latex[round]}-]
\foreach \i in {0,...,3} {
\foreach \j in {\i,...,3} {
\path[tips=false] ({0.8*(\j-\i+1)},0)++(60:{0.8*\i}) coordinate (A)
	++(60:0.8) coordinate (B) ++(-0.8,0) coordinate (C);
\path (A) -- (B);
\path (B) -- (C);
\path (C) -- (A);
}}
\end{scope}

\begin{scope}[thick,tips,{Latex[round,fill=white]}-]
\foreach \i in {1,...,3} {
\path (60:{0.8*\i}) -- ++(60:0.8);
\path (60:4) ++ (-60:{0.8*\i}) -- ++(-60:0.8);
\path ({4-0.8*\i},0) -- ++(-0.8,0);
}
\end{scope}
\end{tikzpicture}
    \caption{Barycentric coordinates $(ijk)$ and a $5$-triangulation with its quiver}\label{fig-coords}
\end{figure}

The \term{$n$-triangulation} of $\stdT$ is obtained by subdividing $\stdT$ into $n^2$ small triangles using lines $i,j,k=\text{constant integers}$. An example of a $5$-triangulation is shown in the right picture in Figure~\ref{fig-coords}.

The vertices and edges of all small triangles, except for the vertices of $\stdT$ and the small edges adjacent to them, form a directed graph (or quiver) $\Gamma_\stdT$. Here the direction of a small edge, also called an \term{arrow}, is defined as follows. If the small edge $u$ is in the boundary $\partial\stdT$ then $u$ has the positive (or counterclockwise) orientation of $\partial \stdT$. If $u$ is interior then the direction of $u$ is the same as that of a boundary edge parallel to $u$. Assign weight $1$ to any boundary arrow and weight $2$ to any interior arrow.

The vertex set $\rd{V}_\stdT$ of $\Gamma_\stdT$ is the set of points with integer barycentric coordinates:
\begin{equation}
\label{ol_V_P3}
    \rd{V}_\stdT=\{(ijk)\in\stdT\mid i,j,k\in\ints\}.
\end{equation}
Elements of $\rdV_\stdT$ are called \term{small vertices}, and small vertices on the boundary of $\rd{V}_\stdT$ are called the \term{edge-vertices}.

The triangle $\stdT$ has a unique triangulation consisting of the 3 boundary edges, up to isotopy. 

Fix an ideal triangulation $\lambda$ of $\fS$. An element of $\lambda$ is called a \term{boundary} edge if it is isotopic to a boundary edge of $\fS$. By cutting $\fS$ along all non-boundary edges we get a disjoint union of ideal triangles, each called a \term{face} (or, a triangle) of the triangulation. Let $\face_\lambda$ denote the set of all faces of $\lambda$. Then
\begin{equation}\label{eq.glue}
\fS = \Big( \bigsqcup_{\tau\in\face_\lambda} \tau \Big) /\sim,
\end{equation}
where each face $\tau$ is a copy of $\stdT$, and $\sim$ is the identification of certain pairs of edges of the faces. Note that one might glue two edges of the same face. Each face $\tau$ comes with a \term{characteristic map} $f_\tau: \tau \to \fS$, which is a homeomorphism when restricted to the interior of $\tau$ or the interior of each edge of $\tau$.

An \term{$n$-triangulation} of $\lambda$ is a collection of $n$-triangulations of the faces $\tau$ which are {\bf compatible} with the gluing $\sim$, meaning that whenever an edge $b$ is glued to another edge $b'$, the edge-vertices on $b$ are glued to the edge-vertices on $b'$ \cite{FG,GS2}. Then define the \term{reduced vertex set}
\begin{align}
    \label{eq.rdV}
\rd{V}_\lambda=\bigcup_{\tau\in\face_\lambda}\rd{V}_\tau, \qquad \rd{V}_\tau=f_\tau(\rd{V}_\stdT).
\end{align}
The images of the weighted quivers $\Gamma_\tau$ under $f_\tau$ together form a quiver $\Gamma_\lambda$ on $\fS$ via gluing. Let $\bmQ_\lambda: \rd{V}_\lambda\times \rd{V}_\lambda \to \BZ$ be the signed adjacency matrix of the weighted quiver $\Gamma_\lambda$, i.e. 
\begin{align}
\label{eq.bmQ}
\bmQ_\lambda(v,v') & = a_{\Gamma_\lambda}(v,v') - a_{\Gamma_\lambda}(v',v), \quad \mbox{where} \\
\nonumber
a_{\Gamma_\lambda} (v,v') & = \mbox{sum of weights of the arrows from $v$ to $v'$.}
\end{align}
What matters about the quiver $\Gamma_\lambda$ is only the matrix $\bmQ_\lambda$. One usually tidy up the quiver $\Gamma_\lambda$ by canceling or adding the weighted arrows, while keeping $\bmQ_\lambda$ unchanged. For example, when edges $b$ and $b'$ are glued, we may cancel the boundary arrows on $b$ by the boundary arrows on $b'$ after gluing.

The ($n$-th root version of the) \term{Fock-Goncharov algebra} \cite{BZ,FG2,FG2,GS2} is the quantum torus 
for $\bmQ_\lambda$:
\begin{equation}
\label{eq.reduced_FG_algebra}
\bXS= \bT(\bmQ_\lambda) = \Zhq \la x_v^{\pm 1}, v \in \rd{V}_\lambda \ra / (x_v x_{v'}= \hq^{\, 2 \bmQ_\lambda(v,v')} x_{v'}x_v \ \text{for } v,v'\in \rd{V}_\lambda ).
\end{equation}

The overlines in $\overline{V}_\lambda$, $\bmQ_\lambda$ and $\bXS$ refer to the `non-extended' versions in the language of \cite{LY23}, which pertains to the reduced stated ${\rm SL}_n$-skein algebra $\overline{\cS}(\fS)$ in \eqref{reduced_stated_skein_algebra} and which is in fact relevant to the original setting of Fock, Goncharov and Shen \cite{FG,FG2,FG3,GS2}. We will soon get to the statements about some versions of the so-called ${\rm SL}_n$-quantum trace homomorphisms, where the reduced version in particular maps $\overline{\cS}(\fS)$ to $\bXS$.

For the `extended' version studied in \cite{LY23}, attach a copy of $\stdT$ to each boundary edge of $\surface$. The resulting surface is denoted $\ext{\surface}$. We adopt the convention that in an attached triangle, the attaching edge is the $e_1$ edge; so $e_1$ is not a boundary edge of $\fS^*$. See Figure~\ref{fig-attach-T}.

\begin{figure}
\centering
\begin{tikzpicture}[baseline=0cm]
\fill[gray!20] (-60:0.5) -- (120:3.5) -- ++(30:1) -- ++(-60:4);
\draw[wall] (-60:0.5) -- (120:3.5);
\draw[wall] (-120:0.5) -- (0,0) -- (-3,0) -- (120:3) -- ++(-0.5,0);

\draw (120:1.5) ++(30:0.5)node{$\surface$};
\draw (150:{1.5/sqrt(3)*2})node{$\stdT$};
\draw (-1.5,0)node[below]{$e_2$} ++(120:1.5)node[left]{$e_3$};

\draw[fill=white] (0,0)circle[radius=0.1] (120:3)circle[radius=0.1];
\end{tikzpicture}
\caption{Attaching triangles to boundary edges of $\fS$.}\label{fig-attach-T}
\end{figure}
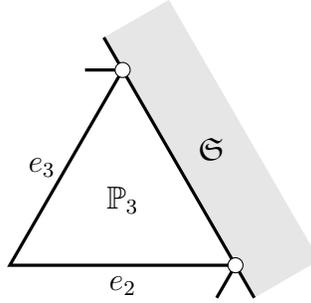

If the surface $\surface$ has an ideal triangulation $\lambda$, then there is an unique extension of $\lambda$ to an ideal triangulation $\lambda^*$ of $\ext{\surface}$ by adding all the new boundary edges. The new faces are exactly the attached triangles. Let $\rd{V}_{\ext{\lambda}}$ be the reduced vertex set of the extended $n$-triangulation. Define the \term{$X$-vertex set} $V_\lambda\subset\rd{V}_{\ext{\lambda}}$ as
\begin{align}
    \label{X-vertex_set}
    V_\lambda= \{\mbox{small vertices of $\overline{V}_{\lambda^*}$ not lying on the $e_3$ edge in the attached triangles}\},
\end{align}
and the \term{$A$-vertex set} $\lv{V}_\lambda\subset\rd{V}_{\ext{\lambda}}$ as
\begin{align}
    \label{A-vertex_set}
    V'_\lambda = \{\mbox{small vertices of $\overline{V}_{\lambda^*}$ not lying on the $e_2$ edge in the attached triangles}\}.
\end{align}
Note that $\rd{V}_\lambda$ is naturally a subset of both $V_\lambda$ and $\lv{V}_\lambda$.

Let $\mQ_\lambda: V_\lambda \times V_\lambda \to \BZ$ be the restriction of $ \bmQ_{\ext{\lambda}}: \rd{V}_{\ext{\lambda}} \times \rd{V}_{\ext{\lambda}} \to \BZ$. The \term{extended $X$-algebra} is defined as
\[\XS = \bT(\mQ_\lambda).\]
There is a natural identification of subalgebras $\rd{\FG}(\surface,\lambda)\subset\FG(\surface,\lambda)\subset\rd{\FG}(\ext{\surface},\ext{\lambda})$.

We need one more prerequisite before recalling the ${\rm SL}_n$ quantum trace map, namely the cutting homomorphism for (reduced) Fock-Goncharov algebras. Let $e$ be a non-boundary edge 
in $\lambda$.
Recall from \OldS\ref{subsec.cutting_homomorphism} that $\Cut_e(\fS)$ denotes the pb surface obtained from $\fS$ by cutting along $e$, where $p_e : \Cut_e(\fS) \to \fS$ is the natural projection, with $p_e^{-1}(e) = e_1 \cup e_2$. Then $\lambda_e = (\lambda \setminus\{e\})\cup \{e_1,e_2\}$ is a triangulation of $\Cut_e(\fS)$; we say that $\lambda_e$ is induced from $\lambda$.
There is 
a natural algebra embedding \cite{LY23}
\begin{align}\label{eq-cutting}
    \overline \Theta_e^{X}: \overline\cX(\fS,\lambda)\rightarrow \overline\cX(
    \Cut_e(\fS), \lambda_e),
\end{align}
the cutting homomorphism for Fock-Goncharov algebras, 
given on the generators by
$$
\overline \Theta_e^{X}(x_v)=
\begin{cases}
    x_v & \mbox{if } v\notin e,\\
    [x_{v'}x_{v''}] & \mbox{if } v\in e\text{ and $
    p_e^{-1}(v)= \{v',v''\}$},
\end{cases}
$$
for any $v\in \rd{V}_\lambda$, where $[\sim]$ is the Weyl-normalization \eqref{Weyl-ordering}.

We now recall the `$X$-version' ${\rm SL}_n$-quantum trace maps:
\begin{theorem}
\cite{LY23}\label{thm-X}
	Assume that $\fS$ is a triangulable 
    pb surface with an ideal triangulation $\lambda$ (Definition \ref{def.triangulation}). 

     \begin{enuma}
     \item There exists an algebra homomorphism
	\begin{equation*}
		\rdtr_\lambda^X\colon \overline{\cS}(\fS) \to \overline \cX(\fS,\lambda)
	\end{equation*}
	such that $\rdtr_\lambda^X$ is compatible with cutting along 
    any non-boundary edge $e$ of $\lambda$; that is, the following diagram commutes (see Theorem \ref{t.splitting2} for $\Theta_e$):
    $$
    \xymatrix{
    \overline{\cS}(\fS) \ar[r]^-{\rdtr_\lambda^X} \ar[d]_{\Theta_e} & \cX(\fS,\lambda) \ar[d]^{\overline{\Theta}_e^X} \\
    \overline{\cS}(\Cut_e(\fS)) \ar[r]^-{\rdtr_{\lambda_e}^X} & \cX(\Cut_e(\fS),\lambda_e).}
    $$
    
    For $n=2,3$ the map $\rdtr_\lambda^X$ is always injective, and for $n>3$, it is injective when $\fS$ is a polygon.

    \item There exists an algebra homomorphism
	\begin{equation*}
		\tr_\lambda^X\colon\cS(\fS) \to  \cX(\fS,\lambda)
	\end{equation*}
  lifting the reduced trace $\rdtr_\lambda^X$ in the following  sense. The image of $\tr_\lambda^X$ is in a subalgebra $\cX'(\fS,\lambda) \subset \cX(\fS,\lambda)$ which comes with a surjective algebra homomorphism $\pr: \cX'(\fS,\lambda) \onto \overline \cX(\fS,\lambda) $, and the following diagram is commutative.
    \begin{equation}\label{eq-tr-lift}
\begin{tikzcd}
\skein(\surface)\arrow[r,"\tr_\lambda^X"]\arrow[d,two heads,"\pr"]
& \cX'(\fS,\lambda)\arrow[d,two heads,"\pr"]\\
\reduceS(\surface)\arrow[r,"\rdtr_\lambda^X"]&\overline \cX(\fS,\lambda)
\end{tikzcd}
\end{equation}

For $n=2$ the map $\tr_\lambda^X$ is  injective, and for $n>2$ it is injective when $\fS$ has no interior punctures.
    \end{enuma}
\end{theorem}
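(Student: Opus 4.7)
The plan is to reduce the construction to a purely local problem on the standard triangle $\stdT$, using the cutting homomorphisms of Theorem~\ref{t.splitting2} together with the gluing description of the Fock--Goncharov algebra. Iterating $\Theta_e$ along all non-boundary edges $e\in \lambda$ yields an algebra homomorphism $\overline{\Theta}_\lambda: \overline{\cS}(\fS) \to \bigotimes_{\tau\in\face_\lambda}\overline{\cS}(\tau)$, which is injective whenever $\fS$ is essentially bordered. On the Fock--Goncharov side, the matrix $\bmQ_\lambda$ is by construction a sum of face-local matrices $\bmQ_\tau$ after identifying the small vertices along glued edges, which yields a compatible embedding $\overline{\cX}(\fS,\lambda) \hookrightarrow \bigotimes_\tau\overline{\cX}(\tau,\lambda_\tau)$ via Weyl-normalized products. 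Hence the task reduces to (i) producing a single local trace $\rdtr_\stdT^X$, and (ii) verifying that the resulting tensor-product map carries the image of $\overline{\Theta}_\lambda$ inside the glued subalgebra.

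For the local construction, by Lemma~\ref{lem-span-arcs}(b) the algebra $\overline{\cS}(\stdT)$ is generated by stated $\pfS$-arcs. To a stated $\pfS$-arc $\alpha$ with states $i,j\in \JJ$, I would assign a Weyl-normalized Laurent monomial in the variables $\{x_v\mid v\in \rdV_\stdT\}$ whose exponents record which small vertices lie on a canonical ``honeycomb path'' of colors $i,j$ across the $n$-triangulation, generalizing the $n=2$ formulas of Bonahon--Wong and the $n=3$ formulas of Kim/Douglas. The recipe extends to arbitrary stated webs via the defining skein relations, and one then checks, relation by relation, that \eqref{e.pm}--\eqref{e.crossp-wall} together with the bad-arc relations are respected; each verification becomes an identity between Weyl monomials in a small quantum torus. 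Gluing compatibility requires showing that whenever an interior edge $e$ is cut, the sum over intermediate states in \eqref{eq.cut00} collapses after Weyl-normalization to the glued product $[x_{v'}x_{v''}]$ that defines $\overline{\Theta}_e^X$ at each identified small vertex.

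For part (b), I would work with the extended surface $\ext{\fS}$ obtained by attaching a triangle to each boundary edge, equipped with the induced extended $n$-triangulation $\ext{\lambda}$. The non-reduced Fock--Goncharov algebra $\cX(\fS,\lambda)$ is then the quantum torus on the $X$-vertex set $V_\lambda\subset \rdV_{\ext{\lambda}}$, and the lift $\tr_\lambda^X$ is constructed by the same local-plus-gluing procedure, now also assigning $X$-coordinates on the attached triangles to encode the bad-arc generators that are quotiented out in the reduced version. The subalgebra $\cX'(\fS,\lambda)$ and the projection $\pr$ in \eqref{eq-tr-lift} would be defined so that setting the attached-triangle bad-arc coordinates to their reduced values recovers $\rdtr_\lambda^X$, which makes the square commute by construction. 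Injectivity would follow from a filtration argument: equip $\overline{\cS}(\fS)$ with an ordered basis obtained by cutting into triangles and placing each triangular piece into a Gelfand--Tsetlin-type normal form (available unconditionally for $n\le 3$ and for polygons in general), then show that the trace sends distinct basis elements to Weyl monomials with distinct leading exponents in the Fock--Goncharov lattice.

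I expect the main obstacle to lie in verifying the $n$-valent sink/source relation \eqref{e.sinksource} for $\rdtr_\stdT^X$: it is an equality among $n!$ Weyl monomials carrying $(-q^{(1-n)/n})^{\ell(\sigma)}$ coefficients, and reducing it to the $q$-Leibniz expansion of $\detq$ requires delicate exponent bookkeeping along the small triangles incident to each $n$-valent vertex. A secondary difficulty will be choosing the normalizations on the attached triangles so that $\cX'(\fS,\lambda)$ has the precise structure needed for \eqref{eq-tr-lift} to commute, and arranging the Gelfand--Tsetlin filtration so that the leading-term argument really detects every non-zero basis element in the injectivity ranges claimed.
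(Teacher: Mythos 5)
This statement is cited verbatim from \cite{LY23} and the paper offers no proof of its own; it is part of the literature review in \OldS\ref{subsec.SLn_quantum_trace_maps}. So there is nothing in the paper to compare your argument against, and the comparison has to be with the cited reference.

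Your high-level plan is consistent with the framework of \cite{LY23}: factor through a cutting into triangles, produce a local trace on $\stdT$, verify gluing compatibility, and for part (b) pass to the extended surface $\ext{\fS}$ with triangles attached along boundary edges and a monomial embedding $\cX(\fS,\lambda)\embed\bsX(\ext{\fS},\ext{\lambda})$. That last step in particular is exactly how $\tr^X_\lambda$ is packaged in \cite{LY23}, as the present paper itself spells out in Step~4 of the proof of Theorem~\ref{thmFrob}(f). The genuine departure, and the gap, is in your local construction. You propose to build $\rdtr^X_\stdT$ directly by assigning Weyl-normalized $X$-monomials to stated $\pfS$-arcs and then verifying the skein relations \eqref{e.pm}--\eqref{e.crossp-wall} and the bad-arc relations one by one; you name the $n$-valent sink/source relation \eqref{e.sinksource} as the ``main obstacle.'' That is not a side remark: it is the crux of the whole construction, and you have not given an argument for it. The reason \cite{LY23} can avoid this head-on combinatorial verification is precisely that it does \emph{not} build the $X$-trace directly; it first constructs the $A$-version trace $\rdtr^A_\lambda$ by declaring $\bar\gaa_v\mapsto a_v$ on the explicit reflection-normalized elements $\gaa_v$ built from quantum minors (Theorem~\ref{trace}), proves a quantum torus frame statement for $\{\gaa_v\}$ that makes this assignment manifestly an algebra homomorphism, and only then defines $\rdtr^X_\lambda=\rd\psi_\lambda\circ\rdtr^A_\lambda$ via the monomial embedding of Lemma~\ref{lem.LY23_A_to_X}. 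Your order of construction is reversed relative to this, and the route you pick is exactly the one where the hard relation has to be checked by hand on the $X$-side --- which you acknowledge you cannot yet do.

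Two further points. First, your injectivity argument via a ``Gelfand--Tsetlin-type normal form'' is speculation; the actual mechanism in \cite{LY23} runs through the $A$-torus frame and a leading-term argument there, not a normal form assembled from triangular pieces on the $X$-side, and in any case you have not indicated why distinct basis elements would have distinct leading $X$-exponents in the ranges claimed (which is the entire content of injectivity). Second, you should be aware that the full cutting map $\overline{\Theta}_\lambda$ need not be injective when $\fS$ is not essentially bordered (e.g.\ a once-punctured torus is triangulable but has empty boundary), so while your use of $\overline{\Theta}_\lambda$ to \emph{define} $\rdtr^X_\lambda$ does not require injectivity, you cannot then read off injectivity of $\rdtr^X_\lambda$ from that factorization in the general triangulable case; this is consistent with the theorem only asserting injectivity for $n=2,3$ or for polygons. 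In short, the framework is recognizable but the load-bearing local construction and the injectivity argument are both outstanding, and as written they would not constitute a proof.
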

We call $\rdtr_\lambda^X$ the reduced $X$-version ${\rm SL}_n$-quantum trace map, and $\tr_\lambda^X$ the extended $X$-version ${\rm SL}_n$-quantum trace map.
\begin{remark}
   For ${\rm SL}_2$  the reduced $X$-version quantum trace map was first defined in \cite{BW} and the other versions were defined in \cite{LY2,Le:triangulation}. The ${\rm SL}_3$-quantum trace map was studied in \cite{douglas2021quantum,kim2020rm}. For two different triangulations $\lambda$ and $\lambda'$, the quantum trace maps $\rdtr_\lambda^X$ and $\rdtr_{\lambda'}^X$ are compatible with each other via an isomorphism between the skew fields of $\overline \cX(\fS,\lambda)$ and $\overline \cX(\fS,\lambda')$, see \cite{BW,LY2} for $n=2$,
    \cite{kim2024naturality} for $n=3$, and \cite{LY23} for general $n$. It is shown in \cite{kim2024naturality} (for $n=3$) and \cite{kimwang2024naturality} (for general $n$) that the isomorphism between the skew fields of $\overline \cX(\fS,\lambda)$ and $\overline \cX(\fS,\lambda')$ is given by a sequence of quantum cluster $X$-mutations, when $\lambda$ and $\lambda'$ involve no `self-folded' triangles.
\end{remark}

Dealing with the $X$-version quantum trace maps is often technically quite challenging. We move on to recall the $A$-version, which is more tractable.

Let $\lambda$ be an ideal triangulation of a triangulable pb surface $\fS$. Define the reduced matrix $\rdm{H}_\lambda:\rd{V}_\lambda\times\rd{V}_\lambda\to\ints$ 
as follows:
\begin{itemize}
\item If $v$ and $v'$ are not on the same boundary edge then $\rdm{H}_\lambda(v,v')=-\frac{1}{2}\rdm{Q}_\lambda(v,v')\in\ints$.
\item If $v$ and $v'$ are on the same boundary edge, then
\begin{align}\label{def-overline-H}
    \rdm{H}_\lambda(v,v') = \begin{cases} 1 \qquad &\text{when $v=v'$},\\
-1 &\text{when there is arrow from $v$ to $v'$}, \\
0 &\text{otherwise}.
\end{cases}
\end{align}
\end{itemize}

 Define $\mat{H}_\lambda$ as the restriction of $\exm{H}$ to $V_\lambda\times\lv{V}_\lambda$, which agrees with the restriction of $-\frac{1}{2}\exm{Q}$ since the domain of $\mat{H}_\lambda$ does not contain pairs $(v,v')$ on the same boundary edge of $\ext{\surface}$.

\begin{lemma}\cite[Lemma 11.9]{LY23}\label{lem-matrix-HK}
    We have the following:

    \begin{enuma}
    \item There exists a unique matrix  $\overline{\mathsf{K}}_\lambda\colon \rd{V}_\lambda\times\rd{V}_\lambda\to\ints$
    such that $\overline{\mathsf{H}}_\lambda \overline{\mathsf{K}}_\lambda = n\,{\rm id}$.

    \item There exists a unique matrix  $\mathsf{K}_\lambda\colon V_\lambda'\times V_\lambda\to\ints$
    such that $\mathsf{H}_\lambda \mathsf{K}_\lambda = n\,{\rm id}$.
    \end{enuma}
\end{lemma}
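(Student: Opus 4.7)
The plan is to establish uniqueness first, and then construct $\bmK_\lambda$ and $\mK_\lambda$ explicitly by reducing to a single ideal triangle and assembling along the face decomposition of $\surface$.

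For uniqueness, if two integer matrices $K, K'$ satisfy $\bmH_\lambda K = \bmH_\lambda K' = n\cdot\mathrm{id}$, then their difference is right-annihilated by $\bmH_\lambda$, so uniqueness reduces to the injectivity of $\bmH_\lambda$ as a $\mathbb{Q}$-linear map. Since $\rd{V}_\lambda$ is finite, $\bmH_\lambda$ is square and injectivity is equivalent to non-singularity over $\mathbb{Q}$. For (b), the sets $V_\lambda$ and $\lv{V}_\lambda$ are obtained from $\rd{V}_{\lambda^*}$ by excluding small vertices on the $e_3$ or $e_2$ edges of attached triangles respectively, and each attached triangle has the same number of small vertices on its $e_2$ and $e_3$ edges; hence $|V_\lambda|=|\lv{V}_\lambda|$, the matrix $\mH_\lambda$ is likewise square, and uniqueness again reduces to injectivity.

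For existence I would first handle the single-triangle case $\tau=\stdT$, viewed as a complete pb surface. Using barycentric coordinates on $\rd{V}_\stdT$, the matrix $\bmH_\stdT$ has an explicit combinatorial description in terms of the coordinates $(i,j,k)$, and one can construct $\bmK_\stdT$ by an ansatz respecting the $S_3$-symmetry of $\stdT$ and verify $\bmH_\stdT\bmK_\stdT = n\cdot\mathrm{id}$ by direct computation. For a general triangulation $\lambda$, I would use the face decomposition \eqref{eq.glue}. The interior part of $\bmH_\lambda$, given by $-\frac{1}{2}\bmQ_\lambda$, assembles from the face contributions since $\Gamma_\lambda$ is glued from $\Gamma_\tau$; the subtlety is that along an edge $e$ internal to $\surface$ but boundary for each incident face, the interior formula applies in $\bmH_\lambda$ while the boundary formula applies in $\bmH_\tau$. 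I would handle this by relating $\bmH_\lambda$ to $\bmH_{\lambda_e}$ for the cut triangulation (Theorem \ref{t.splitting2}) via a correction supported on vertices of $e$, iterating until reducing to a disjoint union of faces, and then assembling $\bmK_\lambda$ from the local $\bmK_\tau$. Finally, (b) follows from (a) applied to the extended triangulation $\lambda^*$ on $\ext{\surface}$, since $\mH_\lambda$ is the restriction of $\rdm{H}_{\lambda^*}$ to $V_\lambda\times\lv{V}_\lambda$, so one can extract $\mK_\lambda$ as an appropriate submatrix of $\rdm{K}_{\lambda^*}$ after verifying that the entries corresponding to excluded rows and columns behave correctly.

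The main obstacle is the integrality of $\bmK_\stdT$ in the single-triangle case, together with the careful bookkeeping of correction terms when iteratively cutting along internal edges. Rational invertibility of $\bmH_\lambda$ only guarantees existence of a rational inverse, and the substantive content of the lemma is that $n$ times this inverse has integer entries; this factor of $n$ is intrinsic to ${\rm SL}_n$-theory, reflecting the appearance of $n$-th roots in the passage between $\FG$-coordinates and $\lenT$-coordinates.
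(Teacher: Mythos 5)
This lemma is \emph{cited} in the paper from \cite[Lemma~11.9]{LY23} and not proved here; there is no in-paper argument to compare against, so I evaluate your proposal on its own terms.

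Your uniqueness reduction and the observation that $\mH_\lambda$ is square because $|V_\lambda|=|\lv{V}_\lambda|$ are fine. The genuine gap is your derivation of part (b) from part (a). Write $\rdm{H}_{\lambda^*}$ and $\rdm{K}_{\lambda^*}$ in block form with rows partitioned into $V_\lambda\sqcup(\rd{V}_{\lambda^*}\setminus V_\lambda)$ and columns into $\lv{V}_\lambda\sqcup(\rd{V}_{\lambda^*}\setminus\lv{V}_\lambda)$ (and the complementary partition for $\rdm{K}_{\lambda^*}$). The top-left block of $\rdm{H}_{\lambda^*}\rdm{K}_{\lambda^*}=n\,\mathrm{id}$ reads $\mH_\lambda K_{11}+H_{12}K_{21}=n\,\mathrm{id}$, where $H_{12}$ has rows in $V_\lambda$ and columns at small vertices on the $e_2$ edges of attached triangles. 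These cross-entries do not vanish: a small vertex on the $e_1$ edge of an attached triangle and one on its $e_2$ edge are joined by an arrow in $\Gamma_{\lambda^*}$, so $\rdm{Q}_{\lambda^*}$ and hence $\rdm{H}_{\lambda^*}$ is nonzero there; and if $v$ itself lies on $e_2$ then it is in $V_\lambda\setminus\lv{V}_\lambda$ and $\rdm{H}_{\lambda^*}(v,v)=1$ already sits inside $H_{12}$. So "extract $\mK_\lambda$ as a submatrix of $\rdm{K}_{\lambda^*}$" would require proving $H_{12}K_{21}=0$, which is a substantive lemma, not a bookkeeping check — submatrices of inverses are not inverses of submatrices.

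The cutting reduction for existence of $\rdm{K}_\lambda$ also needs more than a "correction supported on vertices of $e$". Cutting along an interior edge doubles the small vertices on $e$, so $\rd{V}_{\lambda_e}$ is strictly larger than $\rd{V}_\lambda$ and $\rdm{H}_{\lambda_e}$ is a larger matrix; the relation you need is a precise pushforward identity expressing $\rdm{H}_\lambda$ in terms of $\rdm{H}_{\lambda_e}$ and the doubling map, together with a mechanism for descending an integral $\rdm{K}_{\lambda_e}$ to an integral $\rdm{K}_\lambda$. Your description conflates correcting entries on a fixed index set with changing the index set, and that identity is exactly where the content of the inductive step lives. The overall plan (reduce to $\stdT$, glue, extend) is reasonable, and you have correctly located the substance in the integrality of $n\,\rdm{H}_\lambda^{-1}$, but both reductions as stated bury the theorem rather than prove it.
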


Define the matrices $\overline{\mathsf{P}}_\lambda\colon \rd{V}_\lambda\times\rd{V}_\lambda\to\ints$
and 
$\mathsf{P}_\lambda\colon V_\lambda'\times V_\lambda'\to\ints$ by 
\begin{align}\label{eq-relation-P-Q}
    \overline{\mathsf{P}}_\lambda:=
    \overline{\mathsf{K}}_\lambda
    \overline{\mathsf{Q}}_\lambda
    \overline{\mathsf{K}}_\lambda^{\rm t},\quad
    \mathsf{P}_\lambda=
    \mathsf{K}_\lambda
    \mathsf{Q}_\lambda
    \mathsf{K}_\lambda^{\rm t},
\end{align}
where the superscript ${\rm t}$ stands for the transpose. 
Define the following {\bf $A$-version quantum tori}:
\begin{align*}
\overline{\mathcal A}(\fS,\lambda):=
    \mathbb T(\overline{\mathsf{P}}_\lambda)
    =\Zhq \la a_v^{\pm 1}, v \in \rd{V}_\lambda \ra / (a_v a_{v'}= \hq^{\, 2 \bmP_\lambda(v,v')} a_{v'}a_v \ \text{for } v,v'\in \rd{V}_\lambda ),
    \\
    {\mathcal A}(\fS,\lambda):=
    \mathbb T(\mathsf{P}_\lambda)=
    \Zhq \la a_v^{\pm 1}, a \in V_\lambda' \ra / (a_v a_{v'}= \hq^{\, 2 \mathsf{P}_\lambda(v,v')} a_{v'}a_v \ \text{for } v,v'\in V_\lambda').
\end{align*}

Lemma \ref{lem-matrix-HK} and 
equation \eqref{eq-relation-P-Q} imply the following lemma.
\begin{lemma}[{\cite[Theorem 11.7]{LY23}}]\label{lem.LY23_A_to_X}
    There are $\Zhq$-algebra embeddings 
    \begin{align*}
        &\rd\psi_\lambda\colon\overline \cA(\fS,\lambda)\rightarrow \overline \cX (\fS,\lambda),\quad 
        a^{\bf k}\mapsto x^{{\bf k} \overline{\mathsf{K}}}
        \text{ for }{\bf k}\in\mathbb Z^{\rd{V}_\lambda},\\
        &\psi_\lambda\colon  \cA(\fS,\lambda)\rightarrow  \cX (\fS,\lambda),\quad 
        a^{\bf k}\mapsto x^{{\bf k} \mathsf{K}_\lambda}
        \text{ for }{\bf k}\in\mathbb Z^{V_\lambda},
    \end{align*}
    where each ${\bf k}$ is viewed as a row vector.
\end{lemma}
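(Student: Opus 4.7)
The plan is to verify the two assertions directly from the relationship $\overline{\mathsf{P}}_\lambda = \overline{\mathsf{K}}_\lambda \overline{\mathsf{Q}}_\lambda \overline{\mathsf{K}}_\lambda^{\rm t}$ (and its extended analogue) together with the invertibility statement from Lemma \ref{lem-matrix-HK}. For both cases the argument is purely at the level of exponent lattices of the source and target quantum tori, and no skein theory intervenes. I will describe the reduced case first; the extended case is parallel.

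First I would show that $\rd\psi_\lambda$ is a well-defined $\Zhq$-algebra homomorphism. Using the Weyl-normalization formula \eqref{Weyl-ordering} one has the standard identity
\[
x^{\vec{m}} x^{\vec{m}'} = \hq^{\vec{m}\,\overline{\mathsf{Q}}_\lambda\,(\vec{m}')^{\rm t}} x^{\vec{m}+\vec{m}'}
\]
in $\overline{\cX}(\fS,\lambda) = \bT(\overline{\mathsf Q}_\lambda)$. Substituting $\vec m=\vec k\,\overline{\mathsf K}_\lambda$ and $\vec m'=\vec k'\overline{\mathsf K}_\lambda$ and using the defining identity $\overline{\mathsf{P}}_\lambda= \overline{\mathsf K}_\lambda \overline{\mathsf Q}_\lambda \overline{\mathsf K}_\lambda^{\rm t}$ of \eqref{eq-relation-P-Q}, the exponent of $\hq$ becomes $\vec k\,\overline{\mathsf P}_\lambda\,(\vec k')^{\rm t}$, which exactly matches the Weyl commutation rule of $\overline{\cA}(\fS,\lambda)=\bT(\overline{\mathsf P}_\lambda)$. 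So the assignment $a^{\vec k}\mapsto x^{\vec k\,\overline{\mathsf K}_\lambda}$ is a ring map, and obviously a $\Zhq$-module map, when extended $\Zhq$-linearly on the basis $\{a^{\vec k}\mid \vec k\in\BZ^{\rdV_\lambda}\}$.

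For injectivity, Lemma \ref{lem-matrix-HK}(a) gives $\overline{\mathsf H}_\lambda\overline{\mathsf K}_\lambda = n\cdot\mathrm{id}$, so over $\BQ$ the square matrix $\overline{\mathsf K}_\lambda$ is invertible with inverse $\frac1n\overline{\mathsf H}_\lambda$. Consequently the $\BZ$-linear map $\vec k\mapsto \vec k\,\overline{\mathsf K}_\lambda$ is injective on $\BZ^{\rdV_\lambda}$. Since $\{x^{\vec{m}}\mid\vec{m}\in\BZ^{\rdV_\lambda}\}$ is a free $\Zhq$-basis of $\overline{\cX}(\fS,\lambda)$, the elements $\{x^{\vec k\,\overline{\mathsf K}_\lambda}\mid \vec k\in\BZ^{\rdV_\lambda}\}$ are $\Zhq$-linearly independent, and $\rd\psi_\lambda$ is therefore injective.

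The extended case is verbatim the same, with $\mathsf{K}_\lambda\colon V'_\lambda\times V_\lambda\to\BZ$ and $\mathsf{P}_\lambda = \mathsf K_\lambda \mathsf Q_\lambda \mathsf K_\lambda^{\rm t}$ in place of their overlined counterparts. One first checks that $\mathsf{K}_\lambda$ is a square matrix, i.e.\ that $|V_\lambda|=|V'_\lambda|$; this is immediate from the definitions \eqref{X-vertex_set}--\eqref{A-vertex_set}, since in each attached triangle the number of small vertices on $e_2$ equals the number on $e_3$. Lemma \ref{lem-matrix-HK}(b) then gives invertibility of $\mathsf K_\lambda$ over $\BQ$, and the algebra-homomorphism and injectivity arguments proceed exactly as above. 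No step presents any serious obstacle; the essential work was already done in setting up the matrix identity \eqref{eq-relation-P-Q} and producing the two-sided inverse matrices in Lemma \ref{lem-matrix-HK}.
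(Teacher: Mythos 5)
Your proof is correct and follows the same route as the paper, which states this lemma as a direct consequence of Lemma \ref{lem-matrix-HK} and the matrix identity \eqref{eq-relation-P-Q} (deferring details to \cite{LY23}); you have supplied exactly those details — the Weyl-monomial multiplication rule $x^{\vec m}x^{\vec m'}=\hq^{\vec m\,\mathsf Q\,(\vec m')^{\rm t}}x^{\vec m+\vec m'}$, the exponent substitution via $\mathsf P=\mathsf K\mathsf Q\mathsf K^{\rm t}$, and injectivity from the rational invertibility of $\overline{\mathsf K}_\lambda$ and $\mathsf K_\lambda$ guaranteed by $\mathsf H\mathsf K=n\,\mathrm{id}$. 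Your observation that $|V_\lambda|=|V'_\lambda|$ (so that $\mathsf K_\lambda$ is square) is the right point to check and is indeed immediate from \eqref{X-vertex_set}--\eqref{A-vertex_set}.
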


\begin{theorem}
[\cite{LY2} for $n=2$; \cite{LY23} for $n\ge 2$]\label{thm-Atr}
	Assume that $\surface$ is a triangulable 
    pb surface with no interior puncture, and $\lambda$ is an ideal triangulation of $\surface$.
	\begin{enuma}
            \item There is a unique algebra homomorphism
		\begin{equation}\label{eq.bAtr}
			\rdtr_\lambda^A: \overline{\cS}(\fS) \to \bA(\fS,\lambda)
		\end{equation}
		such that
		\begin{equation}\label{eq.bTrAX}
			\rdtr_\lambda^X = \rd\psi_\lambda \circ\rdtr_\lambda^A.
		\end{equation}
		For $n=2,3$ the map $\rdtr_\lambda^A$ is always injective, and for $n>3$, it is injective when $\fS$ is a polygon.

		\item There is a unique algebra embedding
		\begin{equation}\label{eq.Atr}
			\tr_\lambda^A: \SS \to \cA(\fS,\lambda)
		\end{equation}
        such that 
        \begin{equation}\label{eq.AX}
			\tr_\lambda^X = \psi_\lambda \circ\tr_\lambda^A.
		\end{equation}

	\end{enuma}
\end{theorem}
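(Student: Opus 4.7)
The plan is to obtain $\rdtr_\lambda^A$ and $\tr_\lambda^A$ by showing the images of the $X$-version quantum trace maps lie in the images of the algebra embeddings $\rd\psi_\lambda$ and $\psi_\lambda$ from Lemma \ref{lem.LY23_A_to_X}. Uniqueness is immediate: since $\rd\psi_\lambda$ and $\psi_\lambda$ are injective, any algebra homomorphism satisfying the factorization \eqref{eq.bTrAX} or \eqref{eq.AX} is determined at every element of the domain.

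For existence, I would proceed as follows. First, use Lemma \ref{lem-span-arcs}(b) to reduce the problem to checking the image condition on stated $\pfS$-arcs, which generate $\cS(\fS)$ as an algebra (and whose images generate $\overline{\cS}(\fS)$). Next, using the compatibility of $\rdtr_\lambda^X$ with the cutting homomorphism $\Theta_e$ together with the cutting homomorphism $\overline{\Theta}_e^X$ from \eqref{eq-cutting}, it suffices to verify the image condition on a single ideal triangle $\tau = \PP_3$ with its trivial triangulation; indeed the matrices $\overline{\mat K}_\lambda$ and $\mat K_\lambda$, hence the images of $\rd\psi_\lambda$ and $\psi_\lambda$, are determined locally and assemble correctly under gluing because $\rd\psi_\lambda$ and $\psi_\lambda$ commute with cutting.

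Thus the substantive task is to compute $\rdtr_\lambda^X(\alpha) \in \overline{\cX}(\PP_3,\lambda)$ and $\tr_\lambda^X(\alpha) \in \cX(\PP_3,\lambda)$ for each generating stated arc $\alpha$ in a triangle and check that the exponent vectors appearing satisfy the divisibility constraint that characterizes the image of $\rd\psi_\lambda$ (respectively $\psi_\lambda$). Concretely, since $\overline{\mat H}_\lambda \overline{\mat K}_\lambda = n\cdot \mathrm{id}$, a monomial $x^{\mathbf m}$ lies in the image of $\rd\psi_\lambda$ if and only if $\overline{\mat H}_\lambda^{-1} \mathbf m \in \mathbb Z^{\overline V_\lambda}$; that is, $\overline{\mat H}_\lambda^t$ applied to $\mathbf m$ must satisfy a congruence modulo $n$. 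For the three generating families of stated arcs in $\PP_3$ (each corresponding to a pair of opposite edges together with states $(i,j)$), the $X$-quantum trace has an explicit formula as a sum of Weyl-normalized Laurent monomials in the edge coordinates; one reads off that every exponent vector appearing is of the form $\overline{\mat K}_\lambda \mathbf k$ for some $\mathbf k \in \mathbb Z^{\overline V_\lambda}$. This is essentially a local computation in the triangle that has been carried out explicitly in \cite{LY23}.

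Finally, the injectivity assertions follow by combining the injectivity of the $X$-trace from Theorem \ref{thm-X} with the injectivity of $\rd\psi_\lambda$ and $\psi_\lambda$ from Lemma \ref{lem.LY23_A_to_X}: a factorization through an injective map inherits injectivity from the composition. The main obstacle is the triangle computation, which is combinatorially delicate because one must track the precise exponent vector of every monomial appearing in the quantum trace of a stated arc and verify the $n$-divisibility condition uniformly in $n$; this was the technical heart of \cite{LY23}, and we invoke it here rather than redo it.
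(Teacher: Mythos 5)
Note first that the paper does not prove Theorem~\ref{thm-Atr}; it simply cites it from \cite{LY23} (and \cite{LY2} for $n=2$) as background in the review section \OldS\ref{subsec.SLn_quantum_trace_maps}. So there is no in-paper proof to compare against, and what you are doing is reconstructing the argument of the cited reference. With that caveat, your outline is broadly consistent with the strategy in \cite{LY23}: produce $\tr_\lambda^A$ (resp.\ $\rdtr_\lambda^A$) by showing that the image of $\tr_\lambda^X$ (resp.\ $\rdtr_\lambda^X$) lies in the image of the monomial embedding $\psi_\lambda$ (resp.\ $\rd\psi_\lambda$); uniqueness follows from injectivity of the monomial embedding; and injectivity of the $A$-trace is inherited from injectivity of the corresponding $X$-trace via the factorization $\rdtr_\lambda^X = \rd\psi_\lambda\circ\rdtr_\lambda^A$, which is the right observation (and it covers both the $n=2,3$/polygon cases in part (a) and, using the no-interior-puncture hypothesis with Theorem~\ref{thm-X}(b), the general case in part (b)).

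Two points deserve more care. First, your membership criterion is misstated in its first formulation: since $\rd\psi_\lambda(a^{\mathbf k}) = x^{\mathbf k\overline{\mathsf K}_\lambda}$ with $\mathbf k$ a row vector, a monomial $x^{\mathbf m}$ lies in the image if and only if $\mathbf k = \mathbf m\,\overline{\mathsf K}_\lambda^{-1} = \tfrac{1}{n}\mathbf m\,\overline{\mathsf H}_\lambda$ is integral, i.e.\ one applies $\overline{\mathsf H}_\lambda$ (not $\overline{\mathsf H}_\lambda^{-1}$) to $\mathbf m$ and asks for divisibility by $n$. Your second reformulation ("$\overline{\mathsf H}_\lambda^{t}$ applied to $\mathbf m$ must satisfy a congruence modulo $n$") is the correct one; the clause "$\overline{\mathsf H}_\lambda^{-1}\mathbf m\in\mathbb Z$" should be dropped or replaced by $\overline{\mathsf K}_\lambda^{-1}\mathbf m\in\mathbb Z$. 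Second, the reduction to a single triangle is the genuinely delicate step and cannot be dismissed with "$\rd\psi_\lambda$ commutes with cutting." After cutting along interior edges one has a monomial embedding $\overline\cX(\fS,\lambda)\hookrightarrow\bigotimes_\tau\overline\cX(\tau)$ that identifies pairs of edge variables, and the image condition on each triangle only shows $\overline\Theta(\rdtr_\lambda^X(\alpha))\in\bigotimes_\tau\rd\psi_\tau(\overline\cA(\tau))$; one still must argue that the intersection $\overline\Theta(\overline\cX(\fS,\lambda))\cap\bigotimes_\tau\rd\psi_\tau(\overline\cA(\tau))$ equals $\overline\Theta(\rd\psi_\lambda(\overline\cA(\fS,\lambda)))$, i.e.\ that the mod-$n$ divisibility constraints encoded by $\overline{\mathsf H}_\lambda$ are computed triangle-by-triangle and are compatible with the edge-pairings. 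This is true (it is part of the content of \cite[Lemma 11.9 and Theorem 11.7]{LY23}) but it is a real lemma, not a formality, and your sketch should name it rather than fold it into the phrase "assemble correctly under gluing."
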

We call $\rdtr_\lambda^A$ the reduced $A$-version ${\rm SL}_n$-quantum trace map, and $\tr_\lambda^A$ the extended $A$-version ${\rm SL}_n$-quantum trace map.

Let $\fS$ be a triangulable pb surface with the triangulation $\lambda$. Suppose that $\fS$ has no interior punctures.
For each $v\in\lv{V}_\lambda$ (resp. $v\in \rd{V}_\lambda$), we review the construction in \cite{LY23} of the element $\gaa_v\in\cS(\fS)$ (resp. $\bar{\gaa}_v\in \rd{\cS}(\fS)$) such that $\tr_\lambda^A(\gaa_v) = a_v\in \cA(\fS,\lambda)$ (resp. $\rd{\tr}_\lambda^A(\bar{\gaa}_v) = a_v\in \overline{\cA}(\fS,\lambda)$). These elements $\gaa_v$ and $\bar{\gaa}_v$ will play an important role in proving the compability between the quantum trace map and the Frobenius homomorphism to be studied in the next section (see Theorem \ref{thmFrob}(f)).

Suppose that $\surface$ does not have interior punctures. Since there is no interior ideal point, each characteristic map $f_\tau : \tau \to \fS$ is an embedding, and we will identify $f_\tau(\tau)$ with $\tau$, 
the latter being a copy of $\PP_3$.

For a small vertex $v\in \rdV_\lambda$,
choose a face $\nu\in\face_\lambda$ 
that contains $v$. There are two such $\nu$ when $v$ is on an interior edge of the triangulation. Otherwise, $\nu$ is unique. Assume that $v=(ijk)\in\overline{V}_\nu$, where we identified $\overline{V}_\nu$ with $\overline{V}_{\mathbb{P}_3}$ in \eqref{ol_V_P3} (for this, we chose a based vertex). Draw a weighted directed graph $Y_v$ properly embedded into $\nu$ as in Figure \ref{fig-skel-main}. Here an edge of $Y_v$ has weight $i$, $j$ or $k$ according as the endpoint lands on the edge $e_1$, $e_2$ or $e_3$ respectively; compare with Figure \ref{fig-coords}. The directed weighted graph $Y_v$ is unique up to ambient isotopy of the ideal triangle $\nu$.

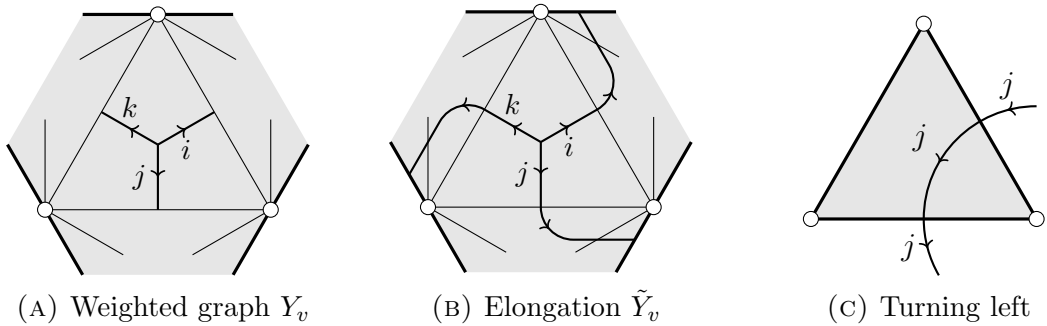
\begin{figure}
\centering
\begin{subfigure}[b]{0.3\linewidth}
\centering
\begin{tikzpicture}[baseline=0cm]
\fill[gray!20] (120:1) -- (-60:1) -- ++(2,0) -- ++(60:2)
	-- ++(120:2) -- ++(180:2) -- cycle;
\draw (0,0) -- (3,0) -- (60:3) -- cycle
	(0,0) edge +(-30:1.2) edge +(90:1.2)
	(3,0) edge +(90:1.2) edge +(-150:1.2)
	(60:3) edge +(-30:1.2) edge +(-150:1.2);
\draw[wall] (120:1) -- (-60:1) ++(2,0) -- ++(60:2) ++(120:2) -- ++(180:2);

\tikzmath{\len=sqrt(3)/2;}
\coordinate (C) at (1.5,\len);
\draw[edge,-o-={0.5}{>}] (C) -- node[midway,below]{\small$i$} +(30:\len);
\draw[edge,-o-={0.5}{>}] (C) -- node[midway,left]{\small$j$} +(-90:\len);
\draw[edge,-o-={0.5}{>}] (C) -- node[midway,above]{\small$k$} +(150:\len);

\draw[fill=white] (0,0)circle[radius=0.1] (3,0)circle[radius=0.1]
	(60:3)circle[radius=0.1];
\end{tikzpicture}
\subcaption{Weighted graph $Y_v$}\label{fig-skel-main}
\end{subfigure}
\begin{subfigure}[b]{0.3\linewidth}
\centering
\begin{tikzpicture}[baseline=0cm]
\fill[gray!20] (120:1) -- (-60:1) -- ++(2,0) -- ++(60:2)
	-- ++(120:2) -- ++(180:2) -- cycle;
\draw (0,0) -- (3,0) -- (60:3) -- cycle
	(0,0) edge +(-30:1.2) edge +(90:1.2)
	(3,0) edge +(90:1.2) edge +(-150:1.2)
	(60:3) edge +(-30:1.2) edge +(-150:1.2);
\draw[wall] (120:1) -- (-60:1) ++(2,0) -- ++(60:2) ++(120:2) -- ++(180:2);

\tikzmath{\len=sqrt(3)/2;\r=sqrt(3)/4;\el=1.25-\r;}
\coordinate (C) at (1.5,\len);
\begin{scope}[edge]
\draw[-o-={0.5}{>}] (C) -- +(30:\len)
	arc[radius=\r,start angle=-60,end angle=30] -- +(120:\el);
\draw[-o-={0.5}{>}] (C) -- +(-90:\len)
	arc[radius=\r,start angle=-180,end angle=-90] -- +(0:\el);
\draw[-o-={0.5}{>}] (C) -- +(150:\len)
	arc[radius=\r,start angle=60,end angle=150] -- +(-120:\el);

\path[-o-={0.5}{>}] (C) -- node[midway,below]{\small$i$} +(30:\len);
\path[-o-={0.5}{>}] (C) -- node[midway,left]{\small$j$} +(-90:\len);
\path[-o-={0.5}{>}] (C) -- node[midway,above]{\small$k$} +(150:\len);
\end{scope}

\draw[fill=white] (0,0)circle[radius=0.1] (3,0)circle[radius=0.1]
	(60:3)circle[radius=0.1];
\end{tikzpicture}
\subcaption{Elongation $\tY_v$}\label{fig-skel-long}
\end{subfigure}
\begin{subfigure}[b]{0.3\linewidth}
\centering
\begin{tikzpicture}[baseline=0cm]
\draw[wall,fill=gray!20] (0,0) -- (3,0) -- (60:3) -- cycle;
\begin{scope}[edge]
\draw[-o-={0.5}{>}] (3,1.5) arc[radius=1.5,start angle=90,end angle=210]
	node[midway,above left]{$j$};
\path[-o-={0.5}{>}] (3,1.5) arc[radius=1.5,start angle=90,end angle=120]
	node[midway,above]{$j$};
\path[-o-={0.5}{>}] (1.5,0)arc[radius=1.5,start angle=180,end angle=210]
	node[midway,left]{$j$};
\end{scope}

\draw[fill=white] (0,0)circle[radius=0.1] (3,0)circle[radius=0.1]
	(60:3)circle[radius=0.1];
\end{tikzpicture}
\subcaption{Turning left}\label{fig-skel-left}
\end{subfigure}
\caption{Graphs associated to a small vertex $v=(ijk) \in \overline{V}_\nu$ 
}\label{fig-skel}
\end{figure}

Elongate the nonzero-weighted edges of $Y_v$ to get an embedded weighted directed graph $\tY_v$ as in Figure~\ref{fig-skel-long}. Here the edge is elongated by using a left turn whenever it enters a triangle; see Figure~\ref{fig-skel-left} for a left turn.

Assume that $v\in\rd{V}_\lambda$. Then $v= (ijk)\in \rdV_\nu$ for an ideal triangle $\nu$ of $\lambda$. 
Turn $\tY_v$ into the stated $n$-web $\ag''_v$ by replacing a $k$-labeled edge of $\tY_v$ with $k$ parallel edges of $\gaa''_v$, adjusted by a sign. See the upper-right picture in Figure~\ref{fig-skel-web}.
We note that \cite[Lemma 4.12]{LY23} implies that the element
$\gaa''_v$ is reflection-normalizable in the sense defined in \OldS\ref{sec.reflection}.

\begin{figure}
	\centering
	\begin{gather*}
\tilde{Y}_v=
\begin{tikzpicture}[baseline=(C.base)]
\fill[gray!20] (0,0) -- (120:1.5) -- (60:3) -- +(1.5,0)
	-- (3,0) -- +(-120:1.5) -- cycle;
\draw (0,0) -- (3,0) -- (60:3) -- cycle;
\draw[wall] (0,0) -- (120:1.5) (60:3) -- +(1.5,0) (3,0) -- +(-120:1.5);
\tikzmath{\len=sqrt(3)/2;\r=sqrt(3)/4;\el=1.25-\r;}
\coordinate (C) at (1.5,\len);
\begin{scope}[edge]
\draw[-o-={0.5}{>}] (C) -- +(30:\len)
	arc[radius=\r,start angle=-60,end angle=30] -- +(120:\el);
\draw[-o-={0.5}{>}] (C) -- +(-90:\len)
	arc[radius=\r,start angle=-180,end angle=-90] -- +(0:\el);
\draw[-o-={0.5}{>}] (C) -- +(150:\len)
	arc[radius=\r,start angle=60,end angle=150] -- +(-120:\el);
\path[-o-={0.5}{>}] (C) -- node[midway,below]{\small$i$} +(30:\len);
\path[-o-={0.5}{>}] (C) -- node[midway,left]{\small$j$} +(-90:\len);
\path[-o-={0.5}{>}] (C) -- node[midway,above]{\small$k$} +(150:\len);
\end{scope}
\draw[fill=white] (0,0)circle[radius=0.1] (3,0)circle[radius=0.1]
	(60:3)circle[radius=0.1];
\node (base) at (C) {\phantom{$-$}};
\end{tikzpicture}
\longrightarrow
\mathsf{g}''_v:=(-1)^{\binom{n}{2}}
\begin{tikzpicture}[baseline=(C.base)]
\fill[gray!20] (0,0) -- (120:1.5) -- (60:3) -- +(1.5,0)
	-- (3,0) -- +(-120:1.5) -- cycle;
\draw (0,0) -- (3,0) -- (60:3) -- cycle;
\draw[wall,->] (0,0) -- (120:1.5);
\draw[wall,->] (60:3) -- +(1.5,0);
\draw[wall,->] (3,0) -- +(-120:1.5);
\tikzmath{\len=sqrt(3)/2;\r=sqrt(3)/4;\el=1.25-\r;}
\coordinate (C) at (1.5,\len);
\begin{scope}[edge]
\draw[-o-={0.3}{<}] (60:3) ++(0.5,0)node[above]{\stsize$n$}
	to[out=-90,in=30] (C);
\draw[-o-={0.3}{<}] (60:3) ++(1,0)node[above]{\stsize$\bar i$}
	to[out=-90,in=30] (C);
\draw[-o-={0.3}{<}] (3,0) ++(-120:0.5)node[right]{\stsize$n$}
	to[out=150,in=-90] (C);
\draw[-o-={0.3}{<}] (3,0) ++(-120:1)node[right]{\stsize$\bar{j}$}
	to[out=150,in=-90] (C);
\draw[-o-={0.3}{<}] (0,0) ++(120:0.5)node[left]{\stsize$n$}
	to[out=30,in=150] (C);
\draw[-o-={0.3}{<}] (0,0) ++(120:1)node[left]{\stsize$\bar{k}$}
	to[out=30,in=150] (C);
\path (60:3) ++(0.75,-0.2) node{...}
	(3,0) ++(-120:0.75) ++(150:0.2) node[rotate=60]{...}
	(120:0.75) ++(30:0.2) node[rotate=-60]{...};
\end{scope}
\draw[fill=white] (0,0)circle[radius=0.1] (3,0)circle[radius=0.1]
	(60:3)circle[radius=0.1];
\node (base) at (C) {\phantom{$-$}};
\end{tikzpicture}\\
\begin{tikzpicture}[baseline=(ref.base)]
\fill[gray!20] (1.5,0) arc[x radius=1.5,y radius=1.5,start angle=0,end angle=180];
\draw[wall] (-1.5,0) -- (1.5,0);
\draw[fill=white] (0,0)circle[radius=0.1];
\draw[edge,-o-={0.5}{>},blue] (0.5,0) arc[radius=0.5,start angle=0,end angle=180] node[midway,above]{$c$};
\path (1.5,0) node[below left]{$e$};
\node (ref) at (0,0.6) {\phantom{$-$}};
\end{tikzpicture}
\longrightarrow
\gaa''_v:=M^{[j+1,j+i]}_{[\bar{i},n]}(c)=
\begin{tikzpicture}[baseline=(ref.base)]
\fill[gray!20] (2,0) arc[x radius=2,y radius=1.5,start angle=0,end angle=180];
\draw[wall] (-2,0) -- (2,0);
\draw[fill=white] (0,0)circle[radius=0.1];
\draw[det] (1.2,0)node[anchor=135]{\stsize$[j+1,j+i]$}
	..controls (1,0.8).. (0.3,0.8) -- (-0.3,0.8)
	..controls (-1,0.8).. (-1.2,0) node[below]{\stsize$[\bar{i},n]$};
\node (ref) at (0,0.6) {\phantom{$-$}};
\end{tikzpicture}
\end{gather*}
	\caption{Definition of $\gaa''_v$}\label{fig-skel-web}
\end{figure}

Now assume that $v\in \lv{V}_\lambda\setminus\rd{V}_\lambda$. Then $v=(ijk)$ is in an attached triangle $\nu\equiv \PP_3$ (of $\fS^\ast$), whose edge $e_1$ is glued to a boundary edge $e$ of $\fS$. Let $c$ be the oriented corner arc of $\fS$ starting on $e$ and going counterclockwise, i.e. turning left all the time. Then \cite[Lemmas 4.10 and 4.13]{LY23} implies that the element 
$\gaa''_v:=M^{[j+1,j+i]}_{[\bar{i},n]}(c)$
is reflection-normalizable. See the lower-right picture in Figure~\ref{fig-skel-web} for the diagram of $\gaa''_v$.

Finally, define $\gaa_v$ to be the reflection-normalization \eqref{eq.reflec} of $\gaa''_v$ for each $v\in\lv{V}_\lambda$. Let $\bar \ag_v$ be the image of $\ag_v$ in $\reduceS(\surface)$. Note that $\bar \ag_v = 0$ if $v \in \lv{V}_\lambda\setminus\rd{V}_\lambda$. 

\begin{theorem}\cite{LY23}\label{trace}
	If $\fS$ has no interior punctures, 
the set $\{\gaa_v\mid v\in V_{\lambda}'\}$ is a quantum torus frame for $\cS(\fS)$, and 
$\tr_\lambda^A(\gaa_v) = a_v$ holds for each $v\in V_{\lambda}'$. 

If $\fS$ is a polygon, the set $\{\bar\ag_v\mid v\in \overline{V}_{\lambda}\}$
is a quantum torus frame for $\overline{\cS}(\fS)$, and 
$\overline{\tr}_\lambda^A(\bar\ag_v) = a_v$ holds for each $v\in \overline{V}_{\lambda}$.
\end{theorem}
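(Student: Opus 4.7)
My plan is to establish the two conclusions in parallel, taking advantage of the fact that both the extended and reduced $A$-quantum trace maps have already been constructed (Theorem \ref{thm-Atr}) together with the key injectivity statements; this converts the problem of proving a quantum torus frame property in $\cS(\fS)$ (resp.\ $\rd{\cS}(\fS)$) into a problem on the target quantum torus $\cA(\fS,\lambda)$ (resp.\ $\overline{\cA}(\fS,\lambda)$), where everything is transparent.

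First I would compute $\tr_\lambda^A(\gaa_v)=a_v$ for every $v\in\lv{V}_\lambda$. Using the compatibility of $\tr_\lambda^A$ with cutting along the interior edges of $\lambda$ (the analogue for the $A$-version of Theorem \ref{thm-X}(a)), the computation localizes to a single ideal triangle: for $v\in \rdV_\nu$ with $\nu\in\face_\lambda$ a face, the elongated web $\tY_v$ is supported in $\nu$, and by construction $\gaa''_v$ is a stated web built out of one principal quantum minor per edge with indices dictated by the barycentric coordinates $(ijk)$; the value of $\tr_\lambda^A$ on such a corner minor is a monomial in the $a_w$'s indexed by the edge-vertices on the adjacent boundary edge, and the three contributions assemble into $a_v$ up to a Weyl-normalization factor. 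The reflection-normalization in the passage $\gaa''_v\mapsto \gaa_v$ is precisely what kills this factor. For $v$ in an attached triangle, $\gaa''_v$ is the corner-arc minor displayed in Figure~\ref{fig-skel-web} and the same local computation applies, using the $A$-trace formula for stated corner arcs proved in~\cite{LY23}.

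Having established $\tr_\lambda^A(\gaa_v)=a_v$, the first two conditions of Definition~\ref{def.qtf} for the set $\{\gaa_v\mid v\in \lv{V}_\lambda\}$ follow immediately from the injectivity of $\tr_\lambda^A$ given by Theorem~\ref{thm-Atr}(b). Indeed, the generators $a_v$ of $\cA(\fS,\lambda)$ are nonzero and $\hq$-commute according to the matrix $\mat{P}_\lambda$; applying $(\tr_\lambda^A)^{-1}$ on its image shows that the $\gaa_v$ are nonzero and satisfy $\gaa_v\gaa_{v'}=\hq^{2\mP_\lambda(v,v')}\gaa_{v'}\gaa_v$. Similarly, the pure monomials in the $a_v$'s form a $\Zhq$-basis of $\cA(\fS,\lambda)$, so by injectivity the corresponding pure products of $\gaa_v$'s are $\Zhq$-linearly independent, giving condition (3).

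The main obstacle is condition (2), weak generation of $\cS(\fS)$ by $\{\gaa_v\}$. My strategy is to reduce to showing that any stated $\pfS$-arc lies in the localization of the subalgebra generated by the $\gaa_v$ at the set of these generators, since by Lemma~\ref{lem-span-arcs}(b) (using that $\fS$ has no interior punctures, hence is essentially bordered) such arcs span $\cS(\fS)$ multiplicatively. Cutting the arc along the interior edges of $\lambda$ via $\Theta_e$ breaks it into arcs supported in single triangles, for which one must exhibit explicit expressions in terms of the $\gaa_v$'s for $v$ in that triangle and in the attached triangles (corner-minor variables). This local statement on $\PP_3$ is essentially a skein-theoretic analogue of the fact that the $A$-cluster variables generate the upper cluster algebra of a disk; it can be proved by induction on the bidegree (number of strands on each edge) of the web, using the skein relations \eqref{e.sinksource}--\eqref{e.crossp-wall} to straighten crossings and resolve sinks/sources into products of minors. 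Finally, the polygon case follows by the same three-step argument using $\rdtr_\lambda^A$ and its injectivity for polygons in Theorem~\ref{thm-Atr}(a), together with the vanishing $\bar\gaa_v=0$ for $v\in\lv{V}_\lambda\setminus\rd{V}_\lambda$.
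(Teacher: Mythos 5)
The paper does not prove Theorem~\ref{trace} at all: it imports the statement verbatim from~\cite{LY23}, marking it with a citation, and never gives an argument. There is therefore no ``paper's own proof'' to compare against; I can only assess your proposal on its own terms as a reconstruction of the proof in~\cite{LY23}.

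Your high-level skeleton is reasonable --- compute $\tr^A_\lambda(\gaa_v)=a_v$, then read off quantum-torus-frame conditions (1) and (3) from the injectivity of $\tr^A_\lambda$ (Theorem~\ref{thm-Atr}(b)) and the standard linear-independence of monomials in a quantum torus, and finally establish weak generation (condition (2)) separately. The deduction of (1) and (3) from $\tr^A_\lambda(\gaa_v)=a_v$ plus injectivity of the algebra embedding is correct and clean.

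However there are two substantive problems. First, your account of the computation $\tr^A_\lambda(\gaa_v)=a_v$ rests on assertions that are false as stated: the elongated graph $\tY_v$ is \emph{not} supported in the single face $\nu$ containing $v$ --- by construction (Figure~\ref{fig-skel-long}) its edges are elongated out of $\nu$ into adjacent triangles via left turns --- so the claim that ``the computation localizes to a single ideal triangle'' does not hold without further work. Moreover, for $v\in\rdV_\lambda$ the web $\gaa''_v$ is a single connected stated web with one $n$-valent vertex (Figure~\ref{fig-skel-web}), not ``one principal quantum minor per edge,'' and the explanation of how ``three contributions assemble into $a_v$'' does not describe the actual object being traced. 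The genuine computation of $\tr^X_\lambda(\gaa_v)$ (from which $\tr^A_\lambda(\gaa_v)=a_v$ follows via $\psi_\lambda$) requires tracking the web across several triangles after cutting and controlling the state sums, and the reflection-normalization step is not a free lunch --- it exactly cancels the $\hq$-power only if one knows $\gaa''_v$ maps to a scalar times a Weyl-ordered monomial in the first place.

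Second, condition (2), weak generation, is where essentially all of the work in \cite{LY23} goes, and your treatment of it is a gesture rather than a proof (``can be proved by induction on the bidegree\ldots''). The sketch does not engage the real obstacles: after cutting, the arcs landing in a triangle $\tau$ meet generators associated to small vertices of $\tau$ \emph{and} to attached triangles depending on the global combinatorics of $\lambda$, so the local statement on $\PP_3$ does not by itself transfer to $\fS$; the reassembly step requires controlling which elements one is allowed to invert; and for $n>3$ one must use the explicit quantum-minor elements and the $B$-type web straightening from \cite{LY23} rather than an unspecified ``induction on bidegree.'' As written, this is a missing step, not merely an omitted detail.
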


As we will see in the next section, when proving a certain compatibility statement regarding the $A$-version quantum trace maps, the above theorem will make it suffice to check it only for the special elements $\gaa_v$.

Major focus of the present paper is on the case when the quantum parameter $\hq$ is specialized at a root of unity. For this purpose, we introduce the following notations for later use, before leaving this section. 
For a non-zero complex number $\hxi$, define 
\begin{align*}
    &\bA_{\hxi}(\fS,\lambda)= \bA(\fS,\lambda)\ot_\Zhq \BC,\qquad
    \cA_{\hxi}(\fS,\lambda)= \cA(\fS,\lambda)\ot_\Zhq \BC,\\
    &\rd\cX_{\hxi}(\fS,\lambda)= \rd\cX(\fS,\lambda)\ot_\Zhq \BC,\qquad
    \cX_{\hxi}(\fS,\lambda)= \cX(\fS,\lambda)\ot_\Zhq \BC,
\end{align*}
where $\BC$ is considered as a $\Zhq$-module by $\hq 
\mapsto \hxi$.

\section{The Frobenius homomorphism for stated ${\rm SL}_n$-skein module}\label{sub-Frobenius-map-SLn}

\def\OA{{{\mathcal O}_{\mathcal A}}}
\def\flip{{\mathsf{flip}}}
\def\coin{{\mathsf{coin}}}
\def\Oo{{\mathcal O}_\omega}
\def\Oe{{\mathcal O}_\eta}
\def\UA{{\dot{\mathcal U  }}_\CA}
\def\DDD{{ \mathcal D} }
\def\Do{{\DDD_\omega}}
\def\Deta{{\DDD_\eta}}
\def\CA{{\mathcal A}}
\def\bY{{\overline Y}}
\def\Yo{\bY_\omega}
\def\Ye{\bY_\eta}
\def\Frac{{\mathsf{Fr}}}
\def\Uo{\dot{\mathcal U}_\omega}
\def\Ue{\dot{\mathcal U}_\eta}
\def\Ch{{\mathsf {Ch}}}
\def\Rep{{\mathsf{Rep}}}
\def\Lo{{L_\omega}}
\def\Leta{{L_\eta}}
\def\home{{\hat \omega}}
\def\heta{{\hat \eta}}
\def\So{{\cS_\home(\fS)}}
\def\Se{{\cS_\heta(\fS)}}
\def\bSo{{\overline\cS_\home(\fS)}}
\def\bSe{{\overline\cS_\heta(\fS)}}

\newcommand\inclp[2]{{\includegraphics[height=#1]{#2-eps-converted-to.pdf}}}
\def\letterx{  \raisebox{-9pt}{\inclp{.9 cm}{x}} } 
\def\lettery{  \raisebox{-9pt}{\inclp{.9 cm}{y}} }
\def\letterxy{  \raisebox{-9pt}{\inclp{.9 cm}{xy}} }
\def\bXSq{{ \overline{\cX}_\hq(\fS,\lambda)   }}
\def\XSq{{ {\cX}_\hq(\fS,\lambda)   }}
\def\ASq{{ {\cA}_\hq(\fS,\lambda)   }}
\def\bASq{{ \overline{\cA}_\hq(\fS,\lambda)   }}
\def\bAPq{{ \overline{\cA}_\hq(\PP_3)   }}
\def\bSS{{  \overline{\cS}_\hq(\fS) }}
\def\OSL{{{\mathcal O}_q(sl_n)}}
\def\bSSe{{  \overline{\cS}_\heta(\fS) }}
\def\bSSo{{  \overline{\cS}_\home(\fS) }}

In this section we study one of the main ingredients of the present paper, namely the Frobenius homomorphism for the stated ${\rm SL}_n$-skein modules
$$
\Phi : \cS_\heta\MN \to \cS_\home\MN
$$
for a marked $3$-manifold $\MN$, where $\home$ is a root of unity, with $N = \ord(\home^{4n^2})$ and $\heta = \home^{N^2}$. Of major interest to many would be the case when $\MN$ is the thickening of a pb surface $\fS$, in which case $\Phi : \cS_\heta(\fS) \to \cS_\home(\fS)$ is required to be 
an algebra homomorphism.

In this section we focus on the case when $\MN$ is an essentially marked $3$-manifold, or when it is the thickening of an essentially bordered pb surface. For this setting, we give a fairly simple construction of the Frobenius homomorphism, which generalizes and improves the previously known constructions. We start with the bigon $\PP_2$ whose stated ${\rm SL}_n$-skein algebra $\cS(\PP_2)$ is a coribbon Hopf algebra isomorphic to $\Oq$.  For an essentially bordered pb surfaces the stated ${\rm SL}_n$-skein algebra is obtained by gluing copies of $\cS(\PP_2)$  together, where the gluing mechanism is governed by the cobraided structure. Since the Frobenius homomorphism for $\cS(\PP_2)$ respects the cobraided structure, we can extend the Frobenius homomorphism from the bigon to the whole surface. This method would work for all similar stated skein theory. 

Theorem \ref{thmFrob}, on the existence of the Frobenius homomorphism $\Phi$ and on its compatibility with various properties of the stated ${\rm SL}_n$-skein modules reviewed in \OldS\ref{sec.stated_SLn-skein_modules_and_algebras}, especially the quantum trace maps, can be regarded as one of the major accomplishments of the present paper.

\def\cF{\mathcal{F}}

\subsection{Assumptions on roots of unity $\home$}\label{ss.assumptions_on_roots_of_unity}

Throughout this section, $\home\in \BC$ is a root of 
unity. Let 
$$\omega= \home^{2n^2}, \quad N =\ord (\omega^2), \quad \heta= \home^{N^2}, \quad \eta= \heta^{2n^2}= \omega^{N^2}.$$

\subsection{Main result of the section}\label{subsec:main_result_of_Frobenius_section} We formulate the main result of this section.

A stated $n$-web $\alpha$ in a marked 3-manifold $(M,\cN)$ (see \OldS\ref{ss.marked}) is {\bf string-ly} if each connected component $\alpha_1, \dots, \alpha_{r}$ of it is a stated framed $\cN$-arc (see \OldS\ref{subsec:punctured_bordered_surface_and_n-web}). For such a string-ly stated $n$-web let $\alpha^{(N)}$ be the result of replacing each $\alpha_j$ with $N$ disjoint parallel copies of $\alpha_j$ placed with respect to each other along
the framing direction. The parallel copies of $\alpha_j$ are supposed to be in a small neighborhood of $\alpha_j$.

\bthm \label{thmFrob} Assume that $\MN$ is an essentially marked 3-manifold (\OldS\ref{subsec:essentially_marked_3-manifolds}).

\begin{enuma}
    
\item
There exists a unique 
$\mathbb{C}$-linear map
$$
\Phi= \Phi_\home: \SeM \to \SoM,
$$
called the {\bf Frobenius homomorphism} for stated ${\rm SL}_n$-skein modules,
such that 
$$\Phi([\al]_{\hat\eta})= [\al^{(N)}]_{\hat \omega}\quad\mbox{whenever $\al$ is a string-ly stated $n$-web in $(M,\cN)$},
$$ 
where $[\sim]_\heta$ and $[\sim]_\home$ are as defined in \OldS\ref{ss.change-coef}.

For an essentially bordered pb surface $\fS$ (Definition \ref{def.pb_surface}), the Frobenius homomorphism 
$$\Phi=\Phi_\home: \Se \to \So$$ associated to the thickening of $\fS$ \eqref{marked_3-manifold_from_surface} is a $\mathbb{C}$-algebra embedding between the stated ${\rm SL}_n$-skein algebras.

\item
If $f: \MN \embed (M', \cN')$ is a  morphism in the category $\mathbb{M}$ (\OldS\ref{ss.marked}), then the induced map $f_*: \cS(M, \cN) \to \cS(M', \cN')$ commutes with $\Phi$. In other words, 
$\Phi\circ f_{*, \heta} = f_{*,\home} \circ \Phi$. 

\item
$\Phi$ commutes with the cutting homomorphisms of \OldS\ref{subsec.cutting_homomorphism}.

\item
$\Phi$ multiplies the $\BZ_n$ grading of \OldS\ref{sec-grading} by $N$: for $k\in \BZ_n$,
\be 
\Phi ( (\SeM)_k  ) \subset  
\SoM)_{Nk}
\ee

\item
$\Phi:\Se\to\So$ for a pb surface $\fS$ descends to a reduced version $$\bPhi: 
\bSe \to 
\bSo$$ between the reduced stated ${\rm SL}_n$-skein algebras \eqref{reduced_stated_skein_algebra}.

\item
The Frobenius homomorphisms $\Phi$ and $\overline{\Phi}$ for stated ${\rm SL}_n$-skein algebras of a triangulable pb surface $\fS$ 
are compatible with the Frobenius homomorphisms $\Phi^\bT$ \eqref{eqFrDef} for quantum tori via the quantum trace 
maps in \OldS\ref{subsec.SLn_quantum_trace_maps}, meaning that for any ideal triangulation $\lambda$ of $\fS$, one has
\begin{align}
	\btr^X_\lambda \circ \bPhi  &= \Phi^\bT \circ \btr^X_\lambda, &\tr^X_\lambda \circ \Phi  &= \Phi^\bT \circ \tr^X_\lambda \\
	\btr^A_\lambda \circ \bPhi  &= \Phi^\bT \circ \btr^A_\lambda,  &\tr^A_\lambda \circ \Phi  &= \Phi^\bT \circ \tr^A_\lambda,
    \label{eq.tr_A_compatible_with_Phi}
\end{align}
where for the $A$-versions in \eqref{eq.tr_A_compatible_with_Phi} we assume that $\fS$ has no interior punctures. 
\end{enuma}
\ethm

\begin{remark}
For $n=2$,
    Theorem \ref{thmFrob} was proved in 
    \cite{BL22,BW16,KQ}. For $n=3$, Theorem \ref{thmFrob}(a), (b), (c), (e)  were proved in \cite{Hig23} when the order of $\hat\omega$ is coprime to $6$ and $(M,\mathcal N)$ is the thickening of an essentially bordered pb surface. 
    For general $n$,
    Theorem \ref{thmFrob}(a), (b), (c), (e) were  proved in \cite{Wan23} when the order of $\hat\omega$ is coprime 
    to $2n$, which was generalized in \cite{KW24} to all roots of unity when $(M,\mathcal N)$ is the thickening of an essentially bordered pb surface.
    
    Our construction of the Frobenius homomorphism for general $n$ (as well as the proof of the properties (a), (b), (c), (e)) is different from \cite{Wan23} and is more structural, as we develop an ${\rm SL}_n$ generalization of the methodology taken for $n=2$ \cite{BL22} and $n=3$ \cite{Hig23}, based on the constructions in \cite{LS21}. Item (f) is proved here for the first time for general $n$, which is a very important property and could be used to formulate the center of the stated ${\rm SL}_n$-skein algebra \cite{FKL,kim2024unicity}.

    Moreover, we have no restriction on the order of $\home$.
\end{remark}

We will prove Theorem \ref{thmFrob} in the remaining parts of this section.

\subsection{Relations for $N$-parallel copies of arcs  in $\SO$}\label{subsec.relations_for_parallel_copies}

We first develop a technical preliminary.

We use
$
\raisebox{-.3\height}{
	\begin{tikzpicture}
		\draw [line width =1pt] (0,0)--(1,0);
		\draw[fill=white] (0.3,-0.2) rectangle (0.7,0.2);
		\node  at (0.5,0) {$N$};
	\end{tikzpicture}
}$
to denote the $N$-parallel copies of the corresponding strand, placed with respect to one another along the framing direction.

When $N$ and $2n$ are coprime,
the third author calculated the height change relation  \cite[Corollary 7.18]{Wan23}, and the relation for removing cups  \cite[Lemma 7.20]{Wan23} for $N$-parallel copies of stated arcs in $\SS$.
These relations can be easily generalized to all $N$ using the same technique. 
We prove Lemmas \ref{lem-height-Nparallel} and \ref{lem-arcs} in Appendix \ref{Appendix-2} for the completeness.

\begin{lemma}\cite[Lemmas 7.6 and 7.7, Corollary 7.18]{Wan23}\label{lem-height-Nparallel}
	In $\cS_{\home}\MN$, we have the following relations:
	$$
	\raisebox{-.40\height}{
		\begin{tikzpicture}
			\draw[color=gray!20,fill=gray!20] (-1,-1.2) rectangle (1,1.2);
			\draw[wall,<-] (1,-1.2)-- (1,1.2);
			\draw[line width =1pt] (-1,-0.8)--(1,0.8);
			\draw[color=gray!20,fill=gray!20] (-0.1,-0.08) rectangle (0.1,0.08);
			\draw[line width =1pt] (-1,0.8)--(1,-0.8);
			\draw[fill=gray!20] (-0.7,0.2) rectangle (-0.3,0.6);
			\node  at (-0.5,0.4) {$N$};
			\draw[fill=gray!20] (-0.7,-0.2) rectangle (-0.3,-0.6);
			\node  at (-0.5,-0.4) {$N$};
			\node [right] at (1,0.8) {$i$};
			\node [right] at (1,-0.8) {$j$};
			\draw[fill=white] (0.5,0.4) circle[radius=0.1] ;
			\draw[fill=white] (0.5,-0.4) circle[radius=0.1] ;
		\end{tikzpicture}
	}=\eta^{-\frac{1}{n}+\delta_{i,j}}
	\raisebox{-.40\height}{
		\begin{tikzpicture}
			\draw[color=gray!20,fill=gray!20] (-1,-1.2) rectangle (1,1.2);
			\draw[wall,<-] (1,-1.2)-- (1,1.2);
			\draw[line width =1pt] (-1,-0.4)--(1,-0.4);
			\draw[color=gray!20,fill=gray!20] (-0.1,-0.08) rectangle (0.1,0.08);
			\draw[line width =1pt] (-1,0.4)--(1,0.4);
			\draw[fill=gray!20] (-0.7,0.2) rectangle (-0.3,0.6);
			\node  at (-0.5,0.4) {$N$};
			\draw[fill=gray!20] (-0.7,-0.2) rectangle (-0.3,-0.6);
			\node  at (-0.5,-0.4) {$N$};
			\node [right] at (1,0.4) {$j$};
			\node [right] at (1,-0.4) {$i$};
			\draw[fill=white] (0.5,0.4) circle[radius=0.1] ;
			\draw[fill=white] (0.5,-0.4) circle[radius=0.1] ;
		\end{tikzpicture}
	}\text{ and }
	\raisebox{-.40\height}{
		\begin{tikzpicture}
			\draw[color=gray!20,fill=gray!20] (-1,-1.2) rectangle (1,1.2);
			\draw[wall,<-] (1,-1.2)-- (1,1.2);
			\draw[line width =1pt] (-1,-0.8)--(1,0.8);
			\draw[color=gray!20,fill=gray!20] (-0.1,-0.08) rectangle (0.1,0.08);
			\draw[line width =1pt] (-1,0.8)--(1,-0.8);
			\draw[fill=gray!20] (-0.7,0.2) rectangle (-0.3,0.6);
			\node  at (-0.5,0.4) {$N$};
			\draw[fill=gray!20] (-0.7,-0.2) rectangle (-0.3,-0.6);
			\node  at (-0.5,-0.4) {$N$};
			\node [right] at (1,0.8) {$i$};
			\node [right] at (1,-0.8) {$j$};
			\draw[fill=black] (0.5,0.4) circle[radius=0.1] ;
			\draw[fill=white] (0.5,-0.4) circle[radius=0.1] ;
		\end{tikzpicture}
	}=\eta^{\frac{1}{n}-\delta_{\bar{i},j}}
	\raisebox{-.40\height}{
		\begin{tikzpicture}
			\draw[color=gray!20,fill=gray!20] (-1,-1.2) rectangle (1,1.2);
			\draw[wall,<-] (1,-1.2)-- (1,1.2);
			\draw[line width =1pt] (-1,-0.4)--(1,-0.4);
			\draw[color=gray!20,fill=gray!20] (-0.1,-0.08) rectangle (0.1,0.08);
			\draw[line width =1pt] (-1,0.4)--(1,0.4);
			\draw[fill=gray!20] (-0.7,0.2) rectangle (-0.3,0.6);
			\node  at (-0.5,0.4) {$N$};
			\draw[fill=gray!20] (-0.7,-0.2) rectangle (-0.3,-0.6);
			\node  at (-0.5,-0.4) {$N$};
			\node [right] at (1,0.4) {$j$};
			\node [right] at (1,-0.4) {$i$};
			\draw[fill=white] (0.5,0.4) circle[radius=0.1] ;
			\draw[fill=black] (0.5,-0.4) circle[radius=0.1] ;
		\end{tikzpicture}
	}
	$$
    for each $i,j \in \JJ = \{1,2,\ldots,n\}$, 
	where each $
	\raisebox{-.3\height}{
		\begin{tikzpicture}
			\draw [line width =1pt] (0,0)--(1,0);
			\draw[fill=white] (0.3,-0.2) rectangle (0.7,0.2);
			\node  at (0.5,0) {$N$};
		\end{tikzpicture}
	}$ is a part of $N$-parallel copies of some stated framed $\cN$-arc.  
\end{lemma}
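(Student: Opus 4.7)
The plan is to establish each of the two equalities by iterating the fundamental wall-crossing relation \eqref{e.crossp-wall} for pairs of single strands and then specializing $\hat q = \hat\omega$. The first step is to push a single strand of the upper bundle, carrying state $i$, past the $N$ strands of the lower bundle one at a time. Each of these $N$ elementary crossings expands via \eqref{e.crossp-wall} into the sum of a pure-swap term with scalar $q^{-1/n + \delta_{ij}}$ and (only when $j < i$) an extra diagonal correction proportional to $q^{-1/n}(q - q^{-1})$ in which the two strands uncross but exchange one of the states. Iterating this $N$ times gives a sum over binary choices (corrected versus swapped at each of the $N$ crossings) whose coefficients assemble into quantum-binomial-type expressions in $q$.

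The second step is to invoke the specialization $\hat q = \hat\omega$. Under our root-of-unity hypothesis one has $[N]_\omega = 0$, from which the relevant quantum binomial coefficients $\binom{N}{k}_\omega$ vanish for $0 < k < N$. Consequently every expansion term containing at least one $(q - q^{-1})$-correction drops out, and only the pure-swap contribution survives. Repeating this argument once for each of the remaining $N - 1$ strands of the upper bundle (a second induction on the bundle size) yields the first claimed relation with overall scalar $(q^{-1/n + \delta_{ij}})^{N^2}$; converting via $q = \hat q^{2n^2}$ and $\eta = \hat\omega^{2n^2 N^2}$, this scalar collapses to exactly $\eta^{-1/n + \delta_{ij}}$ (interpreting $\eta^{-1/n} := \hat\eta^{-2n}$ as in the paper's notational convention), as required.

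For the second equality, in which the two bundles carry opposite orientations (the black-circled strand reversed relative to the white-circled one), the reduction to the first equality proceeds by a cap-cup insertion trick using \eqref{e.capwall} and \eqref{e.capnearwall}. Inserting a cap-cup pair converts the mixed-orientation bundle crossing to a same-orientation crossing between bundles whose upper strand now carries the conjugate state $\bar i$; applying the first relation and then contracting the cap-cup pair back produces $\delta_{\bar i, j}$ in place of $\delta_{ij}$, while the sign of the $1/n$ exponent flips because the orientation reversal inverts the braiding scalar from $q^{-1/n}$ to $q^{1/n}$.

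The hard part will be the combinatorial vanishing argument at the root of unity for arbitrary $N$, without the coprimality hypothesis $\gcd(N, 2n) = 1$ imposed in \cite{Wan23}. One must directly verify that all the quantum-binomial-type sums arising from iterating \eqref{e.crossp-wall} vanish at $\hat q = \hat\omega$; the natural approach is an induction on $k$ using the quantum Pascal recursion together with $[N]_\omega = 0$, but the shift factors of $q$ coming from reordering heights near the wall require careful bookkeeping. A secondary concern is ensuring that the cap-cup insertion in the opposite-orientation case respects the height order on $\mathcal{N}$, which can be handled by invoking \eqref{e.crossp-wall} once more to transpose adjacent endpoints as needed.
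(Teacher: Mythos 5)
Your overall plan --- iterate the wall relation \eqref{e.crossp-wall}, then let root-of-unity vanishing kill the correction terms --- is in the same spirit as what underlies \cite[Lemmas 7.6, 7.7]{Wan23}, which the paper's proof simply cites and then iterates. But there are two genuine gaps you have not closed.

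First, your ``binomial'' picture of the expansion is not faithful to the actual recursion, and you have not carried out the vanishing. When you apply \eqref{e.crossp-wall} to the crossing at the wall, the term with the prefactor $\delta_{j<i}(q-q^{-1})$ does \emph{not} exchange the two wall endpoints --- it uncrosses the strands while leaving the wall states in place. After such a term the offending strand has \emph{not} progressed past the other one, so the subsequent iteration does not simply branch into ``swapped or corrected at each of $N$ crossings.'' Moreover, only one of the $N^2$ crossings is ever adjacent to the wall at a given time; the other $N^2-1$ are interior crossings and must be handled by the interior relation \eqref{e.pm}, not by \eqref{e.crossp-wall}. So the ``sum over binary choices'' whose coefficients ``assemble into quantum-binomial-type expressions'' is not what you actually get, and the $[N]_\omega=0$ argument would need to be threaded through a considerably messier recursion than you describe. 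The published proof sidesteps this by importing the $1$-strand-versus-$N$-bundle statement from \cite{Wan23} and observing that the only vanishing it needs is $1+\omega^2+\cdots+\omega^{2(N-1)}=0$, which holds whenever $\ord(\omega^2)=N$ --- that is how the coprimality hypothesis of \cite{Wan23} is dropped, without redoing the combinatorics.

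Second, and more importantly, you do not address the case where the two $N$-boxes are parallel copies of the \emph{same} $\cN$-arc $a$. In that situation the two bundles near the wall are the two ends of a single $N$-cable, and the single-strand-versus-$N$-bundle lemma from \cite{Wan23}, stated for distinct arcs, cannot be invoked directly. The paper's proof treats this as a separate Case 2: it isolates a neighborhood of $a$ together with the relevant component of $\cN$ (which is diffeomorphic to the thickened once-punctured monogon $\PP_{1,1}$), cuts along an interior ideal arc $c$ into a triangle, applies the $a\neq b$ case to the cut picture, and then uses the injectivity of the cutting homomorphism $\Theta_c$ to pull the identity back. Your proposal does not acknowledge this case at all, and without at least an argument for why a purely local iteration is insensitive to $a=b$, the proof is incomplete.

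Your cap--cup reduction of the mixed-orientation identity to the same-orientation one is a plausible alternative route, but note that it requires the $N$-bundle cap relation, which is precisely the companion Lemma \ref{lem-arcs} --- so you would need to establish that lemma first (the paper proves it by a separate $\Delta_\beta$-coaction argument, so there is no circularity, but the dependency must be made explicit). The paper itself avoids this detour: it proves the black-and-white-circle identity directly and notes that the same technique gives the other.
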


\begin{lemma}\cite[Lemma 7.20]{Wan23}\label{lem-arcs}
	In $\cS_{\home}\MN$, we have the following relations:
	\begin{align}\label{eq-N-copies-cap-wall}
	\sum_{
    i \in \JJ}\eta^{-\frac{n-1}{2n}}(-\eta)^{\bar i-n}
	\raisebox{-.30in}{
		\begin{tikzpicture}
			\draw[color=gray!20,fill=gray!20] (-1.3,-1) rectangle (0,1);
			\draw [wall,<-] (0,-1)-- (0,1);
			\draw [line width =1pt] (-1.3,-0.5)--(-0.7,-0.5);
			\filldraw[fill=black] (-1,-0.5) circle (0.1);
			\draw [color = black, line width =1pt](-0,-0.5)--(-0.3,-0.5);
			\node [right] at(-0,-0.5) { $\bar{i}$};
			\draw[color=black] (-0.5,-0.5) circle (0.2);
			\draw [line width =1pt] (-1.3,0.5)--(-0.7,0.5);
			\draw [color = black, line width =1pt](-0,0.5)--(-0.3,0.5);
			\node [right] at(-0,0.5) { $i$};
			\draw[fill=gray!20] (-0.7,0.3) rectangle (-0.3,0.7);
			\node at(-0.5,0.5) {$N$};
			\draw[fill=gray!20] (-0.7,-0.3) rectangle (-0.3,-0.7);
			\node at(-0.5,-0.5) {$N$};
			\filldraw[fill=white] (-1,0.5) circle (0.1);
	\end{tikzpicture}}
	= 
	\raisebox{-.30in}{
		\begin{tikzpicture}
			\draw[color=gray!20,fill=gray!20] (-1,-1) rectangle (0.7,1);
			\draw [wall] (0.7,-1)--(0.7,1);
			\draw [line width =1pt] (-1,-0.5)--(-0.7,-0.5);
			\draw [color = black, line width =1pt](-0,-0.5)--(-0.3,-0.5);
			\draw[fill=gray!20] (-0.7,-0.3) rectangle (-0.3,-0.7);
			\node at(-0.5,-0.5) {$N$};
			\draw [color = black, line width =1pt] (-1,0.5)--(-0.7,0.5);
			\draw [color = black, line width =1pt](-0,0.5)--(-0.7,0.5);
			\draw [color = black, line width =1pt] (0 ,-0.5) arc (-90:90:0.5);
			\filldraw[fill=white] (-0.5,0.5) circle (0.1);
	\end{tikzpicture}},
	\end{align}
	where each $
	\raisebox{-.3\height}{
		\begin{tikzpicture}
			\draw [line width =1pt] (0,0)--(1,0);
			\draw[fill=white] (0.3,-0.2) rectangle (0.7,0.2);
			\node  at (0.5,0) {$N$};
		\end{tikzpicture}
	}$ is a part of $N$-parallel copies of some stated framed $\cN$-arc.
	
\end{lemma}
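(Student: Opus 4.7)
The identity asserted by Lemma~\ref{lem-arcs} is an $N$-parallel analog of the cap-near-wall relation \eqref{e.capnearwall}: the coefficient $\eta^{-\frac{n-1}{2n}}(-\eta)^{\bar i - n}$ is exactly $(\ccc_{\bar i}|_{q=\eta})^{-1}$, the $\eta$-evaluated version of the coefficient in \eqref{e.capnearwall}. Since $\eta = \omega^{N^2}$, this is a concrete instance of the Frobenius principle that $N$ parallel strands at parameter $\home$ behave like a single strand at $\eta$. The lemma must be derived from the defining skein relations directly, since it is used as input to build the Frobenius homomorphism $\Phi$ of Theorem~\ref{thmFrob} rather than following from it.

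My plan is induction on $N$. The base case $N=1$ gives $\eta=\omega$, and the identity reduces to \eqref{e.capnearwall} itself, evaluated at $\home$. For the inductive step, I start from the right-hand side with $N$ nested parallel cups at the wall. First, I apply \eqref{e.capnearwall} to the innermost cup, producing a sum $\sum_{j\in\JJ}(\ccc_{\bar j}|_{q=\omega})^{-1}$ over a new state label $j$, alongside which the outer $N-1$ cups remain intact. Next, I apply the inductive hypothesis to those $N-1$ outer cups, introducing a second summation index $i$ with coefficient $\eta_{N-1}^{-\frac{n-1}{2n}}(-\eta_{N-1})^{\bar i-n}$ where $\eta_{N-1}=\omega^{(N-1)^2}$. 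This leaves a double sum indexed by $(i,j)$, with the inner pair of endpoints at a different height than the outer bundle. I then use Lemma~\ref{lem-height-Nparallel} to reorder the heights so that the two bundles of $N$ strands become adjacent and carry the desired matched states.

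The chief obstacle is the coefficient bookkeeping after reordering. The height-change factors $\eta^{-\frac{1}{n}+\delta_{i,j}}$ and $\eta^{\frac{1}{n}-\delta_{\bar i,j}}$ from Lemma~\ref{lem-height-Nparallel} are precisely what is needed for the off-diagonal contributions (those where the state $j$ from the innermost cup fails to match the state $i$ from the outer $N-1$ cups via conjugation) to cancel, leaving only a single diagonal sum over $i\in\JJ$. On the diagonal, the accumulated prefactors $(\ccc_{\bar i}|_{q=\omega})^{-1}\cdot(\ccc_{\bar i}|_{q=\eta_{N-1}})^{-1}$ combine with the surviving height-change factors through the arithmetic identity $N^2=(N-1)^2+(2N-1)$ in the exponents of $\omega$, yielding the required $(\ccc_{\bar i}|_{q=\eta})^{-1}$.

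This strategy directly extends the argument of \cite[Lemma 7.20]{Wan23}, which treated the case $\gcd(N,2n)=1$. The coprimality assumption there was used to invert certain explicit powers of $\home$; here I plan instead to work directly with the explicit exponents, so that no invertibility is needed and the argument covers all roots of unity $\home$ uniformly. The hardest technical step will be the verification that the powers of $\home$ in the intermediate expressions collapse as claimed on the diagonal, which is essentially a finite combinatorial identity that can be checked once the height-reordering has been executed carefully.
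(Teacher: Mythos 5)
The paper proves this lemma by a completely different route than yours: it applies the right coaction $\Delta_\beta\colon \cS_\home\MN \to \cS_\home\MN \otimes \cS_\home(\PP_2)$ associated to the marking $\beta$, uses Lemma~\ref{rcut} to compute $\Delta_\beta$ of the right-hand side as a sum of tensors whose $\PP_2$-factors are $N$-parallel caps in the bigon, evaluates those factors to scalars via two auxiliary bigon lemmas (the $N$-th power of a cap in $\cS(\PP_2)$ equals $q^{\frac{N(N-1)(n-1)}{2n}}$ times the $N$-parallel picture, and a single cap in $\cS(\PP_2)$ with states $i,j$ equals $\delta_{\bar i,j}\ccc_{\bar i}^{-1}$), collects constants using $(-\omega)^N=-\eta$, and concludes via $(\id\otimes\varepsilon)\circ\Delta_\beta=\id$. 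There is no induction on $N$; the whole argument reduces to a closed-form computation in $\Oq\cong\cS(\PP_2)$.

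Your proposed induction has a genuine gap in the formulation of the inductive hypothesis. The coefficient you posit for $N-1$ parallel cups, namely $\eta_{N-1}^{-\frac{n-1}{2n}}(-\eta_{N-1})^{\bar i - n}$ with $\eta_{N-1}=\omega^{(N-1)^2}$, is \emph{not} the correct coefficient. For $m$ parallel near-wall caps the correct coefficient (following the bigon computation above with $m$ in place of $N$) is
\begin{align*}
\omega^{-\frac{m(m-1)(n-1)}{2n}}\cdot\bigl(\ccc_{\bar i}|_{q=\omega}\bigr)^{-m}
=\omega^{-\frac{m^2(n-1)}{2n}}\bigl((-\omega)^m\bigr)^{\bar i - n},
\end{align*}
and this takes the form $\omega^{-\frac{m^2(n-1)}{2n}}(-\omega^{m^2})^{\bar i - n}$ only when $(-\omega)^m=-\omega^{m^2}$, equivalently $(-1)^{m+1}=(\omega^2)^{\binom{m}{2}}$. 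The latter holds at $m=N$ precisely because $\ord(\omega^2)=N$, but fails already at $m=N-1$ for generic $\omega$. So the inductive hypothesis as written is false, and the diagonal collapse you describe via ``$N^2=(N-1)^2+(2N-1)$'' will not produce the target coefficient.

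There is also a scoping problem: both Lemma~\ref{lem-arcs} and Lemma~\ref{lem-height-Nparallel} are stated only for $N$-parallel copies with $N=\ord(\omega^2)$, and their proofs exploit $1+\omega^2+\cdots+\omega^{2(N-1)}=0$. To run your induction you would need uniform-in-$m$ versions of both (including a height-exchange relation for a single strand moving past $m$ parallel strands), which are not available and would carry the corrected coefficient above. In short, the strategy can likely be repaired by replacing your $\eta_{N-1}$-based coefficient with $\omega^{-\frac{m^2(n-1)}{2n}}((-\omega)^m)^{\bar i-n}$ and stating the needed $m$-parallel auxiliary relations explicitly, but as written the induction does not close. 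The coaction argument in Appendix~\ref{Appendix-2} sidesteps all of this by a single computation at $m=N$.
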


\begin{lemma}\label{lem_change}
	Relations in Lemmas \ref{lem-height-Nparallel} and \ref{lem-arcs} also hold in $\SE$ if we remove the box $
    \fbox{\hspace{-0,8mm}N}$ in these relations. This means that 
	 the height change relation and the relation for removing caps for stated arcs in $\SE$ are compatible with the corresponding relations 
	for $N$-parallel copies of stated arcs in $\SO$. 
\end{lemma}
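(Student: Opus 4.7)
The plan is to derive the claimed relations directly from the defining skein relations of \OldS\ref{ss.marked} specialized at $\hq = \heta$, exploiting the fact that the scalar $\eta$ is extremely constrained. The whole lemma will reduce to one tiny numerical identity.

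First, I observe that $N = \ord(\omega^2)$ forces $\omega^{2N} = 1$, and hence
$$\eta^2 \;=\; \omega^{2N^2} \;=\; (\omega^{2N})^N \;=\; 1,$$
so $\eta \in \{+1,-1\}$ and in particular
$$\eta - \eta^{-1} \;=\; 0 \quad \text{in } \mathbb{C}.$$
This single identity is what drives everything.

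For the height-exchange claim, I would apply the wall-crossing skein relation \eqref{e.crossp-wall} directly in $\cS_\heta(M,\cN)$, i.e.\ at $q = \eta$:
$$\text{(crossing)}_{i,j} \;=\; \eta^{-1/n}\Bigl(\delta_{j<i}(\eta-\eta^{-1})\,\text{(no-swap)}_{i,j} \;+\; \eta^{\delta_{i,j}}\,\text{(swap)}_{j,i}\Bigr).$$
Because $\eta - \eta^{-1} = 0$, the first summand drops out and only the clean exchange
$$\text{(crossing)}_{i,j} \;=\; \eta^{-1/n + \delta_{i,j}}\,\text{(swap)}_{j,i}$$
survives. This is exactly the first height-exchange identity of Lemma \ref{lem-height-Nparallel} after the $\fbox{$N$}$ boxes are deleted. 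The second height-exchange relation, the one involving a black dot (i.e.\ the opposite orientation), will be handled in the same way: either by deducing it from the first using the cap relations \eqref{e.capwall}--\eqref{e.capnearwall}, or by invoking the orientation-reversing automorphism of \OldS\ref{sec.reflection}; whichever is more convenient.

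For the cap-removal claim, the point is that the coefficient $\eta^{-(n-1)/(2n)}(-\eta)^{\bar i - n}$ appearing in \eqref{eq-N-copies-cap-wall} is nothing other than $\ccc_{\bar i}^{-1}$ from \eqref{def-constants-tac} specialized at $q = \eta$: indeed, using $\overline{\overline{i}} = i$,
$$\ccc_{\bar i}^{-1}\big|_{q=\eta} \;=\; \bigl(\eta^{(n-1)/(2n)}(-\eta)^{n-\bar i}\bigr)^{-1} \;=\; \eta^{-(n-1)/(2n)}(-\eta)^{\bar i - n}.$$
Therefore the asserted cap-removal identity in $\cS_\heta(M,\cN)$ (with the $\fbox{$N$}$ boxes deleted) is literally the defining relation \eqref{e.capnearwall} specialized at $q = \eta$. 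There is really no obstacle: the whole lemma amounts to the observation $\eta = \pm 1$ together with a bookkeeping check of the constants appearing in the defining relations, so the proof should be short.
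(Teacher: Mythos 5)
Your argument is correct and is exactly what the paper has in mind: Lemma~\ref{lem_change} is stated in the paper without proof, being treated as an immediate consequence of $\eta=\pm 1$ together with the observation that the coefficients in Lemmas~\ref{lem-height-Nparallel} and~\ref{lem-arcs} coincide with those in the defining relations~\eqref{e.crossp-wall} and~\eqref{e.capnearwall} after specializing $q\mapsto\eta$ and noting $\eta-\eta^{-1}=0$, which is precisely what you verify. One small slip in your backup plan: the orientation-reversal map $\cev{\,\cdot\,}$ is introduced in the subsection on reversing orientations of webs, not in \OldS\ref{sec.reflection} (which is the reflection anti-involution $\refl$, switching crossings and height orders but preserving orientations), and neither of those maps turns an all-white crossing into a mixed white/black one, so for the second height-exchange identity you should rely on your cap-relation derivation via \eqref{e.capwall}--\eqref{e.capnearwall} rather than that alternative.
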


\def\OSL{\Oq}

\subsection{The bigon case}\label{subsec-the-bigon-case} 
Recall that Theorem \ref{thm-iso-Oq-P2} lets us identify the stated ${\rm SL}_n$-skein algebra $\cS(\PP_2)$ of the based bigon $\mathbb{P}_2$ with the quantized function algebra $\Oq$ (Definition \ref{def.OqSLn}), which is generated by the entries of the quantum matrix $\buu=(u_{ij})_
{i,j\in \JJ}$. 
Recall from \OldS\ref{subsec:quantum_elementary_symmetric_function} that a quantum minor $\fm\in \OSL$ is the quantum determinant of a square submatrix of $\buu$. Let $\fm_\home\in \cS_\home(\PP_2)$ be the image of $\fm$ under the natural map $\cS(\PP_2)\to \cS_\home(\PP_2)$, and define 
$\fm_\heta\in \cS_\heta(\PP_2)$ 
likewise, similarly as in \OldS\ref{ss.change-coef}. Recall that $\cS(\PP_2)$ is a cobraided Hopf algebra, meaning that there is a bilinear form $$\rho_\hq: \cS(\PP_2) \ot \cS(\PP_2) \to \BZ[\hq^{\pm 1}]$$ with favorable properties which make the category of right $\OSL$-comodules a ribbon category; see \cite{LS21}. 
\blem[bigon Frobenius homomorphism]\label{rbigon}
\begin{enuma}
    \item 
    There exists a unique 
    injective Hopf algebra 
    homomorphism  $$\Phi: \cS_\heta(\PP_2) \to \cS_\home(\PP_2)$$ given by $\Phi(u_{ij})= u_{ij}^N$ for all $i,j \in \JJ = \{1,\ldots,n\}$.
For any quantum minor $\fm\in \OSL$, one has
\be 
\Phi(\fm_\heta) = (\fm_\home)^N.
\label{eqminor}
\ee

\item 
For  
any stated $n$-web diagram $\al$ 
in $\PP_2$ that is string-ly (\OldS\ref{subsec:main_result_of_Frobenius_section}), we have $\Phi([\al]_{\hat\eta}) = [\al^{(N)}]_{\hat\omega}$.

\item 
The map $\Phi:\cS_\heta(\PP_2) \to \cS_\home(\PP_2)$ preserves the cobraided structure, meaning that
\be 
\rho_\home(\Phi(x) \ot \Phi(y))= \rho_\heta(x\ot y), \quad\mbox{for all } x,y \in \cS_\heta(\PP_2).
\label{eqrho}
\ee
\end{enuma}

\elem 
\bpr (a) follows from Theorem \ref{Fro_Oq}.

(b) From part (a), we have $\Phi([u_{ij}]_{\hat\eta}) =([u_{ij}]_{\hat\omega})^N = [(u_{ij})^{(N)}]_{\hat\omega}$ for 
all $i,j\in\JJ$; here $u_{ij}\in \cS(\PP_2)$ stands for the stated arc in Figure \ref{fig-bigon}(c). Since $\Phi$ commutes with the antipode $S$ (see \eqref{eq-Hopf-bigon}), we have $\Phi([\cev{u}_{ij}]_{\hat\eta}) = ([\cev{u}_{ij}]_{\hat\omega})^N = [(\cev{u}_{ij})^{(N)}]_{\hat\omega}$ for 
all $i,j\in\JJ$; 
recall that $\cev{u}_{ij}$ is $u_{ij}$ with the reverse orientation. 
The above discussion shows that the claim holds if $[\alpha]_{\hat\eta}$ is a product of $[u_{ij}]_{\hat\eta}$ and $[\cev u_{ij}]_{\hat\eta}$ for 
some collection of $i,j \in \JJ$.
Note that for any stated $n$-web diagram $\alpha$ in $\PP_2$ that is string-ly, by using a sequence of  height exchange relations and relations for removing caps, one can write $[\alpha]_{\hat\eta}$ 
as a linear sum of stated $n$-web diagrams that are products of $[u_{ij}]_{\hat\eta}$ and $[\cev u_{ij}]_{\hat\eta}$ for 
some $i,j \in \JJ$ (see Lemma \ref{lem-span-arcs}(b)). 
Then Lemma \ref{lem_change} completes the proof. 

(c) By \cite{LS21},  $\rho_\hq$ is given by
\begin{align}
	\rho_\hq \left (\letterx  \ot \lettery \right) & = \varepsilon\left(  \letterxy \right),
    \label{eqCobraid} 
\end{align}
where $\varepsilon$ is the counit (see \eqref{eq-Hopf-bigon}). 
Since $\Phi$ 
is compatible with 
$\varepsilon$, in the sense that $\varepsilon \circ \Phi = \varepsilon$, we have \eqref{eqrho}.
\epr

\subsection{Compatibility with the cutting homomorphism, for string-ly diagrams}
\label{ss.compatibility_with_cutting_homomorphism}

We first investigate the bigon case.
Let $
e$ be an oriented interior ideal arc connecting the two ideal points of $\PP_2$. 
Let $\al$ be a 
stated $n$-web diagram in $\mathbb{P}_2$ that is string-ly (\OldS\ref{subsec:main_result_of_Frobenius_section}) and $e$-transverse (\OldS\ref{subsec.cutting_homomorphism}).
By Theorem \ref{t.splitting2}, cutting $\mathbb{P}_2$ along $e$ yields the cutting homomorphism $\Theta_e : \cS(\PP_2) \to \cS(\Cut_e(\PP_2)) = \cS(\PP_2) \otimes \cS(\PP_2)$, which satisfies $\Theta_e(\alpha) = \sum_{s:\alpha \cap e \to \JJ} \alpha_s$, where $\alpha_s$ stands for $\alpha_{h,s}$ of Theorem \ref{t.splitting2} with the linear order $h$ on $\alpha\cap e$ being induced this time by the chosen orientation on $e$ (the positive orientation on $e$ matches the increasing direction of this order). This yields $\Theta_e$ when $\hq$ is specialized at the root 
of unity $\home$
or $\heta$ as well.

Since $\Theta_e$ coincides with the coproduct $\Delta$ of the Hopf algebra $\cS(\PP_2)$, and since $\Phi : \cS_\heta(\PP_2) \to \cS_\home(\PP_2)$ commutes with $\Delta$ in an appropriate sense, we get
\begin{align*}
\Theta_e([\alpha^{(N)}]_\home)
& = \Theta_e(\Phi([\alpha]_\heta)) \quad (\because \mbox{Lemma \ref{rbigon}(b)}) \\
& = \Phi (\Theta_e([\alpha]_\heta)) \\
& = \Phi({\textstyle \sum}_{s:\alpha \cap e \to \JJ} [\alpha_s]_\heta) \\
& = {\textstyle \sum}_{s:\alpha \cap e \to \JJ} [(\alpha_s)^{(N)}]_\home, \quad (\because \mbox{Lemma \ref{rbigon}(b)})
\end{align*}
that is,
\begin{align}
\label{eqcut0_new}
\Theta_e([\alpha^{(N)}]_\home)={\textstyle \sum}_{s:\alpha \cap e \to \JJ} [(\alpha_s)^{(N)}]_\home.
\end{align}

\def\fT{{\PP_3}}

We extend this result to marked 3-manifolds, regarding the cutting homomorhism $\Theta_{(D,\beta)} : \cS\MN \to \cS(\Cut_{(D,\beta)}\MN)$ (in Theorem \ref{thm.cutting_homomorphism_for_3-manifolds}) specialized at roots of unity.

\blem \label{rcut}
Let $\MN$ be an essentially  
marked 3-manifold (\OldS\ref{subsec:essentially_marked_3-manifolds}), 
$D$  
a properly embedded disk in $M$ such that $\partial D\cap\cN=\emptyset$, and 
$\beta$ 
an embedded oriented open interval in $D$.
Suppose that $\al$ is a string-ly stated $n$-web in $\MN$ that is $(D,\beta)$-transverse. Then
\be  \Theta_
{(D,\beta)}( [\al^{(N)}]_{\hat\omega} ) = \sum_{s: \al \cap c \to \JJ} [
(\alpha_s)^{(N)}]_{\hat\omega} \in \cS_\home(\Cut_{(D,\beta)}\MN),
\label{eqcutN}
\ee
where $\alpha_s$ is as in Theorem \ref{thm.cutting_homomorphism_for_3-manifolds}, which is a stated $n$-web in $\Cut_{(D,\beta)}\MN$ induced from $\alpha$ and $s : \alpha \cap c \to \JJ$.

\elem
\bpr A small tubular neighborhood of $\al$ is a disjoint union of the thickenings of bigons. The disk $D$ cuts these thickened bigons into smaller thickened bigons. The lemma follows from a repeated application of \eqref{eqcut0_new}.
\epr

\def\bphi{{\bar \phi}}
\def\fn{{\mathfrak{n}}}
\def\bpsi{{\bar \psi}}
\subsection{Proof of Theorem \ref{thmFrob}(a) for essentially bordered pb surfaces} \label{ss.Proof_of_Thm_6_1a_for_surfaces}

We use induction on $t(\fS)$, defined as follows. If $\fS$ is triangulable, then let $t(\fS)$ be the number of triangles in a triangulation of $\fS$. If $\fS$ is either a monogon or a bigon, then $t(\fS)=0$. Finally if $\fS = \fS_1 \sqcup \fS_2$ then $t(\fS)= t(\fS_1) + t(\fS_2)$.

If $t(\fS)=0$, then $\fS$ is a disjoint union of monogons and bigons, and we are done by \eqref{monogon_skein_algebra} (for monogons) and Lemma \ref{rbigon} (for bigons). 
\begin{figure}[htpb]
	\includegraphics[height=2.8cm]{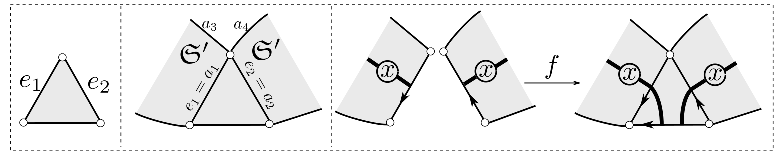} 
	\caption{Left: $\fT$. Middle: Gluing $\fS'$ and $\fT$ by $a_1=e_1$ and $a_2=e_2$ to get $\fS$. Right: web diagram $x\in \cS(\fS')$ and its image $f(x)\in \SS$. Note that $a_3$ may or may not coincide with $a_4$; our figure shows an example when $a_3 \neq a_4$.}  
	\label{figBrtensor0}
\end{figure}

Assume now that $\fS$ is the result of gluing an essentially bordered pb surface $\fS'$ 
and a triangle along two edges, as in the middle picture of
Figure \ref{figBrtensor0}. 
Here $a_1, a_2$ are boundary edges of $\fS'$. Note that $\fS'$ has one less number of triangles than $\fS$. It is not hard to see that for any $\fS$ with $t(\fS)\ge 1$, one can find such $\fS'$. Namely, consider any ideal triangulation $\lambda$ of $\fS$ and consider any triangle $\tau$ of $\lambda$, whose sides are $e_1,e_2,e_3$, with $e_3$ being a boundary edge; there exists such a triangle because $\fS$ is essentially bordered. Let $e_1',e_2'$ be disjoint ideal arcs lying in the interior of $\tau$ that are isotopic to $e_1,e_2$, respectively. Cutting $\fS$ along $e_1'$ and $e_2'$ yields a pb surface given by a disjoint union of a triangle $\PP_3$ and a non-empty pb surface, which we can use as $\fS'$. For example, when $\fS$ is a triangle, then $\fS'$ is a disjoint union of two bigons.

By the induction hypothesis, we may suppose that we have the Frobenius homomorphism 
$$\Phi': \cS_\heta(\fS') \to \cS_\home(\fS'), \qquad \Phi'([\al]_{\hat\eta}) = [\al^{(N)}]_{\hat\omega} \ \text{for each string-ly diagram} \ \al.  $$
By \cite[Proposition 8.1]{LS21} the $R$-linear map $$f: \cS(\fS')\to \SS$$ given in the right picture of Figure \ref{figBrtensor0} is bijective; let $f_\home$ and $f_\heta$ be the specializations of $\hq$ at roots of unity, $\home$ and $\heta$ respectively. We will prove that $\Phi:= f_{\home} \circ \Phi' \circ f_{\heta}^{-1}: \cS_\heta(\fS) \to \cS_\home(\fS)$ is an algebra homomorphism; note that the injectivity of $\Phi$ would follow from that of $\Phi'$. 
Proving that $\Phi$ is an algebra homomorphism can be done easily, since 
the difference between the products on $\cS(\fS')$ and $\SS$ is given by the cobraided structure, and  $\Phi$, for the bigon, preserves the cobraided structure. Here are the details.

To simplify notations we identify $\SS$ with $\cS(\fS')$ by $f$. For two elements $x,y$ in either of these algebras, we use $xy$ to denote the ordinary product of $\cS(\fS')$, and $x*y$ to denote the product of $\SS$.
We need to prove 
\be 
\Phi'(x *y) = \Phi'(x) *  \Phi'( y), \quad \text{ for $x,y \in \cS_\heta(\fS')$}. \label{eqAlg}
\ee
As string-ly diagrams span $\cS_\heta(\fS')$, we can assume that both $x, y$ are string-ly diagrams. For $i=1,2$, cutting $\fS'$ along an ideal arc $a_i'$ in the interior of $\fS'$ isotopic to $a_i$ yields the cutting homomorphism $\Theta_{a_i'} : \cS(\fS') \to \cS(\PP_2) \ot \cS(\fS')$, which equips $\cS(\fS')$ a structure of a left comodule over the Hopf algebra $\cS(\PP_2)$ (see \cite{LS21}); we denote $\Theta_{a_i'}$ by $\Delta_{a_i}$. 
Assume that
\be 
\Delta_{a_1}(x) = \sum x' \ot x'', \qquad \Delta_{a_2}(y) = \sum  y' \ot y''. \label{eq11}
\ee
By \cite[Theorem 8.2]{LS21},  the product in $\cS(\fS)$ is given by
\be 
x*y = \sum \rho_\hq (x' \otimes y') x'' y''. \label{eqProd}
\ee

Using Lemma \ref{rcut} applied to the thickening of $\fS'$ \eqref{marked_3-manifold_from_surface}, where the cutting disk $D$ is $a_i' \times (-1,1)$, we have
$$  \Delta_{a_1}(x^{(N)}) = \sum (x')^{(N)} \ot (x'')^{(N)}, \qquad \Delta_{a_2}(y^{(N)}) = \sum  (y')^{(N)} \ot (y'')^{(N)}.$$
By the induction hypothesis we have $\Phi'(x)= x^{(N)}$ and $\Phi'(y) = y^{(N)}$.
Observe that
\begin{align*}
	\Phi'(x)  * \Phi'( y) &= x^{(N)} * y^{(N)} \\
	&= \sum \rho_\home (  (x')^{(N)} \ot  (y')^{(N)}  )\, \left[   (x'')^{(N)}  (y'')^{(N)} \right] \quad (\because \mbox{\eqref{eqProd} applied to $x^{(N)}$ and $y^{(N)}$}) \\
	&= \sum \rho_\heta (  x' \ot  y'  )\, \left[  \Phi' (x'' y'')  \right] \quad (\because \mbox{\eqref{eqrho} of Lemma \ref{rbigon}(c)}) \\
	&= \Phi' \left ( \sum \rho_\heta (x' \otimes y') x'' y''  \right) \\ 
    & = \Phi'(x*y). \quad (\because \mbox{\eqref{eqProd}})
\end{align*}

Let us now prove that if $\al\in \SS$ is a string-ly diagram, considered as an element of $\cS_\heta(\fS)$, then 
\be \Phi([\al]_{\hat\eta})= [\al^{(N)}]_{\hat\omega}. \label{eq15}
\ee
For this, first recall the inverse $f^{-1}: \cS(\fS) \to \cS(\fS')$. Cutting along $a_1$ and $a_2$ gives the map $\Theta_{a_1, a_2}: \SS \to \cS(\PP_3) \ot \cS(\fS')$. 
Let  $h: \PP_3 \embed \PP_2$ be the embedding obtained by removing (i.e. `filling in') the puncture between $e_1$ and $e_2$, and $
\varepsilon_{\PP_3}$ be the composition $$\varepsilon_{\PP_3}\,:\, \cS(\PP_3) \xrightarrow{h_*} \cS(\PP_2) \xrightarrow{
\varepsilon}R.$$ Then 
\cite[Proposition 8.1]{LS21} says that
\be  f^{-1} = (
\varepsilon_{\PP_3} \ot \id)\circ \Theta_{a_1, a_2}: \SS \to \cS(\fS').
\label{eqfinv}
\ee
Observe that, for any string-ly diagram $\alpha$ in $\PP_3$, 
\begin{align*}
    \varepsilon_{\PP_3,\home}([\alpha^{(N)}]_\home) = \varepsilon_\home(h_{*,\home}([\alpha^{(N)}]_\home))
    & = \varepsilon_{\home}([(h(\alpha))^{(N)}]_\home) \\
    & = \varepsilon_{\home}( \Phi([h(\alpha)]_\heta ) ) \qquad (\because \mbox{Lemma \ref{rbigon}(b)}) \\
    & = \varepsilon_\heta( [h(\alpha)]_\heta ) \qquad (\because \mbox{$\Phi$ for bigon is compatible with $\varepsilon$,} \\
    & \hspace{40mm} \mbox{by Lemma \ref{rbigon}(a)}) \\
    & = \varepsilon_\heta (h_{*,\heta} ([\alpha]_\heta))
    = \varepsilon_{\PP_3,\heta}([\alpha]_\heta),
\end{align*}
hence 
\be
\varepsilon_{\PP_3
,\home}([\al^{(N)}]_{\hat\omega}) = 
\varepsilon_{\PP_3
, \heta}([\al]_{\hat\eta}). 
\label{eqep2}
\ee 
Let us prove \eqref{eq15}.  Assume that $\Theta_{a_1, a_2}([\al]_{\hat\eta}) = \sum [\al']_{\hat\eta} \ot [\al'']_{\hat\eta}$. By Lemma 
\ref{rcut}, 
$$  \Theta_{a_1, a_2}([\al^{(N)}]_{\hat\omega}) = \sum [(\al')^{(N)}]_{\hat\omega} \ot [(\al'')^{(N)}]_{\hat\omega}.
$$

Recall that $\Phi$ is defined to intertwine $f$. Using \eqref{eqfinv} and \eqref{eqep2},
\begin{align*}
	f^{-1} \Phi(([\al]_{\hat\eta})) &= \Phi'( f^{-1}([\al]_{\hat\eta})) = \Phi' \left ( \sum 
    \varepsilon_{\PP_3
    , \heta} ([\al']_{\hat\eta}) \,  [\al'']_{\heta} \right) \\
	&= \sum 
    \varepsilon_{\PP_3
    , \heta} ([\al']_{\heta}) \,  [(\al'')^{(N)}]_{\hat\omega} = \sum 
    \varepsilon_{\PP_3
    , \home} ([(\al')^{(N)}]_{\home}) \,  [(\al'')^{(N)}]_{\home} = f^{-1} ([\al^{(N)}]_{\home}).
\end{align*} 
Applying $f$ to the above, we get \eqref{eq15}. This completes the proof of part (a) for (the thickenings of) essentially bordered pb surfaces.

\subsection{Proof of Theorem \ref{thmFrob}(b) when both $\MN$ and $(M',\cN')$ are thickenings of essentially bordered pb surfaces}\label{sub-proof-Fro-b}
Suppose that $\MN$ and $(M',\cN')$ are thickening of essentially bordered pb surfaces $\fS$ and $\fS'$. 
We need to prove that for every morphism $f: \MN \embed (M', \cN')$ (in the category $\mathbb{M}$ (\OldS\ref{ss.marked})) and all $x\in 
\cS\MN
$, we have
\begin{align}
\label{Thm_6_1b_surface}
    \Phi\circ f_{*, \heta}(
    [x]_\heta)  = f_{*,\home} \circ \Phi (
    [x]_\heta). 
\end{align}
We can assume that $x$ is a string-ly stated $n$-web due to Lemma \ref{lem-span-arcs}(a). Note that $f_* : \cS\MN \to \cS(M',\cN')$ sends a string-ly stated $n$-web to a string-ly stated $n$-web. Thus, the left hand side of \eqref{Thm_6_1b_surface} is
\begin{align*}
    \Phi\circ f_{*, \heta}([x]_\heta) = \Phi [f_*(x)]_\heta = [(f_*(x))^{(N)}]_\home,
\end{align*}
where for the latter equality we applied Theorem \ref{thmFrob}(a) for essentially bordered pb surfaces, which we proved in the last subsection, to the string-ly stated $n$-web $f_*(x)$ in the thickening of $\fS'$. The right hand side of \eqref{Thm_6_1b_surface} is
\begin{align*}
f_{*,\home} \circ \Phi([x]_\heta) = f_{*,\home} ([x^{(N)}]_\home)= [(f_*(x))^{(N)}]_\home,
\end{align*}
where for the former equality we applied Theorem \ref{thmFrob}(a) for the string-ly stated $n$-web $x$ in the thickening of $\fS$. So \eqref{Thm_6_1b_surface} is proved.

 \qed

\subsection{Proof of Theorem \ref{thmFrob}(a) for essentially marked $3$-manifolds}\label{ss.Proof_of_Theorem_6_1a_for_3-manifolds}

We aim to construct a map $\Phi_{\MN} : \cS_\heta\MN \to \cS_\home\MN$, for an essentially marked 3-manifold $\MN$ (\OldS\ref{ss.marked}, \OldS\ref{subsec:essentially_marked_3-manifolds}). Let $\alpha=\bigsqcup_{i=1}^{r}\alpha_i$ be a stated $n$-web in $\MN$, where each $\alpha_i$ is a connected component of $\alpha$.
For every $1\leq i\leq r$, we use an embedded path $p_i\colon[0,1]\rightarrow M$ to connect $\cN$ and $\alpha_i$ such that $p_i\cap \alpha=p_i\cap \alpha_{i} = \{p_i(0)\}$, $p_i\cap \cN= \{p_i(1)\}$ for $1\leq i\leq r$ and 
$p_i\cap p_j=\emptyset$ for $1\leq i\neq j\leq r$.
Let $U(\alpha)$ be a regular open neighborhood of 
$\alpha\cup \left(\bigsqcup_{i=1}^{r}p_i\right)$, which 
is diffeomorphic to the thickening of an essentially bordered pb surface (see Figure \ref{fig-web-neighborhood}). 
Theorem \ref{thmFrob}(a) for essentially bordered pb surfaces which we proved in \OldS\ref{ss.Proof_of_Thm_6_1a_for_surfaces} yields
the Frobenius homomorphism
$\Phi_{U(\alpha)}\colon\cS_\heta(U(\alpha))\rightarrow \cS_\home(U(\alpha))$. 
We use $l$ to denote the proper embedding from 
$
U(\alpha)
$ to $M$, which induces a $\Zhq$-linear homomorphism $l_*\colon\cS(U(\alpha)
)\to\cS\MN$.
For $[\alpha]_\heta \in \cS_\heta\MN$, we define its image under the sought-for map $\Phi_\MN$ as  \begin{align}
    \label{Thm_6_1a_3-manifold_def}
    \Phi_\MN([\alpha]_{\heta}):=l_{*,\home}(\Phi_{U(\alpha)}([\alpha]_{\heta}))\in\cS_\home\MN,
\end{align}
where $[\alpha]_\heta$ in the right hand side is an element of $\cS_\heta(U(\alpha))$.

\begin{figure}[htpb]
	\includegraphics[height=4.5cm]{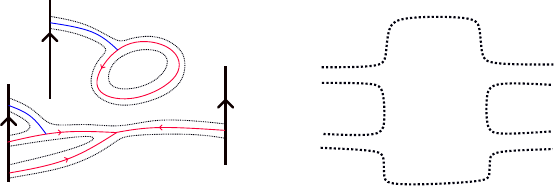} 
	\caption{Left picture: an example of the regular open neighborhood of $\alpha\cup\left(\bigsqcup_{i=1}^{r}p_i\right)$. Right picture: the local picture of $U$ for relation \eqref{e.pm}.}\label{fig-web-neighborhood}
\end{figure}

We will show that $\Phi_\MN([\alpha]_\heta)$ is independent of the choice of $\{p_i\mid 1\leq i\leq r\}$. Let
$\{p_i'\mid 1\leq i\leq r\}$ be another choice.
It is clear that $\Phi_\MN([\alpha]_\heta)$ 
is invariant under the isotopy of $\alpha
\cup \left(\bigsqcup_{i=1}^{r}p_i\right)$.
So, for each $1\leq i\leq r$, we can isotope $p_i(0,1)$ such that $\bigsqcup_{i=1}^{r}p_i$ intersects $\bigsqcup_{i=1}^{r}p_i'$ 
at finitely many points. 
Let $U(\alpha)'$ (resp. $U(\alpha)''$) be a regular open neighborhood of 
$\alpha\cup \left(\bigsqcup_{i=1}^{r}p_i'\right)$
(resp. $\alpha\cup \left(\bigsqcup_{i=1}^{r}p_i\right)\cup \left(\bigsqcup_{i=1}^{r}p_i'\right)$), which is the thickening of an essentially bordered pb surface. Thus we have Frobenius homomorphisms $\Phi_{U(\alpha)'}$ and $\Phi_{U(\alpha)''}$, by the result of \OldS\ref{ss.Proof_of_Thm_6_1a_for_surfaces}. 
We use $l'$ (resp. $l''$) to denote the proper embedding from 
$U(\alpha)'$ (resp. $U(\alpha)''$) to 
$M$. 
We use $f$ (resp. $f'$) to denote the proper embedding from 
$U(\alpha)$ (resp. $U(\alpha)'$) to $U(\alpha)''$. 
Then $l''\circ f=l$ and $l''\circ f'=l'$. 
The maps $l',l'',f,f'$ induce maps between the stated ${\rm SL}_n$-skein algebras and their root of unity versions, denoted with the subscript $*$ and with $\home$ and $\heta$. 
For $[\alpha]_\heta \in \cS_\heta(U(\alpha))$, observe that
\begin{align*}
    l_{*,\home}(\Phi_{U(\alpha)}([\alpha]_{\heta}))&=l''_{*,\home}(f_{*,\home}(\Phi_{U(\alpha)}([\alpha]_{\heta}))) 
    \quad (\because \mbox{$l=l''\circ f$})\\
    &=l''_{*,\home}(\Phi_{U(\alpha)''}(f_{*,\heta}([\alpha]_{\heta}))) \quad (\because \mbox{ 
    \eqref{Thm_6_1b_surface} of \OldS\ref{sub-proof-Fro-b}})\\
    &=l''_{*,\home}(\Phi_{U(\alpha)''}(f'_{*,\heta}([\alpha]_{\heta}))) \quad (\because \mbox{$f(\alpha)=
    f'(\alpha)$; here $[\alpha]_\heta \in \cS_\heta(U(\alpha)')$})\\
    &=l''_{*,\home}(f'_{*,\home}(\Phi_{U(\alpha)'}([\alpha]_{\heta})) \quad (\because \mbox{ 
    \eqref{Thm_6_1b_surface} of \OldS\ref{sub-proof-Fro-b}})\\
    &=l'_{*,\home}(\Phi_{U'(\alpha)}([\alpha]_{\heta})) \quad (\because \mbox{$l'=l''\circ f'$}).
\end{align*}
This shows that $\Phi_\MN([\alpha]_\heta)$ defined in \eqref{Thm_6_1a_3-manifold_def} is independent of the choice of $\{p_i\mid 1\leq i\leq r\}$.

Since any isotopy of $\alpha$ can be extended 
to an isotopy 
of $\alpha\cup \left(\bigsqcup_{i=1}^{r}p_i\right)$, 
it follows that $\Phi_\MN(
[\alpha]_\heta)$  
is invariant under the isotopy of $\alpha$.

We now show that \eqref{Thm_6_1a_3-manifold_def} extends to a well-defined map $\Phi_\MN : \cS_\heta\MN \to \cS_\home\MN$. Let $\sum_{1\leq 
i\leq m} c_i[\alpha_i]_{\heta}=0\in\cS_{\heta}\MN$ be one of the relations \eqref{e.pm}--\eqref{e.crossp-wall}, where $c_i\in\mathbb C$ and $\alpha_i$ is a stated $n$-web in $\MN$ for each $1\leq i\leq m$.
The goal is to show that $\Phi_\MN$ respects this relation, i.e. to show $\sum_{1\le i \le m} c_i \Phi_\MN([\alpha_i]_\heta)=0$. 
Note that all $\alpha_i$, $1\leq i\leq m$, are identical to each other except in a small open 
set $W$, which is diffeomorphic 
to $(0,1)^3$ or $\left((0,1)\times(0,1]
\right)\times (0,1)$. 
As before,
let $p$ be the union of paths that connect components of $\alpha_1$ to $\cN$. We require all these paths have no intersection with $W$.
Then these paths also connect components of $\alpha_i$, $1\leq i\leq m$, to $\cN$.
For each $1\leq i\leq m$,
let $U_i$ be a regular open neighborhood of 
$\alpha_i\cup p$. 
We require that $U_i\setminus W=U_j\setminus W$
for $1\leq i,j\leq m$. 
Then $U_i\cup W=U_j\cup W$
for $1\leq i,j\leq m$, denoted 
by $U$ (see Figure \ref{fig-web-neighborhood}).
We have that $U$ is isomorphic to the thickening of an essentially bordered pb surface. 
For each $1\leq i\leq m$, we use $l^{(i)}$ (resp. $f^{(i)}$) to denote the proper embedding from $U_i$ to $M$ (resp. from $U_i$ to $U$). We 
use $l$ to denote the proper embedding from $U$ to $M$. 
Note that $\sum_{1\leq i\leq m} c_i[\alpha_i]_{\heta}=0\in\cS_{\heta}(U)$. 
Regarding each $[\alpha_i]_\heta$ as an element of $\cS_\heta(U_i)$, we have
\begin{align*}
\sum_{1\leq 
i\leq m} c_i \, l^{(i)}_{*,\home}(\Phi_{U_i}( [\alpha_i]_{\heta}))
   &=\sum_{1\leq 
   i \leq m} c_i \, l_{*,\home}(
   f^{(i)}_{*,\home}
   (\Phi_{U_i}( [\alpha_i]_{\heta})))
    \quad (\because \mbox{$l^{(i)}=l\circ f^{(i)}$})\\
    &=\sum_{1\leq
    i \leq m} c_i \, l_{*,\home}(
    \Phi_{U}
   (f^{(i)}_{*,\heta}( [\alpha_i]_{\heta})))
    \quad  (\because \mbox{ 
    \eqref{Thm_6_1b_surface} of \OldS\ref{sub-proof-Fro-b}})\\
    &= l_{*,\home}\left(\sum_{1\leq 
    i \leq m} c_i \,
    \Phi_{U}
   ([\alpha_i]_{\heta}) \right) \quad \begin{array}{r} (\because \mbox{$f^{(i)}$ is an embedding;} \\ \mbox{here $[\alpha_i]_\heta \in \cS_\heta(U)$}) \end{array} \\
    &=0. \quad (\because \mbox{$\Phi_U$ is well-defined})
\end{align*}
This shows that $\Phi_\MN$ in \eqref{Thm_6_1a_3-manifold_def} 
respects the relations \eqref{e.pm}-\eqref{e.crossp-wall}, hence extends to a well-defined map $\Phi_\MN : \cS_\heta\MN \to \cS_\home\MN$, as desired. The equality $\Phi_\MN([\alpha]_\heta) = [\alpha^{(N)}]_\home$ for each string-ly $n$-web $\alpha$ follows from \eqref{Thm_6_1a_3-manifold_def} and the corresponding property of $\Phi_{U(a)}$ proved in \OldS\ref{ss.Proof_of_Thm_6_1a_for_surfaces} (see \eqref{eq15}).

\subsection{Proof of Theorem \ref{thmFrob}(b) for essentially marked $3$-manifolds}
The arguments in 
\OldS\ref{sub-proof-Fro-b} work verbatim here.

\subsection{Proof of Theorem \ref{thmFrob}(c)}\label{ss.Proof_of_Thm_6_1c}

As in \OldS\ref{subsec.cutting_homomorphism}, let $\MN$ be an essentially marked 3-manifold, and 
assume that $D$ is a properly embedded closed disk in $M$ such that $D$ does not meet the closure of $\cN$, and $\beta$ is an embedded oriented open interval in $D$. Recall that we have a cutting homomorphism $\Theta_{(D,\beta)}: \cS\MN \to \cS(\Cut_{(D,\beta)}\MN)$ (Theorem \ref{thm.cutting_homomorphism_for_3-manifolds}). We need to show that for any
$\alpha\in 
\cS\MN$
\be 
\Phi ( \Theta_{(D,\beta),\heta} (
[\al]_\heta)) = \Theta_{(D,\beta),\home} (\Phi(
[\al]_\heta)). \label{eq16}
\ee
We can assume that $\al$ is a string-ly web in $\MN$ because of Lemma \ref{lem-span-arcs}(a). Then $\Phi([\al]_{\hat\eta})= [\al^{(N)}]_{\hat\omega}$ by Theorem \ref{thmFrob}(a) proved in \OldS\ref{ss.Proof_of_Theorem_6_1a_for_3-manifolds}. 
Now \eqref{eq16} follows from \eqref{eqcutN} of Lemma \ref{rcut} in \OldS\ref{ss.compatibility_with_cutting_homomorphism}. \qed

\subsection{Proof of Theorem \ref{thmFrob}(d)}  The statement follows, since for a string-ly $n$-web $\al$,  we have 
$$[\al^{(N)}]_{\mathsf{homol}  }= N[\al]_{\mathsf{homol}  } \ \ \text{in $H_1(M,\cN;\BZ_n)$}. \qed$$ 

\subsection{Proof of Theorem \ref{thmFrob}(e)}\label{ss.Proof_of_Thm_6_1e}  
If $\al$ is a bad arc, then 
$$\Phi([\al]_{\hat\eta})= [\al^{(N)}]_{\hat\omega} =\home^{m}([\al]_{\hat \omega})^N \in \cI^\bad$$
for some integer $m$.

Thus, $\Phi(\cI^\bad) \subset \cI^\bad$. Hence
$\Phi$ descends to a quotient map $\bPhi: \bSe \to \bSo$. \qed

\subsection{Monomial homomorphisms between quantum tori} \label{secMono} To prove part (f) of Theorem \ref{thmFrob} we make a digression to discuss monomial homomorphisms between quantum tori.

Assume that $Q$ is an integer antisymmetric $r\times r$ matrix, and $Q'$ is an integer antisymmetric $r'\times r'$ matrix. We have the quantum tori $\bT_\hq(Q)$ and $\bT_\hq(Q')$ given by the presentation \eqref{eqTorus}, and the Frobenius homomorphism $\Phi^\bT$ for quantum tori given by \eqref{eqFrDef}.

 An $R$-algebra homomorphism $f: \bT_\hq(Q) \to \bT_\hq(Q')$ is 
 called a {\bf monomial} homomorphism if there are $\bk_i\in \BZ^{r'}$, for $i=1, \dots, r$, such that $f(x_i) = x^{\bk_i}$; where $x^{\bk_i}$ is the Weyl-normalized Laurent monomial defined in \eqref{Weyl-ordering}. For a non-zero complex number $\hat{\xi}$, denote by $f_{\hat{\xi}} : \bT_{\hat{\xi}}(Q) \to \bT_{\hat{\xi}}(Q')$ the map obtained from $f$ by evaluating $\hq$ at $\hat{\xi}$. From the definitions, together with the fact that $f$ sends Weyl-normalized Laurent monomials to Weyl-normalized Laurent monomials (see e.g. \cite[Lem.3.19]{kim2024naturality}), it is clear that
\be 
f_\home \circ \Phi^\bT = \Phi^\bT \circ f_\heta.
\label{eqMono}
\ee

If $K$ is an integer $r \times r'$ matrix such that $K Q' K^
{\rm t}= Q$, where $K^
{\rm t}$ is the transpose of $K$, then there is a monomial algebra homomorphism
$\psi_K: \bT_\hq(Q) \to \bT_\hq(Q')$ given by $\psi_K(x_i)= x^{\bk_i}$, where $\bk_i$ is the $i$-th row of $K$.

\subsection{Proof of Theorem \ref{thmFrob}(f)}  
We will prove the statement in the following order: 
Step 1:~$\fS=\PP_3$ and $\tr= \btr^A$, 
Step 2:~$\fS=\PP_3$ and $\tr= \btr^X$, 
Step 3:~general $\fS$ and $\tr= \btr^X$,  
Step 4:~general $\fS$ and $\tr= \tr^X$, and 
Step 5:~$\tr= \tr^A$ or $\btr^A$. This is the order used in the constructions of quantum traces in \cite{LY23}. Step 1 is 
the most involved, but all other steps are easy.

{\bf Step 1.} 
Let $\fS= \PP_3$. We will show that, for all $x\in \cS_\heta(\PP_3)$,
\be (\btr^A \circ \bPhi) (x)  = (\Phi^\bT \circ\btr^A)(x). \label{eqtrA}
\ee
In this case, $ \btr^A: \cS(\PP_3) \to \bAPq $ is injective (Theorem \ref{thm-Atr}(a)), and the reduced $A$-version quantum torus
is
$$
\bAPq= \bT(\bmP)= R\la a_v ^{\pm 1}, v\in \bV_{\mathbb P_3}\ra/ ( a_v a_{v'} = \hq^{\bmP(v,v')} a_{v'} a_v).
$$
Here $\bV_{\mathbb P_3}$ is a finite set introduced in  
\eqref{ol_V_P3} of  \OldS\ref{subsec.SLn_quantum_trace_maps} and $\bmP:\bV \times \bV$ is an integer antisymmetric matrix $\bmP_\lambda$ in \eqref{eq-relation-P-Q} for the unique ideal triangulation $\lambda$ of $\mathbb{P}_3$. 
From 
Theorem \ref{trace}, for each $v\in \bV_{\mathbb P_3}$ there exists$\bar\gaa_v\in \cS(\PP_3)$ 
such that $\btr^A (\bar\gaa_v)= a_v$. We will first prove \eqref{eqtrA} for $x=\bar\gaa_v$. We will then see that 
the general case 
follows easily  
using Theorem \ref{trace}.

For each oriented arc $c_i$ of Figure \ref{figtri}, an open tubular neighborhood $U_i$ of $c_i$ in $\PP_3$ is diffeomorphic to $\PP_2$, so the embedding $\PP_2 \approx U_i \hookrightarrow \PP_3$ yields
\begin{figure}[htpb]
	\includegraphics[height=1.5cm]{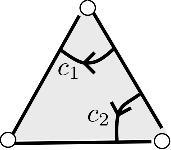} 
	\caption{Arcs $c_1$ and $c_2$}
	\label{figtri}
\end{figure} 
an algebra map $f_{i}: \cS(\PP_2) \to \cS(\PP_3)$, which commutes with $\Phi$ since  if $\al$ is a string-ly diagram 
in $\PP_2$ or $\PP_3$ then $\Phi(
[\al]_\heta) = 
[\al^{(N)}]_\home$ ($\because$ Lemma \ref{rbigon}(b), Theorem \ref{thmFrob}(a)).  
Thus the composition  
$$ \bar f_i: \cS(\PP_2) \xrightarrow{f_i} \cS(\PP_3) \onto  \overline\cS(\PP_3) $$
also commutes with the Frobenius homomorphism ($\because$ Theorem \ref{thmFrob}(e)). Hence by Lemma \ref{rbigon}
(a), if $\fm\in \cS(\PP_2)$ is a quantum minor, then
\be 
\bPhi(\bar f_i(\fm_\heta))= (\bar f_i(\fm_\home))^N. \label{eqminorr}
\ee
By \cite[Lemma 10.3]{LY23}, there are quantum minors $\fm, \fn\in \cS(\PP_2)$ such that $\bar f_1(\fm)$ and $\bar f_2(\fn)$ are $q$-commuting and
\be 
\bar\gaa_v = [ \bar f_1(\fm) \, \bar f_2(\fn)]_\Weyl,  
\label{eqgv}
\ee
where $[\sim]_{\rm Weyl}$ is the Weyl-normalized product defined in Remark \ref{rem.Weyl-ordering}.
Let us prove \eqref{eqtrA} for $x=\bar\gaa_v$. By the definition \eqref{eqFrDef} of $\Phi^\bT$,
\begin{align}
	\Phi^\bT (\btr^A(\bar\gaa_{v,\heta}))&=  \Phi^\bT (a_{v, \heta})= (a_{v, \home})^N.
	\label{eqRS}
\end{align}
Observe that
\begin{align*}
	\btr^A(\bPhi (\bar\gaa_{v,\heta}))&= \btr^A \left(  \left[  \bPhi (  \bar f_1(\fm_\heta)  )\, \bPhi (  \bar f_2(\fn_\heta)  ) \right] _\Weyl \right) \qquad (\because \mbox{\eqref{eqgv}}) \\
	& =
    \btr^A \left(  \left[   (  \bar f_1(\fm_\home)  )\,   \bar f_2(\fn_\home)  ) \right] _\Weyl \right)^N \qquad (\because \mbox{\eqref{eqminorr}}) \\
	&= (\btr^A  (\bar\gaa_{v,\home}))^N \qquad (\because \mbox{\eqref{eqgv}}) \\
    & = (a_{v, \home})^N.
\end{align*}
Comparing with \eqref{eqRS}, we get the sought-for \eqref{eqtrA} for $x=\bar\gaa_v$, and hence also for all $x$ 
in the subalgebra $A$ of $\cS_\heta(\PP_3)$ generated by $\bar\gaa_v, v \in \bV_{\mathbb P_3}$. 

To finish proving \eqref{eqtrA} completely, now
assume that $x\in \overline\cS_\heta(\PP_3)$. From Theorem \ref{trace}, there are $x_1, x_2\in A$ 
such that $x x_1 = x_2$  
holds and that $x_1$$\neq 0$ 
is a product of $\bar\gaa_v$'s.
Denote the homomorphisms of the two sides of \eqref{eqtrA} by $F = \btr^A \circ \bPhi$ and $G = \Phi^\bT \circ\btr^A$.
From $F(x_i)= G(x_i)$  and 
$x x_1 = x_2$ we get
$F(x) G(x_1) = F(x) F(x_1) = F(x x_1) = F(x_2) = G(x_2) = G(x x_1) = G(x) G(x_1)$, hence 
\be 
( F(x) - G(x)
) G(x_1) =0. \label{eq50}
\ee
Note that $G= \Phi^\bT\circ \btr^A$ is injective since  both $\Phi^\bT$ and $ \btr^A$ are (Theorem \ref{thm-Atr}(a)). Hence $G(x_1) \neq 0$. As $\bT_\home(\bmP)$ is a domain, from \eqref{eq50} we have $F(x) = G(x)$. So \eqref{eqtrA} holds.

{\bf Step 2.} 
Still let $\fS= \PP_3$. We will prove 
\be \btr^X \circ \bPhi  = \Phi^\bT \circ \btr^X. \label{eqtrX}
\ee
We have $ \btr^X: \cS(\PP_3) \to \bsX_\hq(\PP_3)$ (Theorem \ref{thm-X}(a)) where 
$$ \bsX_\hq(\PP_3) = \bT_\hq (\bmQ)= R\la x_v ^{\pm 1}, v\in \bV_{\mathbb P_3}\ra/ ( x_v x_{v'} = \hq^{\bmQ(v,v')} x_{v'} x_v),$$
which has $\{ x^\bk \mid \bk \in \BZ^{\bV_{\PP_3}}\}$ as a free $R$-basis; here $\bmQ$ denotes $\bmQ_\lambda$ in \eqref{eq.bmQ} for the unique ideal triangulation $\lambda$ of $\PP_3$. From Theorem \ref{thm-Atr}(a),
there is a monomial algebra homomorphism $\bpsi: \bT_\hq (\bmP) \to  \bT_\hq (\bmQ)$, such that $\btr^X = \bpsi \circ \btr^A$.  Hence \eqref{eqtrX} follows from \eqref{eqtrA}  
proved in Step 1, the fact that $\bpsi$ commutes with $\Phi^\bT$, and the injectivity of $\bpsi$.

{\bf Step 3.} Now let $\fS$ 
be a triangulable essentially bordered pb surface. Let $\lambda$ be an ideal triangulation of $\fS$. We want to prove
\be \btr^X _\lambda \circ \bPhi  = \Phi^\bT \circ \btr^X_\lambda. \label{eqtrXX}
\ee

By cutting $\fS$ along the interior edges in $\lambda$ we get a collection $
\mathcal{F}_\lambda$ of triangles. We can identify  $ \bigotimes_{\tau \in 
\mathcal{F}_\lambda} \bsX_\hq(\tau)$ naturally with the quantum torus $\bT_\hq( \tilde Q )$, where $\tilde Q = \bigoplus _{\tau \in 
\mathcal{F}_\lambda} \bmQ(\tau)$, and $\bmQ(\tau)$ is the $\bmQ$ matrix of $\tau$ \eqref{eq.bmQ}.
In \cite[\OldS 12.2]{LY23}  it is shown that there is a monomial algebra embedding 
$$f: \bXS\embed\bT( \tilde Q )= \bigotimes_{\tau \in 
\mathcal{F}_\lambda} \bsX(\tau),$$ and 
that the quantum trace $\btr^X_\lambda$ is the composition 
$$ 
\bSS  \xrightarrow {\overline \Theta} \bigotimes_{\tau \in 
\mathcal{F}_\lambda} \overline{\cS}_\hq(\tau) \xrightarrow {\ot \btr^X_\tau} \bigotimes_{\tau \in 
\mathcal{F}_\lambda} \bsX_\hq(\tau)\xrightarrow {f^{-1}} \bXS.
$$
Here ${\overline \Theta}$ is the composition of the cutting homomorphisms for all the interior edges of $\lambda$, and $f^{-1}$ denotes the inverse of $f : \bXS \to f(\bXS)$; in particular, part of the above statement says that the image of the composition $(\ot \btr^X_\tau) \circ \overline{\Theta}$ lies in $f(\bXS) \subset \bigotimes_{\tau \in \mathcal{F}_\lambda} \bsX_\hq(\tau)$.
Evaluating at the roots of unity $\heta$ and $\home$, and connecting by the Frobenius homomorphisms, 
we consider the diagram
$$
\begin{tikzcd}
	\bSSe \arrow[r,"\overline \Theta"] \arrow[d,"\overline \Phi"] & \displaystyle{ \bigotimes_{\tau \in 
    \mathcal{F}_\lambda} \overline{\cS}_\heta(\tau)} \arrow[d,"\ot \overline \Phi "]  
	\arrow[r,"\ot \btr^X_\tau"]
	& \displaystyle{\bigotimes_{\tau \in 
    \mathcal{F}_\lambda} \bsX_\heta(\tau)}
	\arrow[d,"\ot  \Phi^\bT "] \arrow[r,"(f_\heta)^{-1}"] & \bsX_\heta(\fS,\lambda) \arrow[d,"  \Phi^\bT "]
	\\
	\bSSo \arrow[r,"\overline \Theta"]  & \displaystyle{ \bigotimes_{\tau \in 
    \mathcal{F}_\lambda} \overline{\cS}_\home(\tau)}   
	\arrow[r,"\ot \btr^X_\tau"]
	& \displaystyle{\bigotimes_{\tau \in 
    \mathcal{F}_\lambda} \bsX_\home(\tau)} \arrow[r,"(f_\home)^{-1}"] &\bsX_
    {\home}(\fS,\lambda)
\end{tikzcd}
$$
The 
left square  is commutative by 
Theorem \ref{thmFrob}(c) proved in \OldS\ref{ss.Proof_of_Thm_6_1c}, Theorem \ref{thmFrob}(e) proved in \OldS\ref{ss.Proof_of_Thm_6_1e}, and the last paragraph of \OldS\ref{ss.reduced_stated_SLn-skein_algebras}. The 
middle square is commutative by Step~2. The 
right square is commutative by \eqref{eqMono} of \OldS\ref{secMono}.
Hence the composition of the three squares is commutative, which proves the sought-for \eqref{eqtrXX}.

{\bf Step 4.} 
Let $\fS$ and $\lambda$ be as in Step 3. 
We 
should prove 
\be \tr^X _\lambda \circ \Phi  = \Phi^\bT \circ \tr^X_\lambda,
\label{eqtrXXX}
\ee
where $\tr^X_\lambda$ is the extended $X$-version quantum trace map (Theorem \ref{thm-Atr}(b)). 
The construction of $\tr^X _\lambda$ in \cite{LY23} (see \cite{LY2} for the ${\rm SL}_2$ case) is via the reduced quantum trace of 
an exteded surface, as follows.
There are (i)  a surface $\fS^*$ with a triangulation $\lambda^*$ (see \OldS\ref{subsec.SLn_quantum_trace_maps}), (ii) a strict embedding $\iota: \fS \embed \fS^*$, and (iii) a monomial algebra embedding $f: \cX_\hq(\fS,\lambda) \embed \bsX_\hq(\fS^*,\lambda^*) $ such that $\tr^X _\lambda$ is the composition 
$$ 
\SS  \xrightarrow {\iota_*   } \cS(\fS^*) \onto  \overline{\cS}_{\hat q}(\fS^*) \xrightarrow {\btr^X_{\lambda^*}   }  \bsX_\hq(\fS^*,\lambda^*) \xrightarrow {f^{-1} } \cX_\hq(\fS,\lambda),
$$  
where $f^{-1}$ is suitably understood as before. 
Since 
the map for each arrow in the above composition commutes with the appropriate Frobenius homomorphism (by Step 3, Theorem \ref{thmFrob}(b) proved in \OldS\ref{sub-proof-Fro-b}, Theorem \ref{thmFrob}(e) proved in \OldS\ref{ss.Proof_of_Thm_6_1e}, \eqref{eqMono} of \OldS\ref{secMono}), we get \eqref{eqtrXXX}.

{\bf Step 5.} 
Let $\fS$ and $\lambda$ be as in Step 3, and we further assume that $\fS$ 
has no interior punctures. We 
should prove 
\be \btr^A _\lambda \circ \bPhi  = \Phi^\bT \circ \btr^A_\lambda, \qquad \tr^A _\lambda \circ \Phi  = \Phi^\bT \circ \tr^A_\lambda. \label{eqtrAA}
\ee

By Lemma \ref{lem.LY23_A_to_X} (proved in \cite{LY23}) we have
monomial algebra embeddings 
$$
\overline{\psi}_\lambda: \overline\cA_\hq(\fS, \lambda) \embed \overline\cX_\hq(\fS, \lambda), \qquad  
\psi_\lambda: \cA_\hq(\fS, \lambda) \embed \cX_\hq(\fS, \lambda), $$
which by Theorem \ref{thm-Atr} (proved in \cite{LY23}) satisfy
$\btr^A _\lambda = 
\overline{\psi}_\lambda^{-1}  \circ \btr^X_\lambda$ and $\tr^A _\lambda =  
\psi_\lambda^{-1}  \circ \tr^X_\lambda$. Hence \eqref{eqtrAA} follows from the corresponding results for the $X$-version quantum traces, established in Steps 3 and 4, together with \eqref{eqMono} of \OldS\ref{secMono}. This completes the proof of Theorem \ref{thmFrob}.

\def\B{{\PP_2}}
\def\SqA{{ \cS(\An;R)}}
\def\SqB{{ \cS(\B;R)}}

\section{The annulus, the bigon, and cutting homomorphism}
\label{sec-annulus}

In this section we exhibit a connection between the 
representation theory of quantum groups, more specifically the materials of \OldS\ref{sec.Lusztig_Frobenius_map}, and the skein theory.
We will show that the module-trace map, defined in representation theory, can be interpreted as a cutting homomorphism in skein theory.

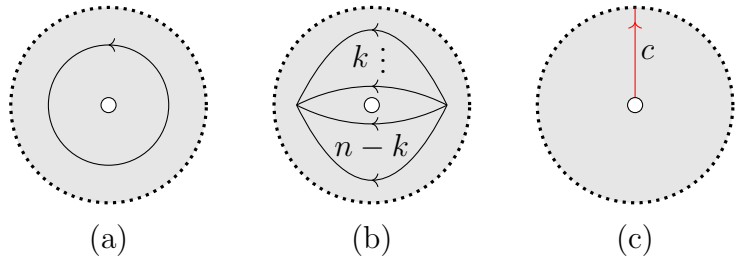
\begin{figure}
\centering
\begin{tikzpicture}[baseline=0cm,every node/.style={inner sep=2pt}]
\draw[wall,fill=gray!20,dotted] (0,0) circle[radius=1.3];
    \draw[fill=white] (0,0) circle[radius=0.1];
    \draw[decoration={markings, mark=at position 0.25 with {\arrow{>}}}, postaction={decorate}] 
        (0,0) circle[radius=0.8];
\path (0,-1.8)node{(a)};

\begin{scope}[xshift=3.5cm]
[baseline=0cm,every node/.style={inner sep=2pt}]
\draw[wall,fill=gray!20,dotted] (0,0) circle[radius=1.3];
    \draw[fill=white] (0,0) circle[radius=0.1];
    \draw[postaction={decorate, decoration={markings, mark=at position 0.5 with {\arrow{>}}}}] (1,0) parabola bend (0,1) (-1,0);
    \draw[postaction={decorate, decoration={markings, mark=at position 0.5 with {\arrow{>}}}}] 
        (1,0) parabola bend (0,0.25) (-1,0);
    \node at (0.15,0.7) {$\vdots$};
    \node at (-0.15,0.65) {$k$};
    \draw[postaction={decorate, decoration={markings, mark=at position 0.5 with {\arrow{>}}}}] (1,0) parabola bend (0,-1) (-1,0);
    \draw[postaction={decorate, decoration={markings, mark=at position 0.5 with {\arrow{>}}}}] 
        (1,0) parabola bend (0,-0.25) (-1,0);
    \node at (0,-0.55) {$n-k$};
\path (0,-1.8)node{(b)};
\end{scope}

\begin{scope}[xshift=7cm]
\draw[wall,fill=gray!20,dotted] (0,0) circle[radius=1.3];
    \draw[red, decoration={markings, mark=at position 0.85 with {\arrow{>}}}, postaction={decorate}] 
        (0,0)--(0,1.3);
    \draw[fill=white] (0,0) circle[radius=0.1];
    \node [right] at (0,0.7) {$c$};
\path (0,-1.8)node{(c)};
\end{scope}
\end{tikzpicture}
\caption{(a) The oriented core curve ${\bf a}$ of the annulus $\An$. 
\\
\hspace*{23mm} (b) The diagram $
{\bf a}_k'$ in $\An$. (c) The essential ideal arc $c$ in $\mathbb P_2$}\label{fig1} 

\end{figure}

\subsection{Stated ${\rm SL}_n$-skein algebra of the annulus}\label{ss.stated_SL_n-skein_algebra_of_the_annulus}

Recall that the {\bf annulus} $\An$, defined in \OldS\ref{subsec:punctured_bordered_surface_and_n-web}, is the twice-punctured sphere, which is a pb surface. In pictures, we depict $\An$ as an open disk with one point removed, see Figure \ref{fig1}.

Recall from \OldS\ref{sub-modified-qaantized-algebra} that for each fundamental weight $\varpi_
{k}, 
k=1, \dots, n-1$, we have the $\Ud$-module $\Lambda_q(\varpi_k)$. Let $R= \Zhq ( [n]_q!  )^{-1}$ , which is $\Zhq$ with the inverse of $ [n]_q!$ adjoined. 
Define  $\Rep_q$ as the free $R$-polynomial ring in the variables $\Lambda_q(\varpi_1), \dots, \Lambda_q(\varpi_{n-1})$:
\begin{equation}\label{def-Rep-q}
    \Rep_q:= R[ \Lambda_q(\varpi_1), \dots, \Lambda_q(\varpi_{n-1})].
\end{equation}
We can think of $\Rep_q$ as the Grothendieck ring of the category of finite-dimensional representations of $\Ud \ot_\Zhq \BQ(\hq)$, with the ground ring extended from $\BZ$ to $R$.

Here we will show that the (stated) ${\rm SL}_n$-skein algebra $\SqA$  is isomorphic to $\Rep_q$. This is due mostly to \cite{QR18} and \cite{Pou22}. See also \cite{DS}.

For each $1\leq k\leq n-1$, let ${\bf a}'_k\in \SqA$ be defined as in Figure \ref{fig1}(b), and let
\begin{align}\label{eq-alphak-loop}
    {\bf a}_k :=
		(-1)^{\binom{k}{2}+\binom{n-k}{2}}\frac{1}{[n-k]_q![k]_q!}\alpha_k'  \in \cS(\An;R)
		\end{align}

\begin{theorem}[\cite{QR18,Pou22}]
\label{twice_sphere} 
  There is a unique $R$-algebra isomorphism 
  $$\kappa\colon \Rep_q= R[ \Lambda_q(\varpi_1), \dots, \Lambda_q(\varpi_{n-1}) ] \xrightarrow{\cong} \SqA  ,$$
   given by $\kappa(\Lambda_q(\varpi_k) ) = {\bf a}_k$,   $k=1, \dots, n-1.$
\end{theorem}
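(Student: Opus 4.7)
My plan is to establish the theorem in three stages: well-definedness of $\kappa$, injectivity via the cutting homomorphism, and surjectivity by invoking the annular web calculus of \cite{QR18,Pou22}.

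First, for well-definedness, observe that $\Rep_q$ is by construction the free $R$-polynomial ring on the generators $\Lambda_q(\varpi_k)$, so defining $\kappa$ as an $R$-algebra homomorphism reduces to choosing pairwise commuting images of these generators. The elements ${\bf a}_k \in \cS(\An;R)$ commute because in the annulus any two framed oriented knots can be isotoped into disjoint vertical height ranges; in fact $\cS(\An;R)$ is easily seen to be a commutative algebra, essentially because $\pi_1(\An) = \mathbb{Z}$.

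Second, for injectivity, I would exploit the cutting homomorphism along an ideal arc $c \subset \An$ connecting the two ideal points (as in Figure~\ref{fig1}(c)). Cutting $\An$ along $c$ yields the bigon $\PP_2$, so Theorem~\ref{t.splitting2} produces an algebra homomorphism $\Theta_c: \cS(\An;R) \to \cS(\PP_2;R) \cong \Oq \otimes_{\Zhq} R$. A direct computation from the diagrammatic definition of ${\bf a}_k'$ in Figure~\ref{fig1}(b), combined with the normalization \eqref{eq-alphak-loop} and the antisymmetrization relation \eqref{e.sinksource}, yields
\[
\Theta_c({\bf a}_k) \;=\; D_k^q({\bf u}) \;=\; \sum_{I \in \JJ_k} M^I_I(\buu) \,\in\, \Oq.
\]
Thus $\Theta_c \circ \kappa$ sends $\Lambda_q(\varpi_k) \mapsto D_k^q({\bf u})$, which by Lemma~\ref{lem-trace-fundamental-representations-dot} agrees on generators with the module-trace map $\mathcal T: \Rep_q \to \Oq$. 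Since $\mathcal T$ is injective by Proposition~\ref{r-inj}(b) (itself relying on the algebraic independence of the $D_k^q({\bf u})$, Lemma~\ref{r.indep}), it follows that $\Theta_c \circ \kappa$ is injective, hence so is $\kappa$.

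Third, surjectivity is the serious content and is the substance of the cited results of Queffelec--Rose \cite{QR18} and Pouliot \cite{Pou22}. The approach I would take is to use the standard annular web basis: any $n$-web in $\An$ can, via the defining skein relations \eqref{e.pm}--\eqref{e.sinksource} (in particular the sink-source relation used to reduce trivalent vertices) together with the ladder calculus of Cautis--Kamnitzer--Morrison for $\SLn$-webs, be rewritten in terms of standard ``cabled'' knots around the core that realize $\kappa(\Lambda_q(\lambda))$ for dominant weights $\lambda \in X_+$. Since $\Lambda_q(\lambda)$ is a polynomial in $\Lambda_q(\varpi_1),\dots,\Lambda_q(\varpi_{n-1})$ inside $\Rep_q$, every skein class in $\cS(\An;R)$ lies in the image of $\kappa$.

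The main obstacle is the surjectivity step: carefully verifying, either by invoking the full annular web-basis machinery of \cite{QR18,Pou22} or by a direct skein-theoretic reduction, that every $n$-web on the annulus reduces to a linear combination of the threaded fundamental classes $\kappa(\Lambda_q(\lambda))$. The injectivity argument above is clean and conceptual, but surjectivity inherently requires the structural input of annular web calculus, which is why I would simply cite the existing proofs rather than reproduce them.
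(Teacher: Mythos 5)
Your proposal reaches the same endpoint but along a genuinely different route, and it is worth comparing. The paper's proof is essentially a two-step citation-plus-check: it invokes Poudel's isomorphism $\mathbf{I}\colon \cS^{\rm CKM}(\fS;R)\xrightarrow{\cong}\cS(\fS;R)$ between the Cautis--Kamnitzer--Morrison skein algebra and the one used here, then cites Queffelec--Rose's computation that $\cS^{\rm CKM}(\An;R)$ is the free polynomial ring on the classes of the $\varpi_k$-colored core curves ${\bf z}_k$, and finally unwinds Poudel's explicit formula for $\mathbf{I}$ to check that $\mathbf{I}({\bf z}_k)={\bf a}_k$. Injectivity and surjectivity of $\kappa$ then both come for free, since $\kappa$ is a composition of two cited isomorphisms. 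You instead construct $\kappa$ directly, argue well-definedness (commutativity of the ${\bf a}_k$'s, which is fine by sliding around the core, though the blanket claim that $\cS(\An;R)$ is commutative ``because $\pi_1(\An)=\mathbb{Z}$'' is a bit more than you have proved independently), and then establish injectivity via the cutting homomorphism $\Theta_c$ together with the identity $\Theta_c({\bf a}_k)=D_k^q({\bf u})$ and algebraic independence of the quantum elementary symmetric functions — this is exactly the content which the paper records (after the theorem) as Proposition~\ref{r-Thet_alpha_k} and Lemma~\ref{r.indep}, so the logic is sound and in fact previews the paper's Theorem~\ref{thm.cutting_homomorphism_and_module-trace}. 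What your approach buys is a self-contained, conceptual injectivity proof that bypasses the CKM comparison entirely and highlights the module-trace interpretation; what it costs is that surjectivity is still deferred wholesale to \cite{QR18,Pou22}, and your sketch there (``ladder calculus'' reductions to threaded fundamental classes) does not quite describe what Queffelec--Rose actually prove — they work representation-theoretically within the CKM framework, so in practice one still needs Poudel's identification $\mathbf{I}$ to transport their annular basis result into the stated ${\rm SL}_n$-skein setting, which is precisely what the paper's proof makes explicit. Two small precision points: Lemma~\ref{lem-trace-fundamental-representations-dot} and Proposition~\ref{r-inj} are stated for $\mathcal{T}$ and $\mathcal{T}_\xi$ respectively, so you should say a word about extending the module-trace map to an $R$-algebra map $\mathcal{T}_q\colon\Rep_q\to\Oq$ (which is automatic since $\Rep_q$ is free polynomial) and about algebraic independence over the domain $R$ (which follows from Lemma~\ref{r.indep} by specialization); and note that $\Theta_c$ itself is not known to be injective for $\An$ (it is not essentially bordered), but your argument only needs injectivity of the composite $\Theta_c\circ\kappa$, so this is not a gap.
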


\begin{proof}
For an oriented surface $\fS$, the Cautis-Kamnitzer-Morrison ({\bf CKM}) skein algebra $\cS^{\rm CKM}(\fS)$ is 
one version of an ${\rm SL}_n$-skein algebra; see \cite{Pou22}, where Poudel shows 
that  there is an $R$-algebra  isomorphism  $
\bf I: \cS^{\rm CKM}(\fS;R)\to \cS(\fS;R)$.

Among the generators of the CKM algebra are simple  oriented loops colored by 
$\varpi_k$,
$k=1, \dots, n-1$. Let  $
{\bf z}_k\in \cS^{\rm CKM}(\An;R)$ be the element represented  by the core curve of $\An$,  
as in Figure~\ref{fig1}(a), colored by 
$\varpi_k$.  By \cite[Lemma 5.8]{QR18} the map $
{\bf z}_k
\mapsto \Lambda_{q}(\varpi_
{k})$ gives an $R$-algebra isomorphism
$$\cS^{\rm CKM}(\An;R) \xrightarrow{\cong} R [ \Lambda_{q}(\varpi_1), \dots, \Lambda_{q}(\varpi_{n-1})  ]. $$
From the explicit definition of $
\bf I$ given in \cite[ 
\OldS6]{Pou22}, it is easy to check that  $
{\bf I}(
{\bf z}_k) = 
{\bf a}_k$. This proves the theorem.
\end{proof}

\def\SoB{{ \cS_\home(\B)}}

In principle, the statement of Theorem \ref{twice_sphere} by itself is just saying that $\mathscr{S}(\An;R)$ is a polynomial algebra. The representation-theoretic meaning will become clear by our investigation in the next subsection.

\subsection{Cutting homomorphism and module-trace map} We show that the module-trace map, defined in representation theory (\OldS\ref{ss.module-trace_map}, \OldS\ref{ss.module-trace_map_at_a_complex_number}),  
can be interpreted as a cutting homomorphism, defined in skein theory.

 Cutting the annulus $\An$ along the ideal arc $c$ in Figure \ref{fig1}(c) yields a bigon. This yields the cutting homomorphism (Theorem \ref{t.splitting2})
\begin{align}
\label{eq.cutting_homomorphism_of_annulus}
    \Theta_c: \SqA \to \SqB.
\end{align}
\bthm\label{thm.cutting_homomorphism_and_module-trace} 
Assume that $R= \Zhq ( [n]_q!  )^{-1}$. Under the identification $\SqA= \Rep_q$ of Theorem \ref{twice_sphere}, the cutting homomorphism $\Theta_c$ 
coincides with the module-trace map $\cT_q$ of \OldS\ref{ss.module-trace_map}. 
More precisely, 
the following diagram is commutative: 
\begin{equation}\label{commmmm}
	\begin{tikzcd}
		\Rep_q \arrow[r, "\kappa"]
		\arrow[rd,  "\cT_q" ]  
		&  \SqA  \arrow[d, "\Theta_c"] \\
		& 
        \Oq = \SqB.
	\end{tikzcd}
\end{equation}

\ethm

\brem On the conceptual level, the proof below is very simple: Let $V_k= \Lambda_q(\varpi_k)$.
We have an inclusion  $V_k \embed (V_1)^{\ot k}$ of $
\Ud$-modules. Cutting $
{\bf a}_k$ along $c$, we get a ribbon graph (in the bigon) whose Reshetikhin-Turaev operator $T_k: (V_1)^{\ot k}  \to (V_1)^{\ot k}$ is the idempotent-projection onto $V_k$. By 
\cite[Section 3.4]{LS21},   for $u\in \Ud$ we have
$\Theta_c(
{\bf a}_k)(u) = \tr_{(V_1)^{\ot k}} (T_k \circ u)$, which is
$\tr_{V_k}(u)$.

\erem

\begin{proof} 
As the $R$-algebra $\Rep_q$ is generated by $\Lambda_q(\varpi_1),\dots,\Lambda_q(\varpi_{n-1})$ (Theorem \ref{twice_sphere}), it is enough to show 
\be \cT_q (\Lambda_q(\varpi_k))= \Theta_c (\kappa(\Lambda_q(\varpi_k))), \qquad k=1, \dots, n-1.
\label{eq52}
\ee

By Lemma \ref{lem-trace-fundamental-representations-dot}, the left hand side of \eqref{eq52} 
equals the quantum elementary symmetric function $D^q_k(\buu)\in \Oq$ (defined in \eqref{eq.D_k_q} of \OldS\ref{subsec:quantum_elementary_symmetric_function}).

On the other hand, since $\kappa(\Lambda_q(\varpi_k)) = {\bf a}_k$ (Theorem \ref{twice_sphere}), the right hand side of \eqref{eq52} is $\Theta_c({\bf a}_k)$, which 
equals $D_k^{q}({\bf u})$ by Proposition \ref{r-Thet_alpha_k} below. 

\end{proof}

 \bpro\label{r-Thet_alpha_k} Assume that $R= \Zhq ( [n]_q!  )^{-1}$. In $\SqB$ we have 
 \begin{align}\label{eqal}
 	\Theta_c(
    {\bf a}_k)= D_k^{q}({\bf u}).
 \end{align}
 \epro
 
 \begin{proof}  Recall from \eqref{eq.D_k_q} that 
$$D_k^{q}({\bf u}) = \sum_{I\in \mathbb J_k}M_{I}^{I}({\bf u}),$$
where  $M^I_J(\buu)\in \Oq$, for $I,J\in \JJ_k=\{\mbox{$k$-element subsets of $\JJ=\{1,\ldots,n\}$}\}$, is the quantum determinant of the $I\times J$ submatrix of $\buu$.

Assume that $I, J\in \JJ_
k$, whose elements are listed in
any chosen order as sequences
${\bf i}=(i_1,\cdots,i_k)$ and $
{\bf j}=(j_1,\cdots,j_k)$. That is, assume that $\{i_1,\ldots,i_k\}=I$ and $\{j_1,\ldots,j_k\}=J$.

Let us first prove
\be	
	\begin{tikzpicture}[baseline=(ref.base)]
		\fill[gray!20] (-1.3,-1) rectangle (1.3,1);
		\coordinate (src) at (0.3,0);
		\begin{scope}[edge]
			\foreach \y in {-0.5,0,0.5} {
				\draw[-o-={0.6}{>}] (src) -- +(1,\y);}
		\end{scope}	
		\draw [line width =1pt] (-0.3,0)--(0.3,0);
		\node [above] at
        (0,-0.05) {\scalebox{0.8}{$n-k$}};
		
		\coordinate (src) at (-0.3,0);
		\begin{scope}[edge]
			\foreach \y in {-0.5,0,0.5} {
				\draw[-o-={0.6}{<}] (src) -- +(-1,\y);}
	\end{scope}
	
	\path (-1.3,-0.5)node[left]{$i_1$} (-1.3,0)node[above,rotate=90]{...} (-1.3,0.5)node[left]{$i_k$};
	\path (1.3,-0.5)node[right]{$j_1$} (1.3,0)node[below,rotate=90]{...} (1.3,0.5)node[right]{$j_k$};
	\draw[wall,<-] (1.3,-1) -- +(0,2);
	\draw[wall,<-] (-1.3,-1) -- +(0,2);
	\node(ref) at (0,0) {\phantom{$-$}};
\end{tikzpicture}
=(-q)^{-\binom{k}{2}}(-1)^{\binom{n-k}{2}}[n-k]_q! (-q)^{\ell(\textbf{i})+\ell(\textbf{j})} M_{J}^{I}({\bf u}),
\label{eq_detq}
\ee
where $\ell(\sim)$ is defined in \eqref{eq.length_of_sequence}, and $[\sim]_q!$ in \eqref{eq.quantum_integer}.

In fact, we  have 
\begin{align*}
	&\begin{tikzpicture}[baseline=(ref.base)]
		\fill[gray!20] (-1.3,-1) rectangle (1.3,1);
		\coordinate (src) at (0.3,0);
		\begin{scope}[edge]
			\foreach \y in {-0.5,0,0.5} {
				\draw[-o-={0.6}{>}] (src) -- +(1,\y);}
		\end{scope}
		\draw [line width =1pt] (-0.3,0)--(0.3,0);
		\node [above] at 
        (0,-0.05){\scalebox{0.8}{$n-k$}};
		\coordinate (src) at (-0.3,0);
		\begin{scope}[edge]
			\foreach \y in {-0.5,0,0.5} {
				\draw[-o-={0.6}{<}] (src) -- +(-1,\y);}
	\end{scope}
	\path (-1.3,-0.5)node[left]{$i_1$} (-1.3,0)node[above,rotate=90]{...} (-1.3,0.5)node[left]{$i_k$};
	\path (1.3,-0.5)node[right]{$j_1$} (1.3,0)node[below,rotate=90]{...} (1.3,0.5)node[right]{$j_k$};
	\draw[wall,<-] (1.3,-1) -- +(0,2);
	\draw[wall,<-] (-1.3,-1) -- +(0,2);
	\node(ref) at (0,0) {\phantom{$-$}};
	\end{tikzpicture}\\
=&(-1)^{\binom{n}{2}}q^{\frac{1}{2n}\big( \binom{n-k}{2}-\binom{k}{2} \big)}
(-q)^{\ell(\textbf{i})-\binom{k}{2}} \sum_{\sigma:[1;n-k]\rightarrow  \bar{I}^c}(-q)^{\binom{n-k}{2} -\ell(\sigma)}
\begin{tikzpicture}[baseline=(ref.base)]
\fill[gray!20] (-1,-1) rectangle (1,1);
\coordinate (src) at (0,0);
\begin{scope}[edge]
\foreach \y in {-0.5,0,0.5} {
\draw[-o-={0.6}{>}] (src) -- +(-1,\y);
\draw[-o-={0.6}{>}] (src) -- +(1,\y);}
\end{scope}
\path (-1,-0.5)node[left]{$\sigma(n-k)$} (-1,0)node[above,rotate=90]{...} (-1,0.5)node[left]{$\sigma(1)$};
\path (1,-0.5)node[right]{$j_1$} (1,0)node[below,rotate=90]{...} (1,0.5)node[right]{$j_k$};
\draw[wall,<-] (1,-1) -- +(0,2);
\draw[wall,<-] (-1,-1) -- +(0,2);
\node(ref) at (0,0) {\phantom{$-$}};
\end{tikzpicture}\\
=&(-1)^{\binom{n}{2}}q^{\frac{1}{2n}\big( \binom{n-k}{2}-\binom{k}{2} \big)}
(-q)^{\ell(\textbf{i})-\binom{k}{2}} \\
& \quad \cdot \sum_{\sigma:[1;n-k]\rightarrow  \bar{I}^c}(-q)^{\binom{n-k}{2} -\ell(\sigma)}
(-1)^{\binom{n}{2}}(-q)^{-\ell(\sigma)+\ell(\textbf{j})}
q^{\frac{1}{2n}\big(- \binom{n-k}{2}+\binom{k}{2} \big)}
M^I_{J}(\textbf{u})\\
=&(-q)^{\ell(\textbf{i})+\ell(\textbf{j})}(-q)^{\binom{n-k}{2}- \binom{k}{2}} M_{J}^{I}(\textbf{u})\sum_{\sigma:[1;n-k]\rightarrow  \bar{I}^c}q^{-2\ell(\sigma)}\\
=&(-q)^{-\binom{k}{2}}(-1)^{\binom{n-k}{2}}[n-k]_q! (-q)^{\ell(\textbf{i})+\ell(\textbf{j})} M_{J}^{I}(\textbf{u}),
\end{align*}
where the first equality comes from \cite[Lemma 4.6]{LY23}, the second equality  
from  \cite[Lemma 4.13]{LY23}, and the last equality from $q^{\binom{n-k}{2}} \sum_{\sigma:[1;n-k] \to \bar{I}^c} q^{-2\ell(\sigma)} = [n-k]_q!$
(see \cite[Lemma 3.1]{Wan23}). 
Here, we used the following notations of \cite{LY23}: let $[1;n-k] = \{1,2,\ldots,n-k\}$, and for each $I \subset \JJ$ 
let
$$
\bar{I} = \{ \bar{i} \, | \, i \in I \}, \qquad
I^c = \JJ \setminus I, \qquad
\bar{I}^c = (\bar{I})^c,
$$
where $\bar{i} = n+1-i$. This proves \eqref{eq_detq}.

Observe now that
	\begin{align*}
		\Theta_c({\bf a}_k') &= \sum_{I\in\mathbb J_k}\sum_{\{i_1,\cdots,i_k\} = I}\begin{tikzpicture}[baseline=(ref.base)]
			\fill[gray!20] (-1.3,-1) rectangle (1.3,1);
			\coordinate (src) at (0.3,0);
			\begin{scope}[edge]
				\foreach \y in {-0.5,0,0.5} {
					\draw[-o-={0.6}{>}] (src) -- +(1,\y);}
			\end{scope}
			\draw [line width =1pt] (-0.3,0)--(0.3,0);
			\node [above] at
            (0,-0.05){\scalebox{0.7}{$n-k$}};
			\coordinate (src) at (-0.3,0);
			\begin{scope}[edge]
				\foreach \y in {-0.5,0,0.5} {
					\draw[-o-={0.6}{<}] (src) -- +(-1,\y);}
		\end{scope}
		\path (-1.3,-0.5)node[left]{$i_1$} (-1.3,0)node[above,rotate=90]{...} (-1.3,0.5)node[left]{$i_k$};
		\path (1.3,-0.5)node[right]{$i_1$} (1.3,0)node[below,rotate=90]{...} (1.3,0.5)node[right]{$i_k$};
		\draw[wall,<-] (1.3,-1) -- +(0,2);
		\draw[wall,<-] (-1.3,-1) -- +(0,2);
		\node(ref) at (0,0) {\phantom{$-$}};
		\end{tikzpicture} \qquad (\because\mbox{\eqref{eq-alphak-loop}, Theorem \ref{t.splitting2}}) \\
		&=(-q)^{-\binom{k}{2}}(-1)^{\binom{n-k}{2}}[n-k]_q!\sum_{I\in\mathbb J_k}\sum_{\{i_1,\cdots,i_k\} = I} q^{2\ell(\textbf{i})} M_{I}^{I}(\textbf{u}) \quad (\because\mbox{\eqref{eq_detq}; for ${\bf i} = (i_1,\ldots,i_k)$}) \\			&=(-q)^{-\binom{k}{2}}(-1)^{\binom{n-k}{2}}[n-k]_q!\sum_{I\in\mathbb J_k}M_{I}^{I}(\textbf{u})\sum_{\{i_1,\cdots,i_k\} = I} q^{2\ell(\textbf{i})} \\
		&=(-q)^{-\binom{k}{2}}(-1)^{\binom{n-k}{2}}[n-k]_q![k]_q!q^{\binom{k}{2}}\sum_{I\in\mathbb J_k}M_{I}^{I}(\textbf{u}) \qquad (\because\mbox{\cite[Lemma 3.1]{Wan23}}) \\
		&=(-1)^{\binom{k}{2}+\binom{n-k}{2}}[n-k]_q![k]_q! 
        D_k^q({\bf u}).
	\end{align*}
    From \eqref{eq-alphak-loop}, we get the desired \eqref{eqal}.

    This finishes the proof of Proposition 
    \ref{r-Thet_alpha_k}, hence also that of Theorem \ref{thm.cutting_homomorphism_and_module-trace}.
\end{proof}

\def\SeA{{\cS_\heta(\An)}}
\def\SeB{{\cS_\heta(\B)}}

As a result, while keeping Lemma \ref{r.indep} in mind, we get the following, which was proved in \cite{DS}.
\bcor
\label{r-inj2} Assume that a non-zero complex number $\hxi
$
satisfies $[n]_\xi!\neq 0$, where $\xi= \hxi^{2n^2}$.
Then $\Theta_c: \cS_\hxi(\An) \to \cS_\hxi(\B)$ is injective.
\ecor

\subsection{Frobenius homomorphism 
for the annulus
}
Now we show that the Frobenius homomorphism, when restricted to the image of the cutting homomorphism $\Theta_c : \cS(\An) \to \cS(\PP_2)$ in \eqref{eq.cutting_homomorphism_of_annulus}, is equivalent to the Adams operation (\OldS\ref{subsection-root-Adams}). More precisely:

\begin{proposition}[annulus Frobenius homomorphism]\label{r-L-comp-F}
	Assume 
    that a root of unity $\home \in \mathbb{C}$ satisfies
    $[n]_\omega!\neq 0$, where $\omega= \home^{2n^2}$. Let $N=\ord(\omega^2),\, \heta=\home^{N^2}$ (as in \OldS\ref{ss.assumptions_on_roots_of_unity}).
    Then there exists a unique $\mathbb C$-algebra embedding $$\Psi\colon\SeA\rightarrow \SoA$$ 
    that satisfies the following property:
    \begin{enuma}
        \item The following diagram commutes:
\begin{align*}
		\raisebox{10mm}{\xymatrix{
				 \Rep_\eta \ar[r]^-{\kappa} \ar[d]_{\Ad} & \SeA \ar[d]^{\Psi} \\
				\Rep_\omega \ar[r]^-{\kappa} & \SoA,
		}}
	\end{align*}
    where the left vertical map $\Ad: \Rep_\eta \to \Rep_\omega$, the Adams operation, is defined in  
    \OldS\ref{subsection-root-Adams}.
    \end{enuma}
Moreover, this map $\Psi$ satisfies the following properties as well:
    \begin{enumerate}[label={\rm (\alph*)}]
        \setcounter{enumi}{1}
        \item The following diagram commutes:
\begin{align}\label{eq.repres_Frob}
		\raisebox{10mm}{\xymatrix{
				 \SeA \ar[r]^-{\Theta_c} \ar[d]_{\Psi} & \SeB= \Oe \ar[d]^{\Phi} \\
				\SoA \ar[r]^-{\Theta_c} & \SoB=\Oo,
		}}
	\end{align}
    where the right vertical map $\Phi : \Oe \to \Oo$ is the Frobenius homomorphism for the quantized function algebras in \OldS\ref{ss.Frobenius_map_for_Oq}.

    \item For each $1\leq k\leq n-1$, we have 
    $$\Psi([
    {\bf a}_k]_{\heta}) = \bar P_{N,k}([
    {\bf a}_1]_{\home},\cdots,[
    {\bf a}_{n-1}]_{\home}),$$
    where ${\bf a}_i$ is as defined in \eqref{eq-alphak-loop}, and $\bar P_{N,k}$ is the reduced power elementary polynomial defined in \eqref{eq-def-barP}.
    \end{enumerate}

\end{proposition}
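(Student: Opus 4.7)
The plan is to simply assemble the proposition from ingredients already at our disposal: Theorem~\ref{twice_sphere} identifies $\cS(\An;R) \cong \Rep_q$ via $\kappa$, Theorem~\ref{thm.cutting_homomorphism_and_module-trace} identifies the cutting homomorphism $\Theta_c$ with the module-trace $\cT_q$, and Theorem~\ref{thm-L-comp-F}(a) says the Frobenius homomorphism $\Phi : \Oe \to \Oo$ intertwines the module-trace maps with the Adams operation $\Ad : \Rep_\eta \to \Rep_\omega$. Under these identifications, the sought-for $\Psi$ is forced to equal $\Ad$, which immediately provides existence and makes the proof essentially a diagram chase.

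I first verify that $\kappa$ descends to an isomorphism at both roots of unity $\hxi \in \{\home, \heta\}$. The construction of $\kappa$ and of the classes $\mathbf{a}_k$ in \eqref{eq-alphak-loop} requires inverting $[n]_q!$. The assumption $[n]_\omega! \neq 0$ handles $\hxi = \home$ directly. For $\hxi = \heta$, recall $\eta = \omega^{N^2}$ satisfies $\eta^2 = 1$, so $\eta = \pm 1$; in either case a direct computation using $[m]_q = q^{-m+1} + q^{-m+3} + \cdots + q^{m-1}$ yields $[m]_\eta = \pm m \neq 0$, so $[n]_\eta! \neq 0$. Consequently $\kappa$ specializes to algebra isomorphisms $\kappa_\xi : \Rep_\xi \xrightarrow{\cong} \cS_\hxi(\An)$ for $\xi \in \{\omega, \eta\}$.

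With these isomorphisms in hand, I define
\[
\Psi := \kappa_\omega \circ \Ad \circ \kappa_\eta^{-1} : \cS_\heta(\An) \to \cS_\home(\An),
\]
which is a $\mathbb{C}$-algebra homomorphism because both $\kappa$ and $\Ad$ are, and injective because $\Ad$ is (being the composition $\Ch_\omega^{-1} \circ \Psi_N \circ \Ch_\eta$ of two isomorphisms and the injection $\Psi_N$). Uniqueness of a $\mathbb{C}$-algebra embedding satisfying (a) is immediate: the commutativity of diagram (a) is precisely $\Psi \circ \kappa_\eta = \kappa_\omega \circ \Ad$, and since $\kappa_\eta$ is surjective, $\Psi$ is determined.

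For property (b), I chase the diagram using the two key identifications:
\[
\Theta_c \circ \Psi = \Theta_c \circ \kappa_\omega \circ \Ad \circ \kappa_\eta^{-1}
= \cT_\omega \circ \Ad \circ \kappa_\eta^{-1}
= \Phi \circ \cT_\eta \circ \kappa_\eta^{-1}
= \Phi \circ \Theta_c,
\]
where the second and fourth equalities use Theorem~\ref{thm.cutting_homomorphism_and_module-trace} and the third uses Theorem~\ref{thm-L-comp-F}(a). For property (c), I use Theorem~\ref{thm-rep-dotU}(b) to identify $\Lambda_\eta(\varpi_k) = L_\eta(\varpi_k)$, so that $\kappa_\eta^{-1}([\mathbf{a}_k]_\heta) = L_\eta(\varpi_k)$. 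Equation~\eqref{eq57a} then gives $\Ad(L_\eta(\varpi_k)) = \bar P_{N,k}(L_\omega(\varpi_1), \ldots, L_\omega(\varpi_{n-1}))$, and applying the algebra homomorphism $\kappa_\omega$ (which sends $L_\omega(\varpi_j)$ to $[\mathbf{a}_j]_\home$) yields the desired formula.

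The main obstacle, such as it is, is the preliminary check that the isomorphism of Theorem~\ref{twice_sphere}, which is stated over the ring $R = \Zhq([n]_q!)^{-1}$, survives specialization at $\hat\eta$. Once that is confirmed by the simple computation of $[m]_{\pm 1}$ above, the rest is a formal diagram chase combining results already established in the paper.
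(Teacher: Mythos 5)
Your proof is correct and follows essentially the same route as the paper: define $\Psi := \kappa \circ \Ad \circ \kappa^{-1}$, note that uniqueness and injectivity are immediate from the fact that $\kappa$ is an isomorphism and $\Ad$ is an embedding, and prove (b), (c) by formal diagram chases through Theorems \ref{twice_sphere}, \ref{thm.cutting_homomorphism_and_module-trace}, and \ref{thm-L-comp-F}. Your explicit check that $[n]_\eta! \neq 0$ when $\eta = \pm1$, needed so that the $R$-isomorphism of Theorem \ref{twice_sphere} survives specialization at $\hat\eta$, is a small point left implicit in the paper and is worth making explicit as you do.
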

\bpr 

(a) The existence and uniqueness of a map $\Psi$ satisfying 
item (a) is obvious because $\kappa$ is an isomorphism (Theorem \ref{twice_sphere}). 
The map $\Psi$ is given as the following composition
$$\SeA\xrightarrow{\kappa^{-1}}
\Rep_\eta \xrightarrow{
\Ad}
\Rep_\omega\xrightarrow{\kappa} \SoA,$$
which is a $\mathbb C$-algebra embedding because 
the Adams operation map $\Ad$ is a 
$\mathbb C$-algebra embedding ($\because$\OldS\ref{subsection-root-Adams}).

(b) With the identifications $\kappa:\Rep_\eta \xrightarrow{\cong } \SeA$, $\kappa:\Rep_\omega \xrightarrow{\cong } \SoA$ (Theorem \ref{twice_sphere}), and 
item (a), 
diagram \eqref{eq.repres_Frob} becomes
\begin{align}\label{eq55}
		\raisebox{10mm}{\xymatrix{
				 \Rep_\eta \ar[r]^-{\Theta_c\circ \kappa} \ar[d]_{\Ad} & \Oe \ar[d]^{\Phi} \\
				\Rep_\omega \ar[r]^-{\Theta_c \circ \kappa} & \Oo
		}}
	\end{align}
	By 
    diagram \eqref{commmmm} of Theorem \ref{thm.cutting_homomorphism_and_module-trace} we have $\Theta_c\circ \kappa= \cT_q$. Hence 
    diagram \eqref{eq55} becomes 
    diagram \eqref{eq57}, which is commutative by 
    Theorem \ref{thm-L-comp-F}.

(c)    With the identifications $L_{\eta}(\varpi_
{k}) = \Lambda_{\eta}(\varpi_
{k})=[
{\bf a}_k]_{\heta}$ and 
    $L_{\omega}(\varpi_
    {k}) = \Lambda_{\omega}(\varpi_
    {k})=[
    {\bf a}_k]_{\home}$ (Theorems \ref{thm-rep-dotU} and \ref{twice_sphere}), 
equation \eqref{eq57a} established in \OldS\ref{subsection-root-Adams} implies (c). 

\epr

\begin{remark}
    Recall that ${\bf a} \in\cS(\mathsf{A})$ represented by the oriented loop in Figure \ref{fig1}(a). 
    It is trivial to check that 
    $$\Theta_c({\bf a}) = D_1^q({\bf u}).$$
    We use $\cev{{\bf a}}$ to denote the loop obtained from ${\bf a}$ by reversing its orientation. 
     	For each $1\leq i\leq n$, equations \eqref{e-Oq-ops} and \eqref{eq-Hopf-bigon} imply
 	$$
 	S(u_{ii})= {\det}_q(\buu^{ii})\text{ and }
 	S(u_{ii}) = S(
 	\raisebox{-.30\height}{
 	\begin{tikzpicture}
 		\fill[gray!20] (0,0) rectangle (1,1);
 		\draw[wall,<-] (0,0) -- (0,1);
 		\draw[wall,<-] (1,0) -- (1,1);
 		\draw[-o-={0.6}{>}] (0,0.5) --(1,0.5);
 		\node [left] at(0,0.5) {$i$};
 		\node [right] at(1,0.5) {$i$};
 	\end{tikzpicture}})
    =\raisebox{-.30\height}{
    	\begin{tikzpicture}
    		\fill[gray!20] (0,0) rectangle (1,1);
    		\draw[wall,<-] (0,0) -- (0,1);
    		\draw[wall,<-] (1,0) -- (1,1);
    		\draw[-o-={0.6}{<}] (0,0.5) --(1,0.5);
    		\node [left] at(0,0.5) {$\bar i$};
    		\node [right] at(1,0.5) {$\bar i$};
    \end{tikzpicture}}.
 	$$
 	Here $\buu^{ii}$ is the result of removing the $i$-th row and $i$-th column from $\buu$. Then we have 
 	\begin{align*}
 	\Theta_c(\cev{{\bf a}}) = \sum_{1\leq i\leq n}
 		\raisebox{-.30\height}{
 			\begin{tikzpicture}
 				\fill[gray!20] (0,0) rectangle (1,1);
 				\draw[wall,<-] (0,0) -- (0,1);
 				\draw[wall,<-] (1,0) -- (1,1);
 				\draw[-o-={0.6}{<}] (0,0.5) --(1,0.5);
 				\node [left] at(0,0.5) {$i$};
 				\node [right] at(1,0.5) {$i$};
 		\end{tikzpicture}}
 	= \sum_{1\leq i\leq n} {\det}_q(\buu^{\bar i\bar i}) =  D_{n-1}^{q}({\bf u}).
 	\end{align*}

 Note that the definitions of ${\bf a}_{k}$ (see \eqref{eq-alphak-loop}), $2\leq k\leq n-2$, do not need the invertibilities of $[n-1]_q$ and $[n]_q$. 
Although in \OldS\ref{sec-image-closed-loop} we require that the root of unity $\home$ satisfies $[n]_{
\omega}!\neq 0$, for Theorem \ref{thm11} (the main result in \OldS\ref{sec-image-closed-loop}), this restriction could be reduced to $[n-2]_{\home}!\neq 0$ by replacing $
{\bf a}_1$ and $
{\bf a}_{n-1}$ in our constructions with $
{\bf a}$ and $\cev{
{\bf a}}$ respectively. Notice that this replacement is justified by $\Theta_c(
{\bf a}) = D_1^q({\bf u})$ and 
$\Theta_c(\cev{
{\bf a}}) = D_{n-1}^q({\bf u})$ which we showed in the previous paragraph. 
So when $n=2,3$, we can get rid of the restriction $[n]_\omega!\neq 0$ for Theorem \ref{thm11}.

\end{remark}

\def\tB{{\widetilde \B}}
\def\tAn{{\widetilde \An}}
\def\CL{{\mathcal L}}
\def\hxi{{\hat \xi}}

\section{The image of 
a knot 
under the Frobenius homomorphism}
\label{sec-image-closed-loop}

In this section we prove 
the main result of the present paper,  describing the image of a framed oriented knot under the Frobenius homomorphism of stated ${\rm SL}_n$-skein algebras in terms of certain threading operations.  
We will show that the Frobenius homomorphism is functorial with respect to the embedding of the thickened annulus into a marked 3-manifold. 

The Frobenius homomorphism for (stated) ${\rm SL}_n$-skein algebras was first defined for
$n=2$ in \cite{BW16} using threading of Chebyshev polynomials along knots. Later works \cite{LP,BL22,KQ} show that powers of stated arcs would give a simpler construction, still for $n=2$. For $
\rm SL_n$-skeins, Bonahon and Higgins \cite{BH23} defined threadings of 
reduced power elementary polynomials along knots which are `transparent' (i.e. ``locally central") in the skein modules, and they conjectured the existence of a Frobenius homomorphism whose image of knots are given by these threadings.  
The  result of this section confirms the conjecture for essentially marked 3-manifolds. Earlier,
for 
$n=3$ Higgins \cite{Hig23} 
proved the result for the thickenings of surfaces 
having at least one puncture 
per connected component, 
and later for all marked 3-manifolds \cite{Hig24} with the restriction that the order of $\home$ is co-prime 
to 6.

For the thickenings of pb surfaces without boundary,  
with or without punctures, 
there is no known Frobenius homomorphism for $n\ge 4$. In this case, we show that the Frobenius homomorphism can be defined on a quotient of the ${\rm SL}_n$-skein algebra, called the projected ${\rm SL}_n$-skein algebra. 

\subsection{Notations and assumptions on $\home$ and $\hxi$} \label{sec-ass_o}
In this section, $\home\in \BC$ is a root 
of unity. Let 
$$\omega= \home^{2n^2}, \quad N= \ord(\omega^2), \quad \heta = \home^{N^2}, \quad \eta = \omega^{N^2},
$$
as in \OldS\ref{ss.assumptions_on_roots_of_unity}.
We further require that $$[n]_{\omega}!\neq 0.$$
We use the ring $R=\Zhq ( [n]_q!  )^{-1}$. 

Also, $\hxi\in \BC$ 
either (i) is not a root of 
unity, or (ii) is one of $\heta$ and $\home$.

\subsection{The image of 
a framed oriented knot under the Frobenius homomorphism}\label{subsec.image_of_closed_components} Recall that $\An$ is the 
open annulus, or the twice-punctured sphere (\OldS\ref{ss.stated_SL_n-skein_algebra_of_the_annulus}), and $\tAn=\An \times (-1,1)$ is its thickening.

A framed oriented knot $\alpha$ in a marked 3-manifold $\MN$ defines a unique (up to isotopy) embedding $f_{\alpha}: \tAn\embed \mathring M$ (see \OldS\ref{subsec:punctured_bordered_surface_and_n-web}). The map $\alpha \mapsto f_{\alpha}$ is a bijection between isotopy classes of framed oriented knots in $\MN$ and morphisms from $\tAn$ to $\MN$ in the category $\mathbb{M}$ (\OldS\ref{ss.marked}). Note that $\cN$ does not play a role here, since a morphism from $\tAn$ to $\MN$ is also a morphism from $\tAn$ to $M$. Denote by $(f_{\alpha})_* : \cS(\An) \to \cS\MN$ the induced map, and by $(f_{\alpha})_{*,\hxi}$ the evaluation of $\hq$ at $\hxi$.

\begin{definition}\label{def.threading_of_element}
Let $\MN$ be a marked $3$-manifold, which is not necessarily assumed to be an essentially marked $3$-manifold.
For $z\in \cS(\An;R)$ and a framed oriented knot $\alpha$ in $\MN$ define the {\bf result of threading $z$ along $
\alpha$} as
$$
\alpha *_\hq z := (f_
{\alpha})_*(z) \in \cS(M,\cN;R).
$$
For $z\in \cS_{\hxi}(\mathsf{A})$ define the {\bf result of threading $z$ along $
\alpha$} as
\be 
\alpha{*_\hxi} z := (f_
{\alpha})_{*, \hxi}(z) \in \cS_{\hxi}\MN.
\ee
\end{definition}

This is similar to the construction of satellite knots. 

The following `functoriality' theorem will let us understand the image of a framed oriented knot in an essentially marked $3$-manifold $\MN$ under the Frobenius homomorphism $\Phi : \cS_\heta\MN \to \cS_\home\MN$ (of Theorem \ref{thmFrob}) via the image of a core curve in the annulus $\An$ under the annulus Frobenius homomorphism $\Psi : \cS_\heta(\An) \to \cS_\home(\An)$ (of Proposition \ref{r-L-comp-F}).
\bthm[functoriality of Frobenius homomorphisms for embedding a thickened annulus] \label{thm10} 
Assume that $\MN$ is an essentially marked 3-manifold (\OldS\ref{subsec:essentially_marked_3-manifolds}), and that
$f: \tAn \hookrightarrow \MN$ is a morphism in the category $\mathbb{M}$. Then the following diagram is commutative:
\be 
\label{eq-dia10}
\begin{tikzcd}
		\cS_{\hat{\eta}}(\An)  \arrow[r, "f_{*,\heta}"]
		\arrow[d, "\Psi"]  
		&    \cS_{\hat{\eta}}\MN\arrow[d, "
        \Phi"] \\
	\cS_{\hat{\omega}}(\An)  \arrow[r, "f_{*,\home}"] 
		&  	\cS_{\hat{\omega}}\MN,  
	\end{tikzcd}
\ee
where we use notations for $\heta$ and $\home$ in \OldS\ref{sec-ass_o}, the left vertical map $\Psi$ is from Proposition \ref{r-L-comp-F}, and the right vertical map $\Phi$ is from Theorem \ref{thmFrob}(a).

Put it differently, if  $\alpha$ is a framed oriented knot in  $\MN$ and  $z\in 
\cS_\heta(\An)$, then
\be 
\Phi(\alpha *_\heta z) = \alpha *_\home \Psi(z).
\label{eq10}
\ee
\ethm
\def\tPone{{\widetilde{\PP_{1,1}}}}
\bpr 
First, consider the the special case when $\MN$ is the thickening $\widetilde{\PP_{1,1}}$ of the punctured monogon $\PP_{1,1}$, obtained from the monogon $\PP_1$ by removing one interior puncture. The interior of $\PP_{1,1}$ is the annulus $\An$, and $f: \widetilde{\An} \to \widetilde{\PP_{1,1}}$ is the thickening of $\iota: \An \embed \PP_{1,1}$. The embedding $\iota$ is first map in the following chain of operations:

\be \text{Chain I:} \quad 
\begin{tikzpicture}[baseline=0cm,every node/.style={inner sep=2pt}]
\draw[wall,fill=gray!20,dotted] (0,0) circle[radius=0.8];
    \draw[fill=white] (0,0) circle[radius=0.1];
    \draw [red] (0,0.1)--(0,0.8);
    \node [right] at (0,0.4) {$c$};
\end{tikzpicture}
\xrightarrow{\iota}
\begin{tikzpicture}[baseline=0cm,every node/.style={inner sep=2pt}]
\draw[wall,fill=gray!20] (0,0) circle[radius=0.8];
    \draw[fill=white] (0,0) circle[radius=0.1];
    \draw[fill=white] (0,0.8) circle[radius=0.1];
    \draw [red] (0,0.1)--(0,0.7);
    \node [right] at (0,0.4) {$c$};
\end{tikzpicture}
\xrightarrow{\Cut_c}
\begin{tikzpicture}[baseline=0cm,every node/.style={inner sep=2pt}]
\draw[draw=white,wall,fill=gray!20] (-0.8,-0.8)--(0,0.8)--(0.8,-0.8)--cycle;
\draw [wall,red] (-0.8,-0.8)--(0,0.8);
\draw [wall,red] (0,0.8)--(0.8,-0.8);
    \draw [wall] (-0.8,-0.8)--(0.8,-0.8);
    \draw[fill=white] (-0.8,-0.8) circle[radius=0.1];
    \draw[fill=white] (0,0.8) circle[radius=0.1];
    \draw[fill=white] (0.8,-0.8) circle[radius=0.1];
\end{tikzpicture}, 
\qquad \An \xrightarrow {\iota} \PP_{1,1} \xrightarrow {\Cut_c} \PP_3.
\label{eqI}
\ee  
The diagram \eqref{eq-dia10} is the left square of the following diagram
\be 
\label{eq-dia10a}
\begin{tikzcd}
		\cS_{\hat{\eta}}(\An)  \arrow[r, "\iota_{*,\heta}"] \arrow[d, "\Psi"]  
		&    \cS_{\hat{\eta}}(\PP_{1,1})\arrow[d, "\Phi"]  \arrow[r, "\Theta_c"] 
		&    \cS_{\hat{\eta}}(\PP_{3})\arrow[d, "\Phi"]
		\\
	\cS_{\hat{\omega}}(\An)  \arrow[r, "\iota_{*,\home}"] 
		&  	\cS_{\hat{\omega}}(\PP_{1,1}) \arrow[r, "\Theta_c"] 
		&\cS_{\hat{\omega}}(\PP_{3}) 
	\end{tikzcd}
\ee
Note that both rows are maps of stated ${\rm SL}_n$-skein algebras 
induced by Chain I 
\eqref{eqI}.

By Theorem \ref{thmFrob}(c), the right square is commutative. Both maps $\Theta_c$ are injective (Theorem \ref{t.splitting2}). Hence we will have the commutativity of the left square if we can show that the composition of the two squares, which we call 
the outer diagram, is commutative.

Consider another chain of operations
\be \text{Chain II:} \quad 
\begin{tikzpicture}[baseline=0cm,every node/.style={inner sep=2pt}]
\draw[wall,fill=gray!20,dotted] (0,0) circle[radius=0.8];
    \draw[fill=white] (0,0) circle[radius=0.1];
    \draw [red] (0,0.1)--(0,0.8);
    \node [right] at (0,0.4) {$c$};
\end{tikzpicture}
\xrightarrow{\Cut_c}
\begin{tikzpicture}[baseline=0cm,every node/.style={inner sep=2pt}]
\draw[draw=white,wall,fill=gray!20] (-0.8,-0.8)--(0,0.8)--(0.8,-0.8)--cycle;
\draw [wall,red] (-0.8,-0.8)--(0,0.8);
\draw [wall,red] (0,0.8)--(0.8,-0.8);
    \draw [wall,dotted] (-0.8,-0.8)--(0.8,-0.8);
    \draw[fill=white] (-0.8,-0.8) circle[radius=0.1];
    \draw[fill=white] (0,0.8) circle[radius=0.1];
    \draw[fill=white] (0.8,-0.8) circle[radius=0.1];
\end{tikzpicture}
\xrightarrow{\jmath}
\begin{tikzpicture}[baseline=0cm,every node/.style={inner sep=2pt}]
\draw[draw=white,wall,fill=gray!20] (-0.8,-0.8)--(0,0.8)--(0.8,-0.8)--cycle;
\draw [wall,red] (-0.8,-0.8)--(0,0.8);
\draw [wall,red] (0,0.8)--(0.8,-0.8);
    \draw [wall] (-0.8,-0.8)--(0.8,-0.8);
    \draw[fill=white] (-0.8,-0.8) circle[radius=0.1];
    \draw[fill=white] (0,0.8) circle[radius=0.1];
    \draw[fill=white] (0.8,-0.8) circle[radius=0.1];
\end{tikzpicture},
\qquad \An \xrightarrow {\Cut_c} \PP_{2} \xrightarrow {\jmath} \PP_3.
\label{eqII}
\ee
Correspondingly we have the following diagram, where the rows come from Chain II \eqref{eqII}:
\be
\label{eq-dia10b}
\begin{tikzcd}
		\cS_{\hat{\eta}}(\An)  \arrow[r, "\Theta_c"] \arrow[d, "\Psi"]  
		&    \cS_{\hat{\eta}}(\PP_{2})\arrow[d, "\Phi"]  \arrow[r, "\jmath_{*,\heta}"] 
		&    \cS_{\hat{\eta}}(\PP_{3})\arrow[d, "\Phi"]
		\\
	\cS_{\hat{\omega}}(\An)  \arrow[r, "\Theta_c"] 
		&  	\cS_{\hat{\omega}}(\PP_{2}) \arrow[r, "\jmath_{*,\home} "] 
		&\cS_{\hat{\omega}}(\PP_{3}) 
	\end{tikzcd}
\ee
The left square is commutative by Theorem \ref{r-L-comp-F}(b), while  the right square is commutative by Theorem \ref{thmFrob}(b). Hence the outer diagram is commutative.

Since the cutting homomorphism commutes with the embedding, by comparing Chain I and Chain II we get
\be 
\Theta_c \circ \iota_* = \jmath_*\circ \Theta_c.
\label{eq-com6}
\ee 
Thus the outer diagram of \eqref{eq-dia10b} equals 
the outer diagram of \eqref{eq-dia10a}. This shows the commutativity of the outer diagram of \eqref{eq-dia10a}
, and hence the theorem in our special case of $\widetilde{\PP_{1,1}}$.

For a general case of $\MN$, assume that $f= f_\gamma$ for a framed oriented knot $\gamma$ in $\MN$. Choose an embedded path $a$ in $M$ connecting a point on $\gamma$ and a point in $\cN$. Then a small neighborhood of $\gamma \cup a$ is diffeomorphic to $\tPone$; see Figure \ref{fig-web-neighborhood}. The theorem follows from the special case by functoriality (\OldS\ref{ss.functor}, Theorem \ref{thmFrob}(b)).
\epr

We record here a consequence of \eqref{eq-com6}.
\bpro \label{r-inj6}
Let $\xi$ be a non-zero complex number satisfying 
$[n]_\xi!\neq 0$.  Then 
 $\iota_{*,\hxi}: \cS_
 \hxi(\An) \to \cS_
 \hxi(\PP_{1,1})$ is  injective.
\epro

\bpr Consider  
equation \eqref{eq-com6}, with $\hq$ evaluated at $\hxi$. 
The map $\Theta_c$$:\cS_\hxi(\An) \to \cS_\hxi(\PP_2)$ in the right hand side is injective by Lemma \ref{r-inj2}, and
the map $\jmath_{*,\hxi} : \cS_\hxi(\PP_2) \to \cS_\hxi(\PP_3)$ is injective by Lemma \ref{lem_P3}. Hence the right hand side is injective, so it follows that $\iota_{*,\hxi}$ in the left hand side 
is injective.
\epr

\def\SqMN{{\cS\MN}}

\subsection{Explicit formulas} 
We write down explicitly the formula for the image of an oriented framed knot  
under the Frobenius homomorphism, and of some more elements.

Recall 
from \eqref{eq-alphak-loop} of \OldS\ref{ss.stated_SL_n-skein_algebra_of_the_annulus} the definition of ${\bf a}_k \in \SqA$, where $R= \Zhq([n]_q!  )^{-1}$.  

Following \cite{BH23}, we define the threading of multi-variable polynomials:
\begin{definition}\label{def.threading_of_polynomial}
 Let $
 P\in R[y_1, \dots, y_{n-1}]$ and $
 \gamma$ be a framed oriented 
 knot in $\MN$. 
 Define the {\bf threading of $
 P$ along $
 \gamma$} by
\be
\nonumber
\gamma^{[P]} := \gamma *_\hq P({\bf a}_1,\ldots,{\bf a}_{n-1}) \in \cS(M,\cN;R),
\ee
where the (threading) operation $*_\hq$ is defined as in Definition \ref{def.threading_of_element}.

\end{definition}

The following theorem, which is one of the main results of the present paper, gives an explicit skein-theoretic description of the image under the Frobenius homomorphism $$\Phi : \cS_\heta\MN \to \cS_\home\MN$$ (of Theorem \ref{thmFrob}), for an essentially marked $3$-manifold $\MN$, of a stated framed $\cN$-arc (\OldS\ref{subsec:punctured_bordered_surface_and_n-web}), an oriented framed link, and an oriented framed link colored with fundamental weights of ${\rm SL}_n$ (in the language of Cautis-Kamnitzer-Morrison skein algebra; see Theorem \ref{twice_sphere}). In particular, it resolves a conjecture of Bonahon and Higgins \cite{BH23}.
\bthm
\label{thm11} Let $\MN$ be an essentially marked 3-manifold (\OldS\ref{subsec:essentially_marked_3-manifolds}). Let $\home$, $\heta$ and $N$ be as in \OldS\ref{sec-ass_o}. 
\begin{enumerate}[label={\rm (\arabic*)}]
\item  If $\alpha$ is a stated framed $\cN$-arc in $\MN$, then $$
\Phi(
[\alpha]_\heta) = \left [ \alpha^{(N)}  \right]_\home,$$
where for $x\in \cS\MN$ and a non-zero complex number $\hxi$ we denote by $[x]_\hxi \in \cS_\hxi\MN$ the image of $x$ under the natural map $\cS\MN \to \cS_\hxi\MN$, and $\alpha^{(N)}$ means the $N$-parallel copies of $\alpha$ as defined in \OldS\ref{subsec:main_result_of_Frobenius_section}.

\item If $\alpha$ is an oriented framed knot in $\MN$ , then
\be 
\Phi([\alpha]_\heta) = \left[ \alpha^{[ \bar P_{N,1}  ]} \right]_\home,
\label{eq-Phi}
\ee
where $\alpha^{[ \bar P_{N,1}]} \in \cS(M,\cN;R)$ is the result of threading (Definition \ref{def.threading_of_polynomial}) the reduced power elementary polynomial $\bar P_{N,1}$ (\eqref{eq-def-barP} of \OldS\ref{ss.representations_of_sln}) along $\alpha$.

\item Suppose that $\alpha= \alpha_1 \sqcup \dots \sqcup \alpha_{r}$ is a stated $n$-web in $\MN$, where each $\alpha_j$ is either a stated framed $\cN$-arc or a framed oriented knot. For each $j$ choose a regular neighborhood $U_j$ of $\alpha_j$ in $M$ such that $U_1,\ldots,U_{r}$ are mutually disjoint, and $(U_j, U_j \cap \cN)$ is isomorphic to the thickened bigon $\widetilde{\PP_2}$ if $\alpha_j$ is a stated framed $\cN$-arc and to the thickened annulus $\widetilde{\An}$ if $\alpha_j$ is a framed oriented knot. For each $j$, write $\Phi([\alpha_j]_\heta)$ as a linear combination $\sum_{i_j} c_{i_j,j} [\alpha_{i_j,j}]_\home$ with $c_{i_j,j} \in \mathbb{C}$ and a stated $n$-web $\alpha_{i_j,j}$ in $(U_j, U_j \cap \cN) \subset (M,\cN)$. Then $\Phi([\alpha]_\heta)$ is given by the `term-by-term multilinear disjoint union' of $\Phi([\alpha_j]_\heta)$, $j=1,\ldots,r$, in the sense that
\be
\label{Phi_for_disjoint_union}
\Phi([\alpha]_\heta) = \sum c_{i_1,1} c_{i_2,2} \cdots c_{i_{r},r} [ \alpha_{i_1,1} \sqcup \alpha_{i_2,2} \sqcup \cdots \sqcup \alpha_{i_{r},r}]_\home,
\ee
where the sum is over the indices $i_1,\ldots,i_{r}$.

 \item For a framed oriented knot $\alpha$ in $\MN$ and for any $k=1,\ldots,n-1$,
\be 
\Phi(\alpha *_\heta [{\bf a}_k]_\heta) = \left[ \alpha^{
[\bar P_{N,k} 
]}\right]_\home,
\ee
where ${\bf a}_k$ is as in \eqref{eq-alphak-loop}, the (threading) operation $*_\heta$ is as defined in Definition \ref{def.threading_of_element}, and $
\alpha^{[\bar{P}_{N,
k}]}\in \cS(M,\cN;R)$ is the result of threading (Definition \ref{def.threading_of_polynomial}) the reduced power elementary polynomial $\bar P_{N,k}$ (\eqref{eq-def-barP} of \OldS\ref{ss.representations_of_sln}) along $
\alpha$.
\end{enumerate}
\ethm

\begin{proof}
    
    We first show the items (1), (2) and (4).
    Using Theorem \ref{thm10}, the functoriality theorem, the proof boils down to showing the corresponding statements for the thickening of the annulus $\An$, which are proved in Theorem \ref{thmFrob}(a) and Proposition \ref{r-L-comp-F}(c).

    It remains to show (3). For each $j$ such that $\alpha_j$ is a framed oriented knot, choose a path $p_j$ in $M$ connecting $\cN$ to $\alpha_j$ as we did in \OldS\ref{ss.Proof_of_Theorem_6_1a_for_3-manifolds} in the proof of Theorem \ref{thmFrob}(a), so that $p_j$'s are mutually disjoint. Then let $U_j' \supset U_j$ be a regular neighborhood of $\alpha_j \cup p_j$ such that $(U_j', U_j' \cap \cN)$ is isomorphic to the thickening $\widetilde{\PP_{1,1}}$ of the punctured monogon which appeared in the proof of Theorem \ref{thm10} . Let $\fS_j = \PP_{1,1}$. For each $j$ such that $\alpha_j$ is a stated framed $\cN$-arc, let $U_j'=U_j$ and $\fS_j = \PP_2$. Since each $\fS_j$ is an essentially bordered pb surface, from Theorem \ref{thmFrob}(a) we have a Frobenius homomorphism which we denote by $\Phi_j : \cS_\heta(\fS_j) \to \cS_\home(\fS_j)$. Note that $\cS(\sqcup_j \fS_j)$ is naturally isomorphic to $\otimes_j \cS(\fS_j)$. By using Theorem \ref{thmFrob}(b) for each $j$ for the embedding $\widetilde{\fS_j} \to \sqcup_j \widetilde{\fS_j}$, we can see that the Frobenius homomorphism $\Phi : \cS_\heta(\sqcup_j \fS_j) \to \cS_\home(\sqcup_j \fS_j)$ for $\sqcup_j \fS_j$ (which exists by Theorem \ref{thmFrob}(a)) is given on each $j$-th tensor factor $
    \cS_\heta(\fS_j)$ of $\cS_\heta(\sqcup_j \fS_j) = \otimes_j \cS_\heta(\fS_j)$ as $\Phi_j$. Now Theorem \ref{thmFrob}(b) for the embedding $\sqcup_j \widetilde{\fS_j} \stackrel{\sim}{\to} \cup_j U'_j \hookrightarrow \MN$ (of essentially marked $3$-manifolds) yields the sought-for result, proving (3).
\end{proof}

\def\hxi{{\hat \xi}}
\def\Sx{{\cS_\hxi}}

\def\CVect{\BC\text{-}{\mathsf{Vect}}}

\subsection{Category language}
Let $\MM''$ be the monoidal category whose objects are marked 3-manifolds where each component is either essentially marked or isomorphic to  the thickened annulus. The tensor products and morphisms are the same as in the category $\MM$ of marked 3-manifolds (\OldS\ref{ss.marked}).

Let $\CVect$ be the  monoidal category of $\BC$-vector 
spaces.
For each non-zero $\hxi\in \BC$, the stated ${\rm SL}_n$-skein module gives a  monoidal functor 
$$ \cS_\hxi: \MM'' \to \CVect,\quad  \MN \to \Sx\MN.$$
Theorems \ref{thmFrob} and \ref{thm10} imply that the Frobenius homomorphism gives a natural transformation 
$$ \Phi_{\heta \to \home}: \cS_\heta \Longrightarrow \cS_\home,$$
which 
coincides with (i) $\Phi:\Oe \to \Oo$ of \OldS\ref{ss.Frobenius_map_for_Oq} (coming from \cite{PW}) for the thickened bigon and (ii) the Adams operation $\Psi:\Rep_\eta \to \Rep_\home$ in \OldS\ref{subsection-root-Adams} for the thickened annulus. Since for $\MN\in \MM''$ the stated ${\rm SL}_n$-skein module $\SMN$ is spanned by stated framed $\cN$-arcs and framed oriented links, see \cite[Proposition 4.2]{LS21}, such a natural transformation, if exists, is unique.  
Theorem \ref{thmFrob}(b) and Theorem \ref{thm10} show
that such a natural transformation does exist.

\subsection{Geometric description of the image of a knot}\label{subsec-geomatric-description} We now show how the 
reduced power elementary polynomial $\bar P_{m,k}$ can be expressed by a simple diagram.

For $1\leq k\leq n-1$ and a positive integer $m$,  
define 
$$
{\bf a}_{m,k} = (-1)^{m(\binom{k}{2}+\binom{n-k}{2})}\frac{1}{((n-k)!k!)^{m}} {\bf a}_{m,k}'\in \cS_{1}(\An),$$
where the $n$-web ${\bf a}_{m,k}'$ in $
\widetilde{\An}$ is given in Figure \ref{fig-beta}. In terms of the Cautis-Kamnitzer-Morrison skein algebra, $
{\bf a}_{
m,k}$ can be thought of as being proportional to the $
m$-times-going-around loop ${\bf a}$ colored by the weight $\varpi_k$. As $\cS_{1}(\An)= \BC[[
{\bf a}_1]_1, \dots, [
{\bf a}_
{n-1}]_1]$ from Theorem \ref{twice_sphere}, 
each $x\in \cS_{1}(\An)$ determines a polynomial $P_x$ in $n-1$ variables such that $$x = [P_x(
{\bf a}_1, \dots, 
{\bf a}_
{n-1})]_1$$ in $\cS_{1}(\An)$ (see \eqref{eq-alphak-loop} for ${\bf a}_k$).

 \begin{figure}
     \centering
     \def\svgwidth{0.3\textwidth}
     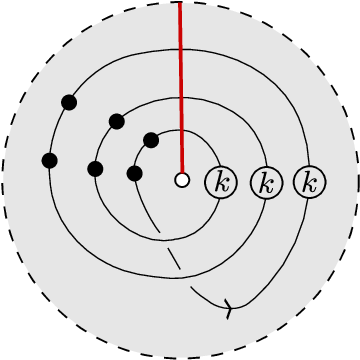
     \caption{The black $n$-web is ${\bf a}_{m,k}'$ ($m=3$ in the above picture), where each circle $k$ represents $k$ strands without crossings, the black dots are sinks or sources. The  red arc is the cutting arc $c$, meeting ${\bf a}'_{m,k}$ at $mk$  
     points}.
     \label{fig-beta}
 \end{figure}

\begin{proposition}\label{prop-twice-classical}
\begin{enuma}
\item
We have $\bar P_{m,k} = P_{{\bf a}_{m,k}  }  $. In other words, in $\cS_{1}(\An)$ we have 
	$$
    {\bf a}_{
    m,k} =\left[ \bar P_{
    m,k}(
    {\bf a}_1,\cdots,
    {\bf a}_{n-1})\right]_1.$$

    \item
    Consequently, with the assumptions of Theorem \ref{thm10}, 
	we have
	\be 
	\Phi(
    \alpha *_{\hat\eta} 
    {\bf a}_k) = 
    \alpha *_\home F_{1 \to \home} (
    {\bf a}_{N,k}).
	\ee
	where $F_{1 \to \home}: \cS_1(\An) \to \SoA$ is the $\BC$-algebra isomorphism given by $[
    {\bf a}_k]_1 
    \mapsto [
    {\bf a}_k]_\home$ for $k=1, \dots, n-1$.
\end{enuma}
\end{proposition}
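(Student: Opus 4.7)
The substance is in part (a); part (b) will follow from it by a brief formal manipulation invoking Theorem~\ref{thm11}(4).

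My plan for (a) is to specialize $\hq$ at the classical value $1$ and then transfer the identity across the cutting homomorphism $\Theta_c\colon \cS_1(\An)\to\cS_1(\PP_2)=\mathcal O_1({\rm SL}_n)$. Since $[n]_1!=n!\neq 0$, Corollary~\ref{r-inj2} applies at $\hxi=1$, so $\Theta_c$ is injective and it suffices to verify the equality after applying it. By Proposition~\ref{r-Thet_alpha_k} together with the fact that $\Theta_c$ is a $\BC$-algebra homomorphism (both $\An$ and $\PP_2$ being essentially bordered), the right-hand side $\bar P_{m,k}({\bf a}_1,\ldots,{\bf a}_{n-1})$ transports to $\bar P_{m,k}(D_1^1({\bf u}),\ldots,D_{n-1}^1({\bf u}))$.

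The heart of the argument will be the geometric identity
\[
\Theta_c({\bf a}_{m,k})=D_k^1({\bf u}^m) \quad \text{in} \quad \mathcal O_1({\rm SL}_n),
\]
where ${\bf u}^m$ denotes the ordinary $m$-th power of the (now commutative) matrix ${\bf u}$. Geometrically, cutting the $n$-web ${\bf a}_{m,k}'$ of Figure~\ref{fig-beta} along $c$ produces, inside the bigon, a chain of $m$ consecutive $k$-strand blocks separated by $n$-valent sinks/sources, each block contributing one factor of ${\bf u}$ via Theorem~\ref{thm-iso-Oq-P2}. Because we are at $\hq=1$, all skein relations collapse to their classical commutative counterparts, so iterating the one-wrap computation of Proposition~\ref{r-Thet_alpha_k} $m$ times, with the normalization $(-1)^{m(\binom{k}{2}+\binom{n-k}{2})}/((n-k)!\,k!)^m$ absorbing the cumulative antisymmetrizer factors, should yield exactly $D_k^1({\bf u}^m)$. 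I expect this step to be the main obstacle; fortunately, the classical commutative setting removes all quantum-ordering subtleties, so the computation amounts to an $m$-fold unfolding modelled directly on Proposition~\ref{r-Thet_alpha_k}.

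With this geometric identity in hand, Lemma~\ref{lem-PD} supplies the classical polynomial identity $D_k(A^m)=\bar P_{m,k}(D_1(A),\ldots,D_{n-1}(A))$ valid for every $A\in{\rm SL}_n(\BC)$. Since $\mathcal O_1({\rm SL}_n)$ is the coordinate ring of the affine variety ${\rm SL}_n(\BC)$ and thus separates points, this forces $D_k^1({\bf u}^m)=\bar P_{m,k}(D_1^1({\bf u}),\ldots,D_{n-1}^1({\bf u}))$, and injectivity of $\Theta_c$ then delivers~(a). For part (b), Theorem~\ref{thm11}(4) gives $\Phi(\alpha*_{\heta}[{\bf a}_k]_{\heta})=[\alpha^{[\bar P_{N,k}]}]_{\home}=\alpha*_{\home}[\bar P_{N,k}({\bf a}_1,\ldots,{\bf a}_{n-1})]_{\home}$, while part~(a), read in $\cS_1(\An)$ and transported by the $\BC$-algebra isomorphism $F_{1\to\home}$, shows that $F_{1\to\home}({\bf a}_{N,k})=\bar P_{N,k}([{\bf a}_1]_{\home},\ldots,[{\bf a}_{n-1}]_{\home})$, so threading along $\alpha$ yields the same element and (b) follows.
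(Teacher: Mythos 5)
Your overall strategy is the one the paper uses: specialize to $\hq=1$, transport the claimed identity across the injective cutting homomorphism $\Theta_c$, reduce the right-hand side to $D_k^1(\mathbf{u}^m)$ via Lemma~\ref{lem-PD}, and reduce (b) to (a) through the functoriality established in Theorem~\ref{thm10}/Theorem~\ref{thm11}(4) together with Proposition~\ref{r-L-comp-F}(c). So the architecture is correct.

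The gap is in the step you flag as ``the main obstacle,'' and I think you underestimate it. Iterating the one-wrap computation of Proposition~\ref{r-Thet_alpha_k} $m$ times does \emph{not} directly yield $D_k^1(\mathbf{u}^m)$. Cutting the diagram ${\bf a}'_{m,k}$ along $c$ meets it in $mk$ points, so the cut element is a sum over \emph{sequences} $(I_1,\ldots,I_m)$ of $k$-element subsets (one for each wrap-around), and the geometric computation gives a cyclic product of minors
$
\Theta_c({\bf a}_{m,k})=\sum_{I_1,\ldots,I_m}M^{I_2}_{I_1}(\mathbf{u})\,M^{I_3}_{I_2}(\mathbf{u})\cdots M^{I_1}_{I_m}(\mathbf{u}).
$
Turning this into $\sum_{I}M^I_I(\mathbf{u}^m)=D_k^1(\mathbf{u}^m)$ requires a separate linear-algebra identity, namely the Cauchy--Binet/generalized minor formula
$
\det\bigl((AB)_{I,J}\bigr)=\sum_{K}\det(A_{I,K})\det(B_{K,J}),
$
which the paper isolates as Lemma~\ref{lem-linear}. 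Your phrasing (``the classical commutative setting removes all quantum-ordering subtleties, so the computation amounts to an $m$-fold unfolding'') suggests the chain of blocks directly produces the $m$-th power trace; it does not. Commutativity at $q=1$ certainly simplifies the bookkeeping of normalization factors and removes quantum orderings, but it does not supply Cauchy--Binet. Relatedly, there is also nontrivial state bookkeeping in passing from $\Theta_c({\bf a}'_{m,k})$ to the product-of-minors form: at each wrap the two ``halves'' of the cut must carry matching states before the minor expression of Proposition~\ref{r-Thet_alpha_k} can be read off, and the paper invokes a state-swapping lemma (\cite[Lemma 4.9]{LY23}) for this. These two ingredients---the Cauchy--Binet collapse and the state matching across the $m$ cuts---are exactly what you would need to write down before the ``main obstacle'' is actually resolved. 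Part (b) of your argument is fine and matches the paper's, once (a) is secured.
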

\begin{proof}
	
(a) 	Since 
$\bar P_{
m,k}(D_1^1({\bf u}),\cdots,D_{n-1}^1({\bf u}))= D_k^1({\bf u}^
{m})$ ($\because$Lemma \ref{lem-PD})
and $\Theta_c(
{\bf a}_k) = D^q_k(\buu)$ by \eqref{eqal}, we have
	\begin{align*}
		\Theta_c([\bar P_{
        m,k}(
        {\bf a}_1,\cdots,
        {\bf a}_{n-1})]_1) = D_k^1(\textbf{u}^
        {m}).
	\end{align*}
	Since the injectivity of $\mathcal{T}_1$ in Proposition \ref{r-inj}, together with Theorem \ref{thm.cutting_homomorphism_and_module-trace}, implies the injectivity of $\Theta_c : \cS_1(\An) \to \cS_1(\PP_2)$, it suffices to show that $\Theta_c(
    {\bf a}_{
    m,k}) = D_k^1(\textbf{u}^
    {m})$.
	From  \eqref{eqal} and \cite[Lemma 4.9]{LY23} (where the latter is used to swap the states; see especially \cite[equations (99) and (100)]{LY23})
    we have 
	\begin{align*}		\Theta_c(
    {\bf a}_{
    m,k})=\sum_{I_1,\cdots, I_
    {m}\in\mathbb J_k} M^{I_2}_{I_1}(\textbf{u})M^{I_3}_{I_2}(\textbf{u})\cdots M^{I_
    {m}}_{I_{
    m-1}}(\textbf{u}) M^{I_{1}}_{I_
    {m}}(\textbf{u})
		=\sum_{I_1\in\mathbb J_k} M^{I_1}_{I_1}(\textbf{u}^
        {m})=D_k^1(\textbf{u}^
        {m}),
	\end{align*}
	where the second equality comes from 
	the following   linear algebra lemma.
	\begin{lemma}\label{lem-linear}
		Suppose that $A$ and $B$ are two square matrices of size $n$, and $I,J\in\mathbb J_k$  for some $1\leq k\leq n-1$. Then we have 
		$\det((AB)_{I,J})=\sum_{K\in\mathbb J_k}\det(A_{I,K}) \det(B_{K,J}),$
        where $(AB)_{I,J}$ is the $I\times J$ submatrix of $AB$. \qed
	\end{lemma} 
\renewcommand{\qed}{}	\end{proof}

(b) 
Due to \eqref{eq10} of Theorem \ref{thm10}, the functoriality theorem, it suffices to show that 
\begin{align}
    \Psi([{\bf a}_k]_{\heta}) = F_{1 \to \home}({\bf a}_{N,k})\in\SoA.
\end{align}
From part (a)  
for $m=N$ we have
\begin{align*}
    F_{1 \to \home}({\bf a}_{N,k})
    = F_{1 \to \home}\left(\left[ \bar P_{
    N,k}({\bf a}_1,\cdots,{\bf a}_{n-1})\right]_1\right)
    =\left[ \bar P_{
    N,k}({\bf a}_1,\cdots,{\bf a}_{n-1})\right]_{\home}
    =\Psi([{\bf a}_k]_{\heta}), 
\end{align*}
where the last equality holds by Proposition \ref{r-L-comp-F}(c). \qed

\def\MN{(M,\mathcal{N})}

\def\zS{\cS_{\hat{\eta}}}
\def\qS{\cS_{\hat{q}}}
\def\cK{\mathcal K}

\subsection{The Frobenius homomorphism for pb surfaces without boundary}\label{subsec.Frobenius_without_boundaries} 

 In this subsection $\fS= \Sigma_{g,n}$, the result of removing $n$ points from an oriented surface of genus $g$. Since $\fS$ is not essentially bordered, we have not constructed the Frobenius homomorphism $\Phi : \cS_\heta(\fS) \to \cS_\home(\fS)$ for a surface.
Here  we define the Frobenius homomorphism for a quotient of $\SS$, known as  the projected ${\rm SL}_n$-skein algebra introduced in \cite{LS21}.

Assume that $\fS$ has at least one punctures, and $p$ is one of them. 
Let $c_p$ be a trivial ideal arc at $p$. This means the two endpoints of $c_p$ are both $p$ and $c_p$ bounds an embedded monogon $D_p$; see Figure \ref{fig_cp}. By removing the interior of $D_p$, from $\fS$ we get 
essentially bordered pb surface $\fS'$, where $c_p$ now becomes a boundary edge.  
With the identification $\fS=\fS\setminus D_p$ 
there is a unique morphism $f: \fS \embed \fS'$, such that $f$ is the identity outside a small neighborhood of $D_p$. One can imagine that $f$ `enlarges' the puncture $p$.

For a non-zero $\hat \xi\in \BC$ let $\cK_{\hat \xi, p}$ be the kernel of the induced map $$f_*: \Sx(\fS) \to \Sx(\fS').$$

\begin{figure}
	\centering
	\begin{tikzpicture}
\draw[draw=white,fill=gray!20]  (-1,-1) rectangle (1,1);
    \draw[wall,fill=gray!20] (0,0) circle[radius=0.5];
    \draw[fill=white] (0.5,0) circle[radius=0.1];
    \node at (0.8,0) {$p$};
    \node [above] at (-0.7,0.1) {$c_p$};
\end{tikzpicture}
	\caption{The arc $c_p$ and the enclosed monogon $D_p$.}\label{fig_cp}
\end{figure}
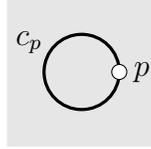

\def\SxsS{\cS^*_\hxi(\fS)}
\def\SxS{\cS_{\hat \xi}(\fS)}
\def\Ssx{\cS_\hxi^{*}}
\def\Sso{\cS_\home^{*}}
\def\Sse{\cS_\heta^{*}}
\def\SxA{\cS_\hxi(\An)}

\def\Ker{\text{Ker}}
In \cite[Section 9]{LS21}, it is proved that  $\cK_{\hxi,p}$ does not depend on the choice of a puncture $p$, and we denote it by $\cK_{\hxi,\fS}$. Define the {\bf projected stated ${\rm SL}_n$-skein algebra} by 
\begin{align}\label{def-projected-skein-punctured}
    \SxsS:= \SxS/\cK_{\hxi,\fS}.
\end{align}

By \cite[Corollary 9.2]{LS21}, we have the following.

\begin{lemma}\label{r.kernel}
For any ideal arc $c$ of $\fS$, the cutting homorphism $\Theta_c\colon\cS_\hxi(\fS) \to \cS_\hxi(\Cut_c(\fS))$ of  Theorem \ref{t.splitting2} has kernel $\cK_{\hxi,\fS}$ and hence descends to an embedding
   \begin{align}
       \label{eq.cutting_homomorphism_projected}
       \Theta_c^{*} : \SxsS\embed 
       \cS_\xi(\Cut_c(\fS)) .
   \end{align}

\end{lemma}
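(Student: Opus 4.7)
The statement is precisely \cite[Corollary 9.2]{LS21}, and my plan is to explain how its proof would unfold in our notation. The claim amounts to two inclusions.

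For the inclusion $\ker(\Theta_c)\subseteq\cK_{\hxi,\fS}$, I would enlarge \emph{every} puncture of $\fS$ to obtain an essentially bordered pb surface $\widehat\fS$ together with an embedding $g:\fS\hookrightarrow\widehat\fS$; iterating the puncture-independence of $\cK_{\hxi,p}$ established in \cite[Section 9]{LS21} gives $\ker(g_*)=\cK_{\hxi,\fS}$. Naturality of the cutting map with respect to the proper embedding $g$ (which follows directly from the definition of $\Theta_c$ as a sum over state lifts) yields a commutative square
\[
\begin{tikzcd}
\cS_\hxi(\fS)\ar[r,"\Theta_c"]\ar[d,"g_*"'] & \cS_\hxi(\Cut_c(\fS))\ar[d,"\widetilde g_*"]\\
\cS_\hxi(\widehat\fS)\ar[r,"\Theta_c"] & \cS_\hxi(\Cut_c(\widehat\fS))
\end{tikzcd}
\]
in which the bottom arrow is injective by Theorem \ref{t.splitting2}, since $\widehat\fS$ is essentially bordered. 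A short diagram chase then forces any $x\in\ker(\Theta_c)$ to satisfy $\Theta_c^{\widehat\fS}(g_*(x))=\widetilde g_*(\Theta_c(x))=0$, hence $g_*(x)=0$ and therefore $x\in\cK_{\hxi,\fS}$.

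For the reverse inclusion $\cK_{\hxi,\fS}\subseteq\ker(\Theta_c)$, I would exploit the puncture-independence of $\cK_{\hxi,\fS}$ by choosing a puncture $p$ whose small neighborhood is disjoint from $c$. Near such $p$, the enlargement $f:\fS\hookrightarrow\fS'$ modifies the surface only inside a small disk disjoint from $c$, and the puncture-local structural analysis of \cite[Section 9]{LS21} shows that $\cK_{\hxi,p}=\ker(f_*)$ is generated as a two-sided ideal by elements representable by $n$-webs supported inside arbitrarily small neighborhoods of $p$. Since $\Theta_c$ acts as the identity on webs disjoint from $c$ and the local skein relations expressing $f_*(x)=0$ take place entirely away from $c$, the same relations hold inside $\cS_\hxi(\Cut_c(\fS))$, forcing $\Theta_c(x)=0$.

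The main obstacle is the second inclusion: it depends on a concrete local description of generators of $\cK_{\hxi,p}$ near the puncture $p$, which is not at all immediate from its definition as a kernel and which also requires a careful argument in the degenerate case where $c$ is incident at every puncture (handled by localizing instead near an interior point of $c$). Producing such a local description is the crux of the proof in \cite[Section 9]{LS21}, and our proof proposal reuses this ingredient.
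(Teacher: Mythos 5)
The paper supplies no proof of this lemma: it is stated with the citation \cite[Corollary 9.2]{LS21}, which you correctly identify. Your reconstruction of the cited argument, built around a commuting square against a puncture-enlarging embedding, is essentially the right shape, but two points deserve attention.

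For the forward inclusion you enlarge \emph{every} puncture, but if $c$ ends at a puncture $p$ then after replacing a small monogon $D_p$ (bounded by a trivial loop $c_p$) with a boundary edge, the arc $c$ is no longer an interior ideal arc of the target: any trivial loop at $p$ separates $p$ from the rest of the surface and so must cross every ideal arc incident to $p$, and $f(c)$ therefore fails to reach $p$. The top horizontal of your square is then undefined. The inexpensive fix is to enlarge only one puncture $p$ that is \emph{not} an endpoint of $c$ (this already makes the target essentially bordered since $\fS$ is connected, and puncture-independence of $\cK_{\hxi,p}$ supplies $\ker f_*=\cK_{\hxi,\fS}$). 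But when every puncture is an endpoint of $c$ — for instance $\fS=\Sigma_{g,1}$, or $\Sigma_{g,2}$ with $c$ joining the two punctures — no such $p$ exists, and your parenthetical appeal to ``localizing near an interior point of $c$'' is not yet an argument; this degenerate case remains a genuine gap in your proposal.

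For the reverse inclusion, the route through a concrete local description of generators of $\cK_{\hxi,p}$ is heavier than necessary and is avoidable. With $D_p$ chosen disjoint from $c$, the \emph{same} square
\begin{equation*}
\begin{tikzcd}
\cS_\hxi(\fS)\arrow[r,"\Theta_c"]\arrow[d,"f_*"'] & \cS_\hxi(\Cut_c(\fS))\arrow[d,"f_*"]\\
\cS_\hxi(\fS')\arrow[r,"\Theta_c"] & \cS_\hxi(\Cut_c(\fS'))
\end{tikzcd}
\end{equation*}
does both jobs at once. The bottom horizontal is injective because $\fS'$ is essentially bordered, which gives $\ker(\Theta_c)\subseteq\ker f_*=\cK_{\hxi,\fS}$; and the right vertical is injective because $\Cut_c(\fS)$ is essentially bordered and enlarging its puncture $p$ is the same as cutting along a trivial ideal arc $c_p$ there, which gives $\cK_{\hxi,\fS}=\ker f_*\subseteq\ker(\Theta_c)$. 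This removes precisely the ingredient you flag as the crux, so your description of the difficulty overestimates what is actually needed once the single-puncture enlargement is set up carefully.
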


Note that $\overline{\cS}(\fS) = \SS$ since $\fS$ has empty boundary. For any triangulation $\lambda$ of $\fS$ the two $X$-version quantum traces are the same.

\begin{lemma} 
    Assume that $\fS= \Sigma_{g,n}$ has  an ideal triangulation $\lambda$.
    The $X$-version quantum trace map $\tr^X_\lambda\colon
    \cS_{\hat \xi}(\fS)\rightarrow
    \mathcal X_{\hat \xi}(\fS,\lambda)$ of Theorem \ref{thm-X}(b)
    induces a quantum trace map for the projected skein algebra
    \begin{align}
        \label{eq.X-quantum_trace_projected}
        \tr^{X*}_\lambda\colon
    \cS_{\hat \xi}^{*}(\fS)\rightarrow
    \mathcal X_{\hat \xi}(\fS,\lambda).
    \end{align}
\end{lemma}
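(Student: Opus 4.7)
The plan is to show the containment $\cK_{\hxi,\fS}\subseteq \ker \tr^X_\lambda$; the desired homomorphism $\tr^{X*}_\lambda$ will then be defined by the universal property of the quotient $\cS_\hxi^*(\fS)=\cS_\hxi(\fS)/\cK_{\hxi,\fS}$.

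First I would exploit two simplifications afforded by $\fS=\Sigma_{g,n}$ having empty boundary. Since there are no ideal points of $\fS$ outside the interior punctures, there are no bad arcs, and so $\overline{\cS}_\hxi(\fS)=\cS_\hxi(\fS)$. Likewise the extended surface $\fS^*$ built in \OldS\ref{subsec.SLn_quantum_trace_maps} by attaching a triangle to each boundary edge equals $\fS$ itself, so $V_\lambda=\overline{V}_\lambda$, $\cX_\hxi(\fS,\lambda)=\overline{\cX}_\hxi(\fS,\lambda)$, and the construction of the extended trace (recalled in Step 4 of the proof of Theorem \ref{thmFrob}(f)) degenerates to give $\tr^X_\lambda=\rdtr^X_\lambda$. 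In particular every edge $c\in\lambda$ is a non-boundary interior ideal arc, and the cutting-compatibility of Theorem \ref{thm-X}(a) supplies the commutative square
\[
\xymatrix{
\cS_\hxi(\fS) \ar[r]^-{\tr^X_\lambda} \ar[d]_-{\overline{\Theta}_c} & \cX_\hxi(\fS,\lambda) \ar[d]^-{\overline{\Theta}_c^X} \\
\overline{\cS}_\hxi(\Cut_c(\fS)) \ar[r]^-{\rdtr^X_{\lambda_c}} & \overline{\cX}_\hxi(\Cut_c(\fS),\lambda_c),
}
\]
where $\lambda_c$ is the induced triangulation of $\Cut_c(\fS)$.

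For $x\in \cK_{\hxi,\fS}$, Lemma \ref{r.kernel} gives $\Theta_c(x)=0$ and hence $\overline{\Theta}_c(x)=0$, so the commutativity of the above square forces $\overline{\Theta}_c^X(\tr^X_\lambda(x))=0$; since $\overline{\Theta}_c^X$ is injective by \eqref{eq-cutting}, this yields $\tr^X_\lambda(x)=0$, which is the required containment. The argument is essentially a diagram chase and I foresee no serious obstacle; the one point I would want to verify carefully is that in the boundary-free setting $\tr^X_\lambda$ really coincides with $\rdtr^X_\lambda$ on the nose (so that Theorem \ref{thm-X}(a) applies directly rather than only up to the projection in \eqref{eq-tr-lift}), but this is immediate once $\fS^*=\fS$ and $V_\lambda=\overline{V}_\lambda$ are observed.
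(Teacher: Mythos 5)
Your proof is correct, and it reaches the containment $\cK_{\hxi,\fS}\subseteq\ker\tr^X_\lambda$ by a slightly different route than the paper. The paper uses the characterization (from \cite[\OldS 12.2]{LY23}) of $\tr^X_\lambda$ as a composition that begins with the full cutting map $\Theta$ along \emph{all} interior edges of $\lambda$, landing in $\bigotimes_{\tau\in\face_\lambda}\cS_\hq(\tau)$; since $\cK_{\hxi,\fS}$ lies in the kernel of any single cutting $\Theta_c$, and $\Theta$ is a composition of such, it lies in $\ker\Theta$, and the descent is immediate without appealing to any injectivity. You instead cut along a single edge $c$, invoke the commutative square of Theorem \ref{thm-X}(a) relating $\rdtr^X_\lambda$ to $\rdtr^X_{\lambda_c}$, and then use the injectivity of the monomial embedding $\overline{\Theta}_c^X$ from \eqref{eq-cutting} to read off $\tr^X_\lambda(x)=0$. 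The preliminary observation you stress --- that for $\fS=\Sigma_{g,n}$ one has $\fS^*=\fS$, $V_\lambda=\overline{V}_\lambda$, and $\tr^X_\lambda=\rdtr^X_\lambda$ --- is exactly the remark the paper makes just before the lemma, so that step is in order. Your approach is a bit more economical (one edge rather than all), at the cost of relying on the injectivity of $\overline{\Theta}_c^X$ as an extra ingredient; the paper's factorization through $\Theta$ avoids that dependence and is closer to how $\tr^X_\lambda$ is actually built in \cite{LY23}.
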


\begin{proof} By cutting $\fS$ along the  edges in $\lambda$ we get a collection $\mathcal{F}_\lambda$ of triangles. 
In \cite[\OldS 12.2]{LY23}  it is shown that we can identify
$\XS$ with a subset of $ \bigotimes_{\tau \in \mathcal{F}_\lambda} \bsX(\tau)$ such 
that the quantum trace $\tr^X_\lambda$ is the composition 
$$ 
\cS_\hq(\fS)  \xrightarrow {\Theta} \bigotimes_{\tau \in \mathcal{F}_\lambda} {\cS}_\hq(\tau)
\xrightarrow {\pr } \bigotimes_{\tau \in \mathcal{F}_\lambda} \overline{\cS}_\hq(\tau) \xrightarrow {\ot \btr^X_\tau} \bigotimes_{\tau \in \mathcal{F}_\lambda} \bsX_\hq(\tau).
$$
Here ${\Theta}$ is the composition of the cutting homomorphisms for all the interior edges of $\lambda$, and $\pr$ is the tensor product of all the projections $ {\cS}_\hq(\tau)\onto \overline{{\cS}}_\hq(\tau)$. Since $\cK_{\hxi,\fS}$ is in the kernel of $\Theta$ by Lemma \ref{r.kernel}, the map $\tr^X_\lambda$ descends to the quotient $\cS^*_\hxi(\fS)$ as claimed by the lemma.

\end{proof}

We denote $\Sigma_g= \Sigma_{g,0}$. 
The natural embedding $\Sigma_{g,1} \embed \Sigma_g$ induces a surjective algebra homomorphism ${\bf P}\colon\cS(\Sigma_{g,1})\onto \cS(\Sigma_g)$ with kernel
	 generated by $a-a'$, where $a$ and $a'$ are 
     $n$-web diagrams  
     in $\Sigma_{g,1}$ related by sliding over the puncture, see Figure \ref{eq-relation-point-p}. 
Define $$\mathcal K_{\hxi,\Sigma_g}:=
{\bf P}(\mathcal K_{\hxi,\Sigma_{g,1}})\subset \cS_{\hxi}(\Sigma_{g}).$$ Since ${\bf P}$ is surjective, we have that $\mathcal K_{\hxi,\Sigma_g}$ is an ideal of $\cS_{\hxi}(\Sigma_g)$.
Define the {\bf projected ${\rm SL}_n$-skein algebra} of 
$\Sigma_g$ by 
\begin{align}\label{def-projected-skein-closed}
    \cS_{\hxi}^{*}(\Sigma_g):=\cS_{\hxi}(\Sigma_g)/ \mathcal K_{\hxi,\Sigma_g}.
\end{align}

    \begin{figure}
         \centering
         \raisebox{-.35\height}{
\begin{tikzpicture}
\draw[draw=white,fill=gray!20]  (-1,-1) rectangle (1,1);
    \draw[fill=white] (0,0) circle[radius=0.1];
    \node [right] at (0.1,0) {$p$};
    \node [left] at (-1,0) {$a=$};
    \draw (1,0) parabola bend (0,-0.5) (-1,0);
\end{tikzpicture}}
\;\;\;\;\;\;\;\;\;\;
\raisebox{-.35\height}{
\begin{tikzpicture}
\draw[draw=white,fill=gray!20]  (-1,-1) rectangle (1,1);
    \draw[fill=white] (0,0) circle[radius=0.1];
    \node [right] at (0.1,0) {$p$};
    \node [left] at (-1,0) {$a'=$};
    \draw (1,0) parabola bend (0,0.5) (-1,0);
\end{tikzpicture}}
         \caption{$n$-web diagrams $a$ and $a'$ in  
      $\Sigma_{g,1}$ related by sliding over the puncture $p$}
         \label{eq-relation-point-p}
     \end{figure}
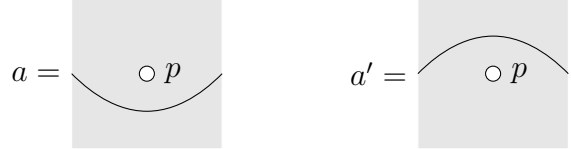

Since 
    ${\bf P}\colon\cS_{\hat \xi}(\Sigma_{g,1})\onto \cS_{\hat \xi}(\Sigma_g)$ sends 
$ \mathcal K_{\hxi,\Sigma_{g,1}}$ to $\mathcal K_{\hxi,\Sigma_g}$, it induces a quotient map
    $${\bf P}^{*}\colon\cS_{\hat \xi}^{*}(\Sigma_{g,1})\onto \cS_{\hat \xi}^{*}(\Sigma_g).$$

\begin{lemma}\label{lem-P-star}
    
 The kernel of $\bf P^*$  is
	 generated by ${\bf pr}(a) - {\bf pr}(a')$, where $a,a'\in \cS_{\hat \xi}(\Sigma_{g,1})$ 
     are 
     $n$-web diagrams 
     in $\Sigma_{g,1}$ related by sliding over the puncture, 
     as in Figure \ref{eq-relation-point-p},
     and ${\bf pr}$ denotes the projection from $\cS_\hxi(\Sigma_{g,1})$ to $\cS_\hxi^*(\Sigma_{g,1})$.
\end{lemma}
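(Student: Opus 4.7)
The plan is to verify this by a straightforward diagram chase, leveraging the explicit description of $\ker(\mathbf{P})$ given just before the lemma. The inclusion ``$\supseteq$'' is immediate: for any pair $a,a'$ of $n$-web diagrams in $\Sigma_{g,1}$ related by sliding over $p$, we have $\mathbf{P}(a-a')=0$ in $\cS_\hxi(\Sigma_g)$ by definition of $\mathbf{P}$, hence
\[
\mathbf{P}^*(\mathbf{pr}(a)-\mathbf{pr}(a')) \;=\; \mathbf{pr}\bigl(\mathbf{P}(a-a')\bigr) \;=\; 0,
\]
so the two-sided ideal $I \lhd \cS_\hxi^*(\Sigma_{g,1})$ generated by the elements $\mathbf{pr}(a)-\mathbf{pr}(a')$ is contained in $\ker(\mathbf{P}^*)$.

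For the reverse inclusion, I would take $x\in\ker(\mathbf{P}^*)$ and choose a lift $\tilde x\in \cS_\hxi(\Sigma_{g,1})$ with $\mathbf{pr}(\tilde x)=x$. Then $\mathbf{P}(\tilde x)$ lies in $\mathcal{K}_{\hxi,\Sigma_g}$, which by its very definition equals $\mathbf{P}(\mathcal{K}_{\hxi,\Sigma_{g,1}})$. Hence there exists $\tilde y\in\mathcal{K}_{\hxi,\Sigma_{g,1}}$ with $\mathbf{P}(\tilde x)=\mathbf{P}(\tilde y)$, i.e.\ $\tilde x-\tilde y\in\ker(\mathbf{P})$. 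Invoking the description of $\ker(\mathbf{P})$ recalled in the paragraph preceding the lemma, we may write
\[
\tilde x - \tilde y \;=\; \sum_i r_i\,(a_i-a_i')\,s_i
\]
for suitable $r_i,s_i\in\cS_\hxi(\Sigma_{g,1})$ and puncture-sliding pairs $a_i,a_i'$.

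Finally I would apply $\mathbf{pr}$ to this identity. Since $\tilde y\in\mathcal{K}_{\hxi,\Sigma_{g,1}}$, we have $\mathbf{pr}(\tilde y)=0$, so
\[
x \;=\; \mathbf{pr}(\tilde x) \;=\; \sum_i \mathbf{pr}(r_i)\bigl(\mathbf{pr}(a_i)-\mathbf{pr}(a_i')\bigr)\mathbf{pr}(s_i) \;\in\; I,
\]
giving $\ker(\mathbf{P}^*)\subseteq I$ and completing the proof. There is no real obstacle here: everything is formal once one writes down that $\mathcal{K}_{\hxi,\Sigma_g}$ was \emph{defined} as the $\mathbf{P}$-image of $\mathcal{K}_{\hxi,\Sigma_{g,1}}$, so the only subtlety — a potential mismatch between ``kill the kernel of $\mathbf{P}$ then quotient by $\mathcal{K}$'' versus ``quotient by $\mathcal{K}$ then kill the kernel of $\mathbf{P}^*$'' — is arranged by construction to be trivial.
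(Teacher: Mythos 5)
Your proof is correct and is, in substance, the same argument as the paper's. The paper packages it via the third isomorphism theorem, identifying $\cS^*_\hxi(\Sigma_g)$ with $\cS^*_\hxi(\Sigma_{g,1})/\bigl((\mathcal K_{\hxi,\Sigma_{g,1}}+\ker\mathbf P)/\mathcal K_{\hxi,\Sigma_{g,1}}\bigr)$ and noting that the denominator equals $\mathbf{pr}(\ker\mathbf P)$; you unwind the same identity as an element-by-element diagram chase. Both proofs rest on exactly the two inputs you name — $\mathcal K_{\hxi,\Sigma_g}=\mathbf P(\mathcal K_{\hxi,\Sigma_{g,1}})$ by definition, and the known generating set of $\ker\mathbf P$ — so there is no real divergence.
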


\begin{proof}
We have 
\begin{align*}
    \cS_\hxi^*(\Sigma_{g})=
    \cS_\hxi(\Sigma_{g})/{\bf P}(\mathcal K_{\hxi,\Sigma_{g,1}})
    =\dfrac{\cS_\hxi(\Sigma_{g,1})/{\rm ker}\,{\bf P}}{\mathcal K_{\hxi,\Sigma_{g,1}}/(\mathcal K_{\hxi,\Sigma_{g,1}}\cap {\rm ker}\,{\bf P})}
    =\dfrac{\cS_\hxi^{*}(\Sigma_{g,1})}{(\mathcal K_{\hxi,\Sigma_{g,1}} + {\rm ker}\,{\bf P})/\mathcal K_{\hxi,\Sigma_{g,1}}}.
\end{align*}
It is easy to see that $(\mathcal K_{\hxi,\Sigma_{g,1}} + {\rm ker}\,{\bf P})/\mathcal K_{\hxi,\Sigma_{g,1}}={\bf pr}({\rm ker}\,{\bf P})$.
This completes the proof since ${\rm ker}\,{\bf P}$ is
	 generated by $a-a'$, where $a,a'\in \cS_{\hat{\xi}}(\Sigma_{g,1})$ 
     are 
     $n$-web diagrams  
     in $\Sigma_{g,1}$ related by sliding over the puncture.

\end{proof}

For ease of notation, we often omit writing the projection map ${\bf pr} : \cS_\hxi(\fS) \to \cS_\hxi^*(\fS)$, and for an element $x$ of $\cS_\hxi(\fS)$, we still denote ${\bf pr}(x) \in \cS_\hxi^*(\fS)$ by $x$.

 We now establish the Frobenius homomorphism for the projected stated ${\rm SL}_n$-skein algebras for $\Sigma_{g,n}$.
 
 \def\Sfs{{\mathsf{Surf}^*}}
 \def\Sstr{{\mathsf{Surf}^{*, \mathsf{st}}}}
 \def\CAlg{{\BC\text{-}\mathsf{Alg}   }}

\begin{theorem}\label{thm-main-pb-surface-star}

Let $\fS=\Sigma_{g,n}$ for $g,n\in \BN$.
There is a unique $\BC$-algebra map 
$$
\Phi^*: \Sse(\fS) \to \Sso(\fS)
$$
such that if $\alpha$ is a simple loop diagram on $\fS$ then $\Phi^{*}([\alpha]_\heta) = [\bar P_{N,1}(\alpha)]_\home$.

Moreover, the map $\Phi^*$ has the following properties.
\begin{enumerate}
\item  If $
\alpha\subset \widetilde{\fS}=\fS\times(-1,1)$ is a framed oriented knot, 
then $
\Phi^*([
\alpha]_\heta) =\left[  
\alpha^ { [\bar P_{N,1} ] } \right]_\home$, 
where $
\alpha^{[ \bar P_{N,1}]} \in \cS(\fS)$ is the result of threading (Definition \ref{def.threading_of_polynomial}) the reduced power elementary polynomial $\bar P_{N,1}$ (\eqref{eq-def-barP} of \OldS\ref{ss.representations_of_sln}) along $
\alpha$. More generally, if $
\alpha= 
\alpha_1 \sqcup \dots \sqcup 
\alpha_
{r}\subset \widetilde{\fS}$, where 
each $
\alpha_j$ is a framed oriented knot, the $\Phi^*([
\alpha]_\heta)$ is given as the `term-by-term (multilinear) disjoint union' of $\Phi^*([
\alpha_j]_\heta)$, $j=1,\ldots,
r$, in the sense as in \eqref{Phi_for_disjoint_union}.

\item 
$\Phi^*$ commutes with the cutting homomorphism in \eqref{eq.cutting_homomorphism_projected}, in the sense that
\be \Phi \circ \Theta_c^* = \Theta ^*_c \circ \Phi^*.
\ee

\item 
$\Phi^*$
is compatible with the Frobenius homomorphisms $\Phi^\bT$ \eqref{eqFrDef} for quantum tori via 
the quantum traces for projected skein algebras in \eqref{eq.X-quantum_trace_projected}, meaning that for any ideal triangulation $\lambda$ of $\fS$, one has
\begin{align*}
\tr^{X*}_\lambda \circ 
\Phi^* = \Phi^\bT \circ \tr^{X*}_\lambda. 
\end{align*}

\end{enumerate}
\end{theorem}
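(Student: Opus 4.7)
The plan is to reduce everything to the essentially bordered case handled by Theorem~\ref{thmFrob}. For $\fS=\Sigma_{g,n}$ with $n\ge 1$, I would use the puncture-enlargement embedding $f\colon\fS\hookrightarrow\fS'$ from the paragraph preceding Lemma~\ref{r.kernel}, where $\fS'$ is obtained by turning the puncture $p$ into the boundary edge $c_p$. Since $\fS'$ is essentially bordered, Theorem~\ref{thmFrob}(a) gives a Frobenius algebra embedding $\Phi\colon\cS_\heta(\fS')\hookrightarrow\cS_\home(\fS')$, and by Lemma~\ref{r.kernel} (in its pb-surface analogue), the kernel of $f_{*,\hxi}$ is exactly $\cK_{\hxi,\fS}$, so $f_{*,\hxi}$ identifies $\cS^*_\hxi(\fS)$ with a subalgebra of $\cS_\hxi(\fS')$. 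The key structural claim to establish is that $\Phi$ preserves this subalgebra, i.e. sends $f_{*,\heta}(\cS_\heta(\fS))$ into $f_{*,\home}(\cS_\home(\fS))$; granting this, the induced quotient map defines $\Phi^*$ as a $\mathbb{C}$-algebra homomorphism. Uniqueness is automatic because simple loops generate $\cS^*(\fS)$ as a $\mathbb{C}$-algebra: every $n$-web reduces via \eqref{e.pm} and \eqref{e.sinksource} to a linear combination of disjoint unions of simple loops, which are products in the skein algebra.

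For the preservation claim, since $\fS$ has empty boundary every element of the image $f_{*,\heta}(\cS_\heta(\fS))$ is represented by a closed $n$-web in $\fS\subset\fS'$ (no endpoints on $c_p$). Resolving all vertices and crossings reduces to $\mathbb{C}$-linear combinations of disjoint unions of framed oriented knots, and Theorem~\ref{thm11}(2)--(3) applied inside $\fS'$ expresses $\Phi$ of each such element as a term-by-term disjoint union of threadings of $\bar P_{N,1}$ along the knots. Each such threading lives in an arbitrarily small tubular neighborhood of the corresponding knot in $\fS$, hence remains in $f_{*,\home}(\cS_\home(\fS))$. This proves the preservation claim and yields $\Phi^*$, along with the explicit threading formula of property~(1) by unwinding the construction.

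For the closed case $\fS=\Sigma_g$, I would define $\Phi^*\colon\cS^*_\heta(\Sigma_g)\to\cS^*_\home(\Sigma_g)$ as the descent of the already constructed $\Phi^*$ for $\Sigma_{g,1}$ through the surjection $\mathbf{P}^*$. By Lemma~\ref{lem-P-star}, this requires showing $\Phi^*(\mathbf{pr}(a))=\Phi^*(\mathbf{pr}(a'))$ whenever $a,a'$ are $n$-web diagrams in $\Sigma_{g,1}$ related by sliding a strand over the puncture. Using property~(1), $\Phi^*([a]_\heta)=[a^{[\bar P_{N,1}]}]_\home$ is a linear combination of webs supported in an arbitrarily thin tubular neighborhood of $a$, and the analogous neighborhood of $a'$ is obtained from that of $a$ by the same puncture-slide isotopy applied simultaneously to all parallel strands. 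Thus the two threadings differ by a sequence of elementary puncture-slide moves, hence coincide in $\cS^*_\home(\Sigma_g)$.

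Properties~(2) and~(3) will follow by general nonsense once $\Phi^*$ is constructed: for (2), combine Lemma~\ref{r.kernel}, which says $\Theta_c^*$ is induced from $\Theta_c$, with Theorem~\ref{thmFrob}(c) at the level of $\fS'$; for (3), combine the analogous descent description of $\tr^{X*}_\lambda$ (as $\tr^X_\lambda$ killed on $\cK_{\hxi,\fS}$ via the cutting formula of \cite[\OldS 12.2]{LY23}) with the compatibility in Theorem~\ref{thmFrob}(f) for the triangles of the triangulation. I expect the principal technical obstacle to lie in the second paragraph, namely in making rigorous that the threading $\alpha^{[\bar P_{N,1}]}$ of any closed web (after resolving vertices and crossings) indeed lives inside $\fS$ rather than merely in $\fS'$; this is geometrically clear but formally requires writing out the resolution while keeping careful track of which webs remain away from $c_p$. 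The puncture-slide compatibility in the closed case is the secondary point requiring care, since one must compare threadings along isotopic curves supported in genuinely different tubular neighborhoods.
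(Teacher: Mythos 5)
Your proposal is correct and follows essentially the same strategy as the paper: for $n\geq 1$, realize $\cS^*_{\hat\xi}(\fS)$ as the subalgebra of $\cS_{\hat\xi}(\fS')$ spanned by webs disjoint from $c_p$ and restrict the Frobenius $\Phi$ of $\fS'$ there via the disjoint-union formula \eqref{Phi_for_disjoint_union}; for $\Sigma_g$, descend through $\mathbf{P}^*$ using Lemma~\ref{lem-P-star}, with (2) and (3) following from the corresponding parts of Theorem~\ref{thmFrob}. The one step you elide in the closed case is that, before invoking the threading formula of property (1) on the kernel generators $a,a'$, one should first apply relation \eqref{e.sinksource} to replace them by oriented link diagrams (which the paper states explicitly), since $\alpha^{[\bar P_{N,1}]}$ is only defined for framed oriented knots and their disjoint unions.
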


\bpr 
{\bf Case 1:} $n >0$
Let $p$ be a puncture of $\fS$, with $\fS'$  as above. 
Recall the `puncture-enlarging' embedding $f: \fS \hookrightarrow \fS'$ and its induced map $f_*: \cS_\hxi(\fS) \to \cS_\hxi(\fS')$. By definition, $\Ssx(\fS)$ is identified with the image of $f_*$, hence 
with the subspace $A_\hxi(\fS') \subset \Sx(\fS')$ spanned by all $n$-web diagrams in $\fS'$ not touching $c_p$. Equation \eqref{Phi_for_disjoint_union} shows that if an  $n$-web diagram $\al$ in $\fS'$ does not touch $c_p$, then its image $\Phi([\al]_\heta)$ is a linear combination of $n$-web diagrams not touching $c_p$. This implies that $\Phi ( A_\heta(\fS') ) \subset A_\home(\fS').$

 Let $
\Phi^*$ be the restriction of $
\Phi$ onto $A_\heta (\fS')$. Then (1)--(3) follow from the corresponding properties of $
\Phi$ in Theorem \ref{thmFrob}.

{\bf Case 2:} $\fS=\Sigma_g$.
Lemma \ref{lem-P-star} implies that the kernel of 
${\bf P}^{*}\colon\cS_{\hat \xi}^{*}(\Sigma_{g,1})\onto \cS_{\hat \xi}^{*}(\Sigma_g)$ is
	 generated by $a-a'$, where $a, a'\in \cS_{\hat \xi}(\Sigma_{g,1})$ 
     are 
     $n$-web diagrams
     in $\Sigma_{g,1}$ related by sliding over the puncture, 
     as in Figure \ref{eq-relation-point-p}.
Because of relation \eqref{e.sinksource}, we can assume that both $a$ and $a'$ are oriented link diagrams.
For any $z\in \cS_\home(\An)$, the threadings $a *_\home z$ and $a' *_\home z$ (defined in Definition \ref{def.threading_of_element})  are also related by the sliding 
moves, hence they are equal under the projection  $\cS_{\hat \omega}^{*}(\Sigma_{g,1})\onto \cS_{\hat \omega}^{*}(\Sigma_g)$. Thus  $\Phi^*\colon \cS_\heta^{*}(\Sigma_{g,1})\to \cS_\home^{*}(\Sigma_{g,1})$ 
 descends to a map
	$$\Phi^*\colon \cS_\heta^{*}(\Sigma_g) \to \cS_\home^{*}(\Sigma_g)$$
	with all the required properties.
\epr

\begin{remark}\label{rem-810} In \cite{LS21} it is conjecture that $\cK_{
\hxi,\Sigma_{g,n}}=0$ for all $g,n\in \BN$.
    If the conjecture is true, then we have  the sought-for Frobenius homomorphism $
\Phi$ for $\SS$
for any pb surface $\fS$.
The conjecture was confirmed  for $n=2$ \cite{Le:triangulation}  and $n=3$ \cite{Hig23}.
It is shown in \cite{wang2024statedTQFT} that $\cK_{
\hxi,\Sigma_{g,n}}=0$ for general $n$ when $\xi=\hat{\xi}^{2n^2}=1$.
Note that $\eta=\heta^{2n^2}=\pm 1$ in Theorem \ref{thm-main-pb-surface-star}.
Then we have $\Sse(\fS)=\Se$ when $\eta=1$.
\end{remark}

\section{Root of unity transparency for knots}\label{sec-root-of-unity-transparency}

One of the very motivations of studying the Frobenius homomorphism $\Phi : \cS_\heta(\fS) \to \cS_\home(\fS)$ for stated ${\rm SL}_n$-skein algebras, as one can see e.g. in \cite{BW16} for $n=2$, is because it is helpful in the investigation of the center of $\cS_\home(\fS)$. Indeed, the image of $\Phi$ provides important elements in the center. More generally, for marked 3-manifolds $\MN$, the image of the Frobenius homomorphism $\Phi : \cS_\heta\MN \to \cS_\home\MN$ provides important elements of $\cS_\home\MN$ that are {\bf transparent}, which is a local version of the notion of commutativity (Remark \ref{rem.transparency}).

In this section we obtain a class of transparent elements of $\cS_\home\MN$ and $\cS_\home(\fS)$. We rely heavily on \cite{BH23}, but we establish and suggest a further development, using the results of the present paper on the Frobenius homomorphisms.

\subsection{Notations}

Let $\home \in \mathbb{C}$ be a root of unity. 
Recall that $\omega=\home^{2n^2}$
and  $\ord(\omega^2)=N$. Let $$d=\ord(\omega^{2N/n}).$$ Since $\omega^{2N}=1$, we have $d |n$. Let $\heta = \home^{N^2}$.

Let $\fS$ be a pb surface. The algebraic intersection gives rise to a bilinear form
$$
\la \cdot, \cdot \ra_\BZ: H_1(\fS;\BZ) \ot_{\BZ} H_1(\fS, \pfS; \BZ) \to \BZ.
$$

We identify $H_1(\An; \BZ_n)$ with $\BZ_n$ so that the element represented by the core curve  is identified with the generator $1\in \BZ_n$. 

Recall from \OldS\ref{sec-grading} the definition of $\cS(\fS)_k$ for $k\in H_1(\fS,\partial \fS;\mathbb Z_n)$; one defines $\cS_\home(\fS)_k$ this way as well. In the case $\fS = \An$, we have $H_1(\fS,\partial \fS;\mathbb Z_n)=\mathbb{Z}_n$, so $\cS_\home(\An)_k$ makes sense for $k\in\{1,\ldots,n\}=\mathbb{Z}_n$. For each divisor $d$ of $n$, we define
\be 
\nonumber
\cS^{(d)}_\home(\An) = \bigoplus_{k \in\{1,\ldots,n\}, \, d |k} \cS_\home(\An)_k.
\ee

\vspace{2mm}

\def\SoS{{\cS_\home(\fS)}}
\def\SqS{{\cS_{\hq}(\fS)}}
\def\SeS{{\cS_\heta(\fS)}}
\def\SdeS{\cS^{(d)}_\heta(\fS) }
\def\tPo{{\tilde \Phi_\home}}

\subsection{Root of unity transparency}
The following statement is based heavily on \cite{BH23}, even though the {\it stated} ${\rm SL}_n$-skein algebra  is not discussed there.

\begin{theorem}\label{thm-main} Assume that $\hat \omega\in \BC$ is a root of unity such that $[n]_{\omega}!\neq 0$. 
Assume that $\gamma$ is a framed oriented knot in a marked 3-manifold $\MN$, and $z\in \SeA_k$ where $k\in \BZ_n =\{0,1,\ldots,n-1\} = H_1(\An;\BZ_n)$; see \OldS\ref{sec-grading} for $\cS_\heta(\An)_k$. 

 In $\cS_{\hat \omega}\MN$, we have the following relations:
 \be 
 \raisebox{-.10in}{
\begin{tikzpicture}
\tikzset{->-/.style=
{decoration={markings,mark=at position #1 with
{\arrow{latex}}},postaction={decorate}}}
\filldraw[draw=white,fill=gray!20] (-0.7,-0.5) rectangle (2,0.5);
\draw(0,0) --(-0.7,0);
\draw (0,0) --(0.4,0);
\draw[decoration={markings, mark=at position 0.95 with {\arrow{>}}},postaction={decorate}](0.8,0) --(2,0);
\draw(-0.3,0.1) --(-0.3,0.5);
\draw[decoration={markings, mark=at position 0.5 with {\arrow{<}}},postaction={decorate}](-0.3,-0.5) --(-0.3,-0.1);
\filldraw[draw=black,fill=gray!20](-0.1,-0.3) rectangle (1.7,0.3);
\node at(0.8,0) {$\gamma*_{\home}\Psi(z)$};
\end{tikzpicture}}
=
 \omega^{\frac{2kN}{n}}
\raisebox{-.10in}{
\begin{tikzpicture}
\tikzset{->-/.style=
{decoration={markings,mark=at position #1 with
{\arrow{latex}}},postaction={decorate}}}
\filldraw[draw=white,fill=gray!20] (-0.7,-0.5) rectangle (2,0.5);
\draw (-0.4,0) --(-0.7,0);
\draw[decoration={markings, mark=at position 0.96 with {\arrow{>}}},postaction={decorate}](-0.2,0) --(2,0);
\draw[decoration={markings, mark=at position 0.2 with {\arrow{<}}},postaction={decorate}](-0.3,-0.5) --(-0.3,0.5);
%
\filldraw[draw=black,fill=gray!20](-0.1,-0.3) rectangle (1.7,0.3);
\node at(0.8,0) {$\gamma*_{\home}\Psi(z)$};
\end{tikzpicture}}
,\;
\raisebox{-.10in}{
\begin{tikzpicture}
\tikzset{->-/.style=
{decoration={markings,mark=at position #1 with
{\arrow{latex}}},postaction={decorate}}}
\filldraw[draw=white,fill=gray!20] (-0.7,-0.5) rectangle (2,0.5);
\draw(0,0) --(-0.7,0);
\draw (0,0) --(0.4,0);
\draw[decoration={markings, mark=at position 0.95 with {\arrow{>}}},postaction={decorate}](0.8,0) --(2,0);
\draw[decoration={markings, mark=at position 0.8 with {\arrow{>}}},postaction={decorate}](-0.3,0.1) --(-0.3,0.5);
\draw(-0.3,-0.5) --(-0.3,-0.1);
\filldraw[draw=black,fill=gray!20](-0.1,-0.3) rectangle (1.7,0.3);
\node at(0.8,0) {$\gamma*_{\home}\Psi(z)$};
\end{tikzpicture}}
=
 \omega^{-\frac{2kN}{n}}
\raisebox{-.10in}{
\begin{tikzpicture}
\tikzset{->-/.style=
{decoration={markings,mark=at position #1 with
{\arrow{latex}}},postaction={decorate}}}
\filldraw[draw=white,fill=gray!20] (-0.7,-0.5) rectangle (2,0.5);
\draw (-0.4,0) --(-0.7,0);
\draw[decoration={markings, mark=at position 0.96 with {\arrow{>}}},postaction={decorate}](-0.2,0) --(2,0);
\draw[decoration={markings, mark=at position 0.8 with {\arrow{>}}},postaction={decorate}](-0.3,0) --(-0.3,0.5);
\draw (-0.3,-0.5) --(-0.3,0);
\filldraw[draw=black,fill=gray!20](-0.1,-0.3) rectangle (1.7,0.3);
\node at(0.8,0) {$\gamma*_{\home}\Psi(z)$};
\end{tikzpicture}}
 \label{eq-trans}
 \ee
In particular, 
	\be 
	\raisebox{-.10in}{
\begin{tikzpicture}
\tikzset{->-/.style=
{decoration={markings,mark=at position #1 with
{\arrow{latex}}},postaction={decorate}}}
\filldraw[draw=white,fill=gray!20] (-0.7,-0.5) rectangle (2,0.5);
\draw (0,0) --(-0.7,0);
\draw (0,0) --(0.4,0);
\draw[decoration={markings, mark=at position 0.85 with {\arrow{>}}},postaction={decorate}](0.8,0) --(2,0);
\draw(0,0.1) --(0,0.5);
\draw [decoration={markings, mark=at position 0.5 with {\arrow{<}}},postaction={decorate}](0,-0.5) --(0,-0.1);
\filldraw[draw=black,fill=gray!20](0.3,-0.3) rectangle (1.3,0.3);
\node at(0.8,0) {$\gamma^{\bar P_{N,k}}$};
\end{tikzpicture}}= \omega^{\frac{2kN}{n}}
	\raisebox{-.10in}{
\begin{tikzpicture}
\tikzset{->-/.style=
{decoration={markings,mark=at position #1 with
{\arrow{latex}}},postaction={decorate}}}
\filldraw[draw=white,fill=gray!20] (-0.7,-0.5) rectangle (2,0.5);
\draw (-0.1,0) --(-0.7,0);
\draw (0.1,0) --(0.4,0);
\draw[decoration={markings, mark=at position 0.8 with {\arrow{>}}},postaction={decorate}](0.8,0) --(2,0);
\draw[decoration={markings, mark=at position 0.92 with {\arrow{>}}},postaction={decorate}](0,0.5) --(0,-0.5);
%
\filldraw[draw=black,fill=gray!20](0.3,-0.3) rectangle (1.3,0.3);
\node at(0.8,0) {$\gamma^{\bar P_{N,k}}$};
\end{tikzpicture}},\;
	\raisebox{-.10in}{
\begin{tikzpicture}
\tikzset{->-/.style=
{decoration={markings,mark=at position #1 with
{\arrow{latex}}},postaction={decorate}}}
\filldraw[draw=white,fill=gray!20] (-0.7,-0.5) rectangle (2,0.5);
\draw(0,0) --(-0.7,0);
\draw (0,0) --(0.4,0);
\draw[decoration={markings, mark=at position 0.85 with {\arrow{>}}},postaction={decorate}](0.8,0) --(2,0);
\draw[decoration={markings, mark=at position 0.8 with {\arrow{>}}},postaction={decorate}](0,0.1) --(0,0.5);
\draw(0,-0.5) --(0,-0.1);
\filldraw[draw=black,fill=gray!20](0.3,-0.3) rectangle (1.3,0.3);
\node at(0.8,0) {$\gamma^{\bar P_{N,k}}$};
\end{tikzpicture}}= \omega^{-\frac{2kN}{n}}
	\raisebox{-.10in}{
\begin{tikzpicture}
\tikzset{->-/.style=
{decoration={markings,mark=at position #1 with
{\arrow{latex}}},postaction={decorate}}}
\filldraw[draw=white,fill=gray!20] (-0.7,-0.5) rectangle (2,0.5);
\draw (-0.1,0) --(-0.7,0);
\draw (0.1,0) --(0.4,0);
\draw[decoration={markings, mark=at position 0.8 with {\arrow{>}}},postaction={decorate}](0.8,0) --(2,0);
\draw[decoration={markings, mark=at position 0.8 with {\arrow{>}}},postaction={decorate}](0,0) --(0,0.5);
\draw (0,-0.5) --(0,0);
\filldraw[draw=black,fill=gray!20](0.3,-0.3) rectangle (1.3,0.3);
\node at(0.8,0) {$\gamma^{\bar P_{N,k}}$};
\end{tikzpicture}}.
	\label{eq-trans1}
	\ee
	Here $\raisebox{-.10in}{
\begin{tikzpicture}
\tikzset{->-/.style=
{decoration={markings,mark=at position #1 with
{\arrow{latex}}},postaction={decorate}}}
\filldraw[draw=white,fill=gray!20] (-0.3,-0.5) rectangle (2,0.5);
\draw(0,0) --(-0.3,0);
\draw (0,0) --(0.4,0);
\draw[decoration={markings, mark=at position 0.95 with {\arrow{>}}},postaction={decorate}](0.8,0) --(2,0);
\filldraw[draw=black,fill=gray!20](-0.1,-0.3) rectangle (1.7,0.3);
\node at(0.8,0) {$\gamma*_{\home}\Psi(z)$};
\end{tikzpicture}}$ represents $\gamma *_\home \Psi(z)$ (Definition \ref{def.threading_of_element}; see Proposition \ref{r-L-comp-F} for $\Psi : \cS_\heta(\An) \to \cS_\home(\An)$) and
	$\raisebox{-.10in}{
\begin{tikzpicture}
\tikzset{->-/.style=
{decoration={markings,mark=at position #1 with
{\arrow{latex}}},postaction={decorate}}}
\filldraw[draw=white,fill=gray!20] (0,-0.5) rectangle (2,0.5);
\draw[decoration={markings, mark=at position 0.85 with {\arrow{>}}},postaction={decorate}](0.8,0) --(2,0);
\draw(0,0) --(0.8,0);
\filldraw[draw=black,fill=gray!20](0.3,-0.3) rectangle (1.3,0.3);
\node at(0.8,0) {$\gamma^{\bar P_{N,k}}$};
\end{tikzpicture}}$ represents $\gamma^{[\bar P_{N,k}]}$ (Definition \ref{def.threading_of_polynomial}; see \eqref{eq-def-barP} of \OldS\ref{ss.representations_of_sln} for $\bar P_{N,k}$).
\end{theorem}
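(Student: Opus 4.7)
My plan is to first reduce \eqref{eq-trans1} to \eqref{eq-trans}, and then establish \eqref{eq-trans} by combining the functoriality of the Frobenius homomorphism (Theorem \ref{thm10}) with the transparency of threaded reduced power elementary polynomials proved in \cite{BH23}.

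Inspecting Figure \ref{fig1}(b) shows that $[\mathbf{a}_k]_\heta$ lies in the graded piece $\cS_\heta(\An)_k$, and Proposition \ref{r-L-comp-F}(c) combined with Definition \ref{def.threading_of_polynomial} gives $\gamma *_\home \Psi([\mathbf{a}_k]_\heta) = \gamma^{[\bar P_{N,k}]}$. Hence \eqref{eq-trans1} is precisely the specialization of \eqref{eq-trans} to $z = [\mathbf{a}_k]_\heta$, so it suffices to prove \eqref{eq-trans}.

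To establish \eqref{eq-trans}, note that the relation is local: both sides coincide outside a small ball $B \subset M$ containing the single crossing point. I may therefore pass to a regular neighborhood of $\gamma$ together with the crossing strand and an auxiliary path to a component of $\cN$, reducing to the case when $(M,\cN)$ is essentially marked. Then Theorem \ref{thm10} yields $\gamma *_\home \Psi(z) = \Phi(\gamma *_\heta z)$, so $\gamma *_\home \Psi(z)$ lies in the image of the Frobenius map $\Phi \colon \cS_\heta(M,\cN) \to \cS_\home(M,\cN)$ and, by Theorem \ref{thmFrob}(d), has $\BZ_n$-homology degree $Nk$. By $\BC$-linearity in $z$ and the polynomial-algebra description $\cS_\heta(\An) = R[\mathbf{a}_1,\ldots,\mathbf{a}_{n-1}]$ of Theorem \ref{twice_sphere}, together with the algebra-homomorphism property of $\Psi$ and Proposition \ref{r-L-comp-F}(c), I may further reduce to the case when $z = \mathbf{a}_{i_1}^{m_1} \cdots \mathbf{a}_{i_s}^{m_s}$, so that $\gamma *_\home \Psi(z)$ becomes a product, stacked along $\gamma$, of threaded factors $\gamma^{[\bar P_{N,i_j}]}$.

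The final step then breaks into two parts: (i) each single-threading $\gamma^{[\bar P_{N,i}]}$ is transparent in the sense of \eqref{eq-trans1} with scalar $\omega^{2iN/n}$; and (ii) transparency is multiplicative under stacked threading along the same knot, so that the overall scalar for $\gamma *_\home \Psi(z)$ becomes $\prod_j (\omega^{2 i_j N/n})^{m_j} = \omega^{2kN/n}$, by additivity of $\BZ_n$-degrees. Claim (ii) is a direct double application of the skein relation \eqref{e.pm}, together with tracking of the $\BZ_n$-grading of each transparent factor. Claim (i) is the content of the central transparency result of \cite{BH23}; because the argument is local near the crossing and does not interact with the marking $\cN$, it transfers verbatim from the non-stated to the stated setting. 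The main obstacle is thus establishing (i): a self-contained proof would require either a careful root-of-unity computation using \eqref{e.pm} at each of the $N$ parallel strands embedded in $\gamma^{[\bar P_{N,i}]}$, or a representation-theoretic argument leveraging the fact that Frobenius-image elements play the role of `central' elements in an appropriate sense. My plan is to import (i) directly from \cite{BH23} and then perform the bookkeeping to assemble the general case.
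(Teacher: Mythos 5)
Your strategy of reducing first to monomial $z$ and then, via a multiplicativity-of-transparency claim, to single threadings $\gamma^{[\bar P_{N,i}]}$, is a genuinely different route from the paper's. The paper reduces directly to $z = \mathbf{a}_k$, observes that \eqref{eq-trans} then becomes exactly \eqref{eq-trans1}, and cites two explicit equations from within the \emph{proof} of Bonahon--Higgins's Proposition~11 expressing the two diagrams as $q^{Nk/n+N}x_{N,k}+q^{Nk/n}y_{N,k}$ and $q^{-Nk/n-N}x_{N,k}+q^{-Nk/n}y_{N,k}$ for auxiliary skein elements $x_{N,k},y_{N,k}$; at $q=\omega$ the relation $\omega^N=\omega^{-N}$ then gives \eqref{eq-trans1} in one line, with no assembly across different values of $i$ needed. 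In particular the paper never invokes the Frobenius homomorphism $\Phi$, the functoriality result of Theorem~\ref{thm10}, nor the $\BZ_n$-grading statement of Theorem~\ref{thmFrob}(d); the argument is a purely local skein computation, and the second identity in \eqref{eq-trans} comes from the reflection anti-involution of Proposition~\ref{r.reflection} applied to the first.

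The appeal to $\Phi$ and Theorem~\ref{thm10} creates a genuine gap: Theorem~\ref{thm-main} is stated for an \emph{arbitrary} marked $3$-manifold $\MN$, including $\cN=\emptyset$ (a remark following the paper's proof emphasizes precisely this point). Your opening reduction step connects $\gamma$ ``together with the crossing strand and an auxiliary path to a component of $\cN$''; this is impossible if $\cN=\emptyset$, and in that case $\Phi\colon\cS_\heta\MN\to\cS_\home\MN$ is not even defined, since Theorem~\ref{thmFrob} constructs $\Phi$ only for essentially marked $3$-manifolds. The paper gets around this by importing from \cite{BH23} a relation that is established by local $n$-web manipulations in a neighborhood of $\gamma$ and the crossing rectangle, and which therefore transfers to $\cS_\home\MN$ for every $\MN$ via functoriality of skein modules; no Frobenius map or marking is involved. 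If you want to keep your reduction to monomials and the multiplicativity step, you would need to replace the $\Phi$-based argument with a direct local skein computation (or note that the relations from \cite{BH23} already hold locally), and verify that what you are calling ``the central transparency result of \cite{BH23}'' really does produce the phase $\omega^{2iN/n}$; the theorem that Bonahon--Higgins \emph{state} concerns full transparency under a divisibility hypothesis, whereas the quantitative phase the paper uses is extracted from the intermediate identities inside their proof.
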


\begin{proof} 
Let us prove the first identity of  \eqref{eq-trans}, as the second is obtained from the first by the reflection anti-involution in Proposition \ref{r.reflection} of \OldS\ref{sec.reflection}. 

By Theorem \ref{twice_sphere}, $z$ is a polynomial in ${\bf a}_1, \dots, {\bf a}_{n-1}$. Since ${\bf a}_i\in \SeA_i$, it is enough to consider the case when $z= {\bf a}_k$. In that case \eqref{eq-trans} is \eqref{eq-trans1}, which we prove now.

By the last two equations in the proof of  \cite[Proposition 11]{BH23},   there are elements $x_{N,k}, y_{N,k}\in \SMN$ such that 
\begin{align}
\raisebox{-.10in}{
\begin{tikzpicture}
\tikzset{->-/.style=
{decoration={markings,mark=at position #1 with
{\arrow{latex}}},postaction={decorate}}}
\filldraw[draw=white,fill=gray!20] (-0.7,-0.5) rectangle (2,0.5);
\draw (0,0) --(-0.7,0);
\draw (0,0) --(0.4,0);
\draw[decoration={markings, mark=at position 0.85 with {\arrow{>}}},postaction={decorate}](0.8,0) --(2,0);
\draw(0,0.1) --(0,0.5);
\draw [decoration={markings, mark=at position 0.5 with {\arrow{<}}},postaction={decorate}](0,-0.5) --(0,-0.1);
\filldraw[draw=black,fill=gray!20](0.3,-0.3) rectangle (1.3,0.3);
\node at(0.8,0) {$\gamma^{\bar P_{N,k}}$};
\end{tikzpicture}}&=  q ^{\frac {Nk} n + N} x_{N,k} +  q ^{\frac {Nk} n} y_{N,k}
\label{eq75a}
\\
\raisebox{-.10in}{
\begin{tikzpicture}
\tikzset{->-/.style=
{decoration={markings,mark=at position #1 with
{\arrow{latex}}},postaction={decorate}}}
\filldraw[draw=white,fill=gray!20] (-0.7,-0.5) rectangle (2,0.5);
\draw (-0.1,0) --(-0.7,0);
\draw (0.1,0) --(0.4,0);
\draw[decoration={markings, mark=at position 0.8 with {\arrow{>}}},postaction={decorate}](0.8,0) --(2,0);
\draw[decoration={markings, mark=at position 0.92 with {\arrow{>}}},postaction={decorate}](0,0.5) --(0,-0.5);
%
\filldraw[draw=black,fill=gray!20](0.3,-0.3) rectangle (1.3,0.3);
\node at(0.8,0) {$\gamma^{\bar P_{N,k}}$};
\end{tikzpicture}} &=  q ^{-\frac {Nk} n - N} x_{N,k} +  q ^{-\frac {Nk} n} y_{N,k}. \label{eq75b}
\end{align}
When $\hq=\home$ we have $\omega^N= \omega ^{-N}$. Then it is clear that 
$$\raisebox{-.10in}{
\begin{tikzpicture}
\tikzset{->-/.style=
{decoration={markings,mark=at position #1 with
{\arrow{latex}}},postaction={decorate}}}
\filldraw[draw=white,fill=gray!20] (-0.7,-0.5) rectangle (2,0.5);
\draw (0,0) --(-0.7,0);
\draw (0,0) --(0.4,0);
\draw[decoration={markings, mark=at position 0.85 with {\arrow{>}}},postaction={decorate}](0.8,0) --(2,0);
\draw(0,0.1) --(0,0.5);
\draw [decoration={markings, mark=at position 0.5 with {\arrow{<}}},postaction={decorate}](0,-0.5) --(0,-0.1);
\filldraw[draw=black,fill=gray!20](0.3,-0.3) rectangle (1.3,0.3);
\node at(0.8,0) {$\gamma^{\bar P_{N,k}}$};
\end{tikzpicture}} = \omega^{2kN/n} \raisebox{-.10in}{
\begin{tikzpicture}
\tikzset{->-/.style=
{decoration={markings,mark=at position #1 with
{\arrow{latex}}},postaction={decorate}}}
\filldraw[draw=white,fill=gray!20] (-0.7,-0.5) rectangle (2,0.5);
\draw (-0.1,0) --(-0.7,0);
\draw (0.1,0) --(0.4,0);
\draw[decoration={markings, mark=at position 0.8 with {\arrow{>}}},postaction={decorate}](0.8,0) --(2,0);
\draw[decoration={markings, mark=at position 0.92 with {\arrow{>}}},postaction={decorate}](0,0.5) --(0,-0.5);
%
\filldraw[draw=black,fill=gray!20](0.3,-0.3) rectangle (1.3,0.3);
\node at(0.8,0) {$\gamma^{\bar P_{N,k}}$};
\end{tikzpicture}}.$$

	\end{proof}
	
	\brem We do not require $\MN$ to be essentially marked. Even though in \cite{BH23} only non-marked 3-manifolds are considered, the proof of \eqref{eq75a} and \eqref{eq75b} involves only $n$-web diagrams in a small neighborhood of the union of $\gamma$ and the shaded rectangle, and hence is valid in $\SMN$.
	\erem

\begin{remark}
	We have restriction $[n]_\omega!\neq 0$ for Theorem \ref{thm-main} and its consequences. 
For $n=2$ the restriction can be lifted \cite{BW16,Le15,BL22}. It is not too difficult to lift this restriction also when $n=3$, see \cite[Proposition 9.16 and Remark 9.17]{kim2024unicity}. 
\end{remark}

\def\Frob{\mathsf{Fro}_\home}

\subsection{Transparent elements}
An immediate corollary of Theorem \ref{thm-main} is the following.
\bpro 
\label{r-trans5}
Assume that in Theorem \ref{thm-main} we have $z\in \cS_\heta^{(d)}(\An)$. Then $\gamma *_\home \Psi(z)$ is {\bf transparent} in $\SoM$. This means that if $\gamma'$ is isotopic to $\gamma$ in $M$, and $\al$ is a stated $n$-web disjoint from both $\gamma$ and $\gamma'$, then in $\SoM$ we have 
$$  \al 
\sqcup (\gamma *_\home \Psi(z)) =   \al 
\sqcup (\gamma' *_\home \Psi(z)), $$
where $\sqcup$ in both sides is the `term-by-term (bilinear) disjoint union' in the sense as in \eqref{Phi_for_disjoint_union}.
\epro

\begin{remark}\label{rem.transparency}
    The transparency condition means that while we isotopy $\gamma$ to $\gamma'$, we can `pass through' other $n$-webs $\alpha$, which explains the terminology `transparent'. For a surface $\fS$, transparent elements of $\cS_\home(\fS)$, which is an algebra, are central.
\end{remark}

 Let $\fS$ is a pb surface. 
By Proposition \ref{r-trans5}, the set
\be 
\Frob'(\fS) := \BC\text{-span}\{ \gamma* \cS^{(d)}_\heta(\An) \,\mid\, 
\gamma: \text{ framed oriented knots in   } \tfS  \}
\label{eqFrobp}
\ee
is  in the center of $\SoS$. The next result, also an easy corollary of Theorem \ref{thm-main}, shows that the center might be bigger.

Let $\gamma$ be an oriented framed link in 
$\widetilde{\fS}$. Define 
\be \tPo(\gamma) := \left[ \gamma^{[\bar P_{N,1}]}\right]_\home \in \SoS,
\ee
where the right hand side is defined as threading in Definition \ref{def.threading_of_polynomial}. 
Note that  $\tPo(\gamma) \in \SoS$ is well-defined for an isotopy class of $\gamma$. If $\fS$ is essentially bordered, then by \eqref{eq-Phi} of Theorem \ref{thm11}, we have
$$ \tPo(\gamma) = 
\Phi( [\gamma]_\heta  ),$$
showing that $\tPo(\gamma)$ is well-defined when $\gamma$ is considered as an element $\SeS$.

\bpro \label{thm-central}
Assume that a  framed oriented link $\gamma$ be  in the thickened pb surface $\tfS$  is {\bf $\BZ_d$-homology annihilator}, meaning that the homology class $[\gamma]\in H_1(\fS;\BZ)$ of $\gamma$   satisfies
$$  \la [\gamma], x \ra_\BZ = 0 \mod d \quad \text{for all} \ x\in H_1(\fS,\pfS;\BZ).  $$

Then  $\tPo(\gamma)= \left[ \gamma^{[\bar P_{N,1}]}\right]_\home$ is in the center of $\SoS$.

\epro

\bpr Let $\beta $ be a stated $n$-web diagram in $\fS$. Assume that the components of $\gamma$ are $
\gamma_1, \dots, \gamma_{r}$.
Since ${\bf a}_1 \in \SeA_1$, by Theorem \ref{thm-main},
$$ \left(
\bigsqcup_{j=1}^{r}
\gamma_j *_\home \Psi({\bf a}_1) 
\right) \beta = \omega^{ \frac{2N}n \la \gamma, \beta\ra  }\, \beta \left(
\bigsqcup_{j=1}^{r}
\gamma_j *_\home \Psi({\bf a}_1) \right),
$$
where $\bigsqcup_{j=1}^{r}$ stands for the `term-by-term (multilinear) disjoint union' in the sense of \eqref{Phi_for_disjoint_union}.
The assumption $\omega^{ \frac{2N}n \la \gamma, \beta\ra  }=1$ proves that $\beta$ and $\tPo(\gamma) = [\gamma^{[\bar{P}_{N,1}]}]_\home$ commute.
\epr

\brem 
We have the following more general construction of central elements: Assume that $z_j \in \SeA_{k_j}$ for $j=1, \dots, m$ such that $\sum_{j} k_j \gamma _j$ is a $\BZ_d$-homology annihilator. Then the same proof shows that $
\bigsqcup_j (  \gamma_j *_\home \Psi(z_j))$ is central. However, this seemingly more general construction does not give new central elements, due to the following.
\bpro Suppose that $\al$ is a framed oriented knot in $\MN$ and $z\in \SeA_k$. Then $\al *_\home \Psi(z)$ is a $\BC$-linear combination of elements of the form $\tPo(\gamma)$, where $\gamma$ are oriented framed links with $k$ components.
\epro

\bpr[Sketch of Proof] It is enough to consider the case $z= {\bf a}_k'$ given in Figure \ref{fig1}. Using the defining relation \eqref{e.sinksource}, in $\SeA$ we can express ${\bf a}_k'$ as a linear combination of framed oriented links. The functoriality of $\Psi$ (see Theorem \ref{thm10})  will complete the proof.
\epr
\erem

 Assume that $\fS$ is an essentially bordered pb surface. Then we have the Frobenius homomorphism $
 \Phi: \SeS \to \SoS$, and we have $\tPo(\gamma)= 
 \Phi( [\gamma]_\heta  ).$
 Let 
 $$\cS^{(d)}_\heta(\fS)\subset \SeS$$
  be the subspace spanned by all elements represented by 
  `closed' (i.e. endpointless) $n$-webs
  that are $\BZ_d$-homology annihilators. Then Proposition \ref{thm-central} can be reformulated as follows.
\bpro \label{prop.some_central_elements}
 Assume that $\fS$ is an essentially bordered pb surface.
The image of $\SdeS$ under the Frobenius homomorphism $\Phi : \cS_\heta(\fS)\to \cS_\home(\fS)$ is in the center of $\SoS$. \qed
\epro

 Assume now that $\fS=\Sigma_{g,m}$, the surface of genus $g$ with $m$ punctures removed. We don't exclude the case $m=0$.

For $\fS = \Sigma_{g,m}$ we do not have a Frobenius homomorphism for (stated) ${\rm SL}_n$-skein algebras yet, but we can define a multivalued substitution as follows. Assume that $x\in \SdeS$. Then $x$ can be  presented as a linear combination 
\be 
 x = \sum c_\gamma \gamma, \qquad c_\gamma \in \BC, \label{eq-pre}
 \ee
where each $\gamma$ is a framed oriented link 
that is a $\BZ_d$-homology annihilator.

\bcon\label{conj.Frobenius_for_Zd_elements} The sum $ \sum c_\gamma  \tPo(\gamma) \in \SoS$ depends only on $x$, but not on the choice of the presentation~\eqref{eq-pre}. 
\econ
If Conjecture \ref{conj.Frobenius_for_Zd_elements} is true, then we can define the Frobenius homomorphism by $\Phi_\home(x) = \sum c_\gamma \tPo(\gamma)$.

Without assuming this conjecture, define $\Phi_\home(x)$ as the set of all possible values of $ \sum c_\gamma \tPo(\gamma) \in \SoS$, when the presentations \eqref{eq-pre} vary. Let 
$$\Phi_\home(\SdeS  ) := \bigcup_{x\in \SdeS}\Phi_\home(x).$$
Then  Proposition \ref{thm-central} implies that
\bpro
Assume that $\fS=\Sigma_{g,m}$.
The set $\Phi_\home(\SdeS  )$ is in the center of $\SoS$. \qed
\epro

Meanwhile, there are obvious central elements of $\SoS$. 
An $n$-web diagram is 
{\bf peripheral} if, after applying an isotopy, it lies in 
an arbitrarily small neighborhood of the  punctures. Clearly every 
peripheral $n$-web diagram determines a central element of $\SoS$. 
\def\Frob{\mathsf{Fro}_\home }

\bcon Assume that $\fS=\Sigma_{g,m}$. Then the center of $\SoS$ is generated by $\Frob(\fS):=\Phi_\home(\SdeS  )$ and 
peripheral $n$-web diagrams.
\label{conj5}
\econ

Conjecture \ref{conj5} is proved for the case of ${\rm SL}_2$ in \cite{FKL}, where the proof is based on the existence of an explicit, geometric basis of $\SS$. For ${\rm SL}_3$ and $m \ge 1$, the Conjecture is proved in \cite{kim2024unicity}.
For some results on the center of $\SoS$, for ${\rm SL}_n$, see \cite{KW24}.

Let us compare Conjecture \ref{conj5} with \cite[Conjecture 16]{BH23}.
Let $N'= \ord(\omega^{2/n})=Nd$. 
Then \cite[Conjecture 16]{BH23} is the same as Conjecture \ref{conj5}, except that $\Frob(\fS)$ is replaced by 
$\Frob''(\fS)$, 
which is the subalgebra of $\SoS$ generated by all threading 
$\Psi_{N'} (\cS(\An))$
along all framed oriented knots. Note that $\Frob (\fS) \supset \Frob'(\fS)$, where the latter is the subalgebra of $\SoS$ generated by all threading 
$\Psi_N (\cS^{(d)}(\An))$
along all framed oriented knots. 
 Using $\Psi_{N'}= \Psi_N \circ \Psi_d$ one can  check that 
$\Psi_{N'} ( \cS(\An)) \subset \Psi_N (\cS^{(d)}(\An))$. The following arguments show that $\Psi_{N'} ( \cS(\An)) \neq \Psi_N (\cS^{(d)}(\An))$.

Let $n=d=3$, and hence $N'=3 N$. Identify $\cS_\home(A)= \cS_\heta(A) =\BC[x_1, x_2, x_3]^{S_3}= \BC[e_1,e_2] $
so that $e_1= \al_1$  and $e_2=\al_2$.  Since $\al_1+ \al_2=0$ in $H_1(\An,\BZ_3)$, we have $\Psi_N(e_1) \Psi_N(e_2) \in \Psi_N (\cS^{(d)}(\An)) $, and we will show that it is not in $\Psi_{N'} ( \cS(\An))$.

The map $\Psi_N: \BC[e_1, e_2] \to \BC[e_1, e_2]$, for any $N$, can be defined inductively by
$$ \Psi_0(e_i) =3, \Psi_1(e_i)= e_i, \Psi_2(e_i) = e_i^2 - 2 e_{ i'}, \Psi_m(e_i) = e_i \Psi_{m-1}(e_i) - e_{ i'} \Psi_{m-2}(e_i) + \Psi_{m-3}(e_i),$$
$\text{where}\   i' = 3-i.$

It follows that \be \Psi_N(e_1 e_2) = e_1^{N} e_2^{N} + (\text{lower total degree terms}).\label{eq.extr}\ee
Here the `total degree' of a term $e_1^{k_1}e_2^{k_2}$ is $k_1+k_2$. The set $ \{ \Psi_{N'}(e_1^k) \Psi_{N'}(e_2^l) \mid k,l \in \BN\}$ is a basis of $\Psi_{N'} (\BC[e_1, e_2] )$. We have
\begin{align}\label{eq-threading-n-prime}
    \Psi_{N'}(e_1^k) \Psi_{N'}(e_2^l) =  e_1^{kN'} e_2^{lN'} + (\text{lower total degree terms}).
\end{align}
It follows from equations \eqref{eq.extr} and \eqref{eq-threading-n-prime} that $\Psi_N(e_1 e_2)$  is not in the span of $ \{ \Psi_{N'}(e_1^k) \Psi_{N'}(e_2^l) \mid k,l \in \BN\}$.

\appendix

\def\bbb{{\mathbf b}}
\def\bbB{{\mathbf B}}
\section{Frobenius homomorphism for bigon}\label{appendix-bigon}

We provide a proof of Theorem \ref{Fro_Oq} of \OldS\ref{ss.Frobenius_map_for_Oq}, as promised. 
 Let $q\in \BC^*$, $\tilde A$ and $B$  be $\BC$-algebras. 
Assume that $\avec  x:=(x_1, \dots, x_n)\in \tilde A^n$ is a {\em a $q$-point}, meaning that $ x_j x_i =q x_i x_j$ for $i<j$, and furthermore algebraically independent, meaning that the subalgebra $\Pol(x_1, \dots, x_n)$ generated by $x_1, \dots, x_n$ has
$\{x^{\bk} \mid \bk \in \BN^n\}$ as a basis.  Let $\bbb= (b_{ij})_{i,j=1}^n$ be a matrix with entries in $B$. Define $\avec  y= \bbb \ot \avec  x $ and $\avec  z= \bbb^T \ot \avec  x$, i.e.
$$y_i = \sum _j b_{ij} \ot x_j, z_i = \sum _j b_{ji} \ot x_j \in B \ot \Pol(x_1,\dots, x_n).$$ The following is one of the definitions of $q$-quantum matrices.
\blem \label{rqM}
$\bbb$ is a $q$-matrix if and only if both $\avec  y, \avec  z$ are $q$-points.
\elem

If $q$ is a primitive root of 1 of order $N$, then from the quantum binomial formula,
\be (x_1 + \dots + x_n)^N = x_1^N + \dots + x_n^N. \label{eqbinom}
\ee

\bpro There exists a unique algebra monomorphism $\Phi\colon \mathcal M_\eta (n)\to\mathcal M_\omega(n) $ given by $\Phi(u_{ij})= u_{ij}^N$.
\epro
\bpr Let $q=\omega, \avec  X=(x^N_1, \dots, x^N_n), \avec  Y=(y^N_1, \dots, y^N_n), \avec  Z=(z^N_1, \dots, z^N_n), \bbB= (u^N_{ij}) $. Then 

(*) $\avec  X, \avec  Y, \avec  Z$ are  $\eta$-points, and  $\avec  X$ is algebraically independent.

The $q$-commutations show that $(u_{i1} \ot x_1, \dots, u_{in} \ot x_n)$ is a $\omega^2$-point. By \eqref{eqbinom},
$$ 
y_i^N=  \sum _j u_{ij}^N \ot x_j^N.
$$
Thus $\avec  Y = \bbB \ot \avec  X$. Similarly $\avec  Z = \bbB^T \ot \avec  X$. By Lemma \ref{rqM},  $\bbB$ is an $\eta$-matrix.

\epr

Let $\xi$ be a non-zero complex number.
For any $x\in\Oq$, we will use $[x]_{\xi}$ to denote the image of $x$ under the projection
$\Oq\rightarrow\mathcal O_\xi$.
Define the monoid
\begin{equation}\label{eq.Gamma}
\Gamma = \Mat_n(\BN)/ (\Id).
\end{equation}
Here $\Mat_n(\BN)=\BN^{n\times n}$ is an additive monoid, and $(\Id)$ is the submonoid generated by the identity matrix. Two matrices $m,m'\in \Mat_n(\BN)$ determine the same element in $\Gamma$ if and only if $m-m'= k \Id$ for $k\in \BZ$. Each $m\in \Gamma$ has a unique lift $\hat m\in \Mat_n(\BN)$, called the \term{minimal representative}, such that $\min_{i} \hat m_{ii}=0$. Note that $\Gamma\cong \BN^{n^2-n}\oplus \BZ^{n-1}$, hence it is a submonoid of a free abelian group.

\bpro[\cite{PW}] \label{r.basisOq}
Let  $\ord$ be a linear order on  $\JJ^2$.
We have the following:
\begin{enuma}
    \item  the set
\begin{equation*}
B^\ord:=\left\{b^{\ord}(m):= \prod_{(i,j)\in \JJ ^2} u_{ij}^{\hat m _{ij}} \mid m \in \Gamma = \Mat_n(\BN)/(\Id)\right\}
\end{equation*}
where the product is taken with respect to the order $\ord$, is a free $R$-basis of $\Oq$. 

\item for any non-zero complex number $\xi$, the set
\begin{equation*}
[B^\ord]_{\xi}:=\left\{[b^\ord(m)]_{\xi} \mid m \in \Gamma = \Mat_n(\BN)/(\Id)\right\}
\end{equation*}
is a basis of $\mathcal O_\xi$. 
\end{enuma}

\epro

\begin{lemma}
    $\Phi\colon \mathcal M_\omega (n)\to\mathcal M_\eta(n) $ induces an injective $\mathbb C$-algebra homomorphism
    $\Phi\colon \mathcal O_\eta \to\mathcal O_\omega$.
\end{lemma}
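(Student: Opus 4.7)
The plan is to verify two things: (i) $\Phi\colon\mathcal M_\eta(n)\to\mathcal M_\omega(n)$ descends to the quotients, and (ii) the induced map on $\mathcal O_\eta\to\mathcal O_\omega$ is injective. For (i), I would invoke \eqref{eq.det}, which gives $\Phi(\det_\eta(\mathbf u)-1)=\det_\omega(\mathbf u)^N-1$. Since $\det_\omega(\mathbf u)$ is central in $\mathcal M_\omega(n)$, one can factor
\[
\det_\omega(\mathbf u)^N-1=(\det_\omega(\mathbf u)-1)\sum_{k=0}^{N-1}\det_\omega(\mathbf u)^k,
\]
which lies in the two-sided ideal $(\det_\omega(\mathbf u)-1)$. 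Hence $\Phi$ sends the defining ideal of $\mathcal O_\eta$ into the defining ideal of $\mathcal O_\omega$, so a well-defined $\mathbb C$-algebra map $\Phi\colon\mathcal O_\eta\to\mathcal O_\omega$ exists.

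For injectivity, the plan is to compare the natural bases provided by Proposition \ref{r.basisOq}. Fix any linear order $\ord$ on $\JJ^2$. Then $[B^\ord]_\eta=\{[b^\ord(m)]_\eta\mid m\in\Gamma\}$ is a $\mathbb C$-basis of $\mathcal O_\eta$, and similarly $[B^\ord]_\omega$ is a basis of $\mathcal O_\omega$. From the defining rule $\Phi(u_{ij})=u_{ij}^N$ and the $\omega^2$-commutation of the monomials $u_{ij}^N$ (which commute essentially as $\eta$-variables by the same computation used in the proof of the existence of $\Phi$ on $\mathcal M_\eta(n)$), the image $\Phi([b^\ord(\hat m)]_\eta)$ equals a nonzero scalar multiple of the ordered monomial $\prod_{(i,j)\in\JJ^2} u_{ij}^{N\hat m_{ij}}$, which in turn is (up to another nonzero scalar coming from reordering in $\mathcal O_\omega$) the basis vector $[b^\ord(Nm)]_\omega$.

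The remaining point is that the multiplication-by-$N$ map $\Gamma\to\Gamma$, $m\mapsto N m$, is injective. Indeed, if $N\hat m\equiv N\hat m'\pmod{(\mathrm{Id})}$ then $N(\hat m-\hat m')=k\,\mathrm{Id}$ in $\mathrm{Mat}_n(\mathbb Z)$ for some $k\in\mathbb Z$; but $\hat m-\hat m'$ has integer entries, so $N\mid k$ and hence $\hat m-\hat m'=(k/N)\,\mathrm{Id}$, giving $m=m'$ in $\Gamma$. Therefore $\Phi$ carries the basis $[B^\ord]_\eta$ bijectively onto a set of nonzero scalar multiples of distinct basis vectors in $[B^\ord]_\omega$, which is manifestly linearly independent. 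Injectivity of $\Phi\colon\mathcal O_\eta\to\mathcal O_\omega$ follows.

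The only mild subtlety I expect is bookkeeping the scalars arising from reordering $u_{ij}^N$'s in $\mathcal O_\omega$ versus reordering $u_{ij}$'s in $\mathcal O_\eta$; this is routine since Weyl-ordering only contributes an invertible factor and does not affect the underlying basis vector indexed by $Nm$, so linear independence of the images is unaffected.
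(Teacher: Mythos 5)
Your proposal is correct and follows essentially the same route as the paper: use \eqref{eq.det} to see that the defining ideal of $\mathcal O_\eta$ is carried into that of $\mathcal O_\omega$, and then invoke the $\Gamma$-indexed basis from Proposition \ref{r.basisOq}(b) to deduce injectivity. The paper merely states that $\Phi$ maps $[B^\ord]_\eta$ injectively into $[B^\ord]_\omega$ as "trivial," whereas you supply the missing details (injectivity of $m\mapsto Nm$ on $\Gamma$); note that since $\widehat{Nm}=N\hat m$ and the product in $b^\ord$ is taken in a fixed order $\ord$, one in fact has $\Phi([b^\ord(\hat m)]_\eta)=[b^\ord(Nm)]_\omega$ on the nose, so the reordering scalars you worry about at the end never arise.
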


\begin{proof}
    Since $\Phi(\det_{\eta}({\bf u}))
    =\det_{\omega}({\bf u})$, we have that 
    $\Phi\colon \mathcal M_\omega (n)\to\mathcal M_\eta(n) $ induces a  $\mathbb C$-algebra homomorphism
    $\Phi\colon \mathcal O_\eta \to\mathcal O_\omega$.
    It is a trivial that 
    $\Phi$ maps $[B^\ord]_{\eta}$ injectively to $[B^\ord]_{\omega}$. 
    Then Proposition \ref{r.basisOq}(b) shows that $\Phi\colon \mathcal O_\eta \to\mathcal O_\omega$ is injective.
\end{proof}

\section{Proofs of Lemmas \ref{lem-height-Nparallel} and \ref{lem-arcs}}\label{Appendix-2}
\begin{proof}[Proof of Lemma \ref{lem-height-Nparallel}]
We only prove the second the identity since the following proving technique also applies for the first one.
We suppose that the local picture with a state $i$ (resp. $j$) is a part of the $\cN$-arc $a$ (resp. $b$).

{\bf Case 1} when $a\neq b$:
Since $\omega^2$ is a primitive $N$-th root of unity, then we have 
$1+\omega^2+\cdots+\omega^{2(N-1)}=0$. 
Then \cite[Lemma 7.6]{Wan23} implies 
$$
\raisebox{-.40\height}{
		\begin{tikzpicture}
			\draw[color=gray!20,fill=gray!20] (-1,-1.2) rectangle (1,1.2);
			\draw[wall,<-] (1,-1.2)-- (1,1.2);
			\draw[line width =1pt] (-1,-0.8)--(1,0.8);
			\draw[color=gray!20,fill=gray!20] (-0.1,-0.08) rectangle (0.1,0.08);
			\draw[line width =1pt] (-1,0.8)--(1,-0.8);
			\draw[fill=gray!20] (-0.7,0.2) rectangle (-0.3,0.6);
			\node  at (-0.5,0.4) {$N$};
			\node [right] at (1,0.8) {$i$};
			\node [right] at (1,-0.8) {$j$};
			\draw[fill=black] (0.5,0.4) circle[radius=0.1] ;
			\draw[fill=white] (0.5,-0.4) circle[radius=0.1] ;
		\end{tikzpicture}
	}=\omega^{\frac{1}{n}-\delta_{\bar{i},j}}
	\raisebox{-.40\height}{
		\begin{tikzpicture}
			\draw[color=gray!20,fill=gray!20] (-1,-1.2) rectangle (1,1.2);
			\draw[wall,<-] (1,-1.2)-- (1,1.2);
			\draw[line width =1pt] (-1,-0.4)--(1,-0.4);
			\draw[color=gray!20,fill=gray!20] (-0.1,-0.08) rectangle (0.1,0.08);
			\draw[line width =1pt] (-1,0.4)--(1,0.4);
			\draw[fill=gray!20] (-0.7,0.2) rectangle (-0.3,0.6);
			\node  at (-0.5,0.4) {$N$};
			\node [right] at (1,0.4) {$j$};
			\node [right] at (1,-0.4) {$i$};
			\draw[fill=white] (0.5,0.4) circle[radius=0.1] ;
			\draw[fill=black] (0.5,-0.4) circle[radius=0.1] ;
		\end{tikzpicture}
	}.
	$$
Applying the above identity $N$ times, we have 
$$
\raisebox{-.40\height}{
		\begin{tikzpicture}
			\draw[color=gray!20,fill=gray!20] (-1,-1.2) rectangle (1,1.2);
			\draw[wall,<-] (1,-1.2)-- (1,1.2);
			\draw[line width =1pt] (-1,-0.8)--(1,0.8);
			\draw[color=gray!20,fill=gray!20] (-0.1,-0.08) rectangle (0.1,0.08);
			\draw[line width =1pt] (-1,0.8)--(1,-0.8);
			\draw[fill=gray!20] (-0.7,0.2) rectangle (-0.3,0.6);
			\node  at (-0.5,0.4) {$N$};
			\draw[fill=gray!20] (-0.7,-0.2) rectangle (-0.3,-0.6);
			\node  at (-0.5,-0.4) {$N$};
			\node [right] at (1,0.8) {$i$};
			\node [right] at (1,-0.8) {$j$};
			\draw[fill=black] (0.5,0.4) circle[radius=0.1] ;
			\draw[fill=white] (0.5,-0.4) circle[radius=0.1] ;
		\end{tikzpicture}
	}=\eta^{\frac{1}{n}-\delta_{\bar{i},j}}
	\raisebox{-.40\height}{
		\begin{tikzpicture}
			\draw[color=gray!20,fill=gray!20] (-1,-1.2) rectangle (1,1.2);
			\draw[wall,<-] (1,-1.2)-- (1,1.2);
			\draw[line width =1pt] (-1,-0.4)--(1,-0.4);
			\draw[color=gray!20,fill=gray!20] (-0.1,-0.08) rectangle (0.1,0.08);
			\draw[line width =1pt] (-1,0.4)--(1,0.4);
			\draw[fill=gray!20] (-0.7,0.2) rectangle (-0.3,0.6);
			\node  at (-0.5,0.4) {$N$};
			\draw[fill=gray!20] (-0.7,-0.2) rectangle (-0.3,-0.6);
			\node  at (-0.5,-0.4) {$N$};
			\node [right] at (1,0.4) {$j$};
			\node [right] at (1,-0.4) {$i$};
			\draw[fill=white] (0.5,0.4) circle[radius=0.1] ;
			\draw[fill=black] (0.5,-0.4) circle[radius=0.1] ;
		\end{tikzpicture}
	}.
$$

 \begin{figure}
     \centering
     \includegraphics[scale=0.8]{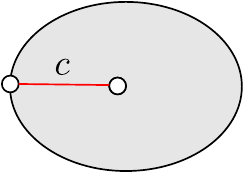}
     \caption{The ideal arc $c$ in the once punctured monongon.}
     \label{fig-ideal-monogon}
 \end{figure}

{\bf Case 2} when $a=b$. 
Suppose that the two endpoints of $a$ are contained in the component $e$ of $\cN$. Then the regular open neighborhood of $a\cup e$  is diffeomorphic with the thickening of the once punctured monogon $\PP_{1,1}$.
Then it suffices to show the following
\begin{align}\label{eq-height-change-P4}
    \begin{array}{c}\includegraphics[scale=0.7]{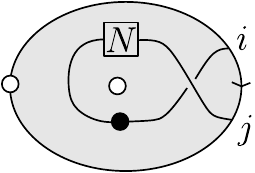}\end{array}
    =\eta^{\frac{1}{n}-\delta_{\bar i,,j}}
    \begin{array}{c}\includegraphics[scale=0.7]{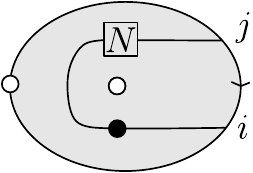}\end{array}.
\end{align}
We use $c$ to denote the horizontal ideal arc connecting the two punctures of the once punctured monogon, see Figure \ref{fig-ideal-monogon}. Then we have 
\begin{align*}
    \Theta_c \left(\begin{array}{c}\includegraphics[scale=0.7]{N-monogon-cross.pdf}\end{array}\right)
  &=\sum_{t\in\mathbb J}
  \begin{array}{c}\includegraphics[scale=0.75]{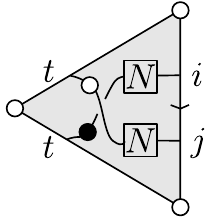}\end{array}
  \quad (\because \mbox{Lemma \ref{rcut}})\\
  &=\eta^{\frac{1}{n}-\delta_{\bar i,,j}}
  \sum_{t\in\mathbb J}
  \begin{array}{c}\includegraphics[scale=0.75]{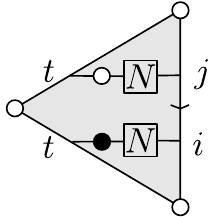}\end{array}
  \quad (\because \mbox{Case 1})\\
  &= \eta^{\frac{1}{n}-\delta_{\bar i,,j}}
   \Theta_c \left(\begin{array}{c}\includegraphics[scale=0.7]{N-monogon-non-cross.pdf}\end{array}\right)
   (\because \mbox{Lemma \ref{rcut}}).
\end{align*}
Then we have equation \eqref{eq-height-change-P4} because $\Theta_c$ is injective.

\end{proof}

Recall that $\Oq \cong \cS(\PP_2)$ as $\Zhq$ algebras;
see Theorem \ref{thm-iso-Oq-P2}. 
Then the Hopf algebra structure of $\Oq$ induces a  Hopf algebra structure of 
$\cS(\PP_2)$ (see \cite{LS21} for geometric interpretation of this Hopf algebra structure).

Given a marking $\beta$ of a marked $3$-manifold $(M, \cN)$, consider its closed disk neighborhood $D$ in $\partial M$, disjoint from the other markings of $(M, \cN)$. By pushing the interior of $D$ inside $M$, we get a new disk $D'$ which is properly embedded in $M$. Cutting $(M, \cN)$ along $D'$, we get a new marked 3-manifold $(M', \cN')$ isomorphic to $(M, \cN)$, and another marked 3-manifold bounded by $D$ and $D'$. The latter, after removing the common boundary of $D$ and $D'$, is isomorphic to the thickening of the bigon, with $\beta$ considered its right face marking. Hence, this construction yields an $\Zhq$-linear cutting map.
\[
\Delta_\beta\colon\cS\MN \to \cS\MN\otimes_{\Zhq} \Oq.
\]
This is a right coaction of $\Oq$ on $\cS\MN$ \cite{LS21}.

\newcommand{\wallcup}[5][]{
\twowallpic{#2}{left}{#4}{#5}{
 \ifthenelse{\equal{#1}{w}}{\draw[wall]  (\xwidth,0) --(\xwidth,1);}{}
\draw[edge, -o-={0.8}{#3}] (0,\yb) ..controls (\xwidth,\yb) and (\xwidth,\ya) .. (0,\ya);
}}

\begin{lemma}\cite[equation (56)]{LS21}\label{appendix-lem-1}
In $\cS\MN$, we have
    $
    \raisebox{-.30in}{
		\begin{tikzpicture}			\draw[color=gray!20,fill=gray!20] (-0.3,-1) rectangle (0.7,1);
        \draw [wall,<-] (-0.3,-1)-- (-0.3,1);
			\draw [color = black, line width =1pt](-0,-0.5)--(-0.3,-0.5);
			\draw [color = black, line width =1pt](-0,0.5)--(-0.3,0.5);
			\draw [color = black, line width =1pt] (0 ,-0.5) arc (-90:90:0.5);
			\filldraw[fill=white] (-0.05,0.5) circle (0.1);
            \node [left] at(-0.3,0.5) {$i$};
            \node [left] at(-0.3,-0.5) {$j$};
	\end{tikzpicture}}
    =\delta_{\bar i,j}\mathbbm{c}_{\bar i}^{-1}.
    $
\end{lemma}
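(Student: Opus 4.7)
The plan is to deduce this left-wall cup identity from the right-wall cup and empty-cap relations \eqref{e.capwall} and \eqref{e.capnearwall} of the stated $\mathrm{SL}_n$-skein module. The diagram on the left-hand side is, up to convention matching, the left-wall mirror image of the diagram treated in \eqref{e.capwall}, while the constant $\mathbbm{c}_{\bar i}^{-1}$ appearing on the right-hand side is precisely the coefficient occurring in the dual expansion \eqref{e.capnearwall}. This is no accident: the two defining relations are set up as inverse pairings, and the claimed identity expresses the matrix of one pairing in terms of the other.

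Concretely, the strategy is to stack the lemma's diagram together with an empty-cap diagram $\capnearwall{white}$ (reflected so that the wall lies on the left) inside a thin vertical strip near the wall. By isotopy within the marked $3$-manifold, the two cups can be straightened into a zigzag, which pulls apart into a pair of parallel vertical arcs running along the wall. Evaluating this combined diagram in two ways --- once by the isotopy, and once by first expanding the empty cap via \eqref{e.capnearwall} (which contributes the sum $\sum_{k} (\mathbbm{c}_{\bar k})^{-1}$ over paired states $k,\bar k$) and then contracting the remaining cup-with-wall portion via \eqref{e.capwall} (which contributes $\delta_{\bar{(\cdot)},(\cdot)}\,\mathbbm{c}_{(\cdot)}$ on each summand) --- produces two expressions whose equality pins down the value of the lemma's diagram as $\delta_{\bar i,j}\,\mathbbm{c}_{\bar i}^{-1}$. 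An alternative route would be to apply the reflection anti-involution $\mathbf{r}$ of Proposition \ref{r.reflection} directly to \eqref{e.capwall} and verify that the powers of $\hat q$ introduced by $\mathbf{r}$ convert the right-wall constant $\mathbbm{c}_i$ into the left-wall constant $\mathbbm{c}_{\bar i}^{-1}$, using the explicit formula $\mathbbm{c}_i = q^{\frac{n-1}{2n}}(-q)^{n-i}$ from \eqref{def-constants-tac}.

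The main obstacle here is not geometric but notational: one must carefully track the height order on the left wall (encoded by the downward-pointing arrow on the wall), the cup's orientation (indicated by the white circle), and the precise sign and fractional $q$-power in the constants $\mathbbm{c}_i$. These conventions are delicate enough that the cleanest course is to cross-reference the identity against equation (56) of \cite{LS21}, where it is already proven, so that this appendix simply records the result as needed in the proof of Lemma \ref{lem-arcs}.
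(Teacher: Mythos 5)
Your final paragraph reaches the same place the paper does: Lemma~\ref{appendix-lem-1} is stated with a citation to \cite[equation (56)]{LS21} and is not re-proved in the appendix, so simply recording the identity with the cross-reference is precisely the paper's own ``proof.'' In that sense the proposal is correct and takes the same approach.

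One caution about your sketched ``alternative route'': the reflection anti-involution $\mathbf{r}$ of Proposition~\ref{r.reflection} reverses the height order on each boundary edge and switches all crossings (and sends $\hq\mapsto\hq^{-1}$); it does \emph{not} implement a left--right mirror that would move a cup touching the right wall to a cup touching the left wall. So applying $\mathbf{r}$ directly to \eqref{e.capwall} would not, by itself, yield the left-wall identity of this lemma. The zigzag stacking argument you describe first is the more promising of the two heuristics: composing the lemma's left-wall cup with the open cap of \eqref{e.capnearwall}, straightening by isotopy, and comparing with the expansion coming from \eqref{e.capwall} and \eqref{e.capnearwall} is the sort of manipulation that would pin down the coefficient $\delta_{\bar i,j}\,\mathbbm{c}_{\bar i}^{-1}$, though one still has to track the state pairing ($i$ meets $\bar i$) and the height arrow on the left wall carefully. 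But since neither route is actually carried out, and the paper also does not carry one out, deferring to \cite{LS21} is the appropriate conclusion.
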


\begin{lemma}\cite[Lemma 4.9(a)]{LY23}\label{zero}
	In $\cS\MN$, we have
	$\raisebox{-.20in}{
		\begin{tikzpicture}
			\filldraw [gray!20](-0.5,-0.5) rectangle (1,0.5);
			\draw[wall,-o-={0.1}{<}] (1,-0.5) -- (1,0.5);
			\draw[edge,-o-={0.6}{>}] (0.5,0)--(1,0.3);
			\draw[edge,-o-={0.6}{>}] (0.5,0)--(1,-0.3);
			\draw[line width =0.8pt](-0.5,0)--(0.5,0);
			\node [above] at 
            (0,-0.1)
            {$n-2$};
			\node [right] at (1,0.3) {$i$};
			\node [right] at (1,-0.3) {$i$};
	\end{tikzpicture}}
	=
	\raisebox{-.20in}{
		\begin{tikzpicture}
			\filldraw [gray!20](-0.5,-0.5) rectangle (1,0.5);
			\draw[wall,-o-={0.1}{<}] (1,-0.5) -- (1,0.5);
			\draw[edge,-o-={0.6}{<}] (0.5,0)--(1,0.3);
			\draw[edge,-o-={0.6}{<}] (0.5,0)--(1,-0.3);
			\draw[line width =0.8pt](-0.5,0)--(0.5,0);
			\node [above] at 
            (0,-0.1) {$n-2$};
			\node [right] at (1,0.3) {$i$};
			\node [right] at (1,-0.3) {$i$};
	\end{tikzpicture}}
	=0$.
\end{lemma}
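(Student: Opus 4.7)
The strategy is to exhibit both diagrams as quantum-determinant expressions with a repeated input index, so that they vanish by the antisymmetry that is built into the $n$-valent vertex. Concretely, an $n$-valent source (or sink) in an ${\rm SL}_n$-web plays the role of the quantum antisymmetrizer on its $n$ half-edges; the two arcs carrying the same state $i$ at the wall then represent two identical factors in this antisymmetrizer, and the whole expression should cancel in pairs.

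The plan is as follows. First, I would expand the bold $(n-2)$-strand into a sum over states $\vec{k} = (k_1, \ldots, k_{n-2}) \in \JJ^{n-2}$ of configurations in which the $(n-2)$ strands enter the vertex as separate stated strands; this reduces the computation to the case of an ordinary $n$-valent vertex all of whose legs carry explicit states, with two of them pinned to $i$. Next, I would apply the defining relation \eqref{e.sinksource} to rewrite the $n$-valent source/sink as $(-q)^{\binom{n}{2}} \sum_{\sigma \in S_n} (-q^{(1-n)/n})^{\ell(\sigma)} \sigma_+$, where $\sigma_+$ is the minimum positive braid representing $\sigma$. Finally, after pushing the braid $\sigma_+$ toward the marking using the crossing-near-wall relation \eqref{e.crossp-wall}, I would group the terms according to the transposition $\tau$ interchanging the positions of the two state-$i$ endpoints: the contributions of $\sigma$ and $\sigma\tau$ differ only by a factor $(-q^{(1-n)/n})^{\pm 1}$ coming from the sign of the permutation, precisely canceled by the off-diagonal term in \eqref{e.crossp-wall} when the two adjacent states agree. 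After summation, the two diagrams in the lemma vanish. The second (sink) diagram then follows from the first by applying the orientation-reversal automorphism $\cev{\,\cdot\,}$ of \OldS\ref{sec.reflection}, which exchanges sources and sinks.

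The main obstacle, and the step that needs the most care, is the bookkeeping of the quantum scalars arising from \eqref{e.crossp-wall}, \eqref{e.vertexnearwall}, and the height-change relations in Lemma~\ref{lem-height-Nparallel}: one must check that the factors $(-q)^{\ell(\sigma)}$, $\aaa$, $\ccc_i$, and the diagonal/off-diagonal split $q^{-1/n}(\delta_{j<i}(q-q^{-1}) + q^{\delta_{i,j}}(\text{swap}))$ in \eqref{e.crossp-wall} combine so that the pairing $\sigma \leftrightarrow \sigma\tau$ really cancels term by term when the two adjacent states are equal. This is the same type of ``quantum antisymmetry at repeated indices'' calculation that underlies the vanishing of a quantum determinant $\det_q(\buu)$ with a repeated row, and by invoking the isomorphism $\cS(\PP_2) \cong \Oq$ of Theorem~\ref{thm-iso-Oq-P2} locally near the two $i$-state arcs, the desired cancellation reduces to that classical fact and the ribbon-category properties of $\Oq$.
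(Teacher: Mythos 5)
The paper does not actually prove Lemma \ref{zero}; it is imported verbatim from \cite[Lemma 4.9(a)]{LY23}, so there is no internal proof to compare against. What you propose is therefore judged on its own, and it has the right guiding idea — the $n$-valent vertex is a skein-theoretic antisymmetrizer, and two legs stated by the same index at the marking should play the role of a repeated row in $\det_q$ — but the concrete plan has a gap. The step ``expand the bold $(n-2)$-strand into a sum over states $\vec k\in\JJ^{n-2}$'' is not an operation available in $\cS\MN$: those $n-2$ half-edges do not end on the marking $\cN$, so they carry no states and there is no boundary along which to perform a cutting-homomorphism expansion without first establishing an injectivity statement that you have not supplied. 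Likewise, the claimed term-by-term pairing $\sigma\leftrightarrow\sigma\tau$ after using \eqref{e.sinksource} is more delicate than advertised, since the minimal positive braids $\sigma_+$ and $(\sigma\tau)_+$ need not differ by a single crossing localized next to the wall, so the asserted cancellation factor $(-q^{(1-n)/n})^{\pm1}$ does not come out of a one-line comparison.

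A cleaner route that implements your underlying idea without either of these issues is the following ``eigenvalue mismatch'' argument, which stays entirely local. Insert a positive crossing between the two $i$-stated strands just before they reach the marking. On the one hand, relation \eqref{e.crossp-wall} with $i=j$ gives that the crossed diagram equals $q^{-1/n}\cdot q^{\,\delta_{i,i}}=q^{1-1/n}$ times the original. On the other hand, that same crossing can be isotoped along the two strands until it sits directly adjacent to the $n$-valent vertex; since the vertex is (by \eqref{e.sinksource} and the Hecke-algebra antisymmetrizer identity it encodes) an eigenvector of the braiding on any pair of adjacent half-edges with eigenvalue $-q^{-1-1/n}$ (the $\Lambda^2V$-eigenvalue of the normalized $R$-matrix), the crossed diagram also equals $-q^{-1-1/n}$ times the original. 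Subtracting, $(q^{1-1/n}+q^{-1-1/n})$ annihilates the diagram, and since that coefficient is a unit in $\Zhq$, the diagram vanishes. The second picture then follows by applying the orientation-reversal automorphism, as you indicate. If you rewrite your proof along these lines — replacing the state-expansion of the interior legs and the $\sigma\leftrightarrow\sigma\tau$ pairing with the eigenvalue comparison — the argument becomes both correct and self-contained.
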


 \cite[equation (51)]{LS21} and Lemma \ref{zero}  imply the following.
\begin{lemma}\label{appendix-lem-2}
In $\cS(\PP_2)$, we have
\begin{align*}
    \left(
    \raisebox{-.30in}{
		\begin{tikzpicture}			\draw[color=gray!20,fill=gray!20] (-0.3,-1) rectangle (0.7,1);
        \draw [wall,<-] (-0.3,-1)-- (-0.3,1);
        \draw [wall,<-] (0.7,-1)--(0.7,1);
			\draw [color = black, line width =1pt](-0,-0.5)--(-0.3,-0.5);
			\draw [color = black, line width =1pt](-0,0.5)--(-0.3,0.5);
			\draw [color = black, line width =1pt] (0 ,-0.5) arc (-90:90:0.5);
			\filldraw[fill=white] (-0.05,0.5) circle (0.1);
            \node [left] at(-0.3,0.5) {$i$};
            \node [left] at(-0.3,-0.5) {$j$};
	\end{tikzpicture}}\;
    \right)^N
    = q^{\frac{N(N-1)(n-1)}{2n}} 
    \raisebox{-.30in}{
		\begin{tikzpicture}			\draw[color=gray!20,fill=gray!20] (-1,-1) rectangle (0.7,1);
        \draw [wall,<-] (-1,-1)-- (-1,1);
			\draw [wall,<-] (0.7,-1)--(0.7,1);
			\draw [line width =1pt] (-1,-0.5)--(-0.7,-0.5);
			\draw [color = black, line width =1pt](-0,-0.5)--(-0.3,-0.5);
			\draw[fill=gray!20] (-0.7,-0.3) rectangle (-0.3,-0.7);
			\node at(-0.5,-0.5) {$N$};
			\draw [color = black, line width =1pt] (-1,0.5)--(-0.7,0.5);
			\draw [color = black, line width =1pt](-0,0.5)--(-0.7,0.5);
			\draw [color = black, line width =1pt] (0 ,-0.5) arc (-90:90:0.5);
			\filldraw[fill=white] (-0.5,0.5) circle (0.1);
            \node [left] at(-1,0.5) {$i$};
            \node [left] at(-1,-0.5) {$j$};
	\end{tikzpicture}}.
\end{align*}

\end{lemma}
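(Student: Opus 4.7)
The plan is to obtain Lemma \ref{appendix-lem-2} as a direct combination of the two cited ingredients: equation (51) of [LS21] and Lemma \ref{zero}. The overall approach is an induction on $N$, merging the stacked copies of the cup into a single $k$-parallel bundle one at a time while tracking the resulting scalar factor.

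For the base step $N=2$, I would apply equation (51) of [LS21] to the square of the cup arc at the left wall. This relation is expected to yield a leading term of the form $q^{(n-1)/n}$ times the two-parallel cup (with top state $i$ and bottom state $j$), together with correction terms containing small $n$-valent sink/source subwebs sitting near the wall whose outgoing strands carry repeated states. By Lemma \ref{zero} of this paper (which, combined with the vertex-near-wall relation \eqref{e.vertexnearwall}, kills any sink/source next to the wall with two coincident out-states), every such correction term vanishes; this is exactly where the hypothesis that all stacked copies carry the \emph{same} top state $i$ and the \emph{same} bottom state $j$ enters.

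For the inductive step, assuming the first $k$ copies have already been merged into a single $k$-parallel cup with common states $i,j$, I would merge the $(k+1)$-th cup by the same application of equation (51) of [LS21]. The new strand interacts pairwise with each of the $k$ strands already in the bundle, each pairing contributing a factor of $q^{(n-1)/n}$, so the scalar accumulated in this step is $q^{k(n-1)/n}$. The error terms are again annihilated by Lemma \ref{zero} because the freshly-joined strand again carries the same states $i,j$. Summing the exponents contributed by successive merges,
\begin{equation*}
\sum_{k=1}^{N-1} k\cdot\frac{n-1}{n}
=\frac{n-1}{n}\cdot\frac{N(N-1)}{2}
=\frac{N(N-1)(n-1)}{2n},
\end{equation*}
which is precisely the exponent in the statement.

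The main obstacle I anticipate is purely diagrammatic bookkeeping: verifying, at each inductive step, that \emph{every} non-leading term emerging from the iterative application of equation (51) genuinely fits the coincident-state sink/source pattern annihilated by Lemma \ref{zero}, and that the scalar contribution at step $k$ is exactly $q^{k(n-1)/n}$ rather than some other expression in $q$. Since both the top state $i$ and the bottom state $j$ are preserved throughout the merging process by construction, no new conceptual ingredient beyond the two cited lemmas should be needed; the work lies entirely in the careful tracking of scalars and the verification of the cancellations.
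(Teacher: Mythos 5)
Your arithmetic lands on the right exponent, and the inductive merge-and-track-the-scalar structure is a reasonable frame, but the mechanism you invent to fit the two citations does not match the diagrammatics, and one of its key claims appears to be wrong.

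The central issue is where your sink/source correction terms are supposed to come from. The cup arc is a single strand with no vertices, and its two wall endpoints carry \emph{opposite} orientations: at the $i$-endpoint the strand runs toward the wall, at the $j$-endpoint away from it (or vice versa, but in any case opposite). When you re-order the $2N$ wall endpoints from the stacked pattern $i_1 j_1 i_2 j_2 \cdots$ to the $N$-parallel pattern $i_1 \cdots i_N j_1 \cdots j_N$, every transposition you perform is between an $i$-endpoint and a $j$-endpoint of \emph{different} copies — that is, between oppositely-oriented strands carrying \emph{different} states. The wall-height exchange for a pair of oppositely-oriented strands (the $N=1$ case of the second identity in Lemma \ref{lem-height-Nparallel}) is a single-term multiplicative relation with no remainder. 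There is no crossing resolution that spawns an $n$-valent sink/source near the wall, and in particular Lemma \ref{zero} — which kills a vertex whose two out-legs hit the wall with \emph{coincident} states — cannot be the device that cancels corrections, because the only potentially troublesome pairings involve the states $i$ and $j$, which are different. Your stated justification, ``the hypothesis that all stacked copies carry the same top state $i$ and the same bottom state $j$,'' therefore never becomes operative in your own reordering.

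There is also an unaddressed numerical point. The clean opposite-orientation wall exchange gives a factor $q^{\pm(1/n-\delta_{\bar\imath,j})}$, not $q^{(n-1)/n}$. You only obtain $q^{\pm(n-1)/n}$ after setting $\delta_{\bar\imath,j}=1$. This is harmless — by Lemma \ref{appendix-lem-1} both sides of the statement vanish unless $j=\bar\imath$, so the exponent is only constrained in that case — but your argument never mentions this case split, and without it the claimed factor $q^{k(n-1)/n}$ at each inductive step is not what the exchange relation produces.

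In summary: the count $\sum_{k=1}^{N-1}k\cdot\tfrac{n-1}{n}=\tfrac{N(N-1)(n-1)}{2n}$ is correct, but the role you assign to Lemma \ref{zero} and to equation (51) of \cite{LS21} is a guess that is internally inconsistent with the state pattern actually arising in the reordering. You would need to consult the specific content of equation (51) and either drop Lemma \ref{zero} from your explanation (using instead the corrections-free opposite-orientation exchange together with the $j=\bar\imath$ reduction) or identify a genuinely different route through sink/source diagrams in which repeated states actually occur.
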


\begin{proof}[Proof of Lemma \ref{lem-arcs}]

We use $\beta$ to  denote the component of $\cN$ involved in equation \eqref{eq-N-copies-cap-wall}.  Lemma \ref{rcut} implies that 
\begin{align*}
    &\Delta_\beta\left( 
    \raisebox{-.30in}{
		\begin{tikzpicture}			\draw[color=gray!20,fill=gray!20] (-1,-1) rectangle (0.7,1);
			\draw [wall] (0.7,-1)--(0.7,1);
			\draw [line width =1pt] (-1,-0.5)--(-0.7,-0.5);
			\draw [color = black, line width =1pt](-0,-0.5)--(-0.3,-0.5);
			\draw[fill=gray!20] (-0.7,-0.3) rectangle (-0.3,-0.7);
			\node at(-0.5,-0.5) {$N$};
			\draw [color = black, line width =1pt] (-1,0.5)--(-0.7,0.5);
			\draw [color = black, line width =1pt](-0,0.5)--(-0.7,0.5);
			\draw [color = black, line width =1pt] (0 ,-0.5) arc (-90:90:0.5);
			\filldraw[fill=white] (-0.5,0.5) circle (0.1);
	\end{tikzpicture}}\;
    \right)\\
    =&
    \sum_{i,j\in\mathbb J}
    \raisebox{-.30in}{
		\begin{tikzpicture}			\draw[color=gray!20,fill=gray!20] (-1.3,-1) rectangle (0,1);
			\draw [wall,<-] (0,-1)-- (0,1);
			\draw [line width =1pt] (-1.3,-0.5)--(-0.7,-0.5);
			\filldraw[fill=black] (-1,-0.5) circle (0.1);
			\draw [color = black, line width =1pt](-0,-0.5)--(-0.3,-0.5);
			\node [right] at(-0,-0.5) { $j$};
			\draw[color=black] (-0.5,-0.5) circle (0.2);
			\draw [line width =1pt] (-1.3,0.5)--(-0.7,0.5);
			\draw [color = black, line width =1pt](-0,0.5)--(-0.3,0.5);
			\node [right] at(-0,0.5) { $i$};
			\draw[fill=gray!20] (-0.7,0.3) rectangle (-0.3,0.7);
			\node at(-0.5,0.5) {$N$};
			\draw[fill=gray!20] (-0.7,-0.3) rectangle (-0.3,-0.7);
			\node at(-0.5,-0.5) {$N$};
			\filldraw[fill=white] (-1,0.5) circle (0.1);
	\end{tikzpicture}}
    \otimes_{\mathbb C}
    \raisebox{-.30in}{
		\begin{tikzpicture}			\draw[color=gray!20,fill=gray!20] (-1,-1) rectangle (0.7,1);
        \draw [wall,<-] (-1,-1)-- (-1,1);
			\draw [wall,<-] (0.7,-1)--(0.7,1);
			\draw [line width =1pt] (-1,-0.5)--(-0.7,-0.5);
			\draw [color = black, line width =1pt](-0,-0.5)--(-0.3,-0.5);
			\draw[fill=gray!20] (-0.7,-0.3) rectangle (-0.3,-0.7);
			\node at(-0.5,-0.5) {$N$};
			\draw [color = black, line width =1pt] (-1,0.5)--(-0.7,0.5);
			\draw [color = black, line width =1pt](-0,0.5)--(-0.7,0.5);
			\draw [color = black, line width =1pt] (0 ,-0.5) arc (-90:90:0.5);
			\filldraw[fill=white] (-0.5,0.5) circle (0.1);
            \node [left] at(-1,0.5) {$i$};
            \node [left] at(-1,-0.5) {$j$};
	\end{tikzpicture}}
    \quad (\because \mbox{Lemma \ref{rcut}})\\
    =& \omega^{-\frac{N(N-1)(n-1)}{2n}} \sum_{i,j\in\mathbb J}
    \raisebox{-.30in}{
		\begin{tikzpicture}			\draw[color=gray!20,fill=gray!20] (-1.3,-1) rectangle (0,1);
			\draw [wall,<-] (0,-1)-- (0,1);
			\draw [line width =1pt] (-1.3,-0.5)--(-0.7,-0.5);
			\filldraw[fill=black] (-1,-0.5) circle (0.1);
			\draw [color = black, line width =1pt](-0,-0.5)--(-0.3,-0.5);
			\node [right] at(-0,-0.5) { $j$};
			\draw[color=black] (-0.5,-0.5) circle (0.2);
			\draw [line width =1pt] (-1.3,0.5)--(-0.7,0.5);
			\draw [color = black, line width =1pt](-0,0.5)--(-0.3,0.5);
			\node [right] at(-0,0.5) { $i$};
			\draw[fill=gray!20] (-0.7,0.3) rectangle (-0.3,0.7);
			\node at(-0.5,0.5) {$N$};
			\draw[fill=gray!20] (-0.7,-0.3) rectangle (-0.3,-0.7);
			\node at(-0.5,-0.5) {$N$};
			\filldraw[fill=white] (-1,0.5) circle (0.1);
	\end{tikzpicture}}
    \otimes_{\mathbb C}
     \left(
    \raisebox{-.30in}{
		\begin{tikzpicture}			\draw[color=gray!20,fill=gray!20] (-0.3,-1) rectangle (0.7,1);
        \draw [wall,<-] (-0.3,-1)-- (-0.3,1);
        \draw [wall,<-] (0.7,-1)--(0.7,1);
			\draw [color = black, line width =1pt](-0,-0.5)--(-0.3,-0.5);
			\draw [color = black, line width =1pt](-0,0.5)--(-0.3,0.5);
			\draw [color = black, line width =1pt] (0 ,-0.5) arc (-90:90:0.5);
			\filldraw[fill=white] (-0.05,0.5) circle (0.1);
            \node [left] at(-0.3,0.5) {$i$};
            \node [left] at(-0.3,-0.5) {$j$};
	\end{tikzpicture}}\;
    \right)^N
    \quad (\because \mbox{Lemma \ref{appendix-lem-2}})\\
    =&\omega^{-\frac{N(N-1)(n-1)}{2n}} 
    \omega^{-\frac{N(n-1)}{2n}} (-\omega)^{-N(n-\bar{i})}
    \sum_{i\in\mathbb J}
    \raisebox{-.30in}{
		\begin{tikzpicture}			\draw[color=gray!20,fill=gray!20] (-1.3,-1) rectangle (0,1);
			\draw [wall,<-] (0,-1)-- (0,1);
			\draw [line width =1pt] (-1.3,-0.5)--(-0.7,-0.5);
			\filldraw[fill=black] (-1,-0.5) circle (0.1);
			\draw [color = black, line width =1pt](-0,-0.5)--(-0.3,-0.5);
			\node [right] at(-0,-0.5) { $\bar i$};
			\draw[color=black] (-0.5,-0.5) circle (0.2);
			\draw [line width =1pt] (-1.3,0.5)--(-0.7,0.5);
			\draw [color = black, line width =1pt](-0,0.5)--(-0.3,0.5);
			\node [right] at(-0,0.5) { $i$};
			\draw[fill=gray!20] (-0.7,0.3) rectangle (-0.3,0.7);
			\node at(-0.5,0.5) {$N$};
			\draw[fill=gray!20] (-0.7,-0.3) rectangle (-0.3,-0.7);
			\node at(-0.5,-0.5) {$N$};
			\filldraw[fill=white] (-1,0.5) circle (0.1);
	\end{tikzpicture}}
    \otimes_{\mathbb C}
    \raisebox{-.30in}{
		\begin{tikzpicture}			\draw[color=gray!20,fill=gray!20] (-0.3,-1) rectangle (0.7,1);
        \draw [wall,<-] (-0.3,-1)-- (-0.3,1);
        \draw [wall,<-] (0.7,-1)--(0.7,1);
	\end{tikzpicture}}
     \quad (\because \mbox{Lemma \ref{appendix-lem-1}})\\
    =&\eta^{-\frac{n-1}{2n}}(-\eta)^{\bar i-n}
    \sum_{i\in\mathbb J}
    \raisebox{-.30in}{
		\begin{tikzpicture}			\draw[color=gray!20,fill=gray!20] (-1.3,-1) rectangle (0,1);
			\draw [wall,<-] (0,-1)-- (0,1);
			\draw [line width =1pt] (-1.3,-0.5)--(-0.7,-0.5);
			\filldraw[fill=black] (-1,-0.5) circle (0.1);
			\draw [color = black, line width =1pt](-0,-0.5)--(-0.3,-0.5);
			\node [right] at(-0,-0.5) { $\bar i$};
			\draw[color=black] (-0.5,-0.5) circle (0.2);
			\draw [line width =1pt] (-1.3,0.5)--(-0.7,0.5);
			\draw [color = black, line width =1pt](-0,0.5)--(-0.3,0.5);
			\node [right] at(-0,0.5) { $i$};
			\draw[fill=gray!20] (-0.7,0.3) rectangle (-0.3,0.7);
			\node at(-0.5,0.5) {$N$};
			\draw[fill=gray!20] (-0.7,-0.3) rectangle (-0.3,-0.7);
			\node at(-0.5,-0.5) {$N$};
			\filldraw[fill=white] (-1,0.5) circle (0.1);
	\end{tikzpicture}}
    \otimes_{\mathbb C}
    \raisebox{-.30in}{
		\begin{tikzpicture}			\draw[color=gray!20,fill=gray!20] (-0.3,-1) rectangle (0.7,1);
        \draw [wall,<-] (-0.3,-1)-- (-0.3,1);
        \draw [wall,<-] (0.7,-1)--(0.7,1);
	\end{tikzpicture}}
     \quad (\because \mbox{$(-\omega)^{N}=-\eta$})
\end{align*}
Then we have equation \eqref{eq-N-copies-cap-wall} since $\Delta_\beta$ is a right coaction of $\cS_{\hat \omega}(\PP_2)$ on $\cS_{\home}\MN$.

\end{proof}

\bibliographystyle{hamsalpha}
\bibliography{biblio}

\end{document}